\def\@settitle{%
  \vspace*{-20pt}
  \begin{flushleft}%
    \baselineskip14\p@\relax
    \normalfont\bfseries\LARGE
%    \uppercasenonmath\@title
    \@title
  \end{flushleft}%
}
\def\@setauthors{%
  \begingroup
  \def\thanks{\protect\thanks@warning}%
  \trivlist
  %\centering
  \large \@topsep30\p@\relax
  \advance\@topsep by -\baselineskip
  \item\relax
  \author@andify\authors
  \def\\{\protect\linebreak}%
%  \MakeUppercase{\authors}%
  \authors
  \ifx\@empty\contribs
  \else
    ,\penalty-3 \space \@setcontribs
    \@closetoccontribs
  \fi
  \normalfont
  \@setaddresses
  \endtrivlist
  \endgroup
}
\def\@setaddresses{\par
  \nobreak \begingroup\raggedright
  \small
  \def\author##1{\nobreak\addvspace\smallskipamount}%
  \def\\{\unskip, \ignorespaces}%
  \interlinepenalty\@M
  \def\address##1##2{\begingroup
    \par\addvspace\bigskipamount\noindent
    \@ifnotempty{##1}{(\ignorespaces##1\unskip) }%
    {\ignorespaces##2}\par\endgroup}%
  \def\curraddr##1##2{\begingroup
    \@ifnotempty{##2}{\nobreak\noindent\curraddrname
      \@ifnotempty{##1}{, \ignorespaces##1\unskip}\/:\space
      ##2\par}\endgroup}%
  \def\email##1##2{\begingroup
    \@ifnotempty{##2}{\smallskip\nobreak\noindent E-mail address%
      \@ifnotempty{##1}{, \ignorespaces##1\unskip}\/:\space
      \ttfamily##2\par}\endgroup}%
  \def\urladdr##1##2{\begingroup
    \def~{\char`\~}%
    \@ifnotempty{##2}{\nobreak\noindent\urladdrname
      \@ifnotempty{##1}{, \ignorespaces##1\unskip}\/:\space
      \ttfamily##2\par}\endgroup}%
  \addresses
  \endgroup
  \global\let\addresses=\@empty
}
\def\@setabstracta{%
    \ifvoid\abstractbox
  \else
    \skip@25\p@ \advance\skip@-\lastskip
    \advance\skip@-\baselineskip \vskip\skip@
%    \hrule\vskip2pt
    \box\abstractbox
    \prevdepth\z@ % because \abstractbox is a vtop
%    \vskip2pt\hrule
    \vskip-10pt
  \fi
}
\renewenvironment{abstract}{%
  \ifx\maketitle\relax
    \ClassWarning{\@classname}{Abstract should precede
      \protect\maketitle\space in AMS document classes; reported}%
  \fi
  \global\setbox\abstractbox=\vtop \bgroup
    \normalfont\small
    \list{}{\labelwidth\z@
      \leftmargin0pc \rightmargin\leftmargin
      \listparindent\normalparindent \itemindent\z@
      \parsep\z@ \@plus\p@
      
    }%
    \item[\hskip\labelsep\bfseries\abstractname.]%
}{%
  \endlist\egroup
  \ifx\@setabstract\relax \@setabstracta \fi
}
\def\section{\@startsection{section}{1}%
  \z@{-1.2\linespacing\@plus-.5\linespacing}{.8\linespacing}%
  {\normalfont\bfseries\large}}
\def\subsection{\@startsection{subsection}{2}%
  \z@{-.8\linespacing\@plus-.3\linespacing}{.3\linespacing\@plus.2\linespacing}%
  {\normalfont\bfseries}}
\def\subsubsection{\@startsection{subsubsection}{3}%
  \z@{.7\linespacing\@plus.1\linespacing}{-1.5ex}%
  {\normalfont\itshape}}
\def\@secnumfont{\bfseries}
\renewcommand{\bar}{\overline}
\renewcommand{\hat}{\widehat}
\newcommand{\dge}{\rotatebox[origin=c]{45}{$\ge$}}
\newcommand{\uge}{\rotatebox[origin=c]{315}{$\ge$}}
\newcommand{\updots}{\hbox to1.65em{\rotatebox[origin=c]{45}{$\cdots$}}}
\newcommand{\dndots}{\hbox to1.65em{\rotatebox[origin=c]{315}{$\cdots$}}}
\newcommand{\lmd}[1]{\hbox to1.65em{$\hfill \lambda_{#1} \hfill$}}
\newcommand{\C}{\mathbb{C}}
\newcommand{\N}{\mathbb{N}}
\newcommand{\R}{\mathbb{R}}
\newcommand{\Z}{\mathbb{Z}}
\newcommand{\p}{\mathbb{P}}
\newcommand{\F}{\mathcal{F}}
\newcommand{\uu}{\mathfrak{u}}
\newcommand{\fa}{\mathfrak{a}}
\newcommand{\fb}{\mathfrak{b}}
\newcommand{\fc}{\mathfrak{c}}
\newcommand{\fd}{\mathfrak{d}}
\newcommand{\hookuparrow}{\mathrel{\rotatebox[origin=c]{90}{$\hookrightarrow$}}}
\def\mcal{\mathcal}
\def\frak{\mathfrak}
\numberwithin{equation}{section} 
\theoremstyle{plain}
\newtheorem{theorem}{Theorem}[section]
\newtheorem{thmx}{Theorem}
\newtheorem{proposition}[theorem]{Proposition}
\newtheorem{corollary}[theorem]{Corollary}
\newtheorem{lemma}[theorem]{Lemma}
\theoremstyle{definition}
\newtheorem{definition}[theorem]{Definition}
\newtheorem{example}[theorem]{Example}
\newtheorem{remark}[theorem]{Remark}
\def\to{\mathchoice{\longrightarrow}{\rightarrow}{\rightarrow}{\rightarrow}}
\newcommand{\shortxra}[2][]{\ext@arrow 0359\rightarrowfill@{#1}{#2}}
\def\longrightarrowfill@{\arrowfill@\relbar\relbar\longrightarrow}
\newcommand{\longxra}[2][]{\ext@arrow 0359\longrightarrowfill@{#1}{#2}}
\renewcommand{\xrightarrow}[2][]{\mathchoice{\longxra[#1]{#2}}%
  {\shortxra[#1]{#2}}{\shortxra[#1]{#2}}{\shortxra[#1]{#2}}}
\numberwithin{equation}{section}
\begin{document}

\title{Lagrangian fibers of Gelfand-Cetlin systems}

\author{Yunhyung Cho}
\address{Department of Mathematics Education, Sungkyunkwan University, Seoul, Republic of Korea}
\email{yunhyung@skku.edu}

\author{Yoosik Kim}
\address{Department of Mathematics, Brandeis University, Waltham, USA and Center of Mathematical Sciences and Applications, Harvard University, Cambridge, USA}
\email{yoosik@brandeis.edu, yoosik@cmsa.fas.harvard.edu}

\author{Yong-Geun Oh}
\address{Center for Geometry and Physics, Institute for Basic Science (IBS),  Pohang, Republic of Korea, and Department of Mathematics, POSTECH, Pohang, Republic of Korea}
\email{yongoh1@postech.ac.kr}

\begin{abstract}
	A {\em Gelfand-Cetlin system} is a completely integrable system defined on a partial flag manifold 
	whose image is a rational convex polytope called a {\em Gelfand-Cetlin polytope}. Motivated by the study of Nishinou-Nohara-Ueda \cite{NNU1} on the Floer theory 
	of Gelfand-Cetlin systems, we provide a detailed description of topology of Gelfand-Cetlin fibers. 
	In particular, we prove that any fiber over an interior point of a $k$-dimensional face of the Gelfand-Cetlin polytope is an isotropic submanifold and 
	is diffeomorphic to $(S^1)^k \times N$ for some smooth manifold $N$. We also prove that such $N$'s are exactly the vanishing cycles shrinking to points in the associated toric variety 
	via the toric degeneration.
	We also devise an algorithm of reading off
	Lagrangian fibers from the combinatorics of the ladder diagram.
\end{abstract}
\date{\today ~(\textit{Last update} : 03/03/2019)}
\maketitle
\setcounter{tocdepth}{1} 
\tableofcontents

%------------------------------------------------------------------------------------------------------------------------
\section{Introduction}
\label{secIntroduction} 

A (complex) partial flag manifold of Lie type $A$ can be defined as the co-adjoint orbit $\mcal{O}_\lambda$ of an element $\lambda$
in the dual Lie algebra of the unitary group.
The choice of $\lambda$ determines a {\em Kirillov-Kostant-Souriau symplectic form} $\omega_\lambda$, a $U(n)$-invariant K\"{a}hler form on $\mcal{O}_\lambda$.
The \emph{Gelfand-Cetlin system} $	\Phi_\lambda \colon (\mathcal{O}_\lambda, \omega_\lambda) \rightarrow \R^{\dim_\C \mathcal{O}_\lambda}$ is a completely integrable system on the orbit $\mcal{O}_\lambda$  constructed by Guillemin and Sternberg \cite{GS2}.
The Gelfand-Cetlin system resembles a toric moment map in the sense that the image is a convex polytope $\Delta_\lambda$ and the fiber over every interior point of $\Delta_\lambda$ is a Lagrangian torus. 
One major
difference from the toric moment map is that the map $\Phi_\lambda$ is
smooth over the interior of $\Delta_\lambda$ but only continuous up to the boundary of $\Delta_\lambda$.

One notable feature of the Gelfand-Cetlin system $\Phi_\lambda$ is that non-torus Lagrangian fibers may appear at some boundary strata of $\Delta_\lambda$.
Those non-torus Lagrangian fibers are particularly important to realize Strominger-Yau-Zaslow and homological mirror symmetry of partial flag varieties in full generality.
The appearance of such non-torus Lagrangian fibers is responsible for the incompleteness of the Givental-Hori-Vafa mirror, the Floer theoretical SYZ mirror of the Lagrangian torus fibration $\Phi_\lambda$ derived by \cite{NNU1}, in order to study closed mirror symmetry. For instance, the Jacobian ring of the torus mirror sometimes has too small rank to compute the quantum cohomology ring.
Moreover, some non-torus Lagrangian fibers are indeed non-zero objects in the Fukaya category.
Partly because of lack of understanding of Lagrangian fibers in higher dimensional partial flag varieties, Floer theory of non-torus Lagrangians in only limited cases is understood, see \cite{NU2, EL}.

In this paper, motivated by those works, we study Lagrangian fibers of the Gelfand-Cetlin systems in details and classify all Lagrangian fibers some of which have their topological type not of tori.
The first main result of the paper is stated as follows. 		

\begin{thmx}[Theorem~\ref{theorem_main}]\label{theoremA}
	Let $\Phi_\lambda$ be the Gelfand-Cetlin system on
	the co-adjoint orbit $(\mathcal{O}_\lambda, \omega_\lambda)$ for $\lambda \in \mathfrak{u}(n)^*$ and let $\Delta_\lambda$ be the corresponding Gelfand-Cetlin
	polytope. For any point $\textbf{\textup{u}} \in \Delta_\lambda$, the fiber $\Phi_\lambda^{-1}(\textbf{\textup{u}})$ is an isotropic submanifold of $(\mathcal{O}_\lambda, \omega_\lambda)$
	and is the total space of an iterated bundle
	\[
		\Phi^{-1}_\lambda(\textbf{\textup{u}}) = E_{n-1} \stackrel{p_{n-1}} \longrightarrow E_{n-2} \stackrel{p_{n-2}} \longrightarrow \cdots \stackrel{p_2} \longrightarrow E_1
		\stackrel{p_1} \longrightarrow E_0= \mathrm{point}
	\]
	such that the fiber at each stage is either a point or a product of odd dimensional spheres.
	Two fibers $\Phi^{-1}_\lambda(\textbf{\textup{u}}_1)$ and $\Phi^{-1}_\lambda(\textbf{\textup{u}}_2)$ are diffeomorphic if two points $\textbf{\textup{u}}_1$ and $\textbf{\textup{u}}_2$ are
	 contained in the relative interior of the same face.
\end{thmx}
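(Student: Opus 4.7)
The plan is to peel off one level of the Gelfand--Cetlin tower at a time. For each $k \in \{1, \ldots, n\}$, write $\lambda^{(k)} = (u_1^{(k)}, \ldots, u_k^{(k)})$ for the spectrum prescribed by $\mathbf{u}$ at the $k$-th level (so $\lambda^{(n)} = \lambda$), and let $\pi_j \colon \mathfrak{u}(k+1)^* \to \mathfrak{u}(j)^*$ denote the projection sending a Hermitian matrix to its upper-left $j \times j$ block. Define
\[
E_k := \bigl\{ X \in \mathcal{O}_{\lambda^{(k+1)}} \,:\, \pi_j(X) \in \mathcal{O}_{\lambda^{(j)}} \text{ for every } 1 \le j \le k \bigr\}, \qquad k = 0, 1, \ldots, n-1.
\]
By construction $E_0 = \mathcal{O}_{\lambda^{(1)}}$ is a single point, $E_{n-1} = \Phi_\lambda^{-1}(\mathbf{u})$, and restricting $\pi_k$ yields a map $p_k \colon E_k \to E_{k-1}$, well-defined by Cauchy interlacing.

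The heart of the argument is to identify $p_k^{-1}(Y)$ for each $Y \in E_{k-1}$. Diagonalizing $Y$ to $\operatorname{diag}(\nu_1, \ldots, \nu_k)$ by a conjugation in $U(k)$, every preimage can be written $X = \bigl(\begin{smallmatrix} Y & v \\ v^* & a \end{smallmatrix}\bigr)$ with $v \in \mathbb{C}^k$, $a \in \mathbb{R}$, and expanding
\[
\det(zI - X) = (z-a)\prod_{i}(z-\nu_i) \;-\; \sum_{j} |v_j|^2 \prod_{i \ne j}(z - \nu_i)
\]
and matching with $\prod_i(z - u_i^{(k+1)})$ yields an explicit system for $a$ and the $|v_j|^2$. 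When the $\nu_j$ are distinct, $a$ and each $|v_j|^2$ are uniquely determined, leaving only the phases of the $v_j$ free, so the fiber is $(S^1)^k$. When a block of consecutive $\nu_j$ coincide, the corresponding terms of the characteristic polynomial merge and only the sum $\sum |v_j|^2$ across that block is pinned down; the block coordinates then fill out a round sphere $S^{2r-1}$, where $r$ is the block size. A saturated interlacing equality $u_i^{(k+1)} = u_i^{(k)}$ (or $u_i^{(k)} = u_{i+1}^{(k+1)}$) forces the relevant $|v_j|^2$ to vanish, collapsing one $S^1$ factor to a point. Running through the saturation patterns of the interlacing inequalities and the induced action of $\operatorname{Stab}(Y) \subset U(k)$ identifies $p_k^{-1}(Y)$ as a product of odd-dimensional spheres, possibly degenerate.

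For the isotropy claim, in the interior of $\Delta_\lambda$ this is the classical Guillemin--Sternberg result that the Gelfand--Cetlin map generates a Lagrangian torus fibration. On a boundary face, I would pass the identity $\omega|_{\Phi_\lambda^{-1}(\mathbf{u})} = 0$ from nearby interior fibers to the limit, using the smooth submanifold structure supplied by the iterated bundle; alternatively, each sphere fiber appears in the iterated model as a principal orbit of $\operatorname{Stab}(Y) \subset U(k)$ under a Hamiltonian action whose moment value lies on the boundary of the corresponding Kirwan polytope, which forces the orbit to be isotropic. The diffeomorphism statement is then immediate: two points in the relative interior of the same face share the same saturation pattern of the defining inequalities of $\Delta_\lambda$, so their iterated bundle models have matching sphere factors at every stage.

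The main obstacle I anticipate is the multi-collision bookkeeping in the fiber identification: when several interlacing equalities saturate simultaneously, or when collisions at adjacent levels interact through a shared eigenvalue, one must check carefully that $p_k^{-1}(Y)$ retains a genuine product-of-spheres structure rather than a more general Stiefel-type quotient, that the projections $p_k$ are locally trivial fibrations, and that these fibrations assemble globally into a smooth embedded submanifold of $\mathcal{O}_\lambda$.
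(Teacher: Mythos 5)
Your construction of the tower $E_\bullet$ and your identification of the fibers of $p_k$ via the characteristic-polynomial expansion is essentially the paper's argument: the paper works with the spaces $\mathcal{A}_{k+1}(\mathfrak{a},\mathfrak{b})$ of matrices with prescribed spectrum and prescribed spectrum of the leading $k\times k$ block, and carries out exactly the collision case analysis you sketch, including the transitivity of $\mathrm{Stab}(I_{\mathfrak{b}})\subset U(k)$ on each fiber, which is what rules out the Stiefel-type degenerations you flag at the end. That half of the proposal is sound in outline.

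The genuine gap is in the isotropy argument, and neither of your two alternatives closes it. The limiting argument fails because the boundary fibers sit exactly where $\Phi_\lambda$ is not smooth: the nearby Lagrangian tori converge to $\Phi_\lambda^{-1}(\mathbf{u})$ only as sets, and there is no implicit-function-theorem reason for their tangent spaces to converge to those of the limit fiber, whose dimension and topology can differ (e.g.\ the tori $T^3$ collapsing onto the $S^3$ fiber over a vertex in $\mathcal{F}(3)$). The orbit argument has two problems. First, lying on the boundary of the Kirwan polytope does not in general force an orbit to be isotropic; the correct criterion is that the moment value of the fiber, namely $I_{\mathfrak{b}}$, is central for $\mathrm{Stab}(I_{\mathfrak{b}})$, so that $\langle \mu(x),[\mathfrak{g},\mathfrak{g}]\rangle=0$ and the orbit is isotropic. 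Second, and more seriously, even a repaired version only shows that each \emph{fiber} of $p_k$ is isotropic; it says nothing about $\omega_{\lambda^{(k+1)}}(\xi,\eta)$ when $\xi,\eta\in T_xE_{n-1}$ are not both vertical. The missing ingredient --- the crux of the paper's proof --- is the identity $(\omega_{\mathfrak{a}})_x(\xi,\eta)=(\omega_{\mathfrak{b}})_{p_k(x)}((p_k)_*\xi,(p_k)_*\eta)$ on $\mathcal{A}_{k+1}(\mathfrak{a},\mathfrak{b})$, proved by using transitivity of the $U(k)$-action to write $\xi=[(i_k)_*X,x]$, $\eta=[(i_k)_*Y,x]$ and computing $\mathrm{tr}(ix[(i_k)_*X,(i_k)_*Y])=\mathrm{tr}(ix^{(k)}[X,Y])$. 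With this compatibility the isotropy telescopes down the tower to the point $E_0$; without it, vertical isotropy at each stage proves nothing about the horizontal directions, so you should add this computation to your argument.
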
		

The process building up the iterated bundle in Theorem~\ref{theoremA} is constructive and algorithmic.
In the light of the work of the first named author with An and Kim \cite{ACK}, the face structure of the polytope $\Delta_\lambda$ can be described in terms of certain subgraphs of the ladder diagram corresponding to $\lambda$.
Their description enables us to reveal the iterated bundle structures of fibers of $\Phi_\lambda$ from the ladder diagram, responding to the face structure of $\Delta_\lambda$.
As a byproduct, we obtain a complete classification of Lagrangian fibers of $\Phi_\lambda$.
The explicit process will be crucial for applications of the Gelfand-Cetlin systems to symplectic geometry and mirror symmetry.

\begin{remark}\label{remarkA} There is a B or D-type analogue of Theorem~\ref{theoremA} in \cite{CK}.
In this case, an even-dimensional sphere can appear as a factor of fibers in the iterated bundle described in Theorem~\ref{theoremA}.
\end{remark}

The second goal of this paper is to analyze how Gelfand-Cetlin fibers deform under the toric degeneration of a partial flag manifold to the toric variety associated with the polytope $\Delta_\lambda$ in \cite{GL,KM}.
To compute the disc potential in \cite{FOOO, COtoric, FOOOToric1}, Nishinou-Nohara-Ueda \cite{NNU1} constructed a toric degeneration from the Gelfand-Cetlin system $\Phi_\lambda$ to the toric moment map $\Phi$.
It leads to the following commutative diagram
\begin{equation}\label{equation_toric_degeneration_diagram}
	\xymatrix{
		  (\mathcal{O}_\lambda, \omega_\lambda)  \ar[dr]_{\Phi_\lambda} \ar[rr]^{ \phi}
                              & & (X_\lambda, \omega)
      \ar[dl]^{\Phi} \\
  & \Delta_\lambda &}
\end{equation}
where $\phi$ is a contraction map from $\mathcal{O}_\lambda$ onto $X_\lambda$, cf. \cite{Ru, HK}.
Our second main theorem is stated as follows.

\begin{thmx}[Theorem ~\ref{theorem_contraction}]\label{theoremB}
	Let $\textbf{\textup{u}}$ be a point lying on the relative interior of an $r$-dimensional face of the Gelfand-Cetlin polytope $\Delta_\lambda$. Then every $S^1$-factor appeared
	in any stage of the iterated bundle given in Theorem \ref{theoremA} comes out as a trivial factor and we get
	\[
		\Phi_\lambda^{-1}(\textbf{\textup{u}}) \cong T^r \times Y(\textbf{\textup{u}})
	\]
	where $Y(\textbf{\textup{u}})$ is the iterated bundle obtained from the original bundle by removing all $S^1$-factors.
	Moreover, the map $\phi \colon \Phi_\lambda^{-1}(\textbf{\textup{u}}) \rightarrow \Phi_{\vphantom{\lambda}}^{-1}(\textbf{\textup{u}})$ is nothing but the projection $T^r \times Y(\textbf{\textup{u}}) \rightarrow T^r$ onto the first factor.
\end{thmx}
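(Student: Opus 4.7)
The plan is to promote the $r$ independent coordinate directions along the face containing $\textbf{\textup{u}}$ to global Hamiltonian circle actions on a neighborhood of the fiber, and then to exploit the equivariance of the contraction map $\phi$ with respect to these actions.

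First I would identify the $r$ Gelfand--Cetlin components responsible for the face. If $\textbf{\textup{u}}$ lies in the relative interior of an $r$-dimensional face $f$, then, using the combinatorial description of faces via ladder diagrams from \cite{ACK}, there are exactly $r$ components of $\Phi_\lambda$ whose values are not frozen by the equalities cutting out $f$; these components are smooth and have linearly independent differentials on an open neighborhood of $\Phi_\lambda^{-1}(f^\circ)$. Their Hamiltonian flows integrate to an effective Hamiltonian $T^r$-action on that neighborhood, preserving every $\Phi_\lambda$-level set and, in particular, acting on $\Phi_\lambda^{-1}(\textbf{\textup{u}})$.

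Second, I would show that this $T^r$-action acts freely on the fiber and accounts for every $S^1$-factor appearing at any stage of the iterated bundle of Theorem~\ref{theoremA}. Recall that the iterated bundle is built inductively by restricting Gelfand--Cetlin subsystems to progressively smaller co-adjoint orbits; at each stage an $S^1$-factor appears precisely when a single Gelfand--Cetlin coordinate contributes a smooth circle direction in that stage's fiber. For $\textbf{\textup{u}}$ in the interior of $f$, the total count of such circle directions across all stages equals $r$, and they are generated by the $r$ globally smooth Hamiltonians identified above. Because those generating functions are smooth on an entire neighborhood (not merely along the fiber), the principal $S^1$-bundles at each stage admit global sections obtained by pulling back transverse slices along the corresponding Hamiltonian flow; assembling these sections through the tower yields a diffeomorphism $\Phi_\lambda^{-1}(\textbf{\textup{u}}) \cong T^r \times Y(\textbf{\textup{u}})$ with $Y(\textbf{\textup{u}})$ as defined in the statement.

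Third, to identify $\phi$ on the fiber, I would use that the toric degeneration of \cite{NNU1} is constructed so as to intertwine the $r$ smooth Gelfand--Cetlin Hamiltonians along $f$ with the corresponding $r$ components of the toric moment map $\Phi$ that remain free on the $r$-dimensional toric stratum over $f^\circ$. Hence $\phi$ is $T^r$-equivariant on a neighborhood of $\Phi_\lambda^{-1}(\textbf{\textup{u}})$, where the $T^r$-action on the toric side is the standard free action on the stratum. Since $\Phi^{-1}(\textbf{\textup{u}})$ is a single free $T^r$-orbit, the equivariant map $\phi \colon T^r \times Y(\textbf{\textup{u}}) \to T^r$ must coincide, up to a global translation, with projection onto the first factor.

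The main obstacle I anticipate is the second step: ensuring that the $r$ circle actions, which a priori only trivialize the $S^1$-factors stage by stage, combine coherently across all stages into a global $T^r$-action and a global product decomposition. This requires carefully tracking the Hamiltonian flows through the successive fibrations used to build the tower of Theorem~\ref{theoremA} and verifying that sections chosen at higher stages descend compatibly with the bundle structure inherited from lower ones.
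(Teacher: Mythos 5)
Your proposal correctly identifies the overall architecture: single out the $r$ components of $\Phi_\lambda$ that remain smooth on a neighborhood of the fiber (indexed by $\mathcal{I}_{\mathcal{C}(\gamma)}$ in the paper's language), let them generate a fiberwise $T^r$-action, and then use $T^r$-equivariance of the contraction map $\phi$ to say something about $\phi$ restricted to the fiber. Steps~1 and~3 of your plan track the paper's Lemma~\ref{lemma_smooth_component_free_torus_action}, Lemma~\ref{lemma_equivariant}, and Corollary~\ref{corollary_torus_equivariant} closely.

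However, step~2 — the heart of the trivialization — has a genuine gap. You claim that because the $r$ generating Hamiltonians are smooth on a neighborhood, ``the principal $S^1$-bundles at each stage admit global sections obtained by pulling back transverse slices along the corresponding Hamiltonian flow.'' A free $S^1$-action with a globally smooth generating Hamiltonian does \emph{not} produce a global section of the quotient bundle; a transverse slice through the flow is only local, and insisting that such slices assemble into a global section is precisely the assertion that the bundle is trivial, which is what you are trying to prove. As written, the argument is circular, and you yourself flag the coherence-across-stages issue as the principal obstacle without resolving it. The paper sidesteps this entirely by reordering the logic you sketch: it first establishes (Proposition~\ref{proposition_toric_degenerate_equivariant_map}) that the $T^r$-action is free on $\Phi_\lambda^{-1}(\textbf{\textup{u}})$ by pushing freeness of the toric $T^r$-action through the $T^r$-equivariant map $\phi$, and then observes that the $T^r$-equivariant map $\phi_{\textbf{\textup{u}}} \colon \Phi_\lambda^{-1}(\textbf{\textup{u}}) \to \Phi^{-1}(\textbf{\textup{u}}) \cong T^r$ is a map of principal $T^r$-bundles lying over $Y(\textbf{\textup{u}}) \to \mathrm{point}$; since any pullback of the trivial $T^r$-bundle over a point is trivial, triviality follows for free, and $\phi$ is simultaneously identified as the first-factor projection. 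In other words, your step~3 should be proved \emph{first} and then used to yield step~2, not the other way around.

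A second, smaller gap: you assert the $T^r$-action is free on $\Phi_\lambda^{-1}(\textbf{\textup{u}})$ without saying how to establish it. This does not follow from smoothness of the Hamiltonians alone. The paper derives it by first checking (via the inward normal vectors of the facets containing $f$, as in Lemma~\ref{lemma_freely}) that the corresponding $T^r$ acts freely on the toric fiber $\Phi^{-1}(\textbf{\textup{u}})$, and then transporting freeness back through the equivariant map $\phi$. If you want a route that avoids the equivariance argument entirely, one could instead note that $Y(\textbf{\textup{u}})$ is an iterated bundle whose fibers are products of spheres $S^{2\ell-1}$ with $\ell\ge 2$, hence $2$-connected, so $H^2(Y(\textbf{\textup{u}});\Z)=0$ and any principal $T^r$-bundle over it is trivial; but that shortcut requires you to have already shown that the circle factors quotient out coherently (the paper handles this with the verticality argument using Lemma~\ref{lemma_isotropic}), and it still will not identify $\phi$ as the projection without invoking equivariance.
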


According to the work of Batyrev, Ciocan-Fontanine, Kim, and Van Straten \cite{BCKV}, the toric degeneration can be interpreted as a smoothing of conifold strata (via a part of \emph{conifold} transition). Namely, through the map $\phi$ in~\eqref{equation_toric_degeneration_diagram}, a partial flag manifold is deformed into a singular toric variety having conifold strata. Theorem \ref{theoremB} tells us how each fiber degenerates into a toric fiber. Every odd-dimensional sphere of dimension $>1$ appeared in each stage of the iterated bundle $\{E_\bullet\}$ contracts to a point, while each $S^1$-factor persists.

\begin{remark}
The persistence of the $S^1$-factors in type $A$ can be highlighted by comparing the Gelfand-Cetlin systems of type $B, D$. Some $S^1$-factor in the analogue of Theorem~\ref{theoremA} might degenerate to a point, cf \cite{CK}.
\end{remark}
The following Table~\ref{table_featuresofgefiber} summarizes our results in the present paper.

\vspace{0.1cm}
\begin{center}
\begin{table}[H]
  \begin{tabular}{| l || c | c | }  \hline
      & \,\, Gelfand-Cetlin fiber \,\, &   Toric moment fiber \\ \hline \hline
    over any point & \multicolumn{2}{|c|}{isotropic submanifold}  \\ \hline
    over any interior point & \multicolumn{2}{|c|}{Lagrangian torus} \\ \hline
    \multirow{3}{*} {\shortstack[l]{over any point in the relative\\ interior of a $k$-dim face $f$}}
     & \multicolumn{2}{|c|}{$\pi_1 \textup{(fiber)} = \Z^k$, \, $\pi_2 \textup{(fiber)} = 0$}  \\ \cline{2-3}
    			& $(S^1)^k \times N_f$ & $(S^1)^k$   \\ \cline{2-2}\cline{2-3}
			& can be Lagrangian & can \emph{not} be Lagrangian \\ \hline
\end{tabular}
\bigskip
\caption{Features of Gefand-Cetlin fibers and toric fibers}\label{table_featuresofgefiber}
\end{table}
\end{center}
\vspace{-1.0cm}
				
The present paper serves as the foundation for subsequent papers of the authors on the study of 
symplectic topology of Gelfand-Cetlin fibers and relevant Lagrangian Floer theory.
	In the second paper \cite{CKO2}, using the description of Lagrangians faces in this article,
we prove that non-torus Lagrangian fibers satisfying certain Bohr-Sommerfeld conditions cannot be displaced by
any time-dependent Hamiltonian diffeomorphisms on monotone full flag manifolds.
The third paper \cite{CKO3} discusses the cotangent bundle of a homogeneous manifold which arises 
as a Gelfand-Cetlin Lagrangian fiber. Using \cite{CKO2}, we produce monotone or non-displaceable 
Lagrangians therein. 

This paper is organized as follows.
In Section~\ref{secTheGelfandCetlinSystems}, we review basic properties of Gelfand-Cetlin systems.
Section~\ref{secLadderDiagramAndItsFaceStructure} discusses the face structure of a Gelfand-Cetlin polytope and describe it in terms of certain subgraphs
of the ladder diagram associated with the polytope. Section~\ref{secLagrangianFibersOfGelfandCetlinSystems} is devoted to introducing combinatorics of ladder diagrams
that will be used to describe the Gelfand-Cetlin fibers and classify all Lagrangian faces.
In Section~\ref{secIteratedBundleStructuresOnGelfandCetlinFibers}, we provide the proof of Theorem~\ref{theoremA}.
Finally, the proof of Theorem~\ref{theoremB} will be given in Section~\ref{secDegenerationsOfFibersToTori}.

The material of the present paper is taken from Part I of our arXiv posting arXiv:1704.07213
which has been circulated since April 2017. Part II thereof forms the material of \cite{CKO2}.

\subsection*{Acknowledgements}
The first named author is supported by the National Research Foundation of Korea grant funded by the Korea government (MSIP; Ministry of Science, ICT \& Future Planning) (NRF-2017R1C1B5018168). The third named author is supported by IBS-R003-D1. This work was initiated when the first named author and the second named author were affiliated to IBS-CGP and supported by IBS-R003-D1.
The second named author would like to thank Cheol-Hyun Cho, Hansol Hong, Siu-Cheong Lau for useful discussions, Yuichi Nohara for explaining work with Ueda, and Byung Hee An, Morimichi Kawasaki and Fumihiko Sanda for helpful comments.

%------------------------------------------------------------------------------------
\section{Gelfand-Cetlin systems}
\label{secTheGelfandCetlinSystems}

In this section, we briefly overview Gelfand-Cetlin systems on partial flag manifolds.

For a given $r \in \mathbb{N}$ and an integer sequence such that 
	\begin{equation}\label{nidef}
		0 = n_0 < n_1 < n_2 < \cdots <n_r < n_{r+1} = n, 
	\end{equation}
the {\em partial flag manifold} $\F(n_1, \cdots, n_r; n)$ is the space of nested sequences of complex vector subspaces whose dimensions are $n_1, \cdots, n_r$, respectively.
That is,
	\[
		\F(n_1, \cdots, n_r; n) = \{V_{\bullet} := 0 \subset V_1 \subset \cdots \subset V_r \subset \C^n ~|~ \dim_{\C} V_i = n_i \mbox{ for $i= 1, \cdots, r$}\}.
	\]
An element $V_\bullet$ of $\F(n_1, \cdots, n_r; n)$ is called a {\em flag}. 
In particular, we call $\mcal{F}(1, 2, \cdots, n-1; n)$ the {\em full flag manifold} and denoted by $\mcal{F}(n)$.

The linear $U(n)$-action on $\C^n$ induces a transitive $U(n)$-action on $\F(n_1, \cdots, n_r; n)$ and each flag 
has an isotropy subgroup isomorphic to $U(k_1) \times \cdots \times U(k_{r+1})$ where
	\begin{equation}\label{kidef}
		k_i = n_i - n_{i-1}
	\end{equation}
for $i=1,\cdots,r+1$. Thus, $\F(n_1, \cdots, n_r; n)$ is a homogeneous space diffeomorphic to $U(n) / (U(k_1) \times \cdots \times U(k_{r+1}))$.
In particular, we have
	\begin{equation}\label{dimofflagma}
		\dim_{\R} \F(n_1, \cdots, n_r; n) = n^2 - \sum_{i=1}^{r+1} k_i^2.
	\end{equation}

%------------------------------------------------------------------------------------
\subsection{Description of $\F(n_1, \cdots, n_r; n)$ as a co-adjoint orbit of $U(n)$}~\label{ssecDescriptionOfFMathfrakNCoadjointOrbitOfUN}

Consider the conjugate action of $U(n)$ on itself. The action fixes the identity matrix $I_n \in U(n)$, which induces the $U(n)$-action,
called the {\em adjoint action} and denoted by $Ad$, on the Lie algebra $\mathfrak{u}(\mathfrak{n}) :=T_{I_n} U(n)$.
Note that $\uu(n)$ is the set of $(n \times n)$ skew-Hermitian matrices
	\[
		\uu(n) = \{ A \in M_n(\C) ~|~ A^* = -A \}, \quad A^* = \overline{A}^t
	\]
and the adjoint action can be written as
	\[
		\begin{array}{ccccc}
			Ad \colon & U(n) \times \uu(n)& \rightarrow & \uu(n) \\[0.1em]
	      	   &  (M, A) & \mapsto  &  MAM^*.\\
		\end{array}
	\]
The {\em co-adjoint action} $Ad^*$ of $U(n)$ is the action on the dual Lie algebra $\uu(n)^*$ induced by $Ad$, explicitly given by
	\[
		\begin{array}{ccccc}
			Ad^* \colon  & U(n) \times \uu(n)^*& \rightarrow & \uu(n)^* \\[0.1em]
	         	   &  (M, X) & \mapsto  &  X_M\\
		\end{array}
	\]
where $X_M \in \uu(n)^*$ is defined by $X_M(A) = X(M^*AM)$ for every $A \in \uu(n)$.

\begin{proposition}[p.51 in \cite{Au}]\label{proposition_herm_coadjoint}
	Let $\mathcal{H}_n = i\uu(n) \subset M_n(\C)$ be the set of
	$(n \times n)$ Hermitian matrices with the conjugate $U(n)$-action. Then there is a $U(n)$-equivariant $\R$-vector space isomorphism
	$\phi \colon \mathcal{H}_n \rightarrow \uu(n)^*$.
\end{proposition}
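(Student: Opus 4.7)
The plan is to produce $\phi$ explicitly via a non-degenerate trace pairing and then verify the three properties (linearity, bijectivity, equivariance) one at a time. First I would define the bilinear map $\beta\colon \mathcal{H}_n \times \uu(n) \to \C$ by $\beta(H,A) := \operatorname{tr}(HA)$ and observe the basic algebraic identity
\[
\overline{\operatorname{tr}(HA)} = \operatorname{tr}((HA)^*) = \operatorname{tr}(A^*H^*) = \operatorname{tr}(-AH) = -\operatorname{tr}(HA),
\]
which uses $H^* = H$ and $A^* = -A$ together with cyclicity of the trace. Consequently $\operatorname{tr}(HA) \in i\R$, so setting $\phi(H)(A) := -i\operatorname{tr}(HA)$ gives a well-defined $\R$-bilinear pairing, and hence a linear map $\phi \colon \mathcal{H}_n \to \uu(n)^*$.

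Next I would show $\phi$ is an $\R$-linear isomorphism. Since $\mathcal{H}_n$ and $\uu(n)^*$ both have real dimension $n^2$, it suffices to establish injectivity. If $\phi(H) = 0$, then in particular $\phi(H)(iH) = 0$; because $H$ is Hermitian, $iH$ lies in $\uu(n)$, so the vanishing reads
\[
0 = -i\operatorname{tr}(H \cdot iH) = \operatorname{tr}(H^2).
\]
Since $H^2$ is positive semi-definite Hermitian, this forces $H = 0$, and injectivity (hence bijectivity) follows.

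Finally I would check $U(n)$-equivariance. Under the conjugation action $M \cdot H = MHM^*$ on $\mathcal{H}_n$ and the coadjoint action $(Ad^*_M X)(A) = X(M^*AM)$ on $\uu(n)^*$, cyclicity of the trace gives
\[
\bigl(Ad^*_M \phi(H)\bigr)(A) = \phi(H)(M^*AM) = -i\operatorname{tr}(HM^*AM) = -i\operatorname{tr}(MHM^*A) = \phi(MHM^*)(A),
\]
so $\phi$ intertwines the two actions. There is no real obstacle here; the only point requiring care is the placement of the factor $-i$, which is forced upon us by the fact that $\operatorname{tr}(HA)$ is purely imaginary and is what ensures that the pairing is real valued and simultaneously non-degenerate. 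Everything else is formal manipulation with the trace.
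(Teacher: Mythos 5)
Your proof is correct. Since the paper cites this result from Audin's book without reproducing a proof, there is no in-paper argument to compare against; your construction via the (normalized) trace pairing $\phi(H)(A) = -i\operatorname{tr}(HA)$ is the standard one, and all three verifications — that the pairing is real-valued, that $\phi$ is injective (hence bijective by dimension count $n^2 = n^2$, using $iH \in \uu(n)$ and positive semi-definiteness of $H^2$), and that conjugation intertwines with the coadjoint action via cyclicity of the trace — are carried out correctly.
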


Henceforth, we always think of the co-adjoint action of $U(n)$ on $\uu(n)^*$ as the conjugate $U(n)$-action on $\mathcal{H}_n$.
Let $\lambda = \{\lambda_1, \cdots, \lambda_n\}$ be a non-increasing sequence of real numbers such that
	\begin{equation}\label{lambdaidef}
		\lambda_1 = \cdots = \lambda_{n_1} > \lambda_{n_1 + 1} = \cdots = \lambda_{n_2} > \cdots > \lambda_{n_r +1} =
		\cdots = \lambda_{n_{r+1}} (= \lambda_n)
	\end{equation}
and let $I_\lambda = \text{diag}(\lambda_1, \cdots, \lambda_n) \in \mathcal{H}_n$ be the diagonal matrix whose $i$-th diagonal entry is $\lambda_i$ for $i=1,\cdots,n$.
Then the isotropy subgroup of $I_\lambda$ is given by 
	\[
		\begin{pmatrix}
			U(k_1) & 0 & \cdots & 0 \\
			0	& U(k_2) & \cdots & 0 \\
			\vdots & \vdots & \ddots & \vdots \\
			0 & 0 & \cdots &  U(k_{r+1}) 
		\end{pmatrix} \subset U(n).
	\]
If we denote by $\mathcal{O}_\lambda$ the $U(n)$-orbit of $I_\lambda$, then we have 
\[
	\mcal{O}_\lambda \cong U(n) / \left(U(k_1) \times \cdots \times U(k_{r+1}) \right),
\] which is diffeomorphic to $\F(n_1, \cdots, n_r; n)$. We call $\mcal{O}_\lambda$ the \emph{co-adjoint orbit associated with eigenvalue pattern} $\lambda$.

\begin{remark}\label{remark_property_hermitian_matrix}
	Any two similar matrices have the same eigenvalues with the same multiplicities and any Hermitian matrix is unitarily diagonalizable.
	Thus the co-adjoint orbit $\mathcal{O}_{\lambda}$ is the set of all Hermitian matrices having the eigenvalue pattern $\lambda$ respecting the multiplicities.
\end{remark}

\vspace{0.2cm}
%------------------------------------------------------------------------------------
\subsection{Symplectic structure on a co-adjoint orbit $\mathcal{O}_{\lambda}$}~\label{ssecSymplecticStructureOnMathcalOLambda}

For any compact Lie group $G$ with the Lie algebra $\mathfrak{g}$ and for any dual element $\lambda \in \mathfrak{g}^*$,
there is a {\em canonical} $G$-invariant symplectic form $\omega_{\lambda}$, called  the \emph{Kirillov-Kostant-Souriau symplectic form} (or \emph{KKS form} shortly),
on the orbit $\mathcal{O}_{\lambda}$. 
Furthermore, $\mathcal{O}_\lambda$ admits a unique $G$-invariant K\"{a}hler metric compatible with $\omega_\lambda$, and therefore
$(\mathcal{O}_\lambda, \omega_\lambda)$ forms a K\"{a}hler manifold. We refer the reader to \cite[p.150]{Br} for more details.

The KKS form $\omega_\lambda$ can be described more explicitly 
in the case where $G = U(n)$ as below.
For each $h \in \mathcal{H}_n$, we define a real-valued skew-symmetric bilinear form $\widetilde{\omega}_h$ 
on $\uu(n) = i\mathcal{H}_n$ by
	\[
		\widetilde{\omega}_h(X,Y) := \text{tr}(ih[X,Y]) = \text{tr}(iY[X,h]), \quad X, Y \in \uu(n).
	\]
The kernel of $\widetilde{\omega}_h$ is then
	\[
		\text{ker} ~\widetilde{\omega}_h = \{ X \in \uu(n) ~|~ [X,h] = 0 \}.
	\]
Since $\mathcal{O}_\lambda$ is a homogeneous $U(n)$-space, we may express each tangent space $T_h \mathcal{O}_\lambda$ at a point $h \in \mcal{O}_\lambda$ as 
	\[
		T_h \mathcal{O}_\lambda = \{ [X,h] \in T_h \mathcal{H}_n = \mathcal{H}_n~|~ X \in \uu(n) \}.
	\]
Then we get a non-degenerate two form $\omega_\lambda$ on $\mathcal{O}_\lambda$ defined by
	\[
		\left(\omega_{\lambda}\right)_h([X,h], [Y,h]) := \widetilde{\omega}_h(X,Y), \quad h \in \mathcal{O}_\lambda, \quad X,Y \in \uu(n).
	\]
The closedness of $\omega_\lambda$ then follows from the Jacobi identity on $\uu(n)$, see \cite[p.52]{Au} for more details.

\begin{remark}
The diffeomorphism type of $\mathcal{O}_\lambda$ does not depend on
the choice of $\lambda$ but on $k_\bullet$'s (or $n_\bullet$'s). However, the symplectic form $\omega_\lambda$
depends on the choice of $\lambda$. Indeed, assuming $\lambda$ being integral, the choice of $\lambda$ determines a projective embedding of $\mcal{O}_\lambda$. 
For instance, two co-adjoint orbits $\mathcal{O}_{\lambda}$ and $\mathcal{O}_{\lambda'}$
have $k_1 = k_2 = 1$ when $\lambda = \{1,-1\}$ and $\lambda' = \{1,0\}$ and both orbits
are diffeomorphic to $U(2) / \left( U(1) \times U(1)\right) \cong \p^1$.
However, the symplectic area of $(\mathcal{O}_\lambda, \omega_\lambda)$ and $(\mathcal{O}_{\lambda'}, \omega_{\lambda'})$ are two
and one, respectively. 
\end{remark}

Any partial flag manifold is known to be a Fano manifold and hence it admits a monotone K\"{a}hler form.
The following proposition gives a complete description of the monotonicity of $\omega_\lambda$.
\begin{proposition}[p.653-654 in \cite{NNU1}]\label{proposition_monotone_lambda}
	The symplectic form $\omega_\lambda$ on $\mathcal{O}_\lambda$ satisfies
	\[
		c_1(T\mathcal{O}_\lambda) = [\omega_\lambda]
	\]
	if and only if
\[
  \lambda = (\underbrace{n-n_1, \cdots}_{k_1} ~,
  \underbrace{n-n_1-n_2, \cdots}_{k_2} ~, \cdots ~, \underbrace{n-n_{r-1}-n_r, \cdots}_{k_r} ~, \underbrace{-n_r, \cdots, -n_r}_{k_{r+1}} )
  +  (\underbrace{m, \cdots, m}_{n}),
  \label{canonical}
\]
for some $m \in \R$.
\end{proposition}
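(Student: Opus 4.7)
The plan is to compute both sides of $c_1(T\mathcal{O}_\lambda) = [\omega_\lambda]$ as elements of $H^2(\mathcal{O}_\lambda;\mathbb{R})$ in a common basis coming from tautological bundles, and then read off the resulting linear constraint on $\lambda$. Let $S_i \subset \underline{\mathbb{C}^n}$ denote the tautological subbundle of rank $n_i$ over $\mathcal{O}_\lambda \cong \mathcal{F}(n_1,\ldots,n_r;n)$ (with $S_0 = 0$ and $S_{r+1} = \underline{\mathbb{C}^n}$), and let $Q_i := S_i/S_{i-1}$, which is a $U(n)$-equivariant vector bundle of rank $k_i$. Setting $x_i := c_1(Q_i^*) \in H^2(\mathcal{O}_\lambda;\mathbb{Z})$, the splitting $\bigoplus_i Q_i \cong \underline{\mathbb{C}^n}$ yields the relation $\sum_{i=1}^{r+1} x_i = 0$, and this is the sole relation: $H^2(\mathcal{O}_\lambda;\mathbb{R})$ is freely spanned by any $r$ of the $x_i$'s.

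First I would compute $c_1(T\mathcal{O}_\lambda)$. The $U(n)$-equivariant splitting $T\mathcal{O}_\lambda \cong \bigoplus_{1 \le i < j \le r+1} \mathrm{Hom}(Q_i,Q_j)$ follows from the root-space decomposition of $\mathfrak{g}/\mathfrak{p}$ at the basepoint. Combined with the tensor formula $c_1(E^*\otimes F) = \mathrm{rk}(F)\,c_1(E^*) + \mathrm{rk}(E)\,c_1(F)$, this telescopes to
\[
	c_1(T\mathcal{O}_\lambda) \;=\; \sum_{i=1}^{r+1} (n - n_{i-1} - n_i)\, x_i.
\]

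Next I would compute $[\omega_\lambda]$. Let $\mu_i$ denote the common value of $\lambda$ on the $i$-th block, so $\lambda_{n_{i-1}+1} = \cdots = \lambda_{n_i} = \mu_i$. For integral $\lambda$, the Borel--Weil line bundle $L_\lambda := \bigotimes_i (\det Q_i^*)^{\mu_i}$ is a $U(n)$-equivariant prequantum line bundle for $(\mathcal{O}_\lambda,\omega_\lambda)$, that is $c_1(L_\lambda) = [\omega_\lambda]$; this is checked by a direct curvature computation, using the canonical $U(n)$-invariant Hermitian connection on $L_\lambda$ together with the defining formula $\widetilde{\omega}_h(X,Y) = \mathrm{tr}(ih[X,Y])$ at the basepoint $I_\lambda$. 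Expanding in the basis $\{x_i\}$ gives $[\omega_\lambda] = \sum_{i=1}^{r+1} \mu_i\,x_i$, and the non-integral case follows by $\mathbb{R}$-linearity of both sides in $\lambda$.

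Combining the two, $[\omega_\lambda] = c_1(T\mathcal{O}_\lambda)$ holds iff the coefficient vectors $(\mu_i)$ and $(n - n_{i-1} - n_i)$ differ by a scalar multiple of $(1,\ldots,1)$, which is the only relation among the $x_i$'s. Equivalently, $\mu_i = (n - n_{i-1} - n_i) + m$ for some $m \in \mathbb{R}$, which directly unwinds to the displayed formula for $\lambda$. The main obstacle will be the second step: carefully matching the sign and normalization convention of the KKS form with the Chern--Weil representative of $c_1(L_\lambda)$, including possible factors of $2\pi$ that should be absorbed into the convention used throughout the paper.
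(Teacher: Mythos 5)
The paper itself does not prove Proposition~\ref{proposition_monotone_lambda}; it is quoted directly from Nishinou--Nohara--Ueda (\cite{NNU1}, pp.~653--654), so there is no in-paper argument to compare against. Evaluated on its own terms, your proof is correct and follows the expected line of reasoning. The identification $H^2(\mathcal{O}_\lambda;\mathbb{R}) \cong \bigoplus_i \mathbb{R}\,x_i / (\textstyle\sum_i x_i)$, the equivariant splitting $T\mathcal{O}_\lambda \cong \bigoplus_{i<j}\mathrm{Hom}(Q_i,Q_j)$, and the telescoping to $c_1(T\mathcal{O}_\lambda) = \sum_i (n - n_{i-1} - n_i)\,x_i$ are all standard and your computation of the coefficients checks out. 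The final step, solving $\mu_i = (n-n_{i-1}-n_i) + m$ and unwinding to the displayed formula for $\lambda$, is immediate and agrees with the statement, including the endpoint block $-n_r + m$ since $n - n_r - n_{r+1} = -n_r$.

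The one place you flag uncertainty, the prequantization identity $c_1(L_\lambda) = [\omega_\lambda]$ with $L_\lambda = \bigotimes_i(\det Q_i^*)^{\mu_i}$, is indeed the step that requires care, but it is a routine curvature computation and, more to the point, the paper's own normalization fixes it unambiguously. Since both $\lambda \mapsto [\omega_\lambda]$ and (on the integral lattice) $\lambda \mapsto c_1(L_\lambda)$ are linear, it suffices to check them on a spanning set, and the paper already records the check on $\mathbb{P}^1$: for $\lambda = (1,-1)$ the KKS area is $2$, while $L_\lambda = (\det Q_1^*) \otimes (\det Q_2^*)^{-1} = \mathcal{O}(1) \otimes \mathcal{O}(1) = \mathcal{O}(2)$, whose first Chern number is also $2$. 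This pins down the $2\pi$ convention and confirms $[\omega_\lambda] = \sum_i \mu_i x_i$ without any residual ambiguity. So the gap you identify is real but closes cleanly; the overall argument is sound.
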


\vspace{0.2cm}
%------------------------------------------------------------------------------------
\subsection{Completely integrable system on $\mathcal{O}_\lambda$}~
\label{ssecCompletelyIntegrableSystemOnMathcalOLambda}

We adorn a co-adjoint orbit $(\mcal{O}_\lambda, \omega_\lambda)$ with a completely integrable system, called the Gelfand-Cetlin system. We recall a standard definition of a completely integrable system.

\begin{definition}\label{definition_CIS_smooth}
A {\em completely integrable system} on a $2n$-dimensional
symplectic manifold $(M,\omega)$ is an $n$-tuple of smooth functions
	\[
		\Phi := (\Phi_1, \cdots, \Phi_n) \colon M \rightarrow \R^n
	\]
such that
\begin{enumerate}
	\item $\{\Phi_i, \Phi_j\} = 0$ for every $1 \leq i, j \leq n$ and
	\item $d\Phi_1, \cdots, d\Phi_n$ are linearly independent on an open dense subset of $M$.
\end{enumerate}
\end{definition}

If $\Phi$ is a proper map, the Arnold-Liouville theorem states that for any regular value $\textbf{\textup{u}} \in \R^n$ of $\Phi$ the connected component of the 
preimage $\Phi^{-1}(\textbf{\textup{u}})$ is a Lagrangian torus. However, if $\textbf{\textup{u}}$ is a critical value, the fiber might not be a manifold in general.

Harada and Kaveh \cite{HK} proved that any smooth projective variety equips a completely integrable system whenever it admits a flat toric degeneration. 
(See Section \ref{secDegenerationsOfFibersToTori} for more details.)
However, the terminology ``completely integrable system'' used in \cite{HK} is a weakened version of Definition \ref{definition_CIS_smooth} in the following sense. 

\begin{definition}\label{definition_CIS_continuous}
	A {\em (continuous) completely integrable system} on a $2n$-dimensional symplectic manifold $(M,\omega)$ is a collection of $n$ \emph{continuous} functions
		\[
			\Phi := \{\Phi_1, \cdots, \Phi_n\} \colon M \rightarrow \R^n
		\]
	such that there exists an open dense subset $\mathcal{U}$ of $M$ on which
	\begin{enumerate}
		\item each $\Phi_i$ is smooth,
		\item $\{\Phi_i, \Phi_j\} = 0$ for every $1 \leq i, j \leq n$, and
		\item $ d\Phi_1, \cdots, d\Phi_n $ are linearly independent.
	\end{enumerate}
\end{definition}

For any co-adjoint orbit $(\mathcal{O}_\lambda, \omega_\lambda)$, Guillemin and Sternberg \cite{GS2} constructed a completely integrable system (in the sense of Definition \ref{definition_CIS_continuous})
	\[
		\Phi_\lambda : \mathcal{O}_\lambda \rightarrow \R^{\dim_{\C} \mathcal{O}_\lambda},
	\]
called {\em the Gelfand-Cetlin system on $\mathcal{O}_\lambda$} (\emph{GC system} for short) with respect to the KKS symplectic form $\omega_\lambda$. 
The GC system on $\mathcal{O}_\lambda$ is in general continuous but not smooth. From now on, a completely integrable system will be meant to be a conitnuous completely integrable system in Definition~\ref{definition_CIS_continuous} unless stated otherwise.

We briefly recall the construction of the GC system on $\mathcal{O}_\lambda$ as follows. (See also \cite[p.7-9]{NNU1}.)
Let $n_\bullet$'s and $k_\bullet$'s be given in~\eqref{nidef} and~\eqref{kidef}, respectively, and 
let $\lambda$ be a non-increasing sequence satisfying \eqref{lambdaidef}.
From~\eqref{dimofflagma}, it follows that
 	\[
 		\dim_{\R} \mathcal{O}_\lambda = 2\dim_{\C} \mathcal{O}_\lambda = n^2 - \sum_{i=1}^{r+1} k_i^2.
 	\]
For any $x \in \mathcal{O}_\lambda \subset \mathcal{H}_n$, let $x^{(k)}$ be the $k \times k$ leading principal submatrix of $x$ for each $k=1,\cdots,n-1$.
Since $x^{(k)}$ is also a Hermitian matrix, the eigenvalues are all real. 
Let
\begin{equation}\label{equation_index_global}
	\mcal{I} = \{(i,j) ~|~ i,j \in \N, ~i+j \leq n + 1\}
\end{equation}
be an index set (where $|\mcal{I}| = \frac{n(n+1)}{2}$). We then define the real-valued function
	\[
		\Phi_\lambda^{i,j}\colon \mcal{O}_\lambda \to \R, \quad (i,j) \in \mcal{I}
	\]
where $\Phi_\lambda^{i,j}(x)$ is defined to be the $i$-th largest eigenvalue of $x^{(i+j-1)}$. In particular, the eigenvalues of $x^{(k)}$ are arranged by
\[
	\Phi_\lambda^{1,k}(x) \geq \Phi_\lambda^{2,k-1}(x) \geq \cdots \geq \Phi_\lambda^{k,1}(x)
\]
in descending order. Collecting all $\Phi_\lambda^{i,j}$'s for $(i,j) \in \mcal{I}$, 
we obtain $\Phi_\lambda$.
	
\begin{definition}\label{definition_GC_system}
Let $\lambda$ be given in \eqref{lambdaidef}. 
The {\em Gelfand-Cetlin system $\Phi_\lambda$ associated with $\lambda$} is defined by the collection of real-valued functions
\begin{equation}\label{philambdaij}
	\Phi_\lambda := \left( \Phi_\lambda^{i,j} \right)_{(i,j) \in \mcal{I}} \colon \mathcal{O}_\lambda \rightarrow \R^{\frac{n(n+1)}{2}}.
\end{equation}
\end{definition}

\begin{remark}
We label each component of $\Phi_\lambda$ by multi-index $(i,j) \in \mcal{I}$ such that $\Phi_\lambda^{i,j}$ corresponds to the lattice point $(i,j) \in \Z^2$ 
in a ladder diagram in Definition \ref{definition_ladder_diagram}. (See also Figure \ref{figure_GC_to_ladder}.)
Notice that the labeling of components of $\Phi_\lambda$ used in \cite{NNU1} is different from ours. 
\end{remark}

Now, we consider the coordinate system of $\R^{n(n+1)/2}$ 
\begin{equation}\label{equation_coordinate}
\left\{ \left( u_{i,j} \right) \in \R^{n(n+1)/2} ~|~ (i,j) \in \mcal{I} \right\}. 
\end{equation}
For $x \in \mcal{O}_\lambda$, if $(u_{i,j})_{(i,j) \in \mcal{I}} = \Phi_\lambda(x)$, that is, $u_{i,j} = \Phi_\lambda^{i,j}(x)$ for each $(i,j) \in \mcal{I}$, 
the min-max principle implies that the components $u_{i,j}$'s satisfy the following pattern : 

\vspace{0.2cm}
\begin{equation} 
\begin{alignedat}{17}
  \lambda_1 &&&& \lambda_2 &&&& \lambda_3 && \cdots && \lambda_{n-1} &&&& \lambda_n  \\
  & \uge && \dge && \uge && \dge &&&&&& \uge && \dge & \\
  && u_{1,n-1} &&&& u_{2, n-2} &&&&&&&& u_{n-1, 1} && \\
  &&& \uge && \dge &&&&&&&& \dge &&& \\
  &&&& u_{1, n-2} &&&&&&&& u_{n-2, 1} &&&& \\
  &&&&& \uge &&&&&& \dge &&&&& \\
  &&&&&& \dndots &&&& \updots &&&&&& \\
  &&&&&&& \uge && \dge &&&&&&& \\
  &&&&&&&& u_{1,1} &&&&&&&&&
\end{alignedat}
\label{equation_GC-pattern}
\vspace{0.2cm}
\end{equation}

Note that by definition, $\Phi_\lambda^{i, n+1-i} \equiv \lambda_i$ is a constant function for each $i=1,\cdots, n$.
More generally, since $\lambda_{n_{i-1} + 1} = \cdots =  \lmd {n_i}$ by our assumption~\eqref{lambdaidef}, 
the min-max principle \eqref{equation_GC-pattern} implies that 
\[
\Phi_\lambda^{j, n + 1 -k}\equiv \lambda_{n_i}
\]
is a constant function on $\mcal{O}_\lambda$ for all $j = n_{i-1}+1, \cdots, n_i$ and each $k = j, \cdots, n_i $. 
Therefore, there are exactly $\frac{1}{2}(n^2 - \sum_{i=1}^{r+1} k_i^2)$ non-constant
functions among $\{\Phi^{i,j}_\lambda\}_{(i,j) \in \mcal{I}}$ on $\mathcal{O}_\lambda$.
Let
\begin{equation}\label{collectionofindicesset}
	\mcal{I}_\lambda := \{ (i,j) \in \mcal{I} ~|~ \Phi^{i,j}_\lambda \textup{ is \emph{not} a constant function on $\mcal{O}_\lambda$}\}. 
\end{equation}
Collecting all non-constant components of $\Phi_\lambda$, we rename 
\begin{equation}\label{Gelfand-Cetlinsystemdef}
\Phi_\lambda = \left( \Phi^{i,j}_\lambda \right)_{(i,j) \in \mcal{I}_\lambda} \colon \mathcal{O}_\lambda \to \R^{| \mcal{I}_\lambda |}, \quad 
| \mcal{I}_\lambda | = \dim_{\C} \mathcal{O}_\lambda = \frac{1}{2}(n^2 - \sum_{i=1}^{r+1} k_i^2)
\end{equation}
as the {\em Gelfand-Cetlin system}.
By abuse of notation, the collection is still denoted by $\Phi_\lambda$.
Guillemin and Sternberg \cite{GS2} prove that $\Phi_\lambda$ satisfies all properties given in Definition \ref{definition_CIS_continuous},
and hence it is a completely integrable system on $\mathcal{O}_\lambda$ \footnote{In general, the Gelfand-Cetlin system is never smooth on the whole space $\mathcal{O}_\lambda$ unless $\mathcal{O}_\lambda$ is a projective space.}.
We will not distinguish two notations ~\eqref{philambdaij} and~\eqref{Gelfand-Cetlinsystemdef} unless any confusion arises.

\begin{definition}
	The {\em Gelfand-Cetlin polytope}\footnote{It is straightforward to see that $\Delta_\lambda$ is a convex polytope, since $\Delta_\lambda$ is the intersection
	of half-spaces
	defined by inequalities in (\ref{equation_GC-pattern}).} $\Delta_\lambda$ (or {\em GC polytope} for short) is the collection of points $(u_{i,j})$ satisfying (\ref{equation_GC-pattern}). 
\end{definition}

\begin{proposition}[\cite{GS2}]
	The Gelfand-Cetlin polytope $\Delta_\lambda$ coincides with the image of $\mcal{O}_\lambda$ under $\Phi_\lambda$. 
\end{proposition}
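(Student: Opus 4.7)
The plan is to prove the two inclusions $\Phi_\lambda(\mathcal{O}_\lambda) \subseteq \Delta_\lambda$ and $\Delta_\lambda \subseteq \Phi_\lambda(\mathcal{O}_\lambda)$ separately, each relying on the Cauchy interlacing theorem for Hermitian matrices (one direction of which is already encoded in the min–max principle \eqref{equation_GC-pattern}).

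For the inclusion $\Phi_\lambda(\mathcal{O}_\lambda) \subseteq \Delta_\lambda$, I would take any $x \in \mathcal{O}_\lambda$, regard it as a Hermitian matrix with eigenvalues $\lambda_1 \geq \cdots \geq \lambda_n$ (Remark~\ref{remark_property_hermitian_matrix}), and for each $k = 1, \dots, n-1$ apply the Cauchy interlacing theorem to the pair $\bigl(x^{(k)}, x^{(k+1)}\bigr)$. Writing $\mu_1^{(k)} \geq \cdots \geq \mu_k^{(k)}$ for the eigenvalues of $x^{(k)}$, the interlacing relation $\mu_i^{(k+1)} \geq \mu_i^{(k)} \geq \mu_{i+1}^{(k+1)}$ is exactly the relation between $\Phi_\lambda^{i, k-i+1}(x)$ and $\Phi_\lambda^{i, k-i+2}(x), \Phi_\lambda^{i+1, k-i+1}(x)$ specified by the top rows of \eqref{equation_GC-pattern}. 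Iterating from $k = n-1$ down to $k = 1$, and using that the eigenvalues of $x^{(n)} = x$ are the $\lambda_i$, yields all the inequalities defining $\Delta_\lambda$, so $\Phi_\lambda(x) \in \Delta_\lambda$.

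For the reverse inclusion $\Delta_\lambda \subseteq \Phi_\lambda(\mathcal{O}_\lambda)$, I would argue by induction on the size $k$ of the principal submatrix, using the converse direction of Cauchy interlacing: given a Hermitian matrix $B$ of size $k$ with eigenvalues $\mu_1 \geq \cdots \geq \mu_k$, and real numbers $\nu_1 \geq \cdots \geq \nu_{k+1}$ satisfying $\nu_i \geq \mu_i \geq \nu_{i+1}$, there exists a $(k+1) \times (k+1)$ Hermitian matrix $A$ with eigenvalues $\nu_1, \dots, \nu_{k+1}$ and with $B$ as its top-left leading principal block. To produce such an $A$, conjugate so that $B$ is diagonal and seek $A$ of arrow form
\[
A = \begin{pmatrix} \operatorname{diag}(\mu_1, \dots, \mu_k) & v \\ v^{*} & a \end{pmatrix},
\]
then expand $\det(tI - A)$ along the last row and match coefficients against $\prod_i (t-\nu_i)$. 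This gives explicit formulas for $a$ and for $|v_i|^2$, and the interlacing inequalities are exactly what force $|v_i|^2 \geq 0$. Concretely, starting from a point $(u_{i,j}) \in \Delta_\lambda$, set $x^{(1)} = [u_{1,1}]$ and inductively build $x^{(k+1)}$ from $x^{(k)}$ by applying this construction with $\mu_i = u_{i, k-i+1}$ and $\nu_i = u_{i, k-i+2}$; the resulting $x = x^{(n)}$ is Hermitian with spectrum $\{\lambda_i\}$, hence lies in $\mathcal{O}_\lambda$ by Remark~\ref{remark_property_hermitian_matrix}, and by construction $\Phi_\lambda(x) = (u_{i,j})$.

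The main obstacle is the converse direction of the Cauchy interlacing theorem used in step two; although classical, it is the only substantive ingredient beyond the min–max principle. Everything else is bookkeeping of indices in the triangular pattern \eqref{equation_GC-pattern}, together with the observation that the repeated eigenvalues dictated by \eqref{lambdaidef} force the boundary entries $\Phi_\lambda^{j, n+1-k}$ to be constants on $\mathcal{O}_\lambda$, which is compatible with the corresponding equalities encoded into $\Delta_\lambda$.
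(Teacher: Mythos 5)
Your proof is correct and follows the standard argument (the one underlying the cited result of Guillemin--Sternberg, which the paper does not reprove): the forward inclusion is Cauchy interlacing, i.e.\ the min--max principle already encoded in \eqref{equation_GC-pattern}, and the reverse inclusion is the converse interlacing statement realized by an arrow matrix. The ``main obstacle'' you identify is exactly Lemma~\ref{lemma_NNU} (Lemma 3.5 of \cite{NNU1}) quoted later in the paper, and your arrow-matrix construction is the same as the paper's $Z_{(\fa,\fb)}(z)$ and equation \eqref{equation_det} in Section~\ref{section4}, so no new ingredient is needed.
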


\vspace{0.2cm}
%------------------------------------------------------------------------------------
\subsection{Smoothness of $\Phi_\lambda$}~
\label{ssecSmoothnessOfPhiLambda}

Let $\lambda$ be given in \eqref{lambdaidef} and let $\Phi_\lambda$ be the GC system 
on $(\mcal{O}_\lambda, \omega_\lambda)$. In general, $\Phi_\lambda$ is \emph{not} smooth on the whole 
$\mcal{O}_\lambda$. However, the following proposition due to Guillemin-Sternberg 
states that $\Phi_\lambda$ is smooth on $\mcal{O}_\lambda$ {\em almost everywhere}.
\begin{proposition}[Proposition 5.3, p.113, and p.122 in \cite{GS2}]\label{proposition_GS_smooth}
	For each $(i,j) \in \mcal{I}_\lambda$, the component $\Phi_\lambda^{i,j}$ is smooth at 
	$z \in \mcal{O}_\lambda$ if 
	\begin{equation}\label{equation_smooth_region}
		\Phi_\lambda^{i+1, j}(z) < \Phi_\lambda^{i, j}(z) < \Phi_\lambda^{i, j+1}(z). 
	\end{equation}
	In particular, $\Phi_\lambda$ is smooth on the open dense subset 
	$\Phi_\lambda^{-1}(\mathring{\Delta}_\lambda)$ of $\mathcal{O}_\lambda$ where $\mathring{\Delta}_\lambda$ is the interior of $\Delta_\lambda$.
	Furthermore, $d\Phi_\lambda^{i,j}(z) \neq 0$ for every point $z$ satisfying \eqref{equation_smooth_region}.  
\end{proposition}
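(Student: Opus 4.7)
The plan is to view each component $\Phi_\lambda^{i,j}$ as the $i$-th eigenvalue of the $k\times k$ Hermitian leading principal submatrix $z^{(k)}$ with $k=i+j-1$, and to deduce all three assertions from elementary perturbation theory of simple eigenvalues together with the Cauchy interlacing inequalities linking the spectra of $z^{(k-1)}\subset z^{(k)}\subset z^{(k+1)}$.

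For the smoothness claims, I rely on the classical fact that a simple eigenvalue of a Hermitian matrix depends analytically on the entries. So the only thing to verify is that the hypothesis $\Phi_\lambda^{i+1,j}(z)<\Phi_\lambda^{i,j}(z)<\Phi_\lambda^{i,j+1}(z)$ forces $\Phi_\lambda^{i,j}(z)$ to be a simple eigenvalue of $z^{(k)}$. The neighbouring eigenvalues of $z^{(k)}$ are $\Phi_\lambda^{i-1,j+1}(z)$ and $\Phi_\lambda^{i+1,j-1}(z)$, and Cauchy interlacing applied to $z^{(k)}\subset z^{(k+1)}$ gives
\[
\Phi_\lambda^{i-1,j+1}(z)\geq \Phi_\lambda^{i,j+1}(z), \qquad \Phi_\lambda^{i+1,j-1}(z)\leq \Phi_\lambda^{i+1,j}(z).
\]
Combining these with the strict hypothesis yields $\Phi_\lambda^{i-1,j+1}(z)>\Phi_\lambda^{i,j}(z)>\Phi_\lambda^{i+1,j-1}(z)$, so the eigenvalue is simple and the component is smooth at $z$. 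Global smoothness on $\Phi_\lambda^{-1}(\mathring{\Delta}_\lambda)$ follows because every inequality in the Gelfand--Cetlin pattern~\eqref{equation_GC-pattern} is strict on the interior.

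For the non-vanishing of $d\Phi_\lambda^{i,j}(z)$, my plan is to use the Hellmann--Feynman formula: for a simple eigenvalue $\mu=\Phi_\lambda^{i,j}(z)$ with unit eigenvector $v\in\C^k$, and for $\delta z=[X,z]\in T_z\mcal{O}_\lambda$ with $X\in\uu(n)$, one has $d\Phi_\lambda^{i,j}(z)(\delta z)=v^*(\delta z)^{(k)}v$. The natural choice of $X$ is one supported on the $(k+1)$-st row and column, i.e.\ $X_{a,k+1}=\gamma_a$ and $X_{k+1,a}=-\overline{\gamma_a}$ for $a=1,\ldots,k$ and all remaining entries zero. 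A direct block calculation gives $[X,z]^{(k)}=\gamma w^*+w\gamma^*$, where $w\in\C^k$ collects the first $k$ entries of the $(k+1)$-st column of $z$, so
\[
v^*[X,z]^{(k)}v = 2\operatorname{Re}\bigl((v^*\gamma)(w^*v)\bigr).
\]
Taking $\gamma=w$ returns $2|w^*v|^2$, hence everything reduces to showing $w^*v\neq 0$. If $w^*v$ vanished then, writing
\[
z^{(k+1)} = \begin{pmatrix} z^{(k)} & w \\ w^* & z_{k+1,k+1} \end{pmatrix},
\]
the vector $(v,0)^T$ would be an eigenvector of $z^{(k+1)}$ with eigenvalue $\mu$. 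But Cauchy interlacing together with the strict hypothesis gives $\Phi_\lambda^{i+1,j}(z)<\mu<\Phi_\lambda^{i,j+1}(z)$ and separates $\mu$ from every other eigenvalue of $z^{(k+1)}$, a contradiction.

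The only real obstacle is the last step. Smoothness and the local formula for the differential are entirely classical, but producing a tangent direction on which $d\Phi_\lambda^{i,j}$ is nonzero requires an explicit infinitesimal construction. The rank-two perturbation above is dictated by the fact that the enlargement $z^{(k)}\hookrightarrow z^{(k+1)}$ encodes its new data precisely in the $(k+1)$-st row and column, and the non-degeneracy of the pairing $w^*v$ is equivalent to $v$ not lifting to an eigenvector of $z^{(k+1)}$—which is exactly what the strict Gelfand--Cetlin inequalities prevent.
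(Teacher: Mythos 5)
Your argument is correct. The paper itself gives no proof of this proposition --- it is quoted from Guillemin--Sternberg \cite{GS2} --- so there is nothing internal to compare against, but your write-up is a complete, self-contained proof along the standard lines: the index bookkeeping is right ($\Phi_\lambda^{i,j+1}$ and $\Phi_\lambda^{i+1,j}$ are the $i$-th and $(i+1)$-th eigenvalues of $z^{(k+1)}$, while $\Phi_\lambda^{i-1,j+1}$ and $\Phi_\lambda^{i+1,j-1}$ are the neighbours within the spectrum of $z^{(k)}$), so the strict hypothesis combined with interlacing does force simplicity, and analytic dependence of simple eigenvalues gives smoothness. The nonvanishing of the differential via the rank-two perturbation $[X,z]^{(k)}=\gamma w^*+w\gamma^*$ and the observation that $w^*v=0$ would make $(v,0)^T$ an eigenvector of $z^{(k+1)}$ with an eigenvalue strictly separated from $\mathrm{sp}(z^{(k+1)})$ is exactly the right mechanism. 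The only things worth adding for completeness are the degenerate boundary cases $i=1$ and $j=1$ (where one of the two neighbouring eigenvalues of $z^{(k)}$ is absent and simplicity follows from a single interlacing inequality), and a one-line remark that $(i,j)\in\mcal{I}_\lambda$ forces $k+1\le n$ so that $z^{(k+1)}$ and the vector $w$ exist; both are immediate.
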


One important remark is that a Hamiltonian trajectory of each $\Phi_\lambda^{i,j}$ passing through a point 
$z \in \mcal{O}_\lambda$ satisfying \eqref{equation_smooth_region} is periodic with integer period. 
Therefore, each $\Phi_\lambda^{i,j}$ generates a Hamiltonian circle action on the subset of $\mathcal{O}_\lambda$ 
on which $\Phi_\lambda^{i,j}$ is smooth.
See \cite[Theorem 3.4 and Section 5]{GS2} for more details.

\vspace{0.2cm}
%------------------------------------------------------------------------------------
\section{Ladder diagram and its face structure}
\label{secLadderDiagramAndItsFaceStructure}

In order to visualize a GC polytope, it is convenient to use an alternative description of its face structure 
in terms of certain graphs in the ladder diagram provided by the first named author with An and Kim in \cite{ACK}. 
The goal of this section is to review their description of the face structure.

We begin by the definition of a ladder diagram.
Let $\lambda = \{\lambda_1, \cdots, \lambda_n\}$ be given in \eqref{lambdaidef}. 
Then $\lambda$ uniquely determines $n_\bullet$'s and $k_\bullet$'s in~\eqref{nidef} and~\eqref{kidef}, respectively. 

\begin{definition}[\cite{BCKV}, \cite{NNU1}]\label{definition_ladder_diagram}
Let $\Gamma_{\Z^2} \subset \R^2$ be the square grid graph satisfying
\begin{enumerate}
\item its vertex set is $\Z^2 \subset \R^2$ and
\item each vertex $(a,b) \in \Z^2$ connects to exactly four vertices $(a, b \pm 1)$ and $(a \pm 1, b)$.
\end{enumerate}
The {\em ladder diagram} $\Gamma (n_1, \cdots, n_r; n)$ is defined as the induced subgraph of $\Gamma_{\Z^2}$ 
that is formed from the set $V_{\Gamma (n_1, \cdots, n_r; n)}$ of vertices given by
\[
	V_{\Gamma (n_1, \cdots, n_r; n)} := \bigcup_{j=0}^r \,\, \left\{ (a,b) \in \Z^2 ~\big{|}~ \, (a,b) \in [n_j, n_{j+1}] \times [0,n-n_{j+1}] \right\}.
\]
As $\lambda$ determines $n_\bullet$'s, we simply denote $\Gamma(n_1, \cdots, n_r; n)$ by $\Gamma_\lambda$. We call $\Gamma_\lambda$ the \emph{ladder diagram associated with} $\lambda$.
\end{definition}

\begin{figure}[H]
	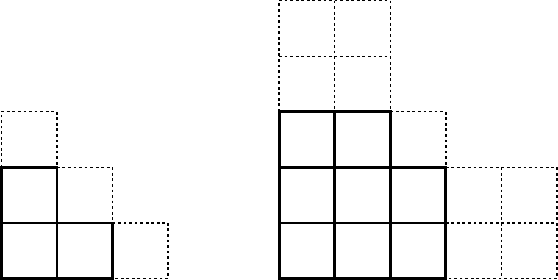
	\bigskip
	\caption{\label{figure_ld123} Ladder diagrams $\Gamma(1,2;3)$ and $\Gamma(2,3;5)$.}	
\end{figure}
\vspace{-0.2cm}

We call the vertex of $\Gamma_\lambda$ located at $(0,0)$ {\em the origin}.
Also, we call $v \in V_\Gamma$ a {\em top vertex} if $v$ is a farthest vertex from the origin 
with respect to the taxicab metric.
Equivalently, a vertex $v = (a,b) \in V_\Gamma$ is a top vertex if $a+b = n$. 

\vspace{0.2cm}
\begin{figure}[H]
	\scalebox{0.9}{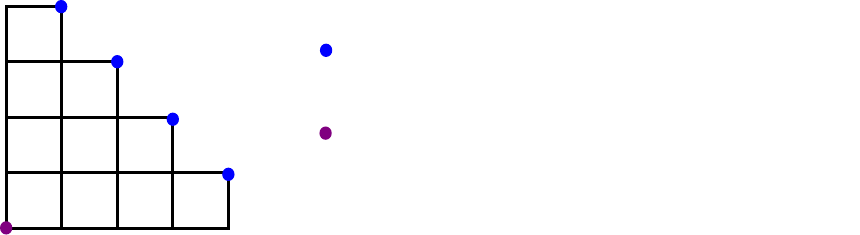}
	\vspace{0.2cm}
	\caption{\label{figure_top_bottom} Top vertices for $\Gamma(1,2,3,4;5)$ and $\Gamma(2,4;6)$.}	
\end{figure}
\vspace{-0.3cm}		

\begin{definition}[Definition 2.2 in \cite{BCKV}]
	A {\em positive path} on a ladder diagram $\Gamma_\lambda$ is a shortest path from the origin to some top vertex in $\Gamma_\lambda$.
\end{definition}

Now, we define the face structure of $\Gamma_\lambda$ as follows.

\begin{definition}[Definition 1.5 in \cite{ACK}]\label{definition_face}
	Let $\Gamma_\lambda$ be a ladder diagram.
	\begin{itemize}
		\item A subgraph $\gamma$ of $\Gamma_\lambda$ is called a {\em face} of $\Gamma_\lambda$ if
			\begin{enumerate}
				\item $\gamma$ contains all top vertices of $\Gamma_\lambda$, 
				\item $\gamma$ can be represented by a union of positive paths.
			\end{enumerate}
		\item For two faces $\gamma$ and $\gamma'$ of $\Gamma_\lambda$, $\gamma$ is said to be a {\em face} of $\gamma'$ if $\gamma \subset \gamma'$.
		\item The \emph{dimension} of a face $\gamma$ is defined by
		\[
			\dim \gamma := \text{rank}_\Z~H_1(\gamma; \Z),
		\]
		regarding $\gamma$ as a $1$-dimensional CW-complex. In other words, $\dim \gamma$ is the number of minimal cycles in $\gamma$.
	\end{itemize}
\end{definition}

It is straightforward from Definition~\ref{definition_face} that for any two faces $\gamma$ and
$\gamma'$ of $\Gamma_\lambda$, $\gamma \cup \gamma'$ is also a face of $\Gamma_\lambda$, which
is the smallest face containing $\gamma$ and $\gamma'$.
The following theorem tells us that the face structures of $\Delta_\lambda$ and $\Gamma_\lambda$ are equivalent. 
\begin{theorem}[Theorem 1.9 in \cite{ACK}]\label{theorem_equiv_CG_LD}
For a sequence $\lambda$ of real numbers given in~\eqref{lambdaidef},
let $\Gamma_\lambda$ be the ladder diagram and $\Delta_\lambda$ be the associated Gelfand-Cetlin polytope. 
Then there exists a bijective map
	\[
			\{~\text{faces of}~\Gamma_\lambda \} \stackrel{\Psi} \longrightarrow \{~\text{faces of} ~\Delta_\lambda\}
	\]
such that for faces $\gamma$ and $\gamma'$ of $\Gamma_\lambda$
\begin{itemize}
	\item (Order-preserving) $\gamma \subset \gamma'$ if and only if $\Psi(\gamma) \subset \Psi(\gamma')$,
	\item (Dimension) $\dim \Psi(\gamma) = \dim \gamma$.
\end{itemize}
\end{theorem}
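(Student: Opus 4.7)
The plan is to construct the bijection $\Psi$ directly by matching each defining inequality of $\Delta_\lambda$ to an edge of $\Gamma_\lambda$. Identify each lattice vertex of $\Gamma_\lambda$ with a corresponding component of $\Phi_\lambda$ (a variable $u_{i,j}$ for interior vertices and a constant eigenvalue $\lambda_{n_i}$ for top vertices), so that each edge of $\Gamma_\lambda$ between adjacent lattice points corresponds to exactly one inequality of the form $u_v \ge u_w$ appearing in the GC pattern \eqref{equation_GC-pattern}. For a face $\gamma \subseteq \Gamma_\lambda$, define $\Psi(\gamma) \subseteq \Delta_\lambda$ as the subset obtained by enforcing equality in every inequality whose edge lies in $\Gamma_\lambda \setminus \gamma$. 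Order-preservation in both directions follows transparently from the construction, since $\gamma \subseteq \gamma'$ is equivalent to imposing fewer equalities on $\Delta_\lambda$, and injectivity follows because the set of imposed equalities recovers $\Gamma_\lambda \setminus \gamma$ uniquely.

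Next I would verify non-emptiness and the dimension formula. For non-emptiness, use that $\gamma$ contains every top vertex and is a union of positive paths: assign values to vertices of $\gamma$ by interpolating strictly and monotonically along each positive path from the origin up to the fixed eigenvalue at its terminal top vertex, then propagate these values to vertices of $\Gamma_\lambda \setminus \gamma$ via the imposed equalities. Consistency of this assignment uses that positive paths terminating at top vertices in the same eigenvalue group are forced to agree at the endpoint. For the dimension, consider the equivalence relation on vertices of $\Gamma_\lambda$ generated by the edges of $\Gamma_\lambda \setminus \gamma$; each class determines a single coordinate value, pinned to a specific $\lambda_{n_i}$ if it contains a top vertex. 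Thus $\dim \Psi(\gamma)$ equals the number of equivalence classes not containing a top vertex, and the core claim becomes that this count agrees with $\mathrm{rank}_\Z H_1(\gamma; \Z)$.

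I would prove the dimension identity by an Euler-characteristic bookkeeping that exploits the planarity of $\Gamma_\lambda$: each elementary square of $\Gamma_\lambda$ contributes either a cycle to $\gamma$ (when all four of its edges lie in $\gamma$) or an additional identification inside $\Gamma_\lambda \setminus \gamma$, and summing over squares relates $b_1(\gamma)$ to the number of free equivalence classes. The main obstacle I anticipate lies in surjectivity: given a face $F$ of $\Delta_\lambda$, form the subgraph $\gamma_F \subseteq \Gamma_\lambda$ consisting of edges whose corresponding inequality remains strict on the relative interior of $F$, and one must show that $\gamma_F$ contains every top vertex and is expressible as a union of positive paths. The nontrivial part is ruling out ``obstructed'' configurations: I will argue that if some vertex of $\Gamma_\lambda$ failed to lie on a positive path of strict inequalities to a top vertex within $\gamma_F$, then the chain of equalities forced by $\Gamma_\lambda \setminus \gamma_F$ would compress two distinct top eigenvalues onto the same value, contradicting $\lambda_{n_i} > \lambda_{n_{i+1}}$. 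This step is where the specific staircase shape of $\Gamma_\lambda$ and the min-max ordering underlying the GC pattern enter decisively.
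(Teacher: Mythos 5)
The paper does not prove this statement; it is cited verbatim as Theorem~1.9 of \cite{ACK}, and the text merely recalls the definition of $\Psi$. So there is no in-paper proof to compare against, and your proposal has to stand on its own.

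The central problem with your outline is the variable--vertex identification. The GC coordinates $u_{i,j}$ are attached to the unit \emph{boxes} $\square^{(i,j)}$ of the diagram, not to its lattice vertices, and an edge of $\Gamma_\lambda$ is a \emph{wall} separating two adjacent boxes: the inequality it controls relates those two box-variables, not variables sitting at the edge's endpoints. Concretely, the vertical edge from $(i,j-1)$ to $(i,j)$ controls $u_{i,j}\ge u_{i+1,j}$, and the horizontal edge from $(i-1,j)$ to $(i,j)$ controls $u_{i,j+1}\ge u_{i,j}$; neither has the form ``$u_v\ge u_w$ with $v,w$ the endpoints.'' Moreover, the number of lattice vertices of $\Gamma_\lambda$ does not match the number of GC coordinates (for $\Gamma(1,2;3)$ there are nine vertices but only three non-constant coordinates plus three constants), so the identification you posit cannot even be set up. This is not just awkward notation: it is the data on which every subsequent step (definition of $\Psi$, the consistency check for non-emptiness, the Euler-characteristic count of free equivalence classes, and the ``which classes are pinned'' bookkeeping) is built, so it has to be corrected at the outset.

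Beyond that, the two genuinely hard steps are under-argued. Injectivity requires more than ``the equality set recovers $\Gamma_\lambda\setminus\gamma$'': one must show distinct equality sets cut out distinct \emph{faces} of $\Delta_\lambda$, which is not automatic because some imposed equalities could be redundant; this is typically deduced from the dimension formula rather than before it. And the surjectivity direction --- that for a face $F$ of $\Delta_\lambda$ the subgraph $\gamma_F$ of strict-inequality walls is a union of positive paths containing all top vertices --- is really the content of the theorem. Your sketch (``forced equalities would compress two distinct eigenvalues'') points to the right obstruction, but to turn it into a proof you need a precise translation between ``a vertex fails to lie on a positive path of $\gamma_F$'' and a chain of box-variable equalities that reaches the constant boxes, and that translation again depends on getting the box/edge correspondence right. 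In short, the overall plan (define $\Psi$ by enforcing equalities on non-$\gamma$ walls, verify order-preservation, compute dimension by Euler-characteristic counting, and prove surjectivity by showing $\gamma_F$ is a legitimate face) is the natural one, but as written the proposal rests on an incorrect identification and leaves precisely the nontrivial parts at the level of plausibility.
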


\begin{example}\label{example_123}
	Let $\lambda = \{\lambda_1, \lambda_2, \lambda_3 \}$ with $\lambda_1 > \lambda_2 > \lambda_3$.
	The co-adjoint orbit $(\mcal{O}_\lambda, \omega_\lambda)$ is diffeomorphic to the full flag manifold $\F(3)$.
	Let $\Gamma_\lambda$ be the ladder diagram associated with $\lambda$ as in Figure
	\ref{figure_ldF123}.
	Here, the blue dots are top vertices and the purple dot is the origin of $\Gamma_\lambda$.
	\begin{figure}[ht]
		\scalebox{1.2}{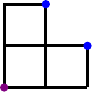}
		\caption{\label{figure_ldF123} Ladder diagram $\Gamma_\lambda$.}	
	\end{figure}	
		
	The zero, one, two, and three-dimensional faces of $\Gamma_\lambda$ are respectively listed in Figure \ref{figure_zero_dim_face_F123}, \ref{figure_one_dim_face_F123}, \ref{figure_two_dim_face_F123}, and \ref{figure_three_dim_face_F123}.
	Here,
	$v_i$ denotes a vertex for $i \in \{1, \cdots, 7\}$,
	$e_{ij}$ is the edge containing $v_i$ and $v_j$, $f_I$ is  the facet
	containing all $v_i$'s for $i \in I$, and $I_{1234567}$ is the three dimensional face, i.e., the whole $\Gamma_\lambda$.

	\vspace{0.5cm}
	\begin{figure}[ht]
		\scalebox{1.2}{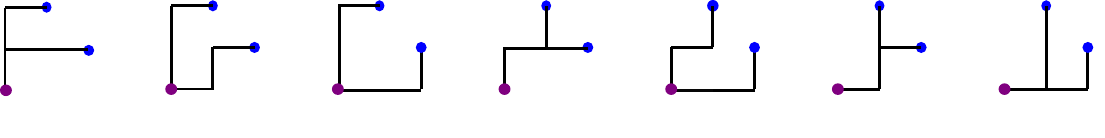}
		\caption{\label{figure_zero_dim_face_F123} The zero-dimensional faces of $\Gamma_\lambda$.}	
	\end{figure}		
	
	\begin{figure}[ht]
		\scalebox{1.2}{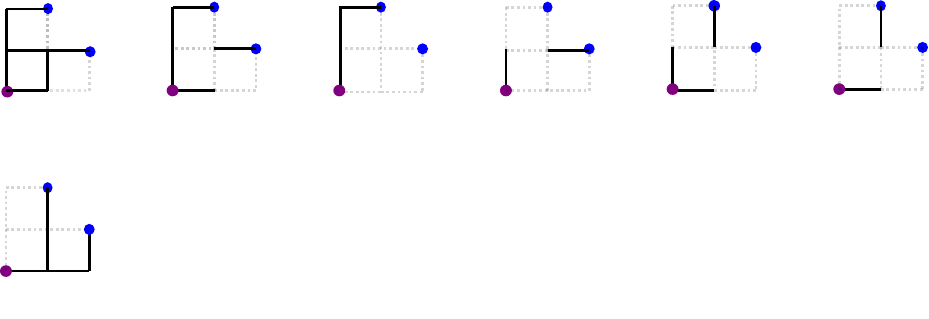}
		\caption{\label{figure_one_dim_face_F123} The one-dimensional faces of $\Gamma_\lambda$.}	
	\end{figure}		
	
	\begin{figure}[H]
		\scalebox{1.2}{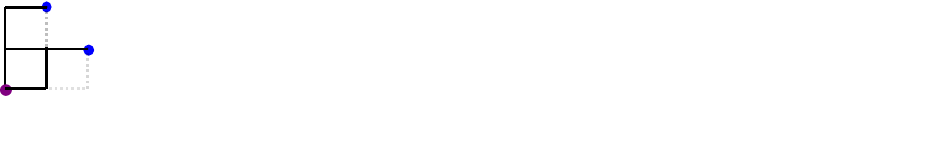}
		\bigskip
		\caption{\label{figure_two_dim_face_F123} The two-dimensional faces of $\Gamma_\lambda$.}	
	\end{figure}		

	\vspace{-0.5cm}	
	\begin{figure}[H]
		\scalebox{1.2}{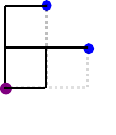}
		\bigskip
		\caption{\label{figure_three_dim_face_F123} The three-dimensional face of $\Gamma_\lambda$.}	
	\end{figure}

	The GC polytope $\Delta_\lambda$ is given in Figure \ref{figure_GC_moment_123}.
	(See Figure 5 in \cite{K} or Figure 4 in \cite{NNU1}.)
	We can easily see that the correspondence $\Psi(v_i) = w_i$ of vertices naturally
	extends to the set of faces of $\Gamma_\lambda$, satisfying
	the conditions in Theorem \ref{theorem_equiv_CG_LD}.
		
	\begin{figure}[ht]
	\scalebox{1.2}{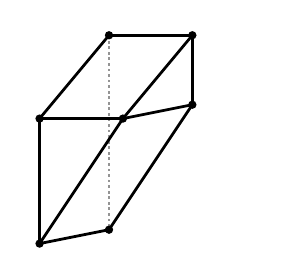}
	\caption{\label{figure_GC_moment_123} The GC polytope $\Delta_\lambda$ for $\lambda = \{ \lambda_1 > \lambda_2 > \lambda_3 \}$.}	
	\end{figure}		
\end{example}

For our convenience, we describe each point in $\Delta_\lambda$ by using $\Gamma_\lambda$ \emph{with a filling}, putting each component ${u_{i,j}}$ of the coordinate system 
\eqref{equation_coordinate} of $\R^{|\mcal{I}|} = \R^{\frac{n(n+1)}{2}}$ into the unit box $\square^{(i,j)}$ whose top-right vertex is $(i,j)$.
The GC pattern~\eqref{equation_GC-pattern} implies that $\{u_{i,j} \}_{(i,j) \in \mcal{I}}$ is 
\begin{equation}\label{rem:diagram-pattern}
\begin{cases}
(1) \,\, \text{\emph{increasing} along the columns of $\Gamma_\lambda$ } and \\
(2) \,\, \text{\emph{decreasing} along the rows of $\Gamma_\lambda$}.
\end{cases}
\end{equation}

\begin{example}
	Let $\mcal{O}_\lambda \simeq \mathcal{F}(3)$ be the co-adjoint orbit from Example~\ref{example_123}. 
	Recall that the pattern~\eqref{equation_GC-pattern} consists of the following inequalities:
	$$
		u_{1,2} \geq u_{1,1},\,\,\, u_{1,1} \geq u_{2,1},\,\,\, \lambda_1 \geq u_{1,2},\,\,\, u_{1,2} \geq \lambda_2,\,\,\, \lambda_2 \geq u_{2,1},\,\,\, u_{2,1} \geq \lambda_3.
	$$
	The ladder diagram $\Gamma_\lambda$ with a filling by the variables $u_{i,j}$'s is as in Figure~\ref{figure_GC_to_ladder}.
	\begin{figure}[H]
	\scalebox{0.9}{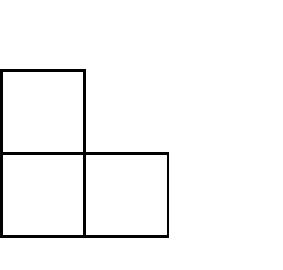}
	\caption{\label{figure_GC_to_ladder} The ladder diagram $\Gamma_\lambda$ with $u_{i,j}$'s variables.}	
\end{figure}	
\end{example}

Now, we briefly explain how the map $\Psi$ in Theorem \ref{theorem_equiv_CG_LD}
is defined. For a given  face $\gamma$ of $\Gamma_\lambda$, consider $\gamma$ with a filling by the variables $u_{i,j}$'s for ${(i,j) \in \mcal{I}}$. 
The image of $\gamma$ under $\Psi$ is the intersection of facets supported by the hyperplanes that are given by equating two adjacent variables $u_{i,j}$'s \emph{not} divided by any positive paths in $\gamma$. 

\begin{example}
Suppose that $\lambda = \{4,4,3,2,1\}$ and let $\gamma$ be a face given as in Figure \ref{figure_one_to_one_face}.
Then, the corresponding face $\Psi(\gamma)$ in $\Delta_\lambda$ is defined by
\[
	\Psi(\gamma) =  \Delta_\lambda \cap  \{ u_{2,1} = u_{1,1} \} \cap  \{ u_{1,1} = u_{1,2} \} \cap  \{ u_{2,2} = u_{1,2} \} \cap  \{ u_{2,1} = u_{2,2} \} \cap  \{ u_{2,2} = u_{2,3} \} \cap  \{ u_{3,1} = 4 \} .
\]
\begin{figure}[H]
	\scalebox{0.9}{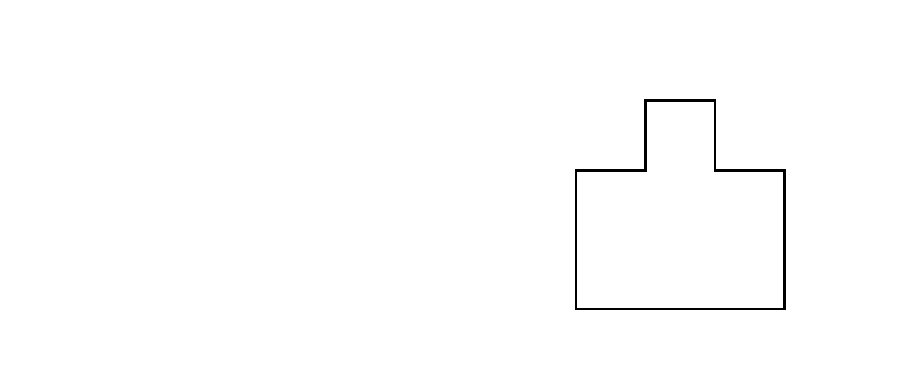}
	\caption{\label{figure_one_to_one_face} Correspondence of faces of $\Delta_\lambda$ and $\Gamma_\lambda$}
\end{figure}
\end{example}

%----------------------------------------------------------------------------------------------------
\section{Classification of Lagrangian fibers}
\label{secLagrangianFibersOfGelfandCetlinSystems}

Our first main theorem \ref{theoremA} states that every fiber of the GC system $\Phi_\lambda$ 
is an isotropic submanifold of $(\mathcal{O}_\lambda, \omega_\lambda)$ and is the total space of certain iterated bundle
\begin{equation}\label{cha4iteratedbund}
		E_{n-1} \stackrel {p_{n-1}} \longrightarrow E_{n-2} \stackrel{p_{n-2}} \longrightarrow \cdots \stackrel{p_2} \longrightarrow E_1
		\stackrel{p_1} \longrightarrow E_0= \mathrm{point}
\end{equation}
such that the fiber at each stage is either a point or a product of odd dimensional spheres.
In this section, we provide a combinatorial way of ``reading off'' the topology of the fiber of each projection map $p_i$ from the ladder diagram (Theorem \ref{theorem_main}).
Furthermore, we classify all positions of Lagrangian fibers in the Gelfand-Cetlin polytope $\Delta_\lambda$
(Corollary~\ref{corollary_L_fillable}).

We first consider a $2n$-dimensional compact symplectic toric manifold $(M,\omega)$ with a moment map $\Phi \colon M \to \R^n.$
It is a smooth completely integrable system on $M$ in the sense of Definition \ref{definition_CIS_smooth} and the 
Atiyah-Guillemin-Sternberg convexity theorem \cite{At, GS} yields that the image $\Delta := \Phi(M)$ is an $n$-dimensional convex polytope.
It is well-known that for any $k$-dimensional face $f$ of $\Delta$ and a point $\textbf{\textup{u}} \in \mathring{f}$ in its relative interior $\mathring{f}$ of $f$,
the fiber $\Phi^{-1}(\textbf{\textup{u}})$ is a $k$-dimensional isotropic torus. 
In particular, a fiber $\Phi^{-1}(\textbf{\textup{u}})$ is Lagrangian if and only if $\textbf{\textup{u}} \in \mathring{\Delta}$. .

In contrast, in the GC system case the preimage of a point in the inteior of a $k$-th dimensional face of the GC polytope $\Delta_\lambda$ might 
have the dimension greater than $k$.
In particular, $\Phi_\lambda$ might admit a Lagrangian fiber over a point not contained in the interior of $\Delta_\lambda$.

\begin{definition}\label{definition_lagrangian_face}
	We call a face $f$ of $\Delta_\lambda$ {\em Lagrangian} if it contains a point $\textbf{\textup{u}}$ in its relative interior $\mathring{f}$
	such that the fiber $\Phi_\lambda^{-1}(\textbf{\textup{u}})$ is Lagrangian. Also, we call a face $\gamma$ of $\Gamma_\lambda$ \emph{Lagrangian} if the corresponding face
	$f_\gamma := \Psi(\gamma)$ of $\Delta_\lambda$ is Lagrangian where $\Psi$ is given in Theorem~\ref{theorem_equiv_CG_LD}.
\end{definition}

\begin{remark}
We will see later that if $\textbf{\textup{u}}$ and $\textbf{\textup{u}}'$ are contained in the relative interior of a same face of $\Delta_\lambda$, then 
$\Phi_\lambda^{-1}(\textbf{\textup{u}})$ and $\Phi_\lambda^{-1}(\textbf{\textup{u}}')$ are diffeomorphic.
In particular if $f$ is a Lagrangian face of $\Delta_\lambda$, then every fiber over any point in $\mathring{f}$ is Lagrangian, see Corollary~\ref{corollary_L_fillable}.
\end{remark}

\begin{example}[\cite{K,NNU1}]\label{example_NNU_S3}
	For a full flag manifold $\F(3) \simeq \mcal{O}_\lambda$ in Example~\ref{example_123}, the fiber $\Phi_\lambda^{-1}(w_3)$ is 
	a Lagrangian submanifold diffeomorphic to $S^3$ where the vertex $w_3$ is given in Figure~\ref{figure_GC_moment_123}.
	See Example 3.8 in \cite{NNU1}.
\end{example}

From now on,  we tacitly identify faces of the ladder diagram $\Gamma_\lambda$ with faces of the Gelfand-Cetlin polytope $\Delta_\lambda$ via the map $\Psi$ in Theorem \ref{theorem_equiv_CG_LD}.
For example, ``a point $r$ in a face $\gamma$'' of $\Gamma_\lambda$ means a point $r$ in the face $\Psi(\gamma) = f_\gamma$ of $\Delta_\lambda$.

\vspace{0.2cm}

%------------------------------------------------------------------------------------
\subsection{$W$-shaped blocks and $M$-shaped blocks}~
\label{ssecWShapedBlocks}

For each $(a,b) \in \Z^2$, let $\square^{(a,b)}$ be the simple closed region bounded by the unit square in $\R^2$
whose vertices are lattice points in $\Z^2$ and the top-right vertex is $(a,b)$. The region $\square^{(a,b)}$ is simply said to be the \emph{box} at $(a, b)$.
\begin{definition}\label{definition_W_shaped_block}
	For each integer $k \geq 1$, the {\em $k$-th $W$-shaped block} denoted by $W_k$, or simply {\em the $W_k$-block}, is defined by
	\[
		W_k := \bigcup_{(a,b)} \square^{(a,b)}
	\]
	where the union is taken over all $(a,b)$'s in $(\Z_{\geq 1})^2$ such that $k+1 \leq a+b \leq k+2$.
	A lattice point closest from the origin (with respect to the taxicab metric) in the $W_k$-block is called a \emph{bottom vertex} of $W_k$.
\end{definition}

	The following figures illustrate $W_1$, $W_2$, and $W_3$ where the red dots in each figure
	indicate the vertices over which the union is taken in Definition \ref{definition_W_shaped_block}. The purple dots are bottom vertices of each $W$-blocks.
	\begin{figure}[H]
	\scalebox{0.9}{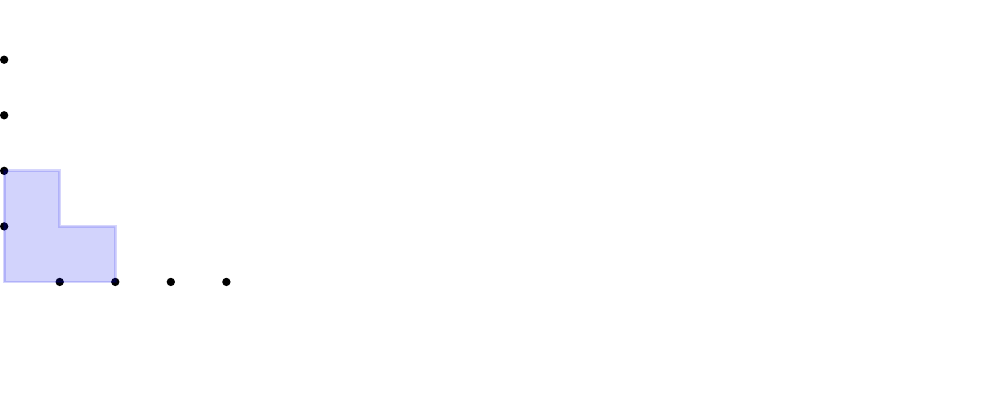}
	\caption{\label{figure_W_k_shaped_block} $W$-shaped blocks}
	\end{figure}	
\vspace{-0.5cm}

For a given face $\gamma$ of $\Gamma_\lambda$, the set of edges of $\gamma$ divides $W_k$
into several pieces of simple closed regions. See Figure \ref{figure_W_k_shaped_block} for example. 
\begin{definition}\label{defn:W-gamma} For the $W_k$-block and a face $\gamma$ of $\Gamma_\lambda$, we denote by $W_k(\gamma)$ the $W_k$-block with `walls', 
where a wall is an edge of $\gamma$ lying on the interior of the $W_k$-block.
\end{definition}

\begin{example}\label{example_dividing_block_123}
	In Example \ref{example_123}. 
	consider the vertex $v_3$ of $\Gamma_\lambda$ given in Figure~\ref{figure_zero_dim_face_F123}. 
	There is no edge of $v_3$ inside $W_1$ and hence we get
	$W_1(v_3) = W_1$ with no walls. The $W_2$-block is divided by $v_3$ into three pieces of simple
	closed regions so that $W_2(v_3)$ is $W_2$ with two walls indicated with red line segments in Figure~\ref{figure_W_k_shaped_block_v3}.
\vspace{0.2cm}
	\begin{figure}[H]
	\scalebox{0.9}{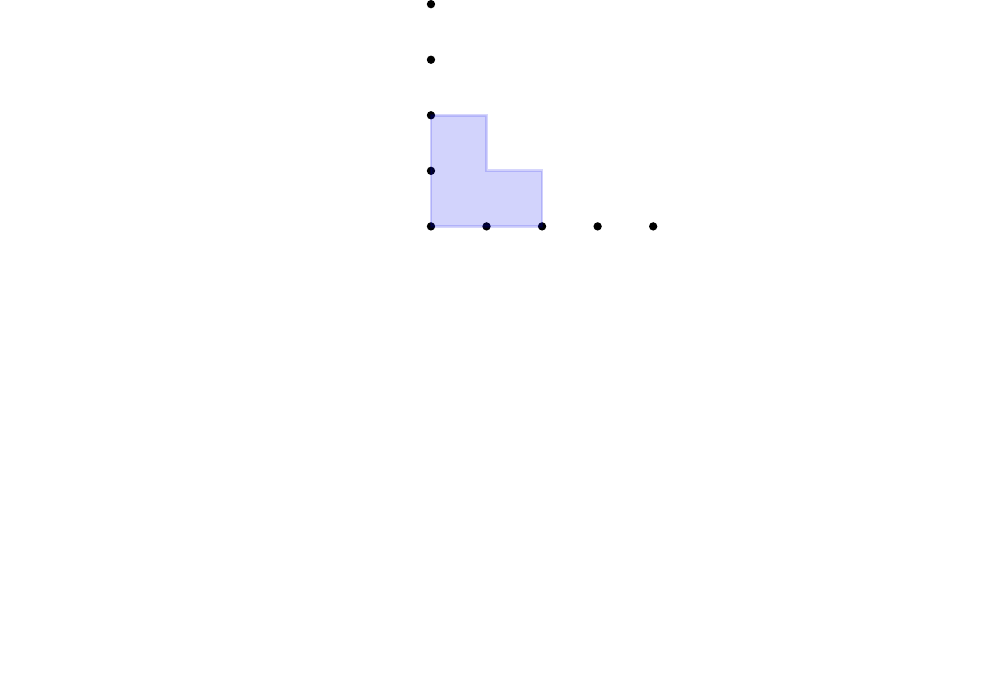}
	\caption{\label{figure_W_k_shaped_block_v3}  $W_i(v_3)$-blocks}
	\end{figure}
\vspace{-0.4cm}			
\end{example}

Next, we introduce the notion of \emph{$M$-shaped blocks}.

\begin{definition}\label{definition_M_shaped_block}
	For each positive integer $k \geq 1$, a {\em $k$-th $M$-shaped block} denoted by $M_k$, or simply an {\em $M_k$-block}, is defined, \emph{up to translation in} $\R^2$,
	by
	\[
		M_k := \bigcup_{(a,b)} \square^{(a,b)}
	\]
	where the union is taken over all $(a,b)$'s in $(\Z_{\geq 1})^2$ such that
	\begin{itemize}
		\item $k+1 \leq a+b \leq k+2$,
		\item $(a,b) \neq (k+1,1)$, and
		\item $(a,b) \neq (1,k+1)$.
	\end{itemize}
\begin{figure}[H]
	\scalebox{0.95}{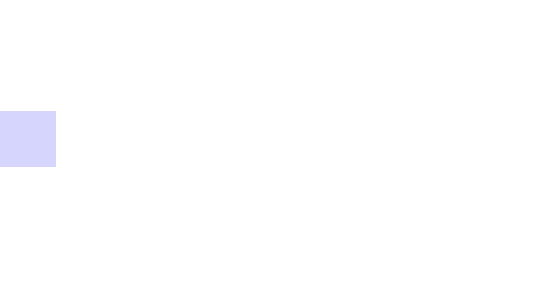}
		\vspace{-0.3cm}			
	\caption{\label{figure_block_sphere} $M$-shaped blocks.}
\end{figure}	
\vspace{-0.5cm}			
\end{definition}

\begin{remark}
	Note that $M_k$ can be obtained from $W_k$ by deleting two boxes $\square^{(k+1,1)}$ and $\square^{(1,k+1)}$.
	The reader should keep in mind that each $W$-shaped block $W_k$ is located at the specific position, but $M_k$ is not
	since it is defined up to translation in $\R^2$.
\end{remark}

For each divided simple closed region $\mcal{D}$ in $W_k(\gamma)$, we assign a topological space to $S_k (\mcal{D})$   by the following rule :
\begin{equation*}
S_k(\mathcal{D}) =
\begin{cases}
S^{2\ell-1} \quad \mbox{if $\mathcal{D}$ is the $M_\ell$-block and $\mcal{D}$ contains at least one bottom vertex of the $W_k$-block,} \\
 \mathrm{point} \quad \mbox{otherwise.}
\end{cases}
\end{equation*}
We then put
\begin{equation}\label{equation_block_sphere}
	S_k(\gamma) := \prod_{\mathcal{D} \subset W_k(\gamma)} S_k(\mathcal{D})
\end{equation}
where the product is taken over all simple closed regions in $W_k(\gamma)$ distinguished by walls coming from edges of $\gamma$.

\begin{example}\label{example_computation_S_2_v3}
	Again, we consider the vertex $v_3$ of $\Gamma_\lambda$ in Example \ref{example_123}, see also Example \ref{example_dividing_block_123}.
	Note that $S_1(v_3) = \mathrm{pt}$ since $W_1(v_3)$ consists of one simple closed region $W_1$ which is not an $M$-shaped block.
	In contrast, $W_2(v_3)$ consists of three simple closed regions $\mathcal{D}_1$, $\mathcal{D}_2$, and $\mathcal{D}_3$
	as in the figure below. (See also Figure \ref{figure_W_k_shaped_block_v3}.)
	Even though $\mcal{D}_1$ and $\mcal{D}_3$ are $M_1$-blocks, they do not contain any bottom vertices of $W_2$ so that 
	$S_2(\mcal{D}_1) = \mathrm{point}$ and $S_2(\mcal{D}_3) = \mathrm{point}$.
	On the other hand, $\mathcal{D}_2$ is an $M_2$-block containing two bottom vertices of $W_2$. 	
	Therefore, we have
	\[
		S_2(v_3) = S_2(\mathcal{D}_1) \times S_2(\mathcal{D}_2) \times S_2(\mathcal{D}_3) \cong S^3.
	\]
	For $k>2$, $W_k(v_3)$ has no walls and hence $W_k(v_3)$ consists of one simple closed region $W_k$ which is never an
	$M$-shaped block. Thus $S_k(v_3) = \mathrm{point}$ for every positive integer $k>2$.

	\begin{figure}[H]
		\scalebox{0.7}{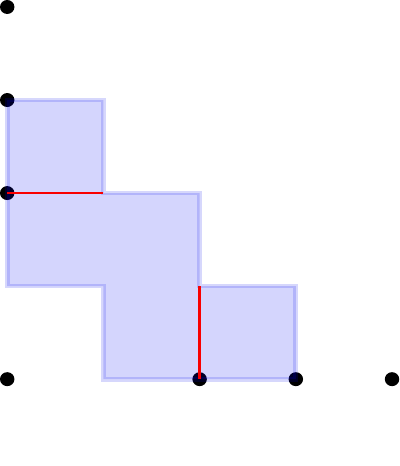}	
	\end{figure}
\end{example}

\begin{example}\label{example_gr24_W_block}
	Let $\lambda = \{t,t,0,0\}$ with any fixed $t > 0$. Then, the co-adjoint orbit $\mathcal{O}_\lambda$ is the complex Grassmannian $\mathrm{Gr}(2,4)$.
	Let $\gamma$ be the one-dimensional face of $\Gamma_\lambda$ given by
	\begin{figure}[H]
	\scalebox{0.9}{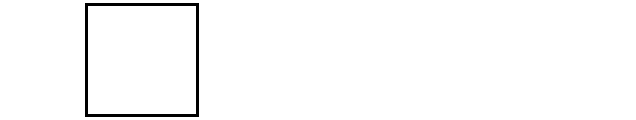}
	\end{figure}			
	As we see in the following figure, $W_1(\gamma)$ consists of one simple closed region that is not an
	$M$-shaped block. Thus we have $S_1(\gamma) = \mathrm{point}$.
	
	\begin{figure}[H]
		\scalebox{0.9}{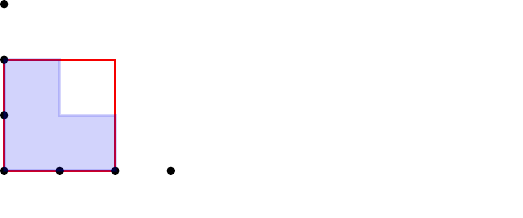}
	\end{figure}
	\vspace{-0.1cm}
	
	On the other hand, $W_2(\gamma)$ has two walls (red line segments in the figure below) and is exactly the same as $W_2(v_3)$ in Example 		
	\ref{example_computation_S_2_v3}. So, we have $S_2(\gamma) = S^3$.
	
	\begin{figure}[H]
		\scalebox{0.9}{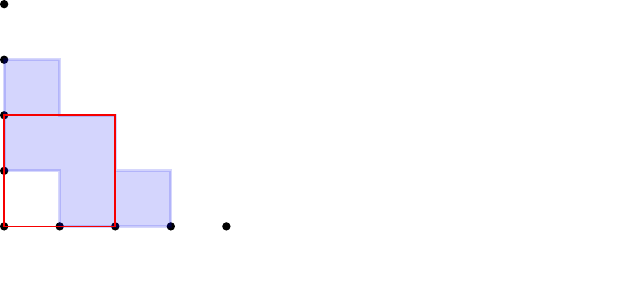}
	\end{figure}
	\vspace{-0.1cm}
	
	Finally, $W_3(\gamma)$ has two walls as below, and therefore there are three simple closed regions, 
	$\mathcal{D}_1$, $\mathcal{D}_2$, and $\mathcal{D}_3$, in $W_3(\gamma)$.
	
	\begin{figure}[H]
		\scalebox{0.9}{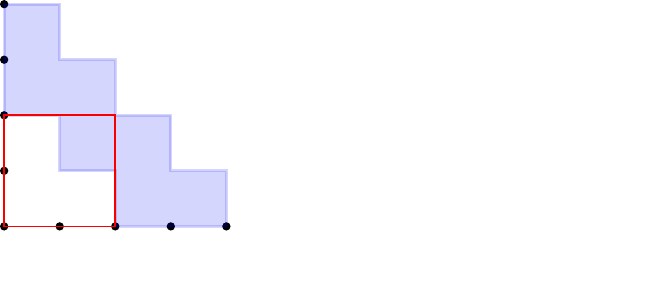}
	\end{figure}
	\vspace{-0.1cm}
	
	Since $\mathcal{D}_1$ and $\mathcal{D}_3$ are not $M$-shaped blocks, we have $S_3(\mathcal{D}_1) = S_3(\mathcal{D}_3) = \mathrm{point}$.
	Also, since $\mathcal{D}_2 = M_1$ and contains a botton vertex of $W_3$, we have $S_3(\mathcal{D}_2) = S^1$.
	Therefore, we obtain
	\[
		S_3(\gamma) = S_3(\mathcal{D}_1) \times S_3(\mathcal{D}_2) \times S_3(\mathcal{D}_3) \cong S^1.
	\]
	For $k>3$, $W_k(\gamma)$ consists only one simple closed region, the $W_k$-block itself, and it is never an $M$-shaped block.
	Thus we have $S_k(\gamma) = \mathrm{point}$ for $k>3$.

\end{example}

\begin{proposition}\label{proposition_M_1_block_simple_closed_region}
	Let $\lambda$ be a sequence of real numbers satisfying~\eqref{lambdaidef} and $\gamma$ be a face of the ladder diagram $\Gamma_\lambda$.
	For each $i \geq 1$, let $\{\mathcal{D}_1, \cdots, \mathcal{D}_{m_i} \}$ be simple closed regions in $W_i(\gamma)$ such that
	$
		S_i(\mathcal{D}_j) = S^1
	$ for every $j=1,\cdots, m_i$.
	Then we have
	\[
		\dim \gamma = \sum_{i=1}^{n-1} m_i.
	\]
\end{proposition}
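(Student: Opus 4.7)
The plan is to unpack the combinatorial meaning of the condition $S_i(\mathcal{D}_j) = S^1$ and then compute $\dim\gamma$ via Euler's formula, matching the resulting count to $\sum_i m_i$. First, since $S^1 = S^{2\ell-1}$ forces $\ell = 1$, the regions counted by $m_i$ are precisely the $M_1$-blocks, i.e., single boxes $\square^{(a,b)}\subset W_i$ containing a bottom vertex of $W_i$. Because the bottom vertices of $W_i$ have taxicab distance $i-1$ from the origin while a box $\square^{(a,b)}\subset W_i$ has $a+b\in\{i+1,i+2\}$, the only way a bottom vertex can be a corner of $\square^{(a,b)}$ is through the corner $(a-1,b-1)$, forcing $a+b = i+1$ and $a,b\geq 1$. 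For such a box to be a standalone simple closed region in $W_i(\gamma)$, its two interior neighbors in $W_i$ (namely $\square^{(a+1,b)}$ and $\square^{(a,b+1)}$, both having coordinate sum $i+2$) must be separated from $\square^{(a,b)}$ by walls; these walls are exactly the edges $(a,b-1)\text{--}(a,b)$ and $(a-1,b)\text{--}(a,b)$ of $\Gamma_\lambda$. Hence $m_i$ equals the number of interior lattice points $(a,b)$ with $a,b\geq 1$ and $a+b = i+1$ such that both of the edges entering $(a,b)$ from the west and from the south belong to $\gamma$, and summing over $i$ gives
\[
\sum_{i=1}^{n-1} m_i \;=\; \#\bigl\{ v = (a,b) \in \gamma : a,b\geq 1,\ (a{-}1,b)\text{--}v\in\gamma \text{ and } (a,b{-}1)\text{--}v\in\gamma \bigr\}.
\]

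Next, I would compute $\dim\gamma$ using Euler's formula. Since $\gamma$ is a union of positive paths all emanating from the origin, it is connected, and as a finite $1$-dimensional CW-complex it satisfies $\dim\gamma = b_1(\gamma) = E - V + 1$, where $V$ and $E$ are the numbers of vertices and edges of $\gamma$. For each vertex $v=(a,b)$ of $\gamma$, let $\delta(v)\in\{0,1,2\}$ count the number of edges of $\gamma$ entering $v$ from the west or from the south; since each edge of $\Gamma_\lambda$ connects two vertices differing by one lattice step in a coordinate direction, every edge of $\gamma$ is counted exactly once by $\sum_v \delta(v)$, so $E = \sum_v \delta(v)$. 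Letting $a_j$ denote the number of vertices $v$ of $\gamma$ with $\delta(v) = j$, I obtain $V = a_0 + a_1 + a_2$ and $E = a_1 + 2a_2$, giving
\[
\dim\gamma \;=\; E - V + 1 \;=\; a_2 - a_0 + 1.
\]
By the previous paragraph, $a_2 = \sum_{i=1}^{n-1} m_i$ (vertices on the axes contribute $0$ to $a_2$ because the would-be incoming edge lies outside $\Gamma_\lambda$), so the proposition will follow once I show $a_0 = 1$.

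The remaining step — and the only place where the structural hypothesis on $\gamma$ enters — is the verification that $a_0 = 1$, i.e., that the origin is the \emph{only} vertex of $\gamma$ with no incoming edge from the west or south. The origin itself manifestly has $\delta = 0$, since the candidate incoming edges are not in $\Gamma_\lambda$. Conversely, any other vertex $v\in\gamma$ lies, by the very definition of a face (Definition~\ref{definition_face}), on some positive path $p$ from the origin to a top vertex; because $p$ is a shortest path in the grid, it is monotone (each step goes right or up), and the step of $p$ that arrives at $v\neq(0,0)$ comes either from $v$'s western or its southern neighbor. That step is an edge of $\gamma$ contributing to $\delta(v)$, so $\delta(v)\geq 1$. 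Therefore $a_0 = 1$, and combining with the Euler computation yields $\dim\gamma = a_2 = \sum_{i=1}^{n-1} m_i$, as desired.

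I do not anticipate a genuine technical obstacle here; the one spot that deserves care is the bookkeeping in step one, namely checking that the ``$M_1$-block containing a bottom vertex of $W_i$'' condition forces $a+b=i+1$ and that the two walls needed to isolate $\square^{(a,b)}$ inside $W_i(\gamma)$ are exactly its top and right edges (not, for instance, its bottom or left edges, which lie on the boundary of $W_i$ since the would-be neighbors have coordinate sum $i$, which is too small for $W_i$).
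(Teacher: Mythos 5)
Your proof is correct, but you take a genuinely different route from the paper. The paper proceeds by a direct bijection: it invokes the (unproved but intuitive) structural fact that each minimal cycle $\sigma$ of $\gamma$ is the union of two monotone paths joining a unique bottom-left vertex and a unique top-right vertex $v_\sigma$, notes that $\square^{v_\sigma}$ lies inside $\sigma$ and appears as an $M_1$-block containing a bottom vertex in $W_{a+b-1}(\gamma)$ where $v_\sigma=(a,b)$, and argues that this assignment $\sigma\mapsto\square^{v_\sigma}$ is a bijection onto the $M_1$-blocks being counted. You instead compute $\dim\gamma$ by the Euler characteristic of the $1$-complex $\gamma$: defining $\delta(v)$ as the number of edges of $\gamma$ entering $v$ from the west or south, you get $E=\sum_v\delta(v)$, hence $\dim\gamma=E-V+1=a_2-a_0+1$; you then identify $a_2$ with $\sum_i m_i$ by the same wall bookkeeping (an $M_1$-block $\square^{(a,b)}$ containing a bottom vertex of $W_i$ is isolated exactly when the two edges into $(a,b)$ from west and south are in $\gamma$, forcing $a+b=i+1$), and you show $a_0=1$ from the positive-path structure. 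What your route buys is that you never need to describe the internal structure of an arbitrary minimal cycle or argue surjectivity of the assignment $\sigma\mapsto v_\sigma$; the only structural input is that $\gamma$ is connected and every non-origin vertex is reached by a monotone positive path, which is immediate from Definition~\ref{definition_face}. What the paper's route buys is a more concrete geometric picture of which cycle ``owns'' which $M_1$-block, which is the picture it illustrates in Figure~\ref{figure_M_1_block_simple_closed_region}. Both arguments are sound; yours is arguably the more self-contained.
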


\begin{proof}
	Note that $\dim \gamma$ is the number of minimal cycles in $\gamma$ by Definition \ref{definition_face}. Also, each minimal cycle $\sigma$ in $\gamma$
	can be represented by the union of two shortest paths connecting the bottom-left vertex and the top-right vertex of $\sigma$. We denote by
	$v_\sigma$ the top-right vertex of $\sigma$. 
	Note that $\square^{v_\sigma}$
	(blue-colored region in Figure \ref{figure_M_1_block_simple_closed_region}) is contained in the simple closed region bounded by $\sigma$.
	Therefore, if we denote by $\Sigma := \{\sigma_1, \cdots, \sigma_m\}$ the set of minimal cycles in $\gamma$, then
	there is a one-to-one correspondence
	between $\Sigma$ and $\{v_{\sigma_i} \}_{1 \leq i \leq m}$.
	
	On the other hand, observe that $\square^{v_\sigma}$ is appeared as an $M_1$-block in $W_i(\gamma)$ where
	\[
		i+1 = a+b, \quad v_\sigma = (a,b)
	\]	
	for each $\sigma \in \Sigma$.
	Also, every $M_1$-block in $W_i(\gamma)$ for some $i$ having a bottom vertex should be one of such $\square^{v_\sigma}$'s.
	Consequently, there is a one-to-one correspondence between $\Sigma$ and $M_1$-blocks having bottom vertices in $W_i(\gamma)$ for $i \geq 1$.
	Since $|\Sigma| = \dim \gamma$ by definition, this completes the proof.
\end{proof}
	\vspace{-0.5cm}
	\begin{figure}[H]
		\scalebox{0.9}{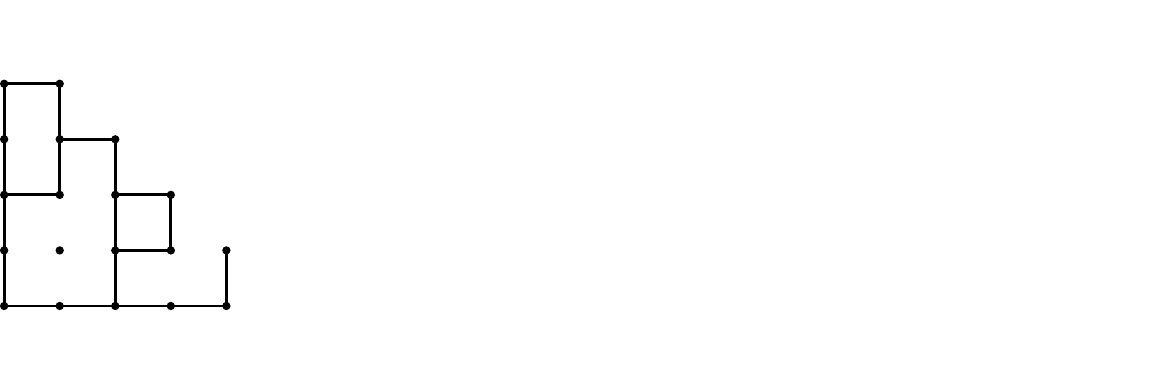}
		\caption{\label{figure_M_1_block_simple_closed_region} Correspondence between minimal cycles and $M_1$-blocks}
	\end{figure}
	\vspace{-0.3cm}

Now, we state one of our main theorem which characterizes the topology of each fiber of $\Phi_\lambda$, where the proof will be given in Section \ref{section4}.

\begin{theorem}\label{theorem_main}
	Let $\lambda = \{ \lambda_1, \cdots, \lambda_n \}$ be a non-increasing sequence of real numbers satisfying~\eqref{lambdaidef}.
	Let $\gamma$ be a face of $\Gamma_\lambda$ and $f_\gamma = \Psi(\gamma)$ be the corresponding face of $\Delta_\lambda$
	described in Theorem \ref{theorem_equiv_CG_LD}.
	For any point $\textbf{\textup{u}}$ in the relative interior of $f_\gamma$, the fiber $\Phi_\lambda^{-1}(\textbf{\textup{u}})$ is an isotropic submanifold
	of $(\mathcal{O}_\lambda, \omega_\lambda)$ and is the total space of an iterated bundle
	\begin{equation}\label{theoremmaindia}
		\Phi_\lambda^{-1}(\textbf{\textup{u}}) \cong \bar{S_{n-1}}(\gamma) \xrightarrow {p_{n-1}} \bar{S_{n-2}}(\gamma)
		 \rightarrow \cdots
		 \xrightarrow{p_2} \bar{S_1}(\gamma) \xrightarrow{p_1} \bar{S_0}(\gamma) := \mathrm{point}
	\end{equation}
	where
	$p_k \colon \bar{S_k}(\gamma) \rightarrow \bar{S_{k-1}}(\gamma)$ is an $S_k(\gamma)$-bundle over $\bar{S_{k-1}}(\gamma)$ for $k=1,\cdots, n-1$.
	In particular, the dimension of $\Phi_\lambda^{-1}(\textbf{\textup{u}})$ is given by 
	\[
		\dim \Phi_\lambda^{-1}(\textbf{\textup{u}}) = \sum_{k=1}^{n-1} \dim S_k(\gamma).
	\]
\end{theorem}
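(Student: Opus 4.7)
The plan is to realize the tower~\eqref{theoremmaindia} via successive projections onto leading principal submatrices. Fix $\textbf{\textup{u}} \in \mathring{f}_\gamma$ and, for each $0 \leq k \leq n-1$, define
\[
\bar{S}_k(\gamma) := \{y \in \mathcal{H}_{k+1} : \text{for every } j \leq k+1,\ y^{(j)} \text{ has eigenvalues } u_{1,j}, u_{2,j-1}, \ldots, u_{j,1}\},
\]
so that $\bar{S}_0(\gamma) = \{(u_{1,1})\}$ is a single point, $\bar{S}_{n-1}(\gamma)$ is identified with $\Phi_\lambda^{-1}(\textbf{\textup{u}})$ via Remark~\ref{remark_property_hermitian_matrix}, and $p_k : \bar{S}_k(\gamma) \to \bar{S}_{k-1}(\gamma)$ is the restriction map $y \mapsto y^{(k)}$.

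The heart of the proof is to compute the fiber of $p_k$ over $y_0 \in \bar{S}_{k-1}(\gamma)$. An extension $z = \bigl(\begin{smallmatrix} y_0 & v \\ v^* & c \end{smallmatrix}\bigr)$ must have prescribed eigenvalues $\nu_i := u_{i, k+2-i}$ for $1 \leq i \leq k+1$. Diagonalizing $y_0 = UDU^*$ with $D = \mathrm{diag}(\mu_1, \ldots, \mu_k)$ (where $\mu_i = u_{i, k+1-i}$) and setting $w = U^*v$, the secular identity
\[
\det(\lambda I - z) \;=\; (\lambda - c)\prod_{i=1}^k(\lambda - \mu_i) \;-\; \sum_{i=1}^k |w_i|^2 \prod_{j \neq i}(\lambda - \mu_j)
\]
forces $c = \sum_j \nu_j - \sum_i \mu_i$ from the trace, and, when the $\mu_i$'s are distinct, determines each $|w_i|^2 \geq 0$ by the residue at $\lambda = \mu_i$. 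When an eigenvalue of $y_0$ has multiplicity $\ell$, say $\mu_{i_1} = \cdots = \mu_{i_\ell}$, only the combined norm $\sum_{r=1}^\ell |w_{i_r}|^2$ of the corresponding block is determined by the secular equation; if this value is strictly positive the block ranges over $S^{2\ell-1}$, whereas if it is zero (which occurs iff some $\nu_j$ coincides with the multiple $\mu$-value) the block collapses to a point.

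This dichotomy is precisely what is encoded by $W_k(\gamma)$: walls arising from edges of $\gamma$ mark strict inequalities on the two anti-diagonals $a+b = k+1$ (the $\mu$'s) and $a+b = k+2$ (the $\nu$'s), so the simple closed regions of $W_k(\gamma)$ biject with the blocks of coincident $\mu$-eigenvalues in the secular analysis. Bottom vertices of $W_k$ correspond exactly to the positivity of the secular radius, singling out those $M_\ell$-blocks that contribute a genuine $S^{2\ell-1}$ rather than a collapsed point. Multiplying over the regions recovers $S_k(\gamma)$ of~\eqref{equation_block_sphere}, and local triviality of $p_k$ follows from equivariance under the residual $U(k)$-symmetry preserving the spectral decomposition of $y_0$, which patches local sections into a bundle atlas. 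Iterating yields the full tower~\eqref{theoremmaindia}, and summing fiber dimensions gives the stated dimension formula.

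For isotropy, although $\Phi_\lambda^{i,j}$ fails to be globally smooth, the joint level sets of the GC system coincide with those of the elementary symmetric polynomials $e_i(x^{(k)})$, which are smooth polynomial functions on $\mathcal{H}_n$ and pairwise Poisson-commute (since the power sums $\mathrm{tr}((x^{(k)})^p)$ do, by the calculation of~\cite{GS2}, and the $e_i$'s are polynomials in them). The iterated bundle structure above shows $\Phi_\lambda^{-1}(\textbf{\textup{u}})$ is a smooth submanifold, so its tangent spaces lie in the common kernel of the $de_i(x^{(k)})$'s and are isotropic. The main obstacle I anticipate is the combinatorial match in the third paragraph: one must verify case by case that the partition of $W_k(\gamma)$ into simple closed regions together with the bottom-vertex rule correctly encodes the multiplicity patterns of both the $\mu$-spectrum and the $\nu$-spectrum that govern the secular analysis, across all faces $\gamma$ of $\Gamma_\lambda$.
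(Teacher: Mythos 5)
Your construction of the bundle tower via the secular equation $\det(\lambda I - z) = 0$ on a block extension $z = \bigl(\begin{smallmatrix} y_0 & v \\ v^* & c \end{smallmatrix}\bigr)$ is exactly the paper's analysis of $\mathcal{A}_{k+1}(\mathfrak{a},\mathfrak{b})$, including the residue computation, the collapse to a point when the combined norm vanishes, and the $S^{2\ell-1}$ when it is positive (Lemmas~\ref{lemma_fiber_torus}--\ref{lemma_fiber_sphere_b_b}), and the $U(k)$-equivariant trivialization (Proposition~\ref{proposition_A_k_S_k_bundle}). The combinatorial match with $W_k(\gamma)$, which you flag as the chief obstacle, is in fact the routine part: the wall data in $W_k(\gamma)$ literally records which inequalities in the GC pattern become equalities on $\mathring{f}_\gamma$, and the bottom-vertex rule isolates the case (2) of~\eqref{eq:inequality1}.

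The genuine gap is in the isotropy argument, not the combinatorics. You pass to the elementary symmetric polynomials $e_i(x^{(k)})$ because they are smooth and Poisson-commute, and then assert that the tangent spaces of the (now known-smooth) fiber lie in $\bigcap \ker de_i(x^{(k)})$ and hence are isotropic. The second implication is false. If $V = \operatorname{span}\{X_{e_i}\}$ denotes the span of the Hamiltonian vector fields, Poisson-commutativity gives only $V \subseteq T\Phi_\lambda^{-1}(\mathbf{u}) \subseteq V^{\omega}$; one needs $V = T\Phi_\lambda^{-1}(\mathbf{u})$ to conclude, and this fails precisely at the singular fibers where the theorem has content. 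Concretely, in Example~\ref{example_NNU_S3} with the fiber $\Phi_\lambda^{-1}(v_3) \cong S^3$, the point $v_3$ corresponds to $x^{(2)} = \lambda_2 I_2$, and there $d\!\det(x^{(2)}) = \lambda_2 \, d\!\operatorname{tr}(x^{(2)})$, so $de_1(x^{(2)})$ and $de_2(x^{(2)})$ are proportional; the Hamiltonian vector fields of the smooth symmetric functions span at most a $2$-dimensional subspace of the $3$-dimensional tangent space of $S^3$, and the quotient $V^{\omega}/V$ still carries a nondegenerate form on which your argument says nothing. This is why the paper does \emph{not} argue via commuting flows at the singular fiber: it proves transitivity of the non-abelian $U(k)$-action on $\mathcal{A}_{k+1}(\mathfrak{a},\mathfrak{b})$ (Lemma~\ref{lemma_transitive}), uses it to compute $(\omega_{\mathfrak{a}})_x(\xi,\eta) = (\omega_{\mathfrak{b}})_{\rho_{k+1}(x)}((\rho_{k+1})_*\xi,(\rho_{k+1})_*\eta)$ directly (Lemma~\ref{lemma_isotropic}), and then pushes the form down the tower until it reaches a point, where it trivially vanishes. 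You would need a replacement for that descent argument; the Poisson-commutativity of the $e_i$'s alone cannot supply it.
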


\begin{remark}
As a consequence of the description, every Gelfand-Cetlin fiber is a smooth submanifold. In \cite{Lane1}, We remark that Lane proved the smoothness of fibers of completely integrable systems constructed by Thimm's trick. 
\end{remark}

We illustrate Theorem \ref{theorem_main} by the following examples.

\begin{example}\label{example_lag_fiber_123_vertex}
	Let $\lambda$ be given in Example \ref{example_123}. 
	We apply Theorem \ref{theorem_main} to compute the fiber $\Phi_\lambda^{-1}(v_i)$ of each vertex $v_i$ given in Figure~\ref{figure_zero_dim_face_F123}
	for $i=1,\cdots,7$. 
	\begin{figure}[H]
		\scalebox{0.8}{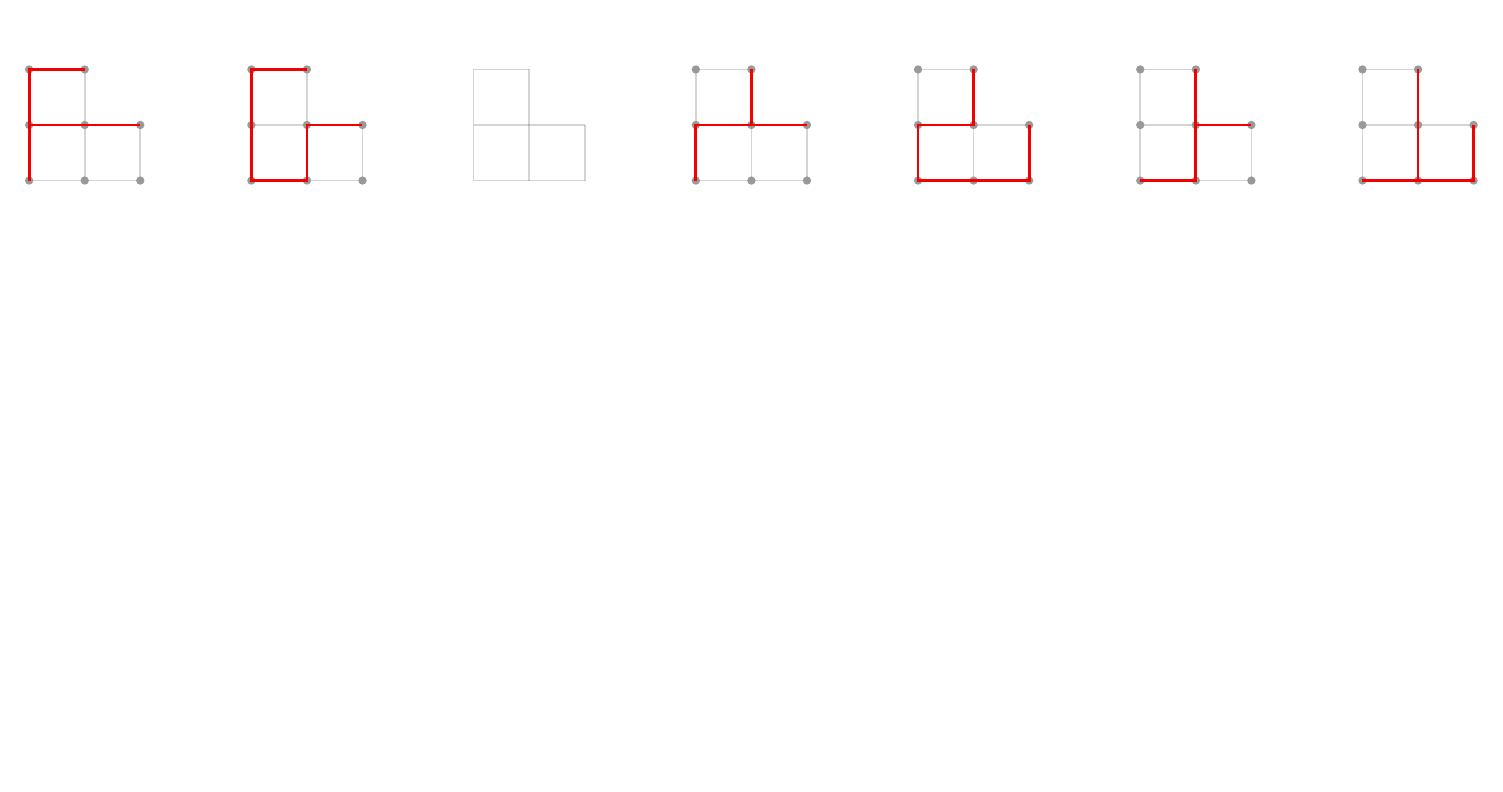}
		\caption{\label{figure_inverse_vertex_123} $W_1(v_i)$'s and $W_2(v_i)$'s for $\F (3)$}
	\end{figure}
		\vspace{-0.5cm}

	Figure \ref{figure_inverse_vertex_123} shows that $S_1(v_i) = \mathrm{pt}$ for every $i=1,\cdots,7$ since each $W_1(v_i)$ does not contain
	any $M$-shaped block containing a bottom vertex of $W_1$, that is, the origin $(0,0)$. Also, we can easily check that $S_2(v_i) = \mathrm{point}$ unless
	$i = 3$. When $i=3$, there is one $M$-shaped block $M_2$ inside $W_2(v_3)$ containing a bottom vertex of $W_2$. Thus we have
	\[
		S_2(v_3) = \mathrm{point} \times S^3 \times \mathrm{point} \cong S^3. 
	\] 
	Since $\Phi_\lambda^{-1}(v_i)$ is an $S_2(v_i)$-bundle over $S_1(v_i)$
	and $S_1(v_i)$ is a point for every $i=1,\cdots, 7$, by Theorem \ref{theorem_main}, we obtain
	\[
		\Phi_\lambda^{-1}(v_i) =
		\begin{cases}
			S^3 \quad &\mbox{ if } i = 3 \\
			\mathrm{point} \quad &\mbox{ otherwise.}
		\end{cases}
	\]
\end{example}

\begin{example}\label{example_lag_fiber_123_edge}	
	Again, we consider Example \ref{example_123} and compute the fibers over the points in the relative interior of some higher dimensional face of $\Delta_\lambda$ as follows.
	
	Consider the edge $e = e_{12}$ in Figure~\ref{figure_one_dim_face_F123} and let $\textbf{\textup{u}} \in \mathring{e}_{12}$. 
	By Theorem \ref{theorem_main}, $\Phi_\lambda^{-1}(\textbf{\textup{u}})$ is an $S_2(e_{12})$-bundle 
	over $S_1(e_{12})$ so that
	it is diffeomorphic to $S^1$. See the figure below. 
	For any other edge $e$ of $\Gamma_\lambda$,
	we can show that $\Phi_\lambda^{-1}(\textbf{\textup{u}}) \cong S^1$ for every point $\textbf{\textup{u}} \in \mathring{e}$ in a similar way. 
	\vspace{-0.2cm}
	\begin{figure}[H]
	\scalebox{0.9}{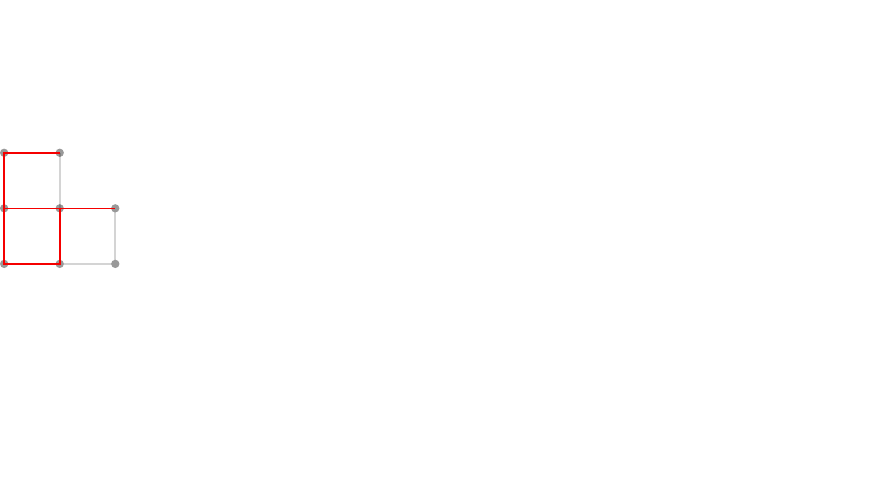}
	\end{figure}
	\vspace{-0.1cm}	
	
	For a two-dimensional face of $\Gamma_\lambda$, we first consider the face $f_{1345}$ of $\Delta_\lambda$ described in Figure
	\ref{figure_two_dim_face_F123}. 	
	Again by Theorem \ref{theorem_main}, 
	we have $\Phi_\lambda^{-1}(\textbf{\textup{u}}) \cong T^2$, an $S_2(f_{1345})$-bundle over $S_1(f_{1345})$, for every point $\textbf{\textup{u}} \in \mathring{f}_{1345}$. 
	See the figure below.
	Similarly, we obtain $\Phi_\lambda^{-1}(\textbf{\textup{u}}) \cong T^2$ for every interior point $\textbf{\textup{u}}$ of any two-dimensional face of $\Delta_\lambda$.

	\vspace{-0.2cm}
	\begin{figure}[H]
	\scalebox{0.9}{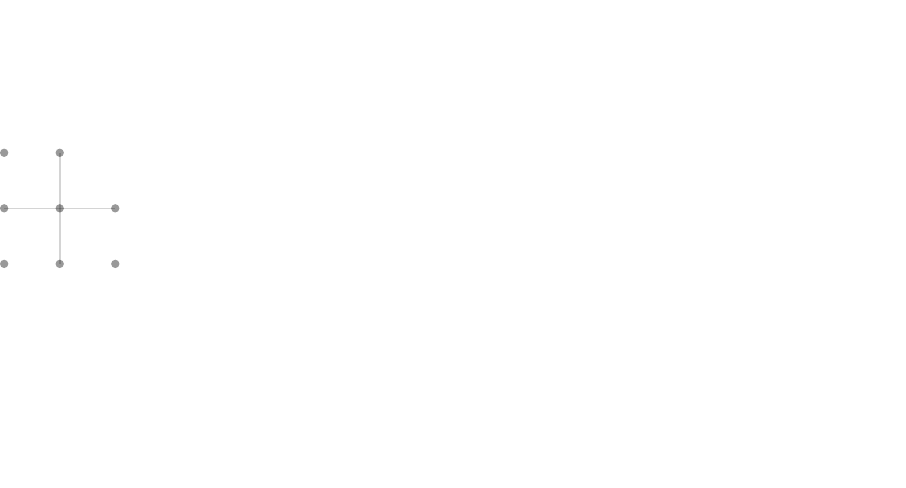}
	\end{figure}	
	\vspace{-0.1cm}	
	Finally, consider $I_{1234567}$, the improper face of $\Delta_\lambda$.
	Then Theorem \ref{theorem_main} tells us that 
	$\Phi_\lambda^{-1}(\textbf{\textup{u}})$ is an $S^1 \times S^1$-bundle over $S^1$ for every interior point $\textbf{\textup{u}}$ of $\Gamma_\lambda$.
	In fact, the Liouville-Arnold theorem implies that the bundle is trivial, i.e., $\Phi_\lambda^{-1}(\textbf{\textup{u}})$ is a torus $T^3$ for every 
	$\textbf{\textup{u}} \in \mathring{\Delta_\lambda}$, see also Theorem~\ref{theorem_contraction}.
	\vspace{-0.2cm}
	\begin{figure}[H]
	\scalebox{0.9}{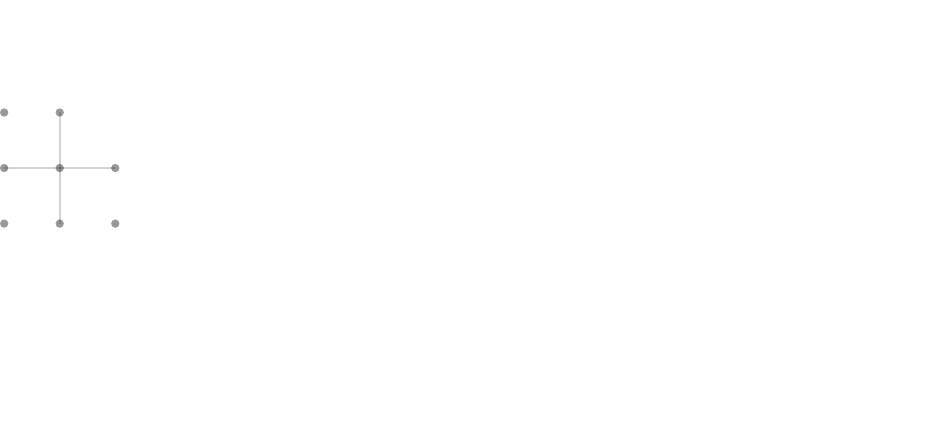}
	\end{figure}	
	\vspace{-0.2cm}
		
	Consequently, a Lagrangian fiber of the GC system on $(\mathcal{O}_\lambda, \omega_\lambda)$ is diffeomorphic to either $T^3$ (a fiber over an
	interior point of $\Delta_\lambda$) or $S^3$ (a fiber over $v_3$). Other fibers are isotropic but not Lagrangian submanifolds of
	$(\mathcal{O}_\lambda, \omega_\lambda)$ for dimensional reasons.
\end{example}

\begin{remark}\label{remark_su(3)}
In general, one should \emph{not} expect that every iterated bundle in~\eqref{theoremmaindia} is trivial. Namely, $\Phi^{-1}_\lambda(\textbf{\textup{u}})$ might not be the product 
space $\prod_{k=1}^{n-1} S_k (\gamma)$. For instance, consider the co-adjoint orbit $\mcal{O}_\lambda \simeq \mcal{F}(2,3;5)$ associated with 
$\lambda = (3,3,0,-3,-3)$ as in Figure~\ref{Fig_SU33}. By Theorem~\ref{theorem_main}, the GC fiber $\Phi_\lambda^{-1}(\textbf{0})$ over the origin is an $S^3$-bundle over $S^5$. 
Meanwhile, Proposition 2.7 in \cite{NU2} implies that $\Phi_\lambda^{-1}(\textbf{0})$ is $SU(3)$. It is however that $SU(3)$ and $S^5 \times S^3$ are not homotopy equivalent.
	\begin{figure}[H]
		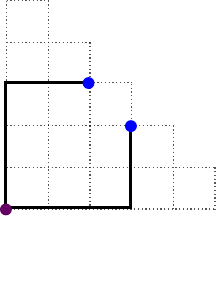
		\caption{\label{Fig_SU33} $SU(3)$-fiber}
	\end{figure}
\end{remark}

%------------------------------------------------------------------------------------
\subsection{Classification of Lagrangian faces}~
\label{ssecClassificationOfLagrangianFaces}

Recall that Theorem \ref{theorem_main} implies that a face $\gamma$ is Lagrangian if and only if the GC fiber over an interior point of $\gamma$ is of dimension  
$\frac{1}{2} \dim_{\R} \mathcal{O}_\lambda.$
Therefore to determine whether $\gamma$ is Lagrangian or not, it suffices to check that 
 \[
	\sum_{k=1}^{n-1} \dim S_k(\gamma) = \frac{1}{2} \dim_{\R} \mathcal{O}_\lambda.
 \]
In this section, we present an efficient way of checking whether a given face of $\Gamma_\lambda$ is Lagrangian or not by using so called ``L-shaped blocks''. 

\begin{definition}\label{definition_L_shaped_block}
	For each positive integer $k \in \Z_{\geq 1}$ and every lattice point $(p,q) \in \Z^2 \subset \R^2$, {\em the $k$-th L-shaped block at $(p,q)$}, 
	or simply {\em the $L_k$-block at $(p,q)$}, is the closed region defined by 
	\[
		L_k(p,q) := \bigcup_{(a,b)} \square^{(a,b)}
	\]
	where the union is taken over all $(a,b)$'s in $\Z^2$ such that
	\begin{itemize}
		\item $(a,b) = (p,q+i)$ \quad for $0 \leq i \leq k-1$, and
		\item $(a,b) = (p+i,q)$ \quad for $0 \leq i \leq k-1$.
	\end{itemize}
\end{definition}
	\vspace{-0.5cm}
	\begin{figure}[H]
	\scalebox{0.9}{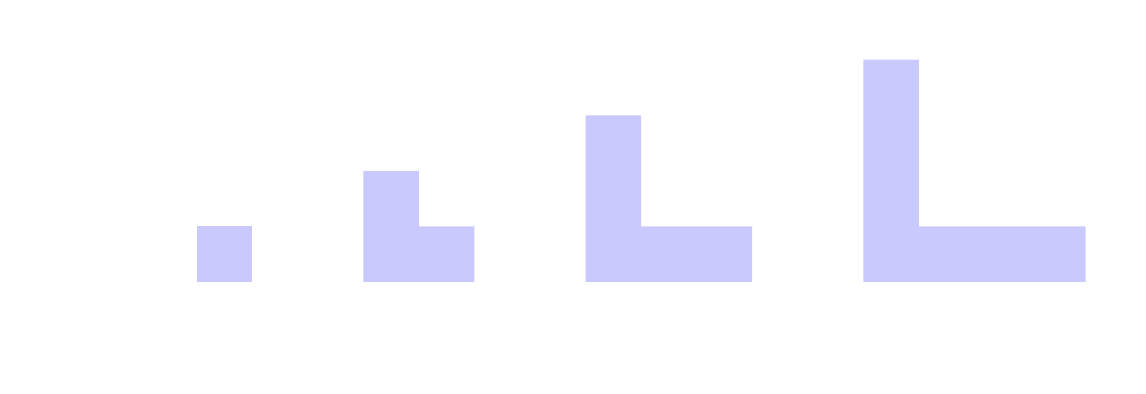}
	\caption{\label{figure_L_shape} $L$-shaped blocks}
	\end{figure}
	\vspace{-0.5cm}
	
\begin{remark}\label{rem:linearly-orderd}
    Note that GC patterns described in \eqref{equation_GC-pattern} are
    linearly ordered on any of $W$-shaped, $M$-shaped, and $L$-shaped
    blocks in the direction from the right or bottom most block to the left or top most
    block.
\end{remark}

\begin{definition}\label{definition_rigid}
	Let $\gamma$ be a face of $\Gamma_\lambda$.
	For a given positive integer $k \in \Z_{\geq 1}$ and a lattice point $(p,q) \in \Z^2$, we say that $L_k(p,q)$ is {\em rigid in} $\gamma$ if
	\begin{enumerate}
		\item the interior of $L_k(p,q)$ does not contain an edge of $\gamma$ and
		\item the rightmost edge and the top edge of $L_k(p,q)$ should be edges of $\gamma$.
	\end{enumerate}
\end{definition}

\begin{example}
	Let us consider $\Gamma = \Gamma(2,5;7)$ and consider a face $\gamma$ given as follows.
	\begin{figure}[H]
	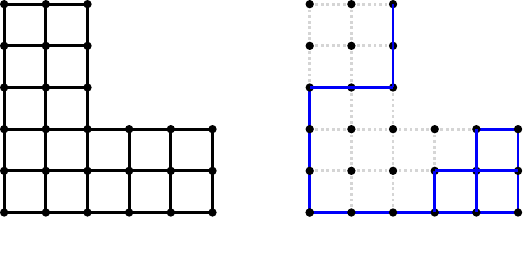
	\end{figure}
	In this example, there are exactly four rigid $L_k$-blocks : $L_3(1,1)$, $L_1(4,1)$, $L_1(5,1)$, and $L_1(5,2)$.
	\begin{figure}[H]
	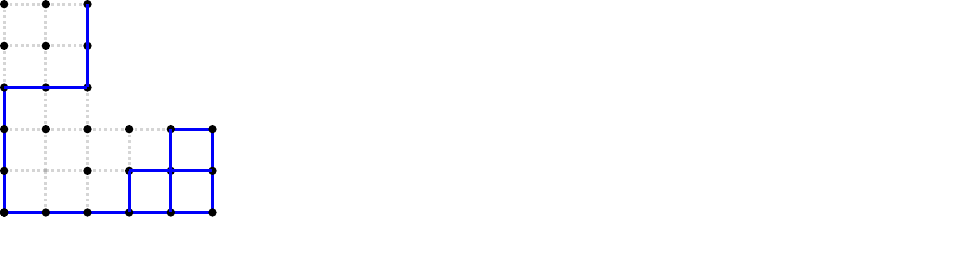
	\end{figure}
	We can check that any other $L$-blocks are \emph{not} rigid.
	For instance, $L_3(2,1)$ is not rigid since its interior contains an edge of $\gamma$. 
	\begin{figure}[H]
	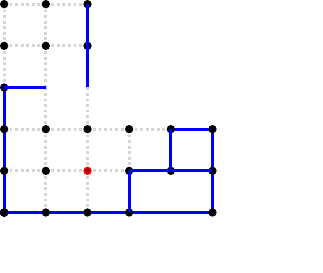
	\end{figure}
	Also, $L_2(2,2)$ is not rigid because its rightmost edge is \emph{not} an edge of $\gamma$ so that it violates the condition (2) in Definition
	\ref{definition_rigid}.
	\begin{figure}[H]
		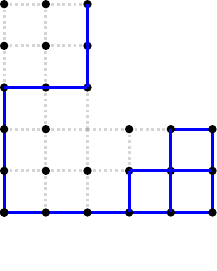
	\end{figure}
\end{example}

The following lemma follows from the min-max principle of GC pattern \eqref{equation_GC-pattern}, 
or more specifically from~\eqref{rem:diagram-pattern}.

\begin{lemma}\label{lemma_L_Q}
	Suppose that $L_k(a,b)$ is rigid in a face $\gamma$ of $\Gamma_\lambda$.
	Let $Q_k(a,b)$ be the closed region defined by
	\[
		Q_k(a,b) := \bigcup_{0 \leq i,j \leq k-1} \square^{(a+i,b+j)},
	\]
	i.e., $Q_k(a,b)$ is the square of size $(k \times k)$ that contains $L_k(a,b)$.
	Then there are no edges of $\gamma$ in the interior of $Q_k(a,b)$.
\end{lemma}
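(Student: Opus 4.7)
The plan is to argue by contradiction. Suppose some edge $e$ of $\gamma$ lies in the interior of $Q_k(a,b)$. Since $\gamma$ is a union of positive paths (Definition~\ref{definition_face}), $e$ belongs to some positive path $P \subseteq \gamma$ from the origin to a top vertex of $\Gamma_\lambda$, which is coordinatewise monotone (right/up only).

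First I would use condition~(1) of rigidity to reduce to the interesting cases. A direct inspection shows that the interior edges of $L_k(a,b)$ are precisely the horizontal edges $(a-1,y)\to(a,y)$ with $y\in\{b,\ldots,b+k-2\}$ and the vertical edges $(x,b-1)\to(x,b)$ with $x\in\{a,\ldots,a+k-2\}$; each of these also lies in the interior of $Q_k(a,b)$, and by rigidity none of them is in $\gamma$. After discarding these, the lower-left endpoint $(x_e,y_e)$ of $e$ must satisfy $x_e \ge a$ and $y_e \ge b$. Consequently $P$ performs a unique $x$-crossing via a horizontal edge $(a-1,y_1)\to(a,y_1)$ and a unique $y$-crossing via a vertical edge $(x_2,b-1)\to(x_2,b)$, both edges of $\gamma$, and condition~(1) applied again forces $y_1 \notin \{b,\ldots,b+k-2\}$ and $x_2 \notin \{a,\ldots,a+k-2\}$.

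The monotonicity of $P$ couples the two crossings, leaving only two configurations: either $y_1 \le b-1$ (so the $y$-crossing comes later, forcing $x_2 \ge a+k-1$), or $y_1 \ge b+k-1$ (so the $x$-crossing comes later, forcing $x_2 \le a-1$). In the first configuration $P$ stays in $\{y \le b-1\}$ before the $y$-crossing and in $\{x \ge a+k-1\}$ afterwards; the second is symmetric with $\{x \le a-1\}$ before and $\{y \ge b+k-1\}$ after. Either way $P$ never enters the rectangle $\{a \le x \le a+k-2,\ b \le y \le b+k-2\}$ that contains $(x_e,y_e)$, contradicting $e \subset P$. The main obstacle is the dichotomy of crossing configurations: once the rigidity veto is combined with monotonicity of positive paths, the contradictions are immediate, and notably condition~(2) of rigidity is not needed for this lemma.
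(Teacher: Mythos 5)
Your proof is correct and follows essentially the same route as the paper's: $\gamma$ is a union of positive (coordinatewise monotone) paths, and any such path reaching a vertex interior to $Q_k(a,b)$ must cross the interior of $L_k(a,b)$, contradicting rigidity~(1). The paper's version is terser---it fixes a vertex $v_0$ of $e$ interior to $Q_k(a,b)$ and simply asserts that the positive path through $v_0$ meets the interior of $L_k(a,b)$---whereas your two-case dichotomy on the $x$- and $y$-crossings unpacks that assertion explicitly; your remark that rigidity condition~(2) is unused here is also accurate.
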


\begin{proof}
	If $k=1$, then $L_1(a,b) = Q_1(a,b)$ and it has no edge of $\gamma$ in its interior so that there is nothing to prove. Now, assume that $k \geq 2$
	and suppose that there is an edge $e = [v_0v_1]$ of $\gamma$ contained in the interior of $Q_k(a,b)$.
	Then, without loss of generality, we may assume that
	$v_0 = (a_0, b_0)$ is in the interior of $Q_k(a,b)$ so that $a \leq a_0 < a+k-1$ and $b \leq b_0 < b+k-1$. 
	
	By Definition \ref{definition_face}, there exists a positive path $\delta$ contained in $\gamma$ passing through $v_0$. 
	Then $\delta$ should pass through the interior of $L_k(a,b)$ since $\delta$ is a shortest path from the origin of $\Gamma_\lambda$ and $v_0$, 
	which intersects the interior of $L_k(a,b)$. This contradicts the rigidity (1) in Definition \ref{definition_rigid}.
\end{proof}

\begin{lemma}\label{lemma_L_disjoint}
	If two different $L$-blocks $L_i(a,b)$ and $L_j(c,d)$ are rigid in the same face $\gamma$, 
	then they cannot be overlapped, i.e.,
	\[
		\mathring{L_i}(a,b) \cap \mathring{L_j}(c,d) = \emptyset
	\]
	where $\mathring{L_i}(a,b)$ and $\mathring{L_j}(c,d)$ denote the interior of $\mathring{L_i}(a,b)$ and $\mathring{L_j}(c,d)$, respectively.
\end{lemma}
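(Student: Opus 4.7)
The plan is to derive a contradiction from the assumption $\mathring{L_i}(a,b) \cap \mathring{L_j}(c,d) \neq \emptyset$ by combining Lemma~\ref{lemma_L_Q} with the strict monotonicity of the GC pattern~\eqref{equation_GC-pattern}. The idea is to pin down the values of the GC variables on a sufficiently large common region, and then use rigidity condition~(2) to force a strict boundary inequality to fail.

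Since each $L$-block is a union of closed unit boxes, the interiors overlap only if $L_i$ and $L_j$ share at least one full box $\square^{(p_0,q_0)}$; consequently $Q_i(a,b)$ and $Q_j(c,d)$ also share this box. By Lemma~\ref{lemma_L_Q}, neither $\mathring{Q}_i$ nor $\mathring{Q}_j$ contains an edge of $\gamma$. Every edge separating two adjacent boxes lying inside a $Q_\bullet$ sits in its interior, so via the correspondence between edges of $\gamma$ and strict inequalities in~\eqref{equation_GC-pattern} (cf.~\eqref{rem:diagram-pattern}), all variables $u_{p,q}$ indexed by boxes of $Q_i$ coincide with a single value $v_1$, and those of $Q_j$ with $v_2$. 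The common box forces $v_1 = v_2 =: v$, so every variable indexed by a box of $Q_i(a,b) \cup Q_j(c,d)$ equals $v$. On the other hand, rigidity condition~(2) for $L_i$ produces edges of $\gamma$ along the top and right of $L_i$, whence the strict GC pattern yields $u_{a,b+i} > v > u_{a+i,b}$; likewise $u_{c,d+j} > v > u_{c+j,d}$ for $L_j$.

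The remaining task is to exhibit at least one of the four boxes
\[
	\square^{(a,b+i)}, \quad \square^{(a+i,b)}, \quad \square^{(c,d+j)}, \quad \square^{(c+j,d)}
\]
as contained in $Q_i(a,b) \cup Q_j(c,d)$, which would force its variable to equal $v$ and contradict one of the four strict inequalities above. After swapping $L_i$ and $L_j$ if necessary we may assume $a \le c$; the existence of a shared box then restricts the configuration to one of the following: $(a,b)=(c,d)$ with $i \ne j$; $a=c$ and $b \ne d$; $b=d$ and $a \ne c$; or $a<c$ and $b>d$. In the first three ``aligned'' cases, the required containment is a direct integer count along the shared row or column, where the boundary subcases $b+i = d+j$ or $a+i = c+j$ collapse to $L_i = L_j$, which is excluded by distinctness.

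The main obstacle is the crossing case $a<c$, $b>d$, in which the $L$-blocks share neither a row, column, nor corner. Here the existence of a shared box forces $i \ge c - a + 1$ and $j \ge b - d + 1$, and only the two containments $\square^{(a+i,b)} \subset Q_j$ and $\square^{(c,d+j)} \subset Q_i$ are geometrically possible. The plan is to finish by observing that their simultaneous failure would require $a + i \ge c + j$ together with $b + i \le d + j$; subtracting these yields $a - c \ge b - d$, which is incompatible with $a - c \le -1 < 1 \le b - d$. Hence one of the two containments is forced, completing the proof.
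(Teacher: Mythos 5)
Your strategy is sound and is a close relative of the paper's: both proofs hinge on Lemma~\ref{lemma_L_Q}. The paper's version is shorter — after assuming WLOG $j \le i$, it observes directly that the top or rightmost edge of $L_j(c,d)$ must lie in the interior of $Q_i(a,b)$, contradicting Lemma~\ref{lemma_L_Q} because those two edges are edges of $\gamma$ by rigidity~(2). You instead evaluate the Gelfand--Cetlin coordinates $u_{\bullet,\bullet}$ on a point of $\mathring{f}_\gamma$, deduce they are constant on $Q_i \cup Q_j$, and seek a boundary box whose variable is pinned to $v$ while rigidity forces a strict inequality. This is a legitimate alternative route, and your careful treatment of the crossing case $a<c$, $b>d$ — in particular the arithmetic $a-c \ge b-d$ against $a-c \le -1 < 1 \le b-d$ — is correct and more explicit than anything in the paper.

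However, there is a genuine gap in the aligned cases. You assert that the boundary subcases $b+i = d+j$ (when $a=c$, $b \ne d$) and $a+i = c+j$ (when $b=d$, $a \ne c$) ``collapse to $L_i = L_j$.'' This is false. For example, with $a=c$ and $b < d$, the equality $b+i = d+j$ only makes the top boxes of $L_i(a,b)$ and $L_j(a,d)$ coincide; the bases are still at different heights and the $L$-shapes differ, so distinctness of the blocks gives no contradiction. In that subcase neither $\square^{(a,b+i)} \in Q_j(a,d)$ nor $\square^{(a,d+j)} \in Q_i(a,b)$ holds (both would need a strict inequality), and your argument stalls. The fix is to bring in the fourth boundary box you listed: since $j = b+i-d < i$ (because $d > b$) and $d \le b+i-1$ (from the overlap condition), one has $\square^{(a+j,d)} \subset Q_i(a,b)$, which forces $u_{a+j,d} = v$ and contradicts the strict inequality $v > u_{a+j,d}$ coming from rigidity~(2) of $L_j(a,d)$. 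The symmetric fix using $\square^{(c,b+i)}$ handles the $b=d$, $a \ne c$ case. With this repair the proof goes through; without it, the aligned boundary subcases are not actually resolved.
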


\begin{proof}
	If $(a,b) = (c,d)$ and $i \neq j$, then both $L_i(a,b)$ and $L_i(c,d)$ cannot be rigid since at least one of these two blocks violates (1) in Definition~\ref{definition_rigid}. 
	Suppose that $(a,b) \neq (c,d)$ and $\mathring{L_i}(a,b) \cap \mathring{L_j}(c,d) \neq \emptyset$. Without loss of generality, we may assume that $j \leq i$.
	Then it is straightforward that either the top edge or the rightmost edge of $L_i(c,d)$ lies on the interior of
	$Q_j(a,b)$. It leads to a contradiction to Lemma \ref{lemma_L_Q} since
	the top edge and the rightmost edge of $L_i(c,d)$ are edges of $\gamma$ by Definition \ref{definition_L_shaped_block}.
	\begin{figure}[H]
		\scalebox{0.9}{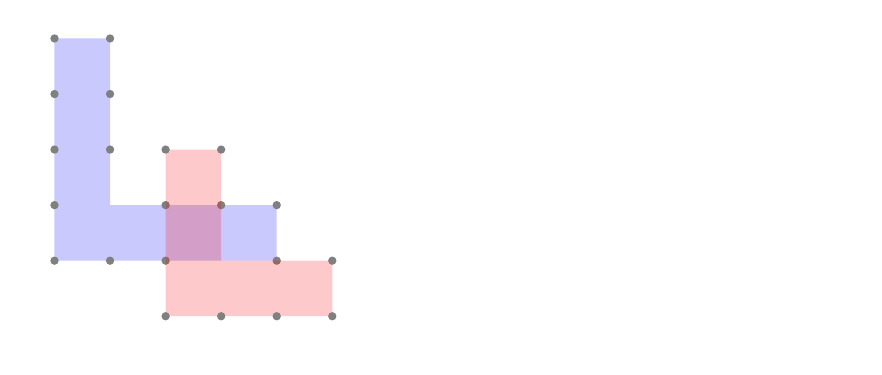}
	\end{figure}
	\vspace{-0.2cm}
\end{proof}

\begin{proposition}\label{proposition_classification_lagrangian_face}
	Let $\lambda = (\lambda_1, \cdots, \lambda_n)$ be given in \eqref{lambdaidef}. 
	For a given face $\gamma$ of $\Gamma_\lambda$, let $\mathcal{L}(\gamma)$ be the set of all rigid L-shaped blocks in $\gamma$.
	Then, for any point $\textbf{\textup{u}}$ in the relative interior of the face $f_\gamma = \Psi(\gamma)$ of $\Delta_\lambda$, we have 
	\[
		\dim \Phi_\lambda^{-1}(\textbf{\textup{u}}) = \sum_{L_k(a,b) \in \mathcal{L}(\gamma)} |L_k(a,b)| = \sum_{L_k(a,b) \in \mathcal{L}(\gamma)} (2k-1)
	\]	
	where $|L_k(a,b)| = 2k - 1$ is the are of $L_k(a,b)$.
\end{proposition}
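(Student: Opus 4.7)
The plan is to combine Theorem~\ref{theorem_main} with an explicit, size-preserving bijection between $\mathcal{L}(\gamma)$ and the ``nontrivial'' $M$-shaped regions living in the blocks $W_k(\gamma)$.

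First, I would invoke Theorem~\ref{theorem_main} to rewrite
\[
\dim \Phi_\lambda^{-1}(\textbf{\textup{u}}) \;=\; \sum_{k=1}^{n-1} \dim S_k(\gamma) \;=\; \sum_{k=1}^{n-1} \sum_{\mathcal{D} \subset W_k(\gamma)} \dim S_k(\mathcal{D}),
\]
and observe that, by the definition of $S_k(\mathcal{D})$, only those simple closed regions $\mathcal{D}$ of shape $M_\ell$ which contain at least one bottom vertex of $W_k$ contribute, and each such $\mathcal{D}$ contributes $\dim S^{2\ell-1} = 2\ell-1$, which is precisely its area $|M_\ell|$ as a union of unit boxes. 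Consequently, the right-hand side of the proposition counts the total area of nontrivial $M$-blocks across all $W_k(\gamma)$'s.

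Next I would define an explicit map
\[
\Xi\colon \mathcal{L}(\gamma) \;\longrightarrow\; \bigsqcup_{k}\bigl\{\mathcal{D}\subset W_k(\gamma) : \mathcal{D} \text{ is an } M\text{-block containing a bottom vertex}\bigr\}
\]
sending a rigid $L_\ell(a,b)\in\mathcal{L}(\gamma)$ to the translate of $M_\ell$ by $(a-1,b-1)$ inside $W_{a+b+\ell-2}(\gamma)$, namely
\[
\mathcal{D}_{(a,b,\ell)} \;=\; \bigcup_{i=0}^{\ell-1}\square^{(a+i,\,b+\ell-1-i)} \;\cup\; \bigcup_{i=1}^{\ell-1}\square^{(a+i,\,b+\ell-i)}.
\]
The lattice point $(a-1,b-1)$ lies in this region and satisfies $(a-1)+(b-1) = (a+b+\ell-2)-1$, so the bottom-vertex condition holds automatically. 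The key verification is that $\mathcal{D}_{(a,b,\ell)}$ is in fact a simple closed region of the claimed shape in $W_{a+b+\ell-2}(\gamma)$. Its two extremal bounding walls, namely the top edge of $\square^{(a,b+\ell-1)}$ and the right edge of $\square^{(a+\ell-1,b)}$, lie in $\gamma$ by the rigidity condition~(2) of Definition~\ref{definition_rigid}. For the internal edges of $\mathcal{D}_{(a,b,\ell)}$, which must be absent from $\gamma$, those strictly inside $Q_\ell(a,b)$ are excluded directly by Lemma~\ref{lemma_L_Q}, while those only touching $\partial Q_\ell(a,b)$ are handled by a positive-path argument: using rigidity~(1) together with Lemma~\ref{lemma_L_Q}, one shows by induction on $c+d$ that every interior lattice point $(c,d)$ of $Q_\ell(a,b)$ is absent from $\gamma$, starting from the inner corner $(a,b)$ (both of whose incoming edges are interior edges of $L_\ell(a,b)$ itself) and propagating outward. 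This immediately kills all edges incident to such interior lattice points, finishing the argument.

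Finally, injectivity of $\Xi$ follows from Lemma~\ref{lemma_L_disjoint}, as disjoint rigid $L$-blocks clearly map to distinct $M$-regions. Surjectivity is obtained by reversing the construction: given an $M_\ell$-block $\mathcal{D}$ with a bottom vertex in $W_k(\gamma)$, its translation parameter $(p,q)$ (with $p+q=k-\ell$) determines $(a,b)=(p+1,q+1)$ and $\ell$ uniquely, and one checks that the corresponding $L_\ell(a,b)$ is rigid in $\gamma$ by transferring the bounding-wall and no-interior-wall conditions of $\mathcal{D}$ back to $L_\ell(a,b)$. Since $\Xi$ preserves $\ell$, summing $|L_\ell(a,b)|=|\mathcal{D}_{(a,b,\ell)}|=2\ell-1$ over $\mathcal{L}(\gamma)$ gives exactly $\dim\Phi_\lambda^{-1}(\textbf{\textup{u}})$, as claimed. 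The hardest step is the verification that $\mathcal{D}_{(a,b,\ell)}$ is an undivided simple closed region: the positive-path induction ruling out $\gamma$-edges incident to interior lattice points of $Q_\ell(a,b)$ is what is not directly supplied by Lemmas~\ref{lemma_L_Q} and~\ref{lemma_L_disjoint} and must be checked by hand.
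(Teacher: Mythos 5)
Your argument is essentially the paper's: reduce via Theorem~\ref{theorem_main} to $\sum_k \dim S_k(\gamma)$, then exhibit an area-preserving bijection between $\mathcal{L}(\gamma)$ and the nontrivial $M$-shaped regions appearing in the $W_k(\gamma)$'s (the paper constructs the correspondence in the reverse direction, starting from such an $M$-region and reading off a rigid $L$-block, but that is only a matter of presentation). One small arithmetic slip: the lattice point $(a-1,b-1)$ is the inner corner of $L_\ell(a,b)$, so $(a-1)+(b-1)=a+b-2$, which equals $(a+b+\ell-2)-1$ only when $\ell=1$, and for $\ell>1$ this point does not lie in $\mathcal{D}_{(a,b,\ell)}$ at all. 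The bottom-vertex condition is nevertheless satisfied, but by the corner vertices of the boxes on the lower anti-diagonal, e.g.\ $(a-1,\,b+\ell-2)$, the bottom-left vertex of $\square^{(a,\,b+\ell-1)}$, whose coordinate sum is the correct value $k-1=a+b+\ell-3$. Finally, on the step you flag as hardest: the proof of Lemma~\ref{lemma_L_Q} actually starts from an endpoint $v_0$ in $\mathring{Q}_k(a,b)$ and therefore already excludes any edge of $\gamma$ whose open segment meets the interior of $Q_\ell(a,b)$, including those edges of $\mathcal{D}_{(a,b,\ell)}$ with one endpoint on $\partial Q_\ell(a,b)$; under that reading your extra positive-path induction is redundant, though it is sound and is the right supplement under a stricter reading of the lemma's statement.
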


\begin{proof}
	By Theorem \ref{theorem_main}, it is enough to show that
	\[
		\sum_{k=1}^{n-1} \dim S_k(\gamma) =  \sum_{L_k(a,b) \in \mathcal{L}(\gamma)} |L_k(a,b)|
	\] where 
	\[
		S_k(\gamma) = \prod_{\mathcal{D} \subset W_k(\gamma)} S_k(\mathcal{D})
	\]
	as defined in (\ref{equation_block_sphere}).
	
	Let $\mathcal{D}$ be a simple closed region in $W_k(\gamma)$. Suppose that $\mathcal{D}$ contains a bottom vertex of $W_k$ and
	$\mathcal{D} = M_j(a,b)$ for some $j \geq 1$ where $M_j(a,b)$ denotes the $j$-th $M$-shaped block whose top-left vertex is
	$(a,b)$.
	Then there are two edges $e_1$ and $e_2$, top-left-horizontal one and bottom-right-vertical one, of $\gamma$ on the boundary of $M_j$ as we see below.
	\begin{figure}[h]
	\scalebox{0.9}{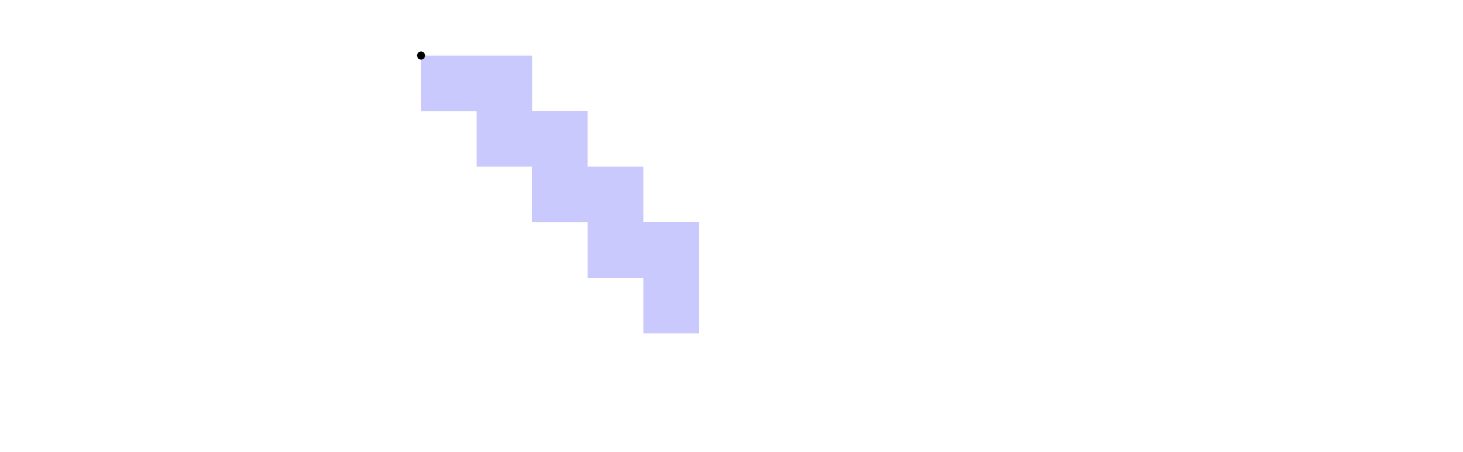}
	\end{figure}
	By the min-max principle, we see that 
	there is no edge of $\gamma$ in the interior of $Q_j(a+1,b-j+1)$. Thus $L_j(a+1,b-j+1)$
	is a rigid $L_j$-block in $\gamma$.
	
	Similarly, for any rigid $L$-shaped block $L_j(a+1,b-j+1)$ in $\gamma$ (for some $j \in \Z_{\geq 1}$ and $(a,b) \in (\Z_{\geq 1})^2$), 
	we can find a closed region $\mcal{D}$ in some $W_k(\gamma)$ such that 
	$\mcal{D}$ is $M_j(a,b)$ containing some bottom vertex of $W_k$. 
	Therefore, 
	there is a one-to-one correspondence between the set of rigid $L$-shaped blocks in $\gamma$ and the set of simple closed regions $\mathcal{D}$ appeared
	in some $W_k(\gamma)$ and containing a bottom vertex of $W_k(\gamma)$. To sum up, we obtain
	\[
		\begin{array}{rcl}
			\displaystyle \bigcup_{j=1} \,\, \{\textrm{rigid~$L_j$}$-$\mathrm{blocks~in~}\gamma \}  & \stackrel{1:1} \Leftrightarrow & \displaystyle \bigcup_{j=1} \bigcup_{k=1}^{n-1} 
			\,\, \left\{ \mathcal{D} \subset W_k(\gamma) ~|~ S_k(\mathcal{D}) = S^{2j-1} \right\} \\ [0.5em]
			L_j(a+1,b-j+1) & \leftrightarrow & \mcal{D} = M_j(a,b).
		\end{array}
	\]
	Moreover, we have $|M_j| = |L_j| = \dim S_k(\mathcal{D}) = 2j-1$. This completes the proof.
\end{proof}

The following corollary is derived from Theorem \ref{theorem_main},
Lemma \ref{lemma_L_disjoint} and Proposition \ref{proposition_classification_lagrangian_face}.

\begin{corollary}\label{corollary_L_fillable}
Let $\gamma$ and $f_\gamma$ be as in Proposition \ref{proposition_classification_lagrangian_face}.
Then the followings are equivalent.
	\begin{enumerate}
		\item For an interior point $\textbf{\textup{u}}$ of $f_\gamma$, the fiber $\Phi_\lambda^{-1}(\textbf{\textup{u}})$ is a Lagrangian submanifold of $(\mathcal{O}_\lambda, \omega)$.
		\item For any interior point $\textbf{\textup{u}}$ of $f_\gamma$, the fiber $\Phi_\lambda^{-1}(\textbf{\textup{u}})$ is a Lagrangian submanifold of $(\mathcal{O}_\lambda, \omega)$.
		\item The set of rigid L-shaped blocks in $\gamma$ covers the whole $\Gamma_\lambda$.
	\end{enumerate}
\end{corollary}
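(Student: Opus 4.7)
The plan is to combine Theorem~\ref{theorem_main}, Proposition~\ref{proposition_classification_lagrangian_face}, and Lemma~\ref{lemma_L_disjoint}, together with a direct area count of the ladder diagram $\Gamma_\lambda$.

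First, the equivalence (1)$\Leftrightarrow$(2) should come essentially for free from Theorem~\ref{theorem_main}: the fiber $\Phi_\lambda^{-1}(\textbf{u})$ over any interior point $\textbf{u} \in \mathring{f}_\gamma$ has a diffeomorphism type expressed solely in terms of the iterated $S_k(\gamma)$-bundle structure, which depends only on the face $\gamma$ and not on the chosen point. In particular, $\dim \Phi_\lambda^{-1}(\textbf{u})$ is constant on $\mathring{f}_\gamma$, so being Lagrangian at one interior point is equivalent to being Lagrangian at every interior point.

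For (1)$\Leftrightarrow$(3), I will use that every Gelfand--Cetlin fiber is isotropic (Theorem~\ref{theoremA}), so the fiber is Lagrangian precisely when $\dim \Phi_\lambda^{-1}(\textbf{u}) = \tfrac{1}{2}\dim_\R \mathcal{O}_\lambda = \dim_\C \mathcal{O}_\lambda = \tfrac{1}{2}(n^2 - \sum k_i^2)$. Proposition~\ref{proposition_classification_lagrangian_face} rewrites the left-hand side as the total area $\sum_{L\in\mathcal{L}(\gamma)} |L|$ of the rigid $L$-blocks in $\gamma$. Thus it is enough to show that this total area equals $\mathrm{Area}(\Gamma_\lambda)$ if and only if the rigid $L$-blocks cover $\Gamma_\lambda$.

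The key area computation will be the elementary identity: slicing $\Gamma_\lambda$ by horizontal strips cut out at heights $n-n_i$, one gets $\mathrm{Area}(\Gamma_\lambda) = \sum_{i=1}^{r+1} k_i(n-n_i)$. Using $n - n_i = \sum_{j > i} k_j$ and $n = \sum_i k_i$, this reduces to
\[
\mathrm{Area}(\Gamma_\lambda) \;=\; \sum_{1\le i < j \le r+1} k_i k_j \;=\; \tfrac{1}{2}\Bigl(\bigl(\textstyle\sum_i k_i\bigr)^2 - \textstyle\sum_i k_i^2\Bigr) \;=\; \tfrac{1}{2}\bigl(n^2 - \textstyle\sum_i k_i^2\bigr) \;=\; \dim_\C \mathcal{O}_\lambda.
\]
Combining this with Lemma~\ref{lemma_L_disjoint}, which guarantees that distinct rigid $L$-blocks have pairwise disjoint interiors, and with the fact that each rigid $L$-block is contained in $\Gamma_\lambda$ (its top and rightmost edges lie in $\gamma \subset \Gamma_\lambda$, forcing the block to fit inside the diagram), yields $\sum_{L\in\mathcal{L}(\gamma)} |L| \le \mathrm{Area}(\Gamma_\lambda)$, with equality iff the rigid $L$-blocks cover $\Gamma_\lambda$. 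This closes (1)$\Leftrightarrow$(3).

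The main potential obstacle is the verification that every rigid $L$-block actually sits inside $\Gamma_\lambda$; this needs a brief check from Definition~\ref{definition_rigid}, since it is the rigidity hypothesis (top and rightmost edges belong to $\gamma$) that confines the $L$-block to the interior of the diagram. Once this is in hand, the rest is a clean area-counting argument.
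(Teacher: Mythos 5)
Your proposal is correct and follows exactly the route the paper intends: the paper states the corollary "is derived from Theorem~\ref{theorem_main}, Lemma~\ref{lemma_L_disjoint} and Proposition~\ref{proposition_classification_lagrangian_face}" without further detail, and your argument supplies precisely the missing bookkeeping, namely the identity $\mathrm{Area}(\Gamma_\lambda)=\sum_{i<j}k_ik_j=\tfrac12(n^2-\sum_i k_i^2)=\dim_\C\mathcal{O}_\lambda$ together with the disjointness and containment of rigid $L$-blocks. The containment check you flag does go through, since the top and rightmost edges of a rigid block lie in $\gamma\subset\Gamma_\lambda$ and the staircase shape of $\Gamma_\lambda$ then forces the remaining boxes of each arm into the diagram.
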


Also, we have the following corollary which follows from Corollary \ref{corollary_L_fillable}.
\begin{corollary}\label{corollary_unless_projective}
	A Gelfand-Cetlin system $\Phi_\lambda \colon \mathcal{O}_\lambda \rightarrow \Delta_\lambda$ on a co-adjoint orbit $(\mathcal{O}_\lambda, \omega_\lambda)$
	always possesses a non-torus Lagrangian fiber unless $\mathcal{O}_\lambda$ is a projective space.
\end{corollary}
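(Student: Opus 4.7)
The plan is to apply Corollary~\ref{corollary_L_fillable} in both directions. If $\mathcal{O}_\lambda\cong\CP^{n-1}$, the ladder diagram $\Gamma_\lambda$ is a single row or column of $n-1$ boxes, so every rigid $L$-block of any face is of size one, and hence by Theorem~\ref{theorem_main} every Lagrangian fiber is a torus. For the substantive converse, the plan is to exhibit, whenever $\mathcal{O}_\lambda$ is not a projective space, a face $\gamma$ of $\Gamma_\lambda$ whose rigid $L$-block tiling contains an $L_2$-block; the corresponding Lagrangian fiber then carries an $S^3$-factor by Theorem~\ref{theorem_main} and so is not a torus.

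Combinatorially, $\mathcal{O}_\lambda\cong\CP^{n-1}$ iff $r=1$ and $n_1\in\{1,n-1\}$, which is equivalent to $\Gamma_\lambda$ being a single strip. In general, the box region of $\Gamma_\lambda$ is a Young-diagram shape with nonincreasing column heights $h_1\geq\cdots\geq h_m$, and a short check shows it contains no $L_2$-subregion iff $h_1=1$ or $m=1$. So when $\mathcal{O}_\lambda$ is not a projective space, there exists $(p,q)\in(\Z_{\geq 1})^2$ with $L_2(p,q)\subseteq\Gamma_\lambda$, i.e., all three boxes $\square^{(p,q)},\square^{(p+1,q)},\square^{(p,q+1)}$ lie in $\Gamma_\lambda$. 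Define $\gamma$ to be the subgraph obtained from $\Gamma_\lambda$ by deleting the vertex $v=(p,q)$ and its four incident edges.

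Two things remain to check. First, $\gamma$ is a face: for every top vertex $w=(a,b)$ (with $a+b=n\geq p+q+1$, forcing $a\geq p+1$ or $b\geq q+1$), one builds a positive path from the origin to $w$ avoiding $v$ by routing through column $p+1$ using $\square^{(p+1,q)}$, or through row $q+1$ using $\square^{(p,q+1)}$; monotonicity of $V_{\Gamma_\lambda}$ keeps the detour inside $\Gamma_\lambda$, and every edge of $\gamma$ sits on such a positive path. Second, the rigid $L$-blocks of $\gamma$ tile $\Gamma_\lambda$: the two interior edges $[(p-1,q),(p,q)]$ and $[(p,q-1),(p,q)]$ of $L_2(p,q)$ are removed (both incident to $v$), while its top edge $[(p-1,q+1),(p,q+1)]$ and rightmost edge $[(p+1,q-1),(p+1,q)]$ remain, so $L_2(p,q)$ is rigid in $\gamma$; every other box $\square^{(a,b)}\subseteq\Gamma_\lambda\setminus L_2(p,q)$ has its top and rightmost edges disjoint from $v$, so $L_1(a,b)$ is rigid. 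The total area of these rigid $L$-blocks equals $\dim_{\C}\mathcal{O}_\lambda$, and the conclusion follows from Corollary~\ref{corollary_L_fillable} together with Theorem~\ref{theorem_main}. The main technical difficulty is the detour-path construction; it is handled by the monotonicity of $V_{\Gamma_\lambda}$ but requires brief case analysis based on the position of $w$ relative to $v$.
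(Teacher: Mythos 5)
Your proposal is correct, and it supplies the explicit construction that the paper omits: the paper states only that the corollary "follows from Corollary~\ref{corollary_L_fillable}" without exhibiting a Lagrangian face, whereas you actually produce one. Your strategy — characterize $\CP^{n-1}$ as the case where the box region of $\Gamma_\lambda$ admits no $L_2$-subregion, then in the remaining cases delete a single vertex $v=(p,q)$ with $L_2(p,q)\subseteq\Gamma_\lambda$ to obtain a codimension-$3$ face $\gamma$ whose rigid $L$-blocks are exactly $L_2(p,q)$ together with $L_1(a,b)$ for the remaining boxes — is exactly the mechanism Corollary~\ref{corollary_L_fillable} calls for. Two small points worth making explicit when you write this up: first, the check that $\gamma$ is a face requires not only that each top vertex be reachable by a positive path avoiding $v$, but that \emph{every edge} of $\gamma$ lies on such a path; this follows from the downward-closedness of $V_{\Gamma_\lambda}$ and the presence of the detour boxes $\square^{(p+1,q)}$ and $\square^{(p,q+1)}$, but it deserves the explicit case split you allude to. Second, the inference "carries an $S^3$-factor and so is not a torus" is most cleanly justified by comparing dimensions: the fiber has dimension $\sum |L_k| = \dim_\C\mathcal{O}_\lambda$, while $\pi_1$ has rank $\dim\gamma = \#\{\text{rigid } L_1\text{-blocks}\} = \dim_\C\mathcal{O}_\lambda - 3$ (by Proposition~\ref{proposition_M_1_block_simple_closed_region} and Proposition~\ref{proposition_pi_2_zero_pi_1_k}), and these disagree, whereas for a torus $T^d$ one has $d = \operatorname{rank}\pi_1$.
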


\begin{example}\label{example_gr26}
	Let $\lambda = \{t,t,0,0,0,0\}$ with $t >0$. The co-adjoint orbit $\mathcal{O}_\lambda$ is a complex Grassmannian $\mathrm{Gr}(2,6)$
	of two planes in $\C^6$ and the corresponding ladder diagram
	$\Gamma_\lambda$ is given as below.
	\begin{figure}[H]
	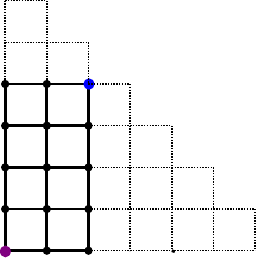
	\end{figure}
	Observe that any faces of $\Gamma_\lambda$ do not admit rigid $L_k$-blocks of $k > 2$. 
	Also, note that there are three Lagrangian faces $\gamma_1$, $\gamma_2$ and $\gamma_3$ of $\Gamma_\lambda$
	which have only one rigid $L_2$-block as follows.
	\begin{figure}[H]
	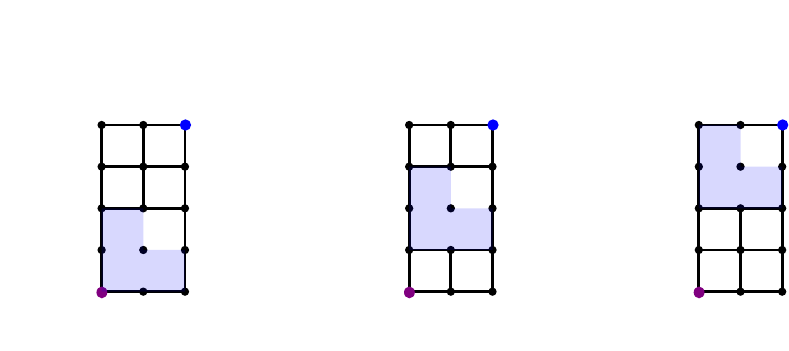
	\end{figure}
	Finally, there is exactly one Lagrangian face $\gamma_4$ which has two rigid $L_2$-blocks as below.
	\begin{figure}[H]
	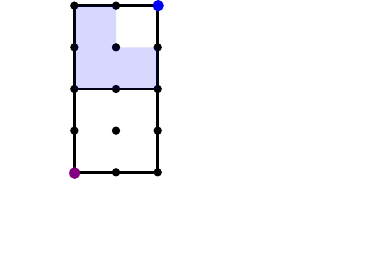
	\end{figure}
	Thus there are exactly four proper Lagrangian faces $\gamma_1, \gamma_2, \gamma_3$ and $\gamma_4$ of $\Gamma_\lambda$.
\end{example}

\begin{example}
	Let $\lambda = \{3,2,1,0\}$. Then the co-adjoint orbit $\mathcal{O}_\lambda$ is the full flag manifold $\mcal{F}(4)$ and the corresponding ladder diagram $\Gamma_\lambda$ is as follows.
	\begin{figure}[H]
	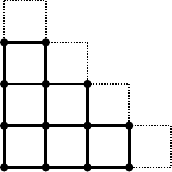
	\end{figure}
	We can easily see that any face of $\Gamma_\lambda$ does not have a rigid $L_k$-block for $k \geq 3$.
	On the other hand, there are exactly three Lagrangian faces of $\Gamma_\lambda$ containing one rigid $L_2$-block as below.
	 \begin{figure}[H]
	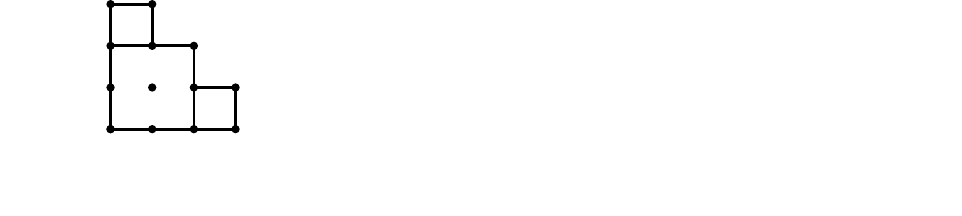
	\end{figure}
	Finally, it is not hard to see that there is no Lagrangian face that contains more than one $L_2$-block.
	Thus $\gamma_1$, $\gamma_2$, and $\gamma_3$ are the only proper Lagrangian faces of $\Gamma_\lambda$.
\end{example}

\vspace{0.1cm}
%------------------------------------------------------------------------------------
\section{Iterated bundle structures on Gelfand-Cetlin fibers}
\label{secIteratedBundleStructuresOnGelfandCetlinFibers}
\label{section4}

In this section, for each point $\textbf{\textup{u}}$ in a GC polytope $\Delta_\lambda$, we will construct the iterated bundle $E_\bullet$ described in 
Theorem \ref{theoremA} whose total space is the fiber $\Phi^{-1}_\lambda(\textbf{\textup{u}})$. Using this construction, we complete the proof of Theorem \ref{theorem_main} by showing that
each $\Phi^{-1}_\lambda(\textbf{\textup{u}})$ is an isotropic submanifold of $(\mathcal{O}_\lambda, \omega_\lambda)$.

	For each $\ell \geq 1$, we denote by $\mathcal{H}_\ell$ the set of $\ell \times \ell$ Hermitian matrices and by $\mathrm{sp}(x)$ the spectrum of $x$.
	For a fixed integer $k \geq 1$, consider sequences  $\fa = (a_1, \cdots, a_{k+1})$ and $\fb = (b_1, \cdots, b_k)$ of real numbers satisfying
	\begin{equation}\label{patternaandb}
		a_1 \geq b_1 \geq a_2 \geq b_2 \geq \cdots \geq a_k \geq b_k \geq a_{k+1}.
	\end{equation}
	Let
	\[
			\mathcal{O}_\fa =  \left\{ x \in \mathcal{H}_{k+1} ~\big{|}~  \mathrm{sp}(x) = \{a_1, \cdots, a_{k+1}\} \right\}
	\]
	be the co-adjoint $U(k+1)$-orbit of the diagonal matrix $I_\fa := \mathrm{diag}(a_1,\cdots, a_{k+1})$ in $\mathcal{H}_{k+1}  \cong \uu(k+1)^*$. Then,
	\begin{align*}
		\mathcal{A}_{k+1}(\fa,\fb) & = \left\{ x \in \mathcal{H}_{k+1} ~\big{|}~ \mathrm{sp}(x) = \{a_1, \cdots, a_{k+1}\}, ~\mathrm{sp}(x^{(k)}) = \{b_1, \cdots, b_k\} \right\}
	\end{align*}
	is a subset of $\mcal{O}_\fa$
	where $x^{(k)}$ denotes the $(k \times k)$ leading principal submatrix of $x$.
	It naturally comes with the projection map
	\[
		\begin{array}{ccccc}
		\rho_{k+1} \colon & \mathcal{A}_{k+1}(\fa,\fb) & \rightarrow & \mathcal{O}_\fb \\
		                     &                   x                     & \mapsto & x^{(k)}\\
		\end{array}
	\]
	from $\mathcal{A}_{k+1}(\fa,\fb)$ to the co-adjoint $U(k)$-orbit $\mathcal{O}_\fb$ of the diagonal matrix $I_\fb$ in $\mathcal{H}_k \cong \uu(k)^*$.

	Let $W_k(\fa,\fb)$ be the $k$-th $W$-shaped block $W_k$ together with walls defined by the equalities of $a_i$'s and $b_j$'s as in Figure~\ref{figure_block_a_b}.
	By comparing the divided regions by the walls on $W_k(\fa,\fb)$ with $M$-shaped blocks as in~\eqref{equation_block_sphere},
	we define a topological space $S_k(\fa,\fb)$, which is either a single point or a product space of odd dimensional spheres.
	
	\begin{figure}[H]
	\scalebox{0.9}{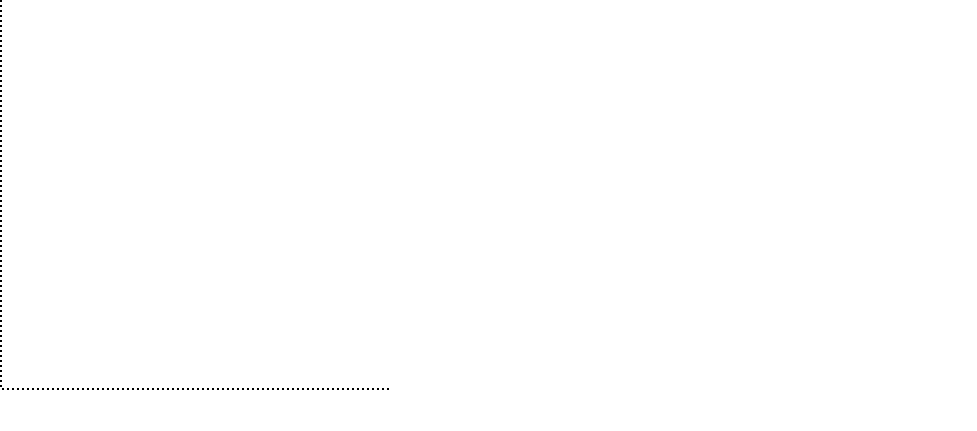}
	\caption{\label{figure_block_a_b} $W_k(\fa, \fb)$}
	\end{figure}	

	\begin{example}
	\begin{enumerate}
	\item
		For $\fa = (a_1,a_2,a_3) = (5,4,2)$ and $\fb = (b_1, b_2) = (4,2)$, $W_2(\fa,\fb)$ is divided by three simply closed regions $\mcal{D}_1, \mcal{D}_2$ and $\mcal{D}_3$. Since $\mcal{D}_1$ does not contain any bottom vertices and neither $\mcal{D}_2$ nor $\mcal{D}_3$ match with $M$-shaped blocks, $S_2(\fa,\fb) = S_2(\mathcal{D}_1) \times S_2(\mathcal{D}_2)  \times S_2(\mathcal{D}_3)  \cong \mathrm{point}$.
		 \begin{figure}[H]
				\scalebox{0.9}{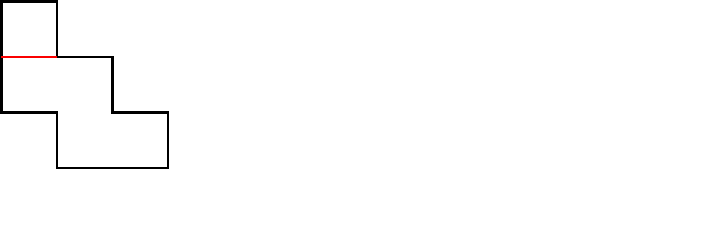}
		\end{figure}
		\vspace{-0.2cm}

		\item		For $\fa = (5,4,2)$ and $\fb = (4,4)$, $W_2(\fa,\fb)$ is divided by three simply closed regions $\mcal{D}_1, \mcal{D}_2$ and $\mcal{D}_3$. Observe that $\mcal{D}_1$ and $\mcal{D}_3$ do not contain any bottom vertices and $\mathcal{D}_2$ is an $M_2$-block containing bottom vertices of $W_2$. Therefore, we have $S_2(\fa,\fb) = \mathrm{point} \times S^3 \times \mathrm{point} \cong S^3.$

		\begin{figure}[H]
				\scalebox{0.9}{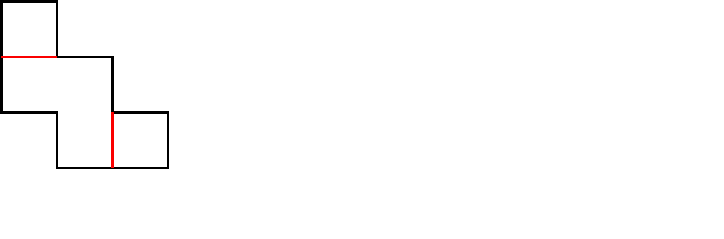}
		\end{figure}
		\vspace{-0.3cm}	
	\end{enumerate}
	\end{example}
	
	\begin{proposition}\label{proposition_A_k_S_k_bundle}\label{proposition_bundle}
		With the notations above, $\rho_{k+1} \colon \mathcal{A}_{k+1}(\fa,\fb) \rightarrow \mathcal{O}_\fb$ is an $S_k(\fa,\fb)$-bundle over $\mathcal{O}_\fb$.
	\end{proposition}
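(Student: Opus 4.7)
The plan is to exploit the natural $U(k)$-action on $\mathcal{A}_{k+1}(\fa,\fb)$ to reduce the bundle-structure question to a fiber computation over a chosen base point, and then to identify this fiber explicitly from the characteristic polynomial of a block Hermitian matrix.

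First I would set up the $U(k)$-equivariant bundle structure. The group $U(k)$ acts on $\mathcal{A}_{k+1}(\fa,\fb)$ by $U \cdot x := \mathrm{diag}(U, 1)\, x\, \mathrm{diag}(U, 1)^*$, which preserves both $\mathrm{sp}(x) = \fa$ (conjugation) and $\mathrm{sp}(x^{(k)}) = \fb$ (since $x^{(k)}$ becomes $U x^{(k)} U^*$). Hence $\rho_{k+1}$ is $U(k)$-equivariant with respect to the transitive $U(k)$-action on $\mathcal{O}_\fb$, and the spectral theorem identifies $\mathrm{Stab}(I_\fb) = \prod_{i=1}^p U(m_i)$, where $\mu_1 > \cdots > \mu_p$ are the distinct eigenvalues of $I_\fb$ with multiplicities $m_1, \ldots, m_p$. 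This realizes $\mathcal{A}_{k+1}(\fa,\fb) \cong U(k) \times_{\prod U(m_i)} F$ with $F := \rho_{k+1}^{-1}(I_\fb)$, so $\rho_{k+1}$ is automatically a fiber bundle and it suffices to show $F \cong S_k(\fa,\fb)$.

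Next I would compute $F$ explicitly. Every $x \in F$ has the block form
\[
x = \begin{pmatrix} I_\fb & v \\ v^* & w \end{pmatrix}, \qquad v \in \C^k,\ w \in \R,
\]
and splitting $v = (v_1, \ldots, v_p)$ with $v_i \in \C^{m_i}$ along the eigenspaces of $I_\fb$, cofactor expansion along the last row yields
\[
\det(tI - x) = \prod_{i=1}^p (t - \mu_i)^{m_i - 1}\, Q(t), \qquad Q(t) := (t - w) \prod_{i=1}^p (t - \mu_i) - \sum_{i=1}^p |v_i|^2 \prod_{j \neq i}(t - \mu_j).
\]
Equating with $\prod_\ell (t - a_\ell)$ forces $Q$ to be uniquely determined by $(\fa, \fb)$: taking traces gives $w = \sum_\ell a_\ell - \sum_s b_s$, while evaluating at $t = \mu_i$ gives $|v_i|^2 = -Q(\mu_i)/\prod_{j \neq i}(\mu_i - \mu_j)$, which is non-negative by the interlacing \eqref{patternaandb}. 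Hence $F$ is a product $\prod_{i=1}^p \{v_i \in \C^{m_i} : |v_i| = r_i\}$, where each factor is $S^{2m_i-1}$ if $r_i > 0$ and a single point if $r_i = 0$.

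The final step is to match this product with $S_k(\fa, \fb)$. Walls in $W_k(\fa, \fb)$ are placed precisely at strict inequalities in \eqref{patternaandb}, so the connected regions correspond bijectively to maximal runs of equalities, each associated with a distinct eigenvalue $\mu_i$; the shape of the region depends on whether the adjacent boundary $a$-values equal $\mu_i$. A case analysis based on the power of $(t - \mu_i)$ in $Q(t)$ shows that $|v_i|^2 > 0$ holds precisely when both adjacent $a$-values are strict (equivalently, when the multiplicity of $\mu_i$ in $\fa$ equals $m_i - 1$), which is exactly the case in which the region is an $M_{m_i}$-block containing a bottom vertex of $W_k$; otherwise the region fails at least one of the two conditions in \eqref{equation_block_sphere}. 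The main obstacle is this last combinatorial dictionary: although the algebraic identification of $F$ as a product of spheres is clean, matching each factor with the specific $M$-block incidence condition requires careful case-by-case bookkeeping of how the equality pattern between $\fa$ and $\fb$ produces the shape and location of each region in $W_k(\fa, \fb)$.
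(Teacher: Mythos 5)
Your proof is correct and takes a cleaner route than the paper's. The bundle-structure step is the same idea---conjugate each fiber $\rho_{k+1}^{-1}(y)$ back to $F = \rho_{k+1}^{-1}(I_\fb)$ by some $g_y \in U(k)$---but your associated-bundle packaging $U(k) \times_{\prod U(m_i)} F$ makes local triviality automatic, whereas the paper leaves the local-section choice implicit inside the proof of Lemma~\ref{lemma_fiber_torus}. The real divergence is in identifying $F$. The paper works coordinate-by-coordinate with $z_1,\dots,z_k$: it establishes the generic case (all entries of $\fa,\fb$ distinct) by a rather indirect dimension count that runs the whole GC tower on $\mathcal{O}_\fa$ (Lemma~\ref{lemma_fiber_torus}), and then handles the degenerate cases by repeatedly peeling off vanishing coordinates one run of equalities at a time (Lemmas~\ref{lemma_fiber_zero_a_b}--\ref{lemma_fiber_sphere_b_b}). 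Your computation collapses all of this: grouping $v$ by the eigenspaces of $I_\fb$ makes the characteristic polynomial manifestly depend only on $(|v_1|^2,\dots,|v_p|^2)$, and the partial-fraction evaluation $|v_i|^2 = -Q(\mu_i)/\prod_{j\ne i}(\mu_i-\mu_j)$ determines each radius in one step, with nonnegativity and the sphere-versus-point dichotomy visible from whether the multiplicity of $\mu_i$ in $\fa$ equals $m_i-1$. The step you flag as still requiring bookkeeping---translating that multiplicity criterion into the $M$-block-with-bottom-vertex condition defining $S_k(\fa,\fb)$---is precisely the case analysis the paper records in \eqref{eq:inequality1}--\eqref{eq:inequality2}, and your reformulation (both $a$-values bracketing a constant $b$-run are strict) is the correct combinatorial condition for that run's region of $W_k(\fa,\fb)$ to be an $M$-block containing a bottom vertex, so nothing is missing beyond writing out that translation.
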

	
	Before starting the proof of Proposition \ref{proposition_A_k_S_k_bundle}, we introduce some notations and prove lemmas. We denote by $\widetilde{\mathcal{A}}_{k+1}(\fa,\fb)$ 
	the subset of $\mathcal{A}_{k+1}(\fa,\fb)$ whose $k \times k$ leading principal submatrix is the diagonal matrix $I_\fb$. So, a matrix in $\widetilde{\mathcal{A}}_{k+1}(\fa,\fb)$ 
	is of the form
	\[
		Z_{(\fa,\fb)}(z) =   \begin{pmatrix} b_1 & & 0 & \bar{z}_1\\
                	  & \ddots & & \vdots\\
                  	0 & & b_k & \bar{z}_k\\
                  	z_1 & \hdots & z_k & z_{k+1}
		  \end{pmatrix}
		  \vspace{0.2cm}
	\]
	for $z = (z_1, \cdots, z_k) \in \C^k$. Since $Z_{(\fa,\fb)}(z)$ has the eigenvalues $\fa = \{a_1, \cdots, a_{k+1} \}$, the $(k+1,k+1)$-entry of $Z_{(\fa,\fb)}(z)$ has to be constant
	\[
		z_{k+1} = \sum_{i=1}^{k+1} a_i - \sum_{i=1}^k b_i \in \R
	\]
	by computing the trace of $Z_{(\fa,\fb)}(z)$.
	The characteristic polynomial of $Z_{(\fa,\fb)}(z)$ is expressed as
	\begin{equation}\label{equation_det}
		\det(xI - Z_{(\fa,\fb)}(z)) = (x - z_{k+1}) \cdot \prod_{i=1}^k (x-b_i) - \sum_{j=1}^k \left( \frac{|z_j|^2}{x-b_j} \cdot \prod_{i=1}^k (x-b_i) \right) = 0,
	\end{equation}
	whose zeros are $x = a_1, \cdots, a_{k+1}$ by our assumption. By inserting $x = a_1, \cdots, a_{k+1}$ into~\eqref{equation_det}, we obtain the system of $(k+1)$ equations of real coefficients, which are linear with respect to $(|z_1|^2, \cdots, |z_k|^2) \in (\R_{\geq 0})^k$.
	We sometimes regard an element $\widetilde{\mathcal{A}}_{k+1}(\fa,\fb)$ as an element $\C^k$ under the identification $Z_{(\fa, \fb)}(z) \mapsto z$.
	The following lemma implies that the solution space is \emph{never} empty as long as $(\fa, \fb)$ obeys~\eqref{patternaandb}.
	
	\begin{lemma}[Lemma 3.5 in \cite{NNU1}]\label{lemma_NNU}
		Let $a_1, b_1, \cdots, a_k, b_k$, and $a_{k+1}$ be real numbers satisfying~\eqref{patternaandb}.
	Then there exists $z_1, \dots, z_k \in \mathbb{C}$ and $z_{k+1} \in \mathbb{R}$ such that
	\[
		  \begin{pmatrix} b_1 & & 0 & \bar{z}_1\\
                	  & \ddots & & \vdots\\
                  	0 & & b_k & \bar{z}_k\\
                  	z_1 & \hdots & z_k & z_{k+1}
		  \end{pmatrix}
	\]
	has eigenvalues $a_1, \dots , a_{k+1}$.
	\end{lemma}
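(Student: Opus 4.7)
The plan is to determine the unknowns $|z_1|^2, \dots, |z_k|^2$ and $z_{k+1}$ directly from the characteristic polynomial identity~\eqref{equation_det}. Taking the trace already forces $z_{k+1} = \sum_{i=1}^{k+1} a_i - \sum_{i=1}^{k} b_i \in \R$, so the entire problem reduces to exhibiting non-negative reals $|z_j|^2$ satisfying the remaining constraints; once these are found, any choice of phases yields admissible $z_j \in \C$.

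First I would treat the generic case in which the $b_i$'s are pairwise distinct and all interlacing inequalities in~\eqref{patternaandb} are strict. Setting the characteristic polynomial equal to $\prod_{i=1}^{k+1}(x-a_i)$ and evaluating at $x=b_\ell$, every summand indexed by $j\neq \ell$ vanishes because it carries a factor $b_\ell-b_\ell=0$, and the whole $(x-z_{k+1})\prod_i(x-b_i)$ term also vanishes. What survives is the closed form
\[
|z_\ell|^2 \;=\; -\,\frac{\prod_{i=1}^{k+1}(b_\ell-a_i)}{\prod_{i\neq\ell}(b_\ell-b_i)}.
\]
Next I would verify this is non-negative via the interlacing: the numerator has exactly $\ell$ non-positive factors (for $i\leq\ell$ we have $a_i\geq b_\ell$) and $k+1-\ell$ non-negative factors, hence sign $(-1)^\ell$; the denominator has $\ell-1$ negative factors and $k-\ell$ positive ones, hence sign $(-1)^{\ell-1}$. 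The overall sign of $|z_\ell|^2$ is $(-1)\cdot(-1)^\ell/(-1)^{\ell-1}=+1$, as required. Conversely, once all the $|z_j|^2$'s are determined by this formula and $z_{k+1}$ by the trace, a Lagrange interpolation argument (or matching the $k+1$ elementary-symmetric coefficients with $k+1$ free parameters) shows that the resulting matrix indeed has characteristic polynomial $\prod_{i=1}^{k+1}(x-a_i)$.

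Finally I would deal with the degenerate cases where some inequalities in~\eqref{patternaandb} are equalities. If the only coincidences are of the form $a_\ell=b_\ell$ or $b_\ell=a_{\ell+1}$ while the $b_i$'s remain distinct, the formula above extends continuously and simply assigns $|z_\ell|^2=0$ whenever the numerator has an extra zero. If instead $b_\ell=b_{\ell+1}$, then the interlacing forces $b_\ell=a_{\ell+1}=b_{\ell+1}$; one then prescribes $z_\ell=0$, which makes $e_\ell$ an eigenvector with eigenvalue $b_\ell$ matching $a_{\ell+1}$, and deletes the $\ell$-th row and column to reduce to the analogous problem for $(a_1,\dots,a_\ell,a_{\ell+2},\dots,a_{k+1})$ and $(b_1,\dots,b_{\ell-1},b_{\ell+1},\dots,b_k)$, which still satisfies interlacing. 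An induction on $k$ closes the argument. The main obstacle is the sign bookkeeping in the generic case, while the degenerate reductions are essentially continuity/induction; the rest is routine.
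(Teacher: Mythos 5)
The paper does not supply its own proof of this lemma; it is cited verbatim from Nishinou--Nohara--Ueda (Lemma~3.5 in~\cite{NNU1}), so there is no internal argument to compare against. Assessed on its own terms, your proposal is a correct and essentially self-contained proof, and it follows the standard route for the inverse eigenvalue problem for arrow (bordered-diagonal) Hermitian matrices: evaluate the characteristic-polynomial identity at the diagonal values $x=b_\ell$ to isolate $|z_\ell|^2$, verify non-negativity via the interlacing inequalities, fix $z_{k+1}$ by the trace, and confirm the resulting monic degree-$(k+1)$ polynomial equals $\prod_i(x-a_i)$ because the two agree at $k$ distinct points and have the same $x^k$-coefficient.

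Two small points worth tightening. First, the parenthetical ``matching the $k+1$ elementary-symmetric coefficients with $k+1$ free parameters'' is not by itself sufficient (a square linear system need not be solvable); it is the interpolation argument you state first that actually closes the loop, so you should rely on that one. Second, the phrase ``the formula above extends continuously'' in the degenerate case is unnecessary and slightly misleading: when the $b_i$ remain distinct but some $a_\ell = b_\ell$ or $b_\ell = a_{\ell+1}$, the closed formula is still \emph{exact} (it just returns $0$), and the interpolation argument applies verbatim with no limiting procedure. The only case that genuinely needs a separate device is repeated $b$'s, and your reduction via $z_\ell=0$ and deleting the $\ell$-th row and column is the right one; the induced interlacing on the reduced data holds precisely because $b_\ell=a_{\ell+1}=b_{\ell+1}$, as you note, so the induction on $k$ goes through.
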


	\begin{lemma}\label{lemma_fiber_torus}
		Suppose in addition that $a_1, b_1, \cdots, a_k, b_k$, and $a_{k+1}$ in Lemma~\ref{lemma_NNU} are all distinct.
		Then there exist positive numbers $\delta_1, \cdots, \delta_k$ such that
		\[
			\begin{array}{ccl}
			\widetilde{\mathcal{A}}_{k+1}(\fa,\fb) & = & \left \{
				  \begin{pmatrix} b_1 & & 0 & \bar{z}_1\\
                	  & \ddots & & \vdots\\
                  	0 & & b_k & \bar{z}_k\\
                  	z_1 & \hdots & z_k & z_{k+1}
		  \end{pmatrix}
		\in \mathcal{A}_{k+1}(\fa,\fb)
		  \,:\, (z_1,\cdots, z_k) \in \C^k, ~|z_i|^2 = \delta_i \textup{ for } i=1,\cdots, k \right \} \\[0.8em]
		  \end{array}
		\]
		where $z_{k+1} = \sum_{i=1}^{k+1} a_i - \sum_{i=1}^k b_i$.
		In particular, we have $\widetilde{\mathcal{A}}_{k+1}(\fa,\fb) \cong T^k$.
	\end{lemma}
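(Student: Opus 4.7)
The plan is to extract from the characteristic polynomial of $Z_{(\fa,\fb)}(z)$ an explicit formula for each $|z_\ell|^2$ in terms of $\fa$ and $\fb$, and then to check positivity using the strict interlacing hypothesis.

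First I would start from the formula \eqref{equation_det} for $\det(xI - Z_{(\fa,\fb)}(z))$. Since this polynomial is monic of degree $k+1$ and, by assumption, has roots exactly $a_1,\dots,a_{k+1}$, it must coincide with $\prod_{i=1}^{k+1}(x-a_i)$. Thus one has the polynomial identity
\[
\prod_{i=1}^{k+1}(x-a_i) \;=\; (x-z_{k+1})\prod_{i=1}^{k}(x-b_i) \;-\; \sum_{j=1}^{k} |z_j|^2 \prod_{i\neq j}(x-b_i),
\]
which holds for every $x\in\R$. Evaluating both sides at $x=b_\ell$ for each $\ell=1,\dots,k$ kills every term on the right except the $j=\ell$ term, and yields
\[
\prod_{i=1}^{k+1}(b_\ell-a_i) \;=\; -|z_\ell|^2 \prod_{i\neq \ell}(b_\ell-b_i).
\]
Solving for $|z_\ell|^2$ gives an explicit constant $\delta_\ell$ depending only on $\fa$ and $\fb$; this is the candidate for the advertised positive number.

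Next I would verify $\delta_\ell>0$ using the strict inequalities $a_1>b_1>a_2>\cdots>b_k>a_{k+1}$. A short sign count shows that exactly $\ell$ of the factors $(b_\ell-a_i)$ are negative (namely those with $i\le\ell$) and exactly $\ell-1$ of the factors $(b_\ell-b_i)$ are negative (those with $i<\ell$), so the ratio above carries sign $(-1)\cdot(-1)^\ell/(-1)^{\ell-1}=+1$. Hence $\delta_\ell>0$ for every $\ell$. This step is where strictness of the interlacing is essential; if any $a_\ell=b_\ell$ or $b_\ell=a_{\ell+1}$ one of the factors vanishes and $\delta_\ell$ degenerates to $0$, which is precisely why this lemma is limited to the generic stratum.

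Finally, I would assemble the two directions. If $Z_{(\fa,\fb)}(z)\in\widetilde{\mcal{A}}_{k+1}(\fa,\fb)$, then the computation above forces $|z_\ell|^2=\delta_\ell$ for each $\ell$; conversely, since the characteristic polynomial of $Z_{(\fa,\fb)}(z)$ depends on $z$ only through $|z_1|^2,\dots,|z_k|^2$ (and the prescribed real constant $z_{k+1}=\sum a_i-\sum b_i$), any $z=(z_1,\dots,z_k)$ with $|z_\ell|^2=\delta_\ell$ produces a matrix whose characteristic polynomial is $\prod(x-a_i)$ and whose leading $k\times k$ principal submatrix is by construction $I_\fb$, hence lies in $\widetilde{\mcal{A}}_{k+1}(\fa,\fb)$. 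The phase of each $z_\ell$ is therefore a free $S^1$-parameter, and the product of the $k$ circles gives the diffeomorphism $\widetilde{\mcal{A}}_{k+1}(\fa,\fb)\cong T^k$. I do not anticipate a serious obstacle; the only point requiring care is the sign bookkeeping to confirm $\delta_\ell>0$, and this is handled by the interlacing pattern.
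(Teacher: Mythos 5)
Your proposal is correct, and it takes a genuinely different — and more elementary — route than the paper. You solve the linear system in $(|z_1|^2,\dots,|z_k|^2)$ explicitly: identifying $\det(xI-Z_{(\fa,\fb)}(z))$ with $\prod_{i=1}^{k+1}(x-a_i)$ and evaluating at $x=b_\ell$ isolates the single term $-|z_\ell|^2\prod_{i\neq\ell}(b_\ell-b_i)$, giving the closed form $\delta_\ell=-\prod_{i=1}^{k+1}(b_\ell-a_i)\big/\prod_{i\neq\ell}(b_\ell-b_i)$, and your sign count ($\ell$ negative factors upstairs, $\ell-1$ downstairs, times the overall minus sign) correctly yields $\delta_\ell>0$ from strict interlacing. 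The paper instead takes existence from Lemma~\ref{lemma_NNU}, rules out $\delta_j=0$ by noting that $|z_j|^2=0$ would force $b_j$ to be an eigenvalue, and establishes \emph{uniqueness} indirectly: the solution set is an affine subspace of $\R^k$ containing a $T^k$, and a dimension count over the whole iterated bundle (each fiber has dimension $\geq i$, the total must equal $\frac{k(k+1)}{2}=\dim_\R\mathcal{O}_\fa/2$ by Proposition~\ref{proposition_GS_smooth}) forces each fiber to be exactly $k$-dimensional. Your computation is shorter, self-contained, and gives the actual values of the $\delta_\ell$ — in effect it reproves the existence part of Lemma~\ref{lemma_NNU} in the distinct case, since the difference of the two monic polynomials has degree $\leq k-1$ and vanishes at the $k$ points $b_1,\dots,b_k$; the paper's heavier argument has the side benefit of already setting up the bundle-tower bookkeeping that the rest of Section~\ref{section4} relies on. The only point worth tightening in your write-up is the converse direction: "the characteristic polynomial depends only on $|z_j|^2$ and $z_{k+1}$" suffices only once you know \emph{some} $z$ with $|z_\ell|^2=\delta_\ell$ realizes the spectrum $\fa$, which follows either from Lemma~\ref{lemma_NNU} or from the degree-count identity just mentioned; one sentence making that explicit closes the loop.
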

	\begin{proof}
		We first note that if $|z_j|^2 = 0$ for some $j$, then the equation (\ref{equation_det}) (with respect to $x$) has a solution $x=b_j$. This implies that
		$b_j \in \{a_1, \cdots, a_{k+1} \}$, which contradicts to our assumption that $a_i$'s and $b_j$'s are all distinct.
		Thus, it is enough to show existence and uniqueness of
		a solution $(|z_1|^2, \cdots, |z_k|^2)$ of the system of linear equations in (\ref{equation_det}).
		
		The existence immediately follows from Lemma \ref{lemma_NNU}.
		Let
		\[
			\{|z_1|^2 = \delta_1 > 0, \cdots, |z_k|^2 = \delta_k > 0\}
		\]
		be one of solutions of (\ref{equation_det}) so that
		$\widetilde{\mathcal{A}}_{k+1}(\fa,\fb)$ contains a real $k$-torus
		\[
			T^k = \{(z_1, \cdots, z_k) \in \C^k ~|~ |z_i|^2 = \delta_i,~ i=1,\cdots,k \},
		\]
		which yields that
		$\dim_\R \widetilde{\mathcal{A}}_{k+1}(\fa,\fb) \geq k$.
		Since (\ref{equation_det}) is a system of non-homogeneous \emph{linear} equations with respect to the variables $|z_1|^2, \cdots, |z_k|^2$,
		the set
		$$
			\left\{ (|z_1|^2, \cdots, |z_k|^2) ~|~ (z_1, \cdots, z_k) \in \widetilde{\mathcal{A}}_{k+1}(\fa,\fb) \right\}
		$$
		is an affine subspace of $\R^k$. Therefore,
		$\dim_\R \widetilde{\mathcal{A}}_{k+1}(\fa,\fb) = k$ if and only if the equations (\ref{equation_det})
		has a unique solution $(|z_1|^2, \cdots, |z_k|^2) = (\delta_1, \cdots, \delta_k)$. It suffices to show that
		$\dim_\R \widetilde{A}_{k+1}(\fa,\fb) = k$.
		
		Note that $\mathcal{A}_{k+1}(\fa,\fb)$ is an $ \widetilde{\mathcal{A}}_{k+1}(\fa,\fb)$-bundle over $\mathcal{O}_{\fb}$ whose projection map is
		\[
			\begin{array}{ccccc}
					\rho_{k+1}  \colon & \mathcal{A}_{k+1}(\fa,\fb) & \rightarrow & \mathcal{O}_\fb \\
					                    &                x             & \mapsto    &  x^{(k)}.\\
			\end{array}
		\]
		More precisely, for each element $y \in \mathcal{O}_\fb$, there exists a unitary matrix $g_y \in U(k)$ (depending on $y$) such that
		\[
			g_y y g_y^{-1} = I_\fb
		\]
		where $I_\fb$ is the diagonal matrix $\textup{diag}(b_1, \cdots, b_k)$.
		Then the preimage $\rho_{k+1}^{-1}(y)$ of $y$ can be identified with $\widetilde{\mathcal{A}}_{k+1}(\fa,\fb)$ via
		\[
		\begin{array}{ccl}
			\rho_{k+1}^{-1}(y) & \longrightarrow & \widetilde{\mathcal{A}}_{k+1}(\fa,\fb) \\[0.5em]
			          Y  =  \begin{pmatrix} y & * \\
					                  	*^t & z_{k+1} \end{pmatrix}
					                  	& \mapsto & \begin{pmatrix} g_y & 0 \\
					                  	0 & 1 \end{pmatrix}
					                  	\cdot  Y \cdot
					                  	\begin{pmatrix} g_y^{-1} & 0 \\
					                  	0 & 1 \end{pmatrix}\\[2em]
					                  	
					                  	& & = \begin{pmatrix} g_y \cdot y \cdot g_y^{-1} & g_y \cdot * \\
					                  	*^t \cdot g_y^{-1} & z_{k+1}
								 \end{pmatrix} \\[2em]
								 & & =
								 \begin{pmatrix} I_\fb & g_y \cdot * \\
					                  	*^t \cdot g_y^{-1} & z_{k+1}
								 \end{pmatrix}
		\end{array}
		\]
		so that $\mathcal{A}_{k+1}(\fa,\fb)$ is an $ \widetilde{\mathcal{A}}_{k+1}(\fa,\fb)$-bundle over $\mathcal{O}_{\fb}$ via $\rho_{k+1}$.
		
		Now, we consider a sequence of real numbers $\fc = \{c_1, \cdots, c_{k-1}\}$ such that
		\[
		b_1 > c_1 > \cdots > b_{k-1} > c_{k-1} > b_k.
		\]
		Restricting the fibration $\rho_{k+1}$ to $\mathcal{A}_k(\fb,\fc)$, we similary have an $\widetilde{\mathcal{A}}_{k+1}(\fa,\fb)$-bundle over
		$\mathcal{A}_k(\fb,\fc)$.
		Note that its total space is the collection of $(k+1)\times (k+1)$ Hermitian matrices such that the spectra,
		the spectra of the $k \times k$ leading principal submatrix and the spectra of the $(k-1) \times (k-1)$ leading principal submatrix are $\fa, \fb$ and $\fc$,
		respectively.
		
		By a similar way described as above, we see that
		$\mathcal{A}_k(\fb,\fc)$ is an $\widetilde{\mathcal{A}}_k(\fb,\fc)$-bundle over $\mathcal{O}_\fc$
		with the projection map $\rho_k \colon \mathcal{A}_k(\fb,\fc) \rightarrow \mathcal{O}_\fc$ such that
		$\dim_\R \widetilde{\mathcal{A}}_k(\fb,\fc) \geq k-1$. Taking a sequence of real numbers $\fd = \{d_1, \cdots, d_{k-2}\}$
		so that
		\[
		c_1 > d_1 > \cdots > c_{k-2} > d_{k-2} > c_{k-1},
		\]
		the restriction of $\rho_k$ to $\mathcal{A}_{k-1}(\fc,\fd)$ induces an $\widetilde{\mathcal{A}}_k(\fb,\fc)$-bundle over $\mathcal{A}_{k-1}(\fc,\fd)$.
		
		Proceeding this procedure inductively, we end up obtaining a tower of bundles such that the total space $E$ is a generic fiber of the GC system of $\mcal{O}_\fa$. 
		Namely, $E$ is the preimage of a point in the interior of the GC polytope
		$\Delta_\fa$. By Proposition \ref{proposition_GS_smooth}, $E$ is a smooth manifold of dimension
		\[
			\dim_\R E = \frac{1}{2} \dim_\R \mathcal{O}_\fa = \frac{k(k+1)}{2}.
		\]
		On the other hand, by our construction, the dimension of $E$ is the sum of dimensions of all fibers of $\rho_i$'s for $i=2,\cdots,k+1$ so that
		\[
			\dim E = \dim \widetilde{\mathcal{A}}_{k+1} + \dim \widetilde{\mathcal{A}}_{k} + \cdots + \dim \widetilde{\mathcal{A}}_{2}, \quad \quad 
			\widetilde{\mathcal{A}}_{k+1} := \widetilde{\mathcal{A}}_{k+1}(\fa, \fb), \quad \widetilde{\mathcal{A}}_{k} := \widetilde{\mathcal{A}}_{k}(\fb, \fc), \quad \cdots 
		\]
		Since $\dim \widetilde{\mathcal{A}}_{i+1} \geq i$ for each $i$, we get $\dim \widetilde{\mathcal{A}}_{i+1} = i$ for every $i=1,\cdots,k$.
		This finishes the proof.
	\end{proof}
	
	Note that Lemma \ref{lemma_fiber_torus} deals with the case where $a_j \not \in \{b_1, \cdots, b_k\}$ for $j=1,\cdots,k+1$.
	Now, let us consider the case where $a_{j+1} \in \{b_1, \cdots, b_k\}$ for some $j \in \{0,1,\cdots, k\}$. 			
	Denoting the multiplicity of $a_{j+1}$ in $\fa$ by $\ell$, without loss of generality, we assume that $a_j > a_{j+1} = \cdots = a_{j+\ell} > a_{j+\ell+1}$.
     Then either $a_{j+1} = b_j$ or $a_{j+1} = b_{j+1}$. For the first case, there are two possible cases:
     \begin{equation}\label{eq:inequality1}
        \begin{cases}
        (1) ~a_j > b_j = a_{j+1} = \cdots = a_{j+\ell} > b_{j+\ell},   ~\mathrm{or}\\
        (2) ~a_j > b_j = a_{j+1} = \cdots = a_{j+\ell} = b_{j+\ell} > a_{j+\ell+1}.
        \end{cases}
     \end{equation}
     For the second case, we also have two possible cases :
     \begin{equation}\label{eq:inequality2}
        \begin{cases}
        (3) ~b_j > a_{j+1} = b_{j+1} = \cdots = a_{j+\ell} = b_{j+\ell} > a_{j+\ell+1},  ~\mathrm{or}\\
		(4) ~b_j > a_{j+1} = b_{j+1} =  \cdots = a_{j+\ell} > b_{j+\ell},\\
        \end{cases}
     \end{equation}
     Note that only the case (2) above have more multiplicity of $b$'s than $a$'s, i.e.,
     multiplicity $\ell+1$ and $\ell$ respectively: The cases (1) and (3) have
     the same multiplicities of both $a$ and $b$ while in the case (4) $a$ has multiplicity $\ell$ and $b$ has
     multiplicity $\ell -1$.

	We start with the inequality (3) of (\ref{eq:inequality2}).
	\begin{lemma}[case (3) of~\eqref{eq:inequality2}]\label{lemma_fiber_zero_a_b}
		Suppose that $b_j > a_{j+1} = b_{j+1} = \cdots = a_{j+\ell} = b_{j+\ell} > a_{j+\ell+1}.$ Then, every solution $(z_1, \cdots, z_k) \in \C^k$ of the equation
		(\ref{equation_det}) satisfies
		\[
			z_{j+1} = \cdots = z_{j+\ell} = 0.
		\]
	\end{lemma}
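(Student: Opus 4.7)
The plan is to exploit the fact that, under the hypothesis of case (3), the value $\mu := a_{j+1} = b_{j+1} = \cdots = a_{j+\ell} = b_{j+\ell}$ is a common eigenvalue of both $Z_{(\fa,\fb)}(z)$ and its $k \times k$ leading principal submatrix with the \emph{same} multiplicity $\ell$. We then track the divisibility of the polynomial identity \eqref{equation_det} by $(x - \mu)^\ell$ and read off the vanishing of the $z_{j'}$'s with $j' \in J := \{j+1, \dots, j+\ell\}$.

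Concretely, set $P(x) := \prod_{i=1}^k (x - b_i)$ and rewrite \eqref{equation_det} as
\[
  \prod_{i=1}^{k+1}(x - a_i) \;=\; (x - z_{k+1})\,P(x) \;-\; \sum_{j'=1}^k |z_{j'}|^2 \,\frac{P(x)}{x - b_{j'}}.
\]
The left-hand side is divisible by $(x-\mu)^\ell$ because $\mu$ appears with multiplicity $\ell$ among $a_1, \dots, a_{k+1}$, and the first summand on the right is divisible by $(x-\mu)^\ell$ because $\mu$ appears with multiplicity $\ell$ among $b_1, \dots, b_k$. Hence the remaining sum is also divisible by $(x-\mu)^\ell$.

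Next, split the sum according to whether $b_{j'} = \mu$. For $j' \notin J$, one has $b_{j'} \neq \mu$, so $\frac{P(x)}{x - b_{j'}}$ still carries the full factor $(x-\mu)^\ell$, and these terms are divisible by $(x-\mu)^\ell$ individually. For $j' \in J$, one has $b_{j'} = \mu$, and $\frac{P(x)}{x - \mu}$ has $(x-\mu)^{\ell-1}$ (and no higher power of $(x-\mu)$) as a factor. Collecting, we obtain
\[
  (x-\mu)^\ell \;\Big|\; \Bigl(\,\sum_{j' \in J} |z_{j'}|^2 \Bigr)\,\frac{P(x)}{x-\mu}.
\]
Since $\frac{P(x)}{x-\mu}$ is divisible by $(x-\mu)^{\ell-1}$ but not by $(x-\mu)^{\ell}$, the scalar $\sum_{j' \in J} |z_{j'}|^2$ must vanish. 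Because each $|z_{j'}|^2 \geq 0$, we conclude $z_{j+1} = \cdots = z_{j+\ell} = 0$, as required.

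The argument is essentially bookkeeping of $(x-\mu)$-adic valuations; the one conceptual point is that case (3) is precisely the configuration in which $\mu$ has \emph{equal} multiplicity in the spectra of $Z_{(\fa,\fb)}(z)$ and its principal submatrix, which is what forces the higher-order cancellation. No real obstacle is expected beyond this observation.
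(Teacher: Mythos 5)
Your proof is correct and follows essentially the same route as the paper's: both arguments track the order of vanishing of the characteristic polynomial identity at $x=\mu=b_{j+1}$, using that the terms with $b_{j'}\neq\mu$ carry the full factor $(x-\mu)^\ell$ while the terms with $j'\in J$ only carry $(x-\mu)^{\ell-1}$, forcing $\sum_{j'\in J}|z_{j'}|^2=0$. The paper phrases this by dividing by $(x-b_{j+1})^{\ell-1}$ and evaluating at $x=b_{j+1}$, whereas you phrase it via $(x-\mu)$-adic divisibility, but the content is identical.
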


	\begin{proof}
	 	Observe that each term of the equation (\ref{equation_det})
	 	\[
	 		\det(xI - Z_{(\fa,\fb)}(z)) = (x - z_{k+1}) \cdot \prod_{i=1}^k (x-b_i) - \sum_{i=1}^k \left( \frac{|z_i|^2}{x-b_i} \cdot \prod_{m=1}^k (x-b_m) \right) = 0
		\]
		is divisible by $(x-b_{j+1})^{\ell-1}$ by our assumption. In particular, the first term of the equation
		\[
		(x - z_{k+1}) \cdot \prod_{i=1}^k (x-b_i)
		\]
		is divisible by $(x-b_{j+1})^{\ell}$. Similarly, since $b_{j+1} = \cdots = b_{j+\ell}$,
		\[
		\frac{|z_i|^2}{x-b_i} \cdot \prod_{m=1}^k (x-b_m)
		\]
		is divisible by $(x-b_{j+1})^{\ell}$ for each $i \not \in \{j+1, \cdots, j+\ell\}$.
		Taking
		\[
			g(x) := \det(xI - Z_{(\fa,\fb)}(z)) / (x-b_{j+1})^{\ell-1},
		\]
		we have $g(b_{j+1}) = g(a_{j+1}) = 0$ because $x = a_{j+1} = b_{j+1}$ is a solution of~\eqref{equation_det} with multiplicity $\ell$. It yields
		\[
			\left(|z_{j+1}|^2 + \cdots + |z_{j+\ell}|^2\right) \cdot \prod_{\substack{m=1 \\ m \not \in \{j+1, \cdots, j+\ell\}}}^k (b_{j+1}-b_m) = 0.
		\]
		Since
		\[
			\prod_{\substack{m=1 \\ m \not \in \{j+1, \cdots, j+\ell\}}}^k (b_{j+1}-b_m) \neq 0,
		\]
		we deduce that $|z_{j+1}|^2 + \cdots + |z_{j+\ell}|^2 = 0$ and hence $z_{j+1} = \cdots = z_{j+\ell} = 0$.
	\end{proof}
	
	Lemma \ref{lemma_fiber_zero_a_b} enables us to ignore the variables $z_{j+1}, \cdots, z_{j+\ell}$ for solving the equation \eqref{equation_det} for the case
	(3) in \eqref{eq:inequality2}.
	That is, if $\fa$ and $\fb$ contain subsequences satisfying (3) in \eqref{eq:inequality2} on $\fa$ and $\fb$, 
	the equation (\ref{equation_det}) is written by
	\[
			\det(xI - Z_{(\fa,\fb)}(z))
			        = (x-b_{j+1})^{\ell} \cdot
			        \left \{(x - w_{k+1-\ell}) \cdot \displaystyle \prod_{i=1}^{k-\ell} (x-b'_i) - \sum_{i=1}^{k-\ell}
			        \left( \frac{|w_i|^2}{x-b'_i} \cdot \prod_{m=1}^{k-\ell} (x-b'_m) \right) \right \}\\
			        =  0
	\]
		where
		\begin{itemize}
			\item $(b_1'. \cdots, b_{k-\ell}') = (b_1, \cdots, b_j, \hat{b_{j+1}}, \cdots, \hat{b_{j+\ell}}, b_{j+\ell+1}, \cdots, b_k)$,
			\item $(w_1. \cdots, w_{k-\ell}) = (z_1, \cdots, z_j, \hat{z_{j+1}}, \cdots, \hat{z_{j+\ell}}, z_{j+\ell+1}, \cdots, z_k)$, and
			\item $w_{k-\ell+1} = z_{k+1}$.
		\end{itemize}
	Observe that the equation
	\[
		\det(xI - Z_{(\fa,\fb)}(z)) / (x-b_{j+1})^{\ell} = 0
	\]	
	is same as the equation $\det(xI - Z_{(\fa',\fb')}(w)) = 0$ where
	\begin{itemize}
		\item $\fa' = (a_1', \cdots, a_{k-\ell+1}') = (a_1, \cdots, a_j, \hat{a_{j+1}}, \cdots, \hat{a_{j+\ell}}, a_{j+\ell+1}, \cdots, a_{k+1})$ and
		\item $\fb' = (b_1', \cdots, b_{k-\ell}')$.
	\end{itemize}
	Thus, $\widetilde{\mathcal{A}}_{k+1}(\fa,\fb)$ can be identified with $\widetilde{\mathcal{A}}_{k+1-\ell}(\fa', \fb')$ via
	\begin{equation}\label{identificationof}
		\begin{array}{cccc}
				  & \widetilde{\mathcal{A}}_{k+1}(\fa,\fb) & \rightarrow & \widetilde{\mathcal{A}}_{k+1-\ell}(\fa', \fb') \\
				                    &               		(z_1, \cdots, z_j, 0, \cdots, 0, z_{j+\ell+1}, \cdots, z_k)             & \mapsto    &  		(z_1, \cdots, z_j,  \hat{z_{j+1}}, \cdots, \hat{z_{j+\ell}}, z_{j+\ell+1}, \cdots, z_k).\\
		\end{array}
	\end{equation}
	Therefore, we obtain the following corollary.
	
	\begin{corollary}\label{corollary_fiber_zero_a_b}
		For sequences $\fa = (a_1, \cdots, a_{k+1})$ and $\fb = (b_1, \cdots, b_k)$ of real numbers obeying~\eqref{patternaandb}, suppose that there exist $j, \ell \in \Z_{>0}$ such that
		\[
			b_j > a_{j+1} = b_{j+1} = \cdots = a_{j+\ell} = b_{j+\ell} > a_{j+\ell+1}.
		\]
		Setting $\fa'$ (respectively $\fb'$) to be the sequence of real numbers obtained by deleting $a_{j+1}, \cdots, a_{j+\ell}$ (respectively $b_{j+1}, \cdots, b_{j+\ell}$), $\widetilde{\mathcal{A}}_{k+1}(\fa, \fb)$ can be identified with $\widetilde{\mathcal{A}}_{k+1-\ell}(\fa', \fb')$ under~\eqref{identificationof}.
	\end{corollary}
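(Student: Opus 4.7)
The plan is to derive the corollary directly from Lemma~\ref{lemma_fiber_zero_a_b} together with a clean algebraic factorization of the characteristic polynomial~\eqref{equation_det}. First, I would apply Lemma~\ref{lemma_fiber_zero_a_b} to every $Z_{(\fa,\fb)}(z) \in \widetilde{\mathcal{A}}_{k+1}(\fa,\fb)$: since the hypothesis $b_j > a_{j+1} = b_{j+1} = \cdots = a_{j+\ell} = b_{j+\ell} > a_{j+\ell+1}$ is exactly the hypothesis of that lemma, we obtain $z_{j+1} = \cdots = z_{j+\ell} = 0$ for every such $z$. Hence the map
\[
(z_1,\dots,z_k) \longmapsto (z_1,\dots,z_j,\widehat{z_{j+1}},\dots,\widehat{z_{j+\ell}},z_{j+\ell+1},\dots,z_k) =: (w_1,\dots,w_{k-\ell})
\]
is well-defined on $\widetilde{\mathcal{A}}_{k+1}(\fa,\fb)$, and it is clearly injective; the task is to show that its image is precisely $\widetilde{\mathcal{A}}_{k+1-\ell}(\fa',\fb')$.

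For surjectivity, I would factor the characteristic polynomial of $Z_{(\fa,\fb)}(z)$ under the constraint $z_{j+1} = \cdots = z_{j+\ell} = 0$. Each summand $\frac{|z_i|^2}{x-b_i}\prod_{m=1}^k (x-b_m)$ with $i \notin \{j+1,\dots,j+\ell\}$ is divisible by $(x-b_{j+1})^\ell$, as is the leading term $(x-z_{k+1})\prod_{m=1}^k (x-b_m)$. Therefore,
\[
\det(xI - Z_{(\fa,\fb)}(z)) = (x-b_{j+1})^\ell \cdot \left\{ (x-w_{k-\ell+1})\prod_{i=1}^{k-\ell}(x-b'_i) - \sum_{i=1}^{k-\ell}\frac{|w_i|^2}{x-b'_i}\prod_{m=1}^{k-\ell}(x-b'_m) \right\},
\]
where $w_{k-\ell+1} = z_{k+1}$. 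The requirement that the left hand side has zero set $\fa$ (with multiplicities) is equivalent, after canceling the $(x-b_{j+1})^\ell$ factor, to the quotient having zero set $\fa'$ — which is exactly the defining equation of $\widetilde{\mathcal{A}}_{k+1-\ell}(\fa',\fb')$. For compatibility of the trace constants I would check that $z_{k+1} = \sum_{i=1}^{k+1} a_i - \sum_{i=1}^{k} b_i$ agrees with $w_{k-\ell+1} = \sum_{i=1}^{k+1-\ell} a'_i - \sum_{i=1}^{k-\ell} b'_i$, which follows immediately since the deleted $a$'s and $b$'s cancel in pairs.

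The only real delicacy is ensuring that when we invert the construction, a solution $w \in \widetilde{\mathcal{A}}_{k+1-\ell}(\fa',\fb')$ padded with zeros in the deleted slots indeed yields an element of $\widetilde{\mathcal{A}}_{k+1}(\fa,\fb)$; this is immediate from the factorization displayed above, since padding with zeros produces a characteristic polynomial equal to $(x-b_{j+1})^\ell$ times the polynomial corresponding to $w$, whose roots are precisely $\fa' \cup \{b_{j+1}\}^{(\ell)} = \fa$ with matching multiplicities. I expect no genuine obstacle beyond this bookkeeping: the substantive content was already established in Lemma~\ref{lemma_fiber_zero_a_b}, and the corollary is the natural packaging of that vanishing statement as an identification of strata.
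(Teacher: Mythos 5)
Your proposal is correct and follows essentially the same route as the paper: apply Lemma~\ref{lemma_fiber_zero_a_b} to force $z_{j+1}=\cdots=z_{j+\ell}=0$, factor $\det(xI-Z_{(\fa,\fb)}(z))$ as $(x-b_{j+1})^{\ell}\cdot\det(xI-Z_{(\fa',\fb')}(w))$, and observe that the root condition $\fa$ for the former is equivalent to the root condition $\fa'$ for the latter. Your explicit checks of the trace constant and of the inverse (padding with zeros) are just slightly more careful bookkeeping of what the paper states in passing.
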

	
	The following two lemmas below are about the cases of (4) in \eqref{eq:inequality2} and (1) of \eqref{eq:inequality1}.
	Since they can be proven by using exactly same method of the proof of Lemma \ref{lemma_fiber_zero_a_b}, we omit the proofs.
	
	\begin{lemma}[case (1) of \eqref{eq:inequality1}]\label{lemma_fiber_zero_b_a}
		Suppose that $a_j > b_j = a_{j+1} = \cdots = a_{j+\ell} > b_{j+\ell}.$ Then
		$(z_1, \cdots, z_k) \in \widetilde{\mathcal{A}}_{k+1}(\fa,\fb)$ if and only if
		\begin{itemize}
			\item $z_{j} = \cdots =z_{j+\ell-1} = 0$, and
			\item $(z_1, \cdots, z_{j-1}, z_{j+\ell}, \cdots, z_{k}) \in \widetilde{\mathcal{A}}_{k-\ell+1}(\fa',\fb')$
		\end{itemize}
		where $\fa'$ is obtained by deleting $\{a_{j+1}, \cdots, a_{j+\ell} \}$ from $\fa$ and
		$\fb' = \{b_1', \cdots, b_{k-\ell}'\}$ is obtained by deleting $\{b_{j}, \cdots, b_{j+\ell-1} \}$ from $\fb$, respectively.
	\end{lemma}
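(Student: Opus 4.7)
The plan is to mimic the argument of Lemma \ref{lemma_fiber_zero_a_b} by extracting the common value as a repeated factor of the characteristic polynomial of $Z_{(\fa,\fb)}(z)$. First I would use the interlacing inequalities $a_{i} \geq b_{i} \geq a_{i+1}$ applied to the indices $j+1,\ldots,j+\ell-1$: from $a_{j+1} = \cdots = a_{j+\ell}$ and $a_i \geq b_i \geq a_{i+1}$ one forces $b_{j+1} = \cdots = b_{j+\ell-1}$ to equal the common value $c := b_j = a_{j+1} = \cdots = a_{j+\ell}$. So $c$ has multiplicity exactly $\ell$ both in $\fa$ (at positions $j+1,\ldots,j+\ell$) and in $\fb$ (at positions $j,\ldots,j+\ell-1$), while $c \notin \{a_i\}_{i \notin \{j+1,\ldots,j+\ell\}} \cup \{b_i\}_{i \notin \{j,\ldots,j+\ell-1\}}$.

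Next I would examine~\eqref{equation_det}. The term $\prod_{i=1}^k (x-b_i)$ carries $(x-c)^\ell$, while $\prod_{m\neq i}(x-b_m)$ carries $(x-c)^\ell$ for $i \notin \{j,\ldots,j+\ell-1\}$ and only $(x-c)^{\ell-1}$ for $i \in \{j,\ldots,j+\ell-1\}$. Hence $(x-c)^{\ell-1}$ divides $\det(xI - Z_{(\fa,\fb)}(z))$, and the polynomial
\[
g(x) := \det(xI - Z_{(\fa,\fb)}(z)) / (x-c)^{\ell-1}
\]
must vanish at $x = c$ because $c$ is a root of $\det(xI-Z_{(\fa,\fb)}(z))$ of multiplicity $\ell$ by the assumption on $\fa$. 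Evaluating $g(c) = 0$ the first summand contributes nothing (it still carries an extra factor of $(x-c)$), so one obtains
\[
\Bigl(\sum_{i=j}^{j+\ell-1} |z_i|^2\Bigr) \prod_{\substack{m=1 \\ m \notin \{j,\ldots,j+\ell-1\}}}^k (c - b_m) \;=\; 0.
\]
Since every factor in the product is a difference of distinct values, the product is nonzero, and we conclude $z_j = \cdots = z_{j+\ell-1} = 0$.

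Finally, substituting these vanishing conditions back into~\eqref{equation_det} and dividing by $(x-c)^\ell$ (which now divides both the first summand and every remaining summand since the indices $i \in \{j,\ldots,j+\ell-1\}$ have been excised), the equation reduces to the characteristic equation for $Z_{(\fa',\fb')}(w)$, where $w$ is obtained from $z$ by deleting the coordinates $z_j,\ldots,z_{j+\ell-1}$ and retaining $z_{k+1}$ in the last slot. The trace identity $z_{k+1} = \sum_{i=1}^{k+1} a_i - \sum_{i=1}^{k} b_i$ is preserved because we remove the same number ($\ell$) of copies of $c$ from $\fa$ and $\fb$. This produces the desired bijective identification of $\widetilde{\mathcal{A}}_{k+1}(\fa,\fb)$ with $\widetilde{\mathcal{A}}_{k-\ell+1}(\fa',\fb')$ under the inclusion sending $(z_1,\ldots,z_{j-1},0,\ldots,0,z_{j+\ell},\ldots,z_k)$ to $(z_1,\ldots,z_{j-1},z_{j+\ell},\ldots,z_k)$, which is exactly the statement of the lemma. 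I do not anticipate any significant obstacle since the structure is parallel to Lemma \ref{lemma_fiber_zero_a_b}; the only care required is the bookkeeping of which positions in $\fa$ and $\fb$ contribute the repeated factor, and verifying that the trace invariant survives the reduction.
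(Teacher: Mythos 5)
Your proof is correct and follows exactly the method the paper prescribes: it replays the argument of Lemma~\ref{lemma_fiber_zero_a_b}, factoring $(x-c)^{\ell-1}$ out of~\eqref{equation_det}, evaluating the quotient at $x=c$ to force $z_j=\cdots=z_{j+\ell-1}=0$, and then dividing off $(x-c)^\ell$ to identify the remaining equation with that of $Z_{(\fa',\fb')}(w)$, with the trace invariant surviving because $\ell$ copies of $c$ are removed from both $\fa$ and $\fb$. The paper itself omits this proof, stating only that it is the same method as Lemma~\ref{lemma_fiber_zero_a_b}, so your write-out is precisely what was intended; the one point worth keeping explicit (which you do note) is that the interlacing pattern forces $b_{j+1}=\cdots=b_{j+\ell-1}=c$, which is what makes the multiplicity count of $c$ in $\fb$ equal to $\ell$.
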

	
	\begin{lemma}[case (4) of \eqref{eq:inequality2}]\label{lemma_fiber_zero_a_a}
		Suppose that $b_j > a_{j+1} = \cdots = a_{j+\ell} > b_{j+\ell}.$ Then
		$(z_1, \cdots, z_k) \in \widetilde{\mathcal{A}}_{k+1}(\fa,\fb)$ if and only if
		\begin{itemize}
			\item $z_{j+1} = \cdots =z_{j+\ell-1} = 0$, and
			\item $(z_1, \cdots, z_{j}, z_{j+\ell}, \cdots, z_{k}) \in \widetilde{\mathcal{A}}_{k-\ell+2}(\fa',\fb')$
		\end{itemize}
		where $\fa' = \{a_1', \cdots, a_{k-\ell+2}'   \}$ is obtained by deleting $\{a_{j+2}, \cdots, a_{j+\ell} \}$ from $\fa$ and
		$\fb' = \{b_1', \cdots, b_{k-\ell+1}' \}$ is obtained by deleting $\{b_{j+1}, \cdots, b_{j+\ell-1} \}$ from $\fb$.
	\end{lemma}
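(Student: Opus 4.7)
The plan is to mirror the argument of Lemma~\ref{lemma_fiber_zero_a_b}, but with a more delicate divisibility bookkeeping that reflects the fact that in case (4) the common value has multiplicity $\ell$ in $\fa$ while only multiplicity $\ell-1$ in $\fb$ (whereas in case (3) both multiplicities coincided). Concretely, write $\beta := a_{j+1} = b_{j+1} = \cdots = b_{j+\ell-1} = a_{j+\ell}$, and use the explicit formula~\eqref{equation_det} for $\det(xI - Z_{(\fa,\fb)}(z))$.

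First I will analyze the multiplicity of $x-\beta$ in each term of~\eqref{equation_det}. The product $\prod_{i=1}^{k}(x-b_i)$ contains $(x-\beta)^{\ell-1}$, so the first term is divisible by $(x-\beta)^{\ell-1}$. In the sum $\sum_{i=1}^{k}|z_i|^2\prod_{m\neq i}(x-b_m)$, summands with $i\notin\{j+1,\ldots,j+\ell-1\}$ contribute $(x-\beta)^{\ell-1}$, while those with $i\in\{j+1,\ldots,j+\ell-1\}$ contribute only $(x-\beta)^{\ell-2}$. Thus the whole polynomial is divisible by $(x-\beta)^{\ell-2}$ but generally no more. Since $x=\beta=a_{j+1}=\cdots=a_{j+\ell}$ is an eigenvalue of multiplicity $\ell$ in $\fa$, the quotient
\[
	g(x):=\det(xI-Z_{(\fa,\fb)}(z))/(x-\beta)^{\ell-2}
\]
must have a zero of order at least two at $x=\beta$; in particular $g(\beta)=0$.

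Next I will evaluate $g(\beta)=0$ explicitly. The first term gives a factor of $(x-\beta)$ after division and therefore vanishes at $x=\beta$. In the summation only the indices $i\in\{j+1,\ldots,j+\ell-1\}$ survive the division, and evaluating at $x=\beta$ yields
\[
	0=g(\beta)=-\Bigl(\sum_{i=j+1}^{j+\ell-1}|z_i|^2\Bigr)\cdot\!\!\!\!\prod_{m\notin\{j+1,\ldots,j+\ell-1\}}\!\!\!(\beta-b_m).
\]
Because $b_j,b_{j+\ell},\ldots$ are all distinct from $\beta$, the product is nonzero, forcing $z_{j+1}=\cdots=z_{j+\ell-1}=0$. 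Substituting these vanishings back into~\eqref{equation_det} shows that the factor $(x-\beta)^{\ell-1}$ splits off cleanly, and the remaining cofactor is precisely the characteristic polynomial of $Z_{(\fa',\fb')}(w)$ with $w=(z_1,\ldots,z_j,z_{j+\ell},\ldots,z_k)$; this identifies $\widetilde{\mathcal{A}}_{k+1}(\fa,\fb)$ with $\widetilde{\mathcal{A}}_{k-\ell+2}(\fa',\fb')$ as claimed, giving the converse direction as well.

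The main obstacle will be the careful tracking of the exponent of $(x-\beta)$ in the two pieces of~\eqref{equation_det}: unlike in Lemma~\ref{lemma_fiber_zero_a_b}, here the natural divisibility exponent $\ell-2$ is one smaller than what one might naively expect, and the vanishing we seek comes not from $g(\beta)=0$ combined with all other roots automatically respected, but from genuinely using that $\beta$ is a \emph{double} root of $g$. Once this bookkeeping is set up correctly, the rest of the argument runs in parallel to case (3), and the reduction to $(\fa',\fb')$ follows by the same factorization trick that produced the identification~\eqref{identificationof} after Corollary~\ref{corollary_fiber_zero_a_b}.
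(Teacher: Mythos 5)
Your proof is correct and follows exactly the method the paper itself invokes (it omits this proof, stating that it runs parallel to Lemma~\ref{lemma_fiber_zero_a_b}): the key adjustment — that the guaranteed divisibility drops to $(x-\beta)^{\ell-2}$ while $\beta$ remains a root of multiplicity $\ell$, so that $g(\beta)=0$ isolates $\sum_{i=j+1}^{j+\ell-1}|z_i|^2$ against a nonzero product — is carried out correctly, and the factorization into $\det(xI-Z_{(\fa',\fb')}(w))$ gives both directions. The only quibble is your closing remark that the vanishing ``genuinely uses'' that $\beta$ is a double root of $g$; in fact $g(\beta)=0$ alone (i.e.\ multiplicity $\geq \ell-1$ of $\beta$ in the characteristic polynomial) already forces the conclusion, so the argument is even slightly more robust than you suggest.
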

	
	Finally, we consider the case (2) of \eqref{eq:inequality1}.
	
	\begin{lemma}[case (2) of \eqref{eq:inequality1}]\label{lemma_fiber_sphere_b_b}
		Suppose that
		\[
			a_j > b_j = a_{j+1} = \cdots = a_{j+\ell} = b_{j+\ell} > a_{j+\ell+1}
		\]
		Then there exists a unique positive real number $C_j > 0$ such that
		\[
			|z_j|^2 + \cdots + |z_{j+\ell}|^2 = C_{j}.
		\]
		for any $(z_1, \cdots, z_k) \in \widetilde{\mathcal{A}}_{k+1}(\fa,\fb)$.
	\end{lemma}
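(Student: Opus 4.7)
The plan is to compute the characteristic polynomial of $Z_{(\fa,\fb)}(z)$ via formula \eqref{equation_det} and match it with the product $\prod_{i=1}^{k+1}(x-a_i)$, extracting from this identity a single linear relation on $|z_j|^2 + \cdots + |z_{j+\ell}|^2$.

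First, I note that by the interlacing pattern \eqref{patternaandb} together with the hypothesis $b_j = a_{j+1} = \cdots = a_{j+\ell} = b_{j+\ell}$, the intermediate values are squeezed: $b_j = b_{j+1} = \cdots = b_{j+\ell} =: \beta$. Set $J := \{j, j+1, \ldots, j+\ell\}$ and $J' := \{j+1, \ldots, j+\ell\}$. Then $\prod_{i=1}^{k}(x-b_i)$ carries a factor of $(x-\beta)^{\ell+1}$, while for each $i \in J$ the product $\prod_{m \neq i}(x - b_m)$ carries $(x-\beta)^\ell$ but not $(x-\beta)^{\ell+1}$; for $i \notin J$ it carries $(x-\beta)^{\ell+1}$. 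Thus from \eqref{equation_det} we can factor $(x-\beta)^\ell$ out of $\det(xI - Z_{(\fa,\fb)}(z))$ and write
\[
\det(xI - Z_{(\fa,\fb)}(z)) = (x-\beta)^\ell \cdot G(x;z),
\]
where $G(x;z)$ is a polynomial in $x$ of degree $k+1-\ell$.

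On the other hand, $\det(xI - Z_{(\fa,\fb)}(z)) = \prod_{i=1}^{k+1}(x - a_i) = (x-\beta)^\ell \cdot H(x)$ with $H(x) := \prod_{i \notin J'}(x - a_i)$, which is independent of $z$. Equating these and evaluating at $x = \beta$ gives $G(\beta; z) = H(\beta)$. A direct inspection of the factored-out expression shows
\[
G(\beta; z) = -\left( \sum_{i \in J} |z_i|^2 \right) \prod_{m \notin J}(\beta - b_m),
\]
since the $(x - z_{k+1})(x-\beta)\prod_{i \notin J}(x-b_i)$ piece and the $(x-\beta)$-weighted sum over $i \notin J$ both vanish at $x=\beta$, leaving only the sum over $i \in J$. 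Consequently
\[
|z_j|^2 + |z_{j+1}|^2 + \cdots + |z_{j+\ell}|^2 = -\frac{\prod_{i \notin J'}(\beta - a_i)}{\prod_{m \notin J}(\beta - b_m)} =: C_j,
\]
which depends only on $(\fa,\fb)$.

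It remains to verify $C_j > 0$. By the hypothesis $a_j > \beta > a_{j+\ell+1}$ and the interlacing, the factors $(\beta - a_i)$ with $i \notin J'$ are negative exactly for $i = 1,\ldots,j$ and positive for $i = j+\ell+1, \ldots, k+1$, giving sign $(-1)^j$; similarly the factors $(\beta - b_m)$ with $m \notin J$ are negative for $m = 1,\ldots,j-1$ and positive for $m = j+\ell+1,\ldots,k$, giving sign $(-1)^{j-1}$. The overall sign of $-\prod / \prod$ is therefore $-(-1)^j(-1)^{-(j-1)} = 1$, so $C_j > 0$. No step looks like a serious obstacle; the only care needed is in the bookkeeping of which factors vanish at $x = \beta$ to orders $\ell$ versus $\ell+1$, which is pinned down by the multiplicity of $\beta$ in the sequences $\fa$ and $\fb$.
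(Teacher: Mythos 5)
Your proof is correct, and it takes a genuinely different route from the paper. The paper's proof first invokes Lemmas~\ref{lemma_fiber_zero_a_b}, \ref{lemma_fiber_zero_b_a}, and \ref{lemma_fiber_zero_a_a} to reduce to a sequence $(\fa',\fb')$ free of type (1), (3), (4) patterns, then recognizes $g(x) := \det(xI - Z_{(\fa,\fb)}(z))/(x-b_j)^\ell$ as the characteristic polynomial $\det(xI - Z_{(\fa',\fb')}(w))$ of a smaller system with $|w_j|^2 = \sum_{i \in J}|z_i|^2$, and finally appeals to Lemma~\ref{lemma_fiber_torus} (which rests on a dimension count of the generic GC fiber) to conclude that $|w_j|^2$ is a uniquely determined positive constant. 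Your argument bypasses all of that: you factor $(x-\beta)^\ell$ from both sides of the polynomial identity $\det(xI - Z_{(\fa,\fb)}(z)) = \prod_{i=1}^{k+1}(x-a_i)$, evaluate the resulting quotient at $x=\beta$, and read off the closed-form expression
\[
C_j = -\frac{\prod_{i \notin J'}(\beta-a_i)}{\prod_{m \notin J}(\beta-b_m)},
\]
checking positivity by an explicit sign count. Besides being more elementary and self-contained (you never need the other lemmas of the section or the existence argument underlying Lemma~\ref{lemma_fiber_torus}), your approach has the bonus of producing an explicit formula for the radius $C_j$, which the paper's proof does not exhibit. The only point worth stating for completeness is that nonemptiness of $\widetilde{\mathcal{A}}_{k+1}(\fa,\fb)$ (guaranteed by Lemma~\ref{lemma_NNU}) is what makes the constant $C_j$ uniquely determined by the constraint, and also that none of the $a_i$ with $i\notin J'$ or $b_m$ with $m\notin J$ can equal $\beta$ (which you implicitly use when dividing); both facts follow directly from the hypotheses and the interlacing~\eqref{patternaandb}.
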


	\begin{proof}
		By Lemma \ref{lemma_fiber_zero_a_b}, \ref{lemma_fiber_zero_b_a}, and \ref{lemma_fiber_zero_a_a},
		we may reduce $\fa = \{a_1, \cdots, a_{k+1} \}$ and $\fb = \{ b_1, \cdots, b_k \}$ to
		\[
			\fa' = \{a_1', \cdots, a_{r+1}' \}, \quad \text{and} \quad \fb' = \{b_1', \cdots, b_r' \}
		\]
		for some $r > 0$ so that
		there are no subsequences of type (3), (4) of (\ref{eq:inequality2}) or (1) of \eqref{eq:inequality1} in $(a_1' \geq b_1' \geq \cdots \geq a_r' \geq b_r' \geq a_{r+1}')$.
		Also, the above series of lemmas implies that $\widetilde{\mathcal{A}}_{k+1}(\fa,\fb)$ is identified with $\widetilde{\mathcal{A}}_{r+1}(\fa',\fb')$ under the
		 identification of $w = (w_1, \cdots, w_r)$ with
		suitable sub-coordinates $(z_{i_1}, \cdots, z_{i_r})$ of $(z_1, \cdots, z_{k+1})$.
		Therefore, it is enough to prove Lemma \ref{lemma_fiber_sphere_b_b}
		in the case where $(a_1, b_1, \cdots, a_k, b_k, a_{k+1})$ does not contain any pattern of type
	        (3), (4) of \eqref{eq:inequality2} or (1) of \eqref{eq:inequality1}.
		
		We temporarily assume that
		\[
			a_j > b_j = a_{j+1} = \cdots = a_{j+\ell} = b_{j+\ell} > a_{j+\ell+1}.
		\]
		is the unique pattern
		of type (2) of \eqref{eq:inequality1} in $(a_1, b_1, \cdots, a_k, b_k, a_{k+1})$.
		Then the equation (\ref{equation_det}) is written as
		\[
			\det(xI - Z) = (x-b_j)^{\ell} \cdot g(x)
		\]
		where
		\[
			\begin{array}{ccl}
				g(x) & = & (x-z_{k+1}) \displaystyle \prod_{\substack{i=1 \\ i \not\in \{j+1, \cdots, j+\ell\}}}^k (x-b_i)
				                   -  \sum_{i=1}^k \left( \frac{|z_i|^2}{x-b_i} \cdot \prod_{\substack{m=1 \\ m \not\in \{j+1, \cdots, j+\ell\}}}^k (x-b_m) \right) \\[0.2em]
			\end{array}\vspace{0.2cm}
		\]
		is a polynomial of degree $(k-\ell+1)$ with respect to $x$.
		For the sake of simplicity, we denote by
		\[
			\displaystyle B(x) := \prod_{\substack{m=1 \\ m \not\in \{j+1, \cdots, j+\ell\}}}^k (x-b_m).
		\]
		Since our assumption says that $\displaystyle \frac{1}{x-b_{j}} = \cdots = \frac{1}{x-b_{j+\ell}}$,
		the second part of $g(x)$ can be written by
		\begin{displaymath}
			\begin{array}{ccl}
			\displaystyle \sum_{i=1}^k \left( \frac{|z_i|^2}{x-b_i} \cdot B(x) \right) & = &
			\displaystyle
			\left( \displaystyle \frac{(|z_j|^2 + \cdots + |z_{j+\ell}|^2)}{x-b_j} + \sum_{\substack{i=1 \\ i \not\in \{j, \cdots, j+\ell\}}}^k  \frac{|z_i|^2}{x-b_i} \right) \cdot B(x)  \\
			\end{array}
		\end{displaymath}
		By substituting $\fa' = \{a_1' \cdots, a_{k+1-\ell}'\}$ and $\fb' = \{b_1', \cdots, b_{k-\ell}' \}$ where
		\begin{itemize}
			\item $a_i' = a_i$ \quad and  \quad $b_i' = b_i$ \quad for $1 \leq i \leq j$,
			\item $a_i' = a_{i+\ell}$ \quad for $j+1 \leq i \leq k-\ell+1$, and
			\item $b_i' = b_{i+\ell}$ \quad for $j+1 \leq i \leq k-\ell$,
		\end{itemize}
		$\fa' $ and $\fb'$ satisfies
		\[
			a_1' > b_1' > \cdots > a_{k-\ell}' > b_{k-\ell}' > a_{k+1-\ell}'.
		\]
		Then we have $g(x) = \det(xI - Z_{(\fa',\fb')}(w))$ where
		\begin{itemize}
			\item $|w_i|^2 =|z_i|^2$ \quad for $1 \leq i \leq j-1$,
			\item $|w_j|^2 = |z_j|^2 + \cdots + |z_{j+\ell}|^2$,
			\item $|w_i|^2 = |z_{i+\ell}|^2$ \quad for $j+1 \leq i \leq k-\ell$, and
			\item $w_{k-\ell+1} = \sum_{i=1}^{k-\ell+1} a_i' - \sum_{i=1}^{k-\ell}b_i' = \sum_{i=1}^{k+1} a_i - \sum_{i=1}^{k}b_i = z_{k+1}.$
		\end{itemize}
		Thus Lemma \ref{lemma_fiber_torus} implies that there exist positive constants $C_1, \cdots, C_{k-\ell}$ such that
		\[
			\widetilde{\mathcal{A}}_{k-\ell+1}(\fa',\fb') = \left\{(w_1, \cdots, w_{k-\ell}) \in \C^{k-\ell} ~|~ |w_j|^2 = C_j,~j=1,\cdots,k-\ell \right\}.
		\]
		In particular, we have $|w_j|^2 = |z_j|^2 + \cdots + |z_{j+\ell}|^2 = C_j$.
		
		It remains to prove the case where  $(a_1, b_1, \cdots, a_k, b_k, a_{k+1})$ contains more than one pattern of
                type (2) of \eqref{eq:inequality1}. However, since all patterns of
        	type (2) of \eqref{eq:inequality1} are disjoint from one another, we can apply the same argument to each
		pattern inductively. This completes the proof.
	\end{proof}
	
	Now, we are ready to prove Proposition \ref{proposition_A_k_S_k_bundle}.
	
	\begin{proof}[Proof of Proposition \ref{proposition_A_k_S_k_bundle}]
		For a given sequence $a_1\geq  b_1 \geq  \cdots \geq a_k \geq b_k \geq a_{k+1}$, let us consider the $W$-shaped block $W_k(a,b)$ with
		walls defined by strict inequalities $a_j > b_j$ or $b_j > a_{j+1}$ for each $j=1,\cdots, k$.  (See Figure \ref{figure_block_a_b}.)
		Note that each pattern of type (2) in \eqref{eq:inequality1} corresponds to an $M$-shaped block inside of $W_k(\fa,\fb)$.
		More specifically, if
		\[
			a_j > b_j = a_{j+1} = \cdots = a_{j+\ell} = b_{j+\ell} > a_{j+\ell+1}.
		\]
		is one of patterns of type (2) in \eqref{eq:inequality1} for some $j$, then it corresponds to a simple closed region which is an $M$-shaped block $M_{\ell+1}$.
		In particular, we have
		\[
			|M_{\ell+1}| = 2\ell + 1= \dim \left\{ (z_j, \cdots, z_{j+\ell}) \in \C^{\ell+1}~|~ |z_j|^2 + \cdots + |z_{j+\ell}|^2 = C_j \right\} = \dim S^{2\ell+1}.
		\]
		for a positive real number $C_j$.
		Combining the series of Lemma \ref{lemma_fiber_zero_a_b}, \ref{lemma_fiber_zero_b_a}, \ref{lemma_fiber_zero_a_a}, and \ref{lemma_fiber_sphere_b_b},
		we see
		$\widetilde{\mathcal{A}}_{k+1}(\fa,\fb) \cong S_{k}(\fa,\fb)$.
		Note that $\mathcal{A}_{k+1}(\fa,\fb)$ is an $\widetilde{\mathcal{A}}_{k+1}(\fa,\fb)$-bundle over $\mathcal{O}_\fb$ via
		\begin{equation}\label{equation_fibration_over_flag_manifold}
			\begin{array}{ccccc}
					\rho_{k+1}  \colon & \mathcal{A}_{k+1}(\fa,\fb) & \rightarrow & \mathcal{O}_\fb \\
					                    &                x             & \mapsto    &  x^{(k)}.\\
			\end{array}
		\end{equation}
		Thus $\mathcal{A}_{k+1}(\fa,\fb)$ is an $S_k(\fa,\fb)$-bundle over $\mathcal{O}_\fb$.
	\end{proof}

	\begin{corollary}\label{corollary_first_part}
		Let $f$ be a face of the Gelfand-Cetlin polytope $\Delta_\lambda$ and $\gamma$ be the face of the ladder diagram
		$\Gamma_\lambda$ corresponding to $f$. For any point $\textbf{\textup{u}}$ in the interior of $f$, the fiber $\Phi_\lambda^{-1}(\textbf{\textup{u}})$ has
		an iterated bundle structure given by
		\[
			\Phi_\lambda^{-1}(\textbf{\textup{u}}) = \bar{S_{n-1}}(\gamma) \xrightarrow {p_{n-1}} \bar{S_{n-2}}(\gamma) \rightarrow \cdots
			 \bar{S_1}(\gamma) \xrightarrow{p_1} \bar{S_0}(\gamma) = \mathrm{pt}
		\]
		where
		$p_{k} : \bar{S_k}(\gamma) \rightarrow \bar{S_{k-1}}(\gamma)$ is an $S_k(\gamma)$-bundle over $\bar{S_{k-1}}(\gamma)$ for $k=1,\cdots, n-1$.
		In particular, $\Phi_\lambda^{-1}(\textbf{\textup{u}})$ is of dimension
		\[
			\dim \Phi_\lambda^{-1}(\textbf{\textup{u}}) = \sum_{k=1}^{n-1} \dim S_k(\gamma).
		\]
	\end{corollary}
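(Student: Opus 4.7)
The plan is to construct the iterated bundle inductively by successive projection onto leading principal submatrices, using Proposition~\ref{proposition_A_k_S_k_bundle} as the single-step building block. Fix $\textbf{u} = (u_{i,j})$ in the relative interior of $f_\gamma$. From the GC pattern~\eqref{equation_GC-pattern} the $(k{+}1)\times(k{+}1)$ leading principal submatrix $x^{(k+1)}$ of any $x \in \Phi_\lambda^{-1}(\textbf{u})$ has a prescribed eigenvalue sequence $\mathfrak{a}^{(k+1)}$, read off as the $k$-th anti-diagonal of the triangle. Define
\[
\bar{S}_k(\gamma) \;=\; \{\, y \in \mathcal{H}_{k+1} \;:\; \mathrm{sp}(y^{(j)}) = \mathfrak{a}^{(j)} \text{ for all } 1 \le j \le k+1\,\}
\]
and let $p_k \colon \bar{S}_k(\gamma) \to \bar{S}_{k-1}(\gamma)$ send $y$ to $y^{(k)}$. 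Then $\bar{S}_{n-1}(\gamma)$ is canonically $\Phi_\lambda^{-1}(\textbf{u})$, and $\bar{S}_0(\gamma)$ is a single point.

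The key step is to apply Proposition~\ref{proposition_A_k_S_k_bundle} with $\mathfrak{a} = \mathfrak{a}^{(k+1)}$ and $\mathfrak{b} = \mathfrak{a}^{(k)}$: the map $\rho_{k+1}$ of that proposition is an $S_k(\mathfrak{a},\mathfrak{b})$-bundle, and the fiber of $p_k$ over a point $y^{(k)} \in \bar{S}_{k-1}(\gamma)$ is precisely $\rho_{k+1}^{-1}(y^{(k)})$, because the constraints defining $\bar{S}_{k-1}(\gamma)$ are already absorbed into the target. To identify the sphere-product $S_k(\mathfrak{a},\mathfrak{b})$ with $S_k(\gamma)$, I verify that the walls of $W_k(\gamma)$—edges of $\gamma$ lying in the interior of $W_k$—correspond bijectively to the strict inequalities $a_i > b_i$ and $b_i > a_{i+1}$ in the pattern at level $k$. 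This is immediate from the description of $\Psi$ recalled after Theorem~\ref{theorem_equiv_CG_LD}: $f_\gamma$ is cut out by equating adjacent variables not separated by positive paths in $\gamma$, so for $\textbf{u}$ in the relative interior, walls of $W_k(\gamma)$ record exactly the strict inequalities. Iterating for $k = n-1, \ldots, 1$ yields~\eqref{theoremmaindia} and the dimension formula follows by summing fiber dimensions.

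For the isotropic property, on the smooth open stratum $\Phi_\lambda^{-1}(\mathring{\Delta}_\lambda)$ it is automatic from Poisson commutativity and Proposition~\ref{proposition_GS_smooth}. For points on the boundary, I would argue that each sphere factor $S^{2\ell-1}$ arising as a fiber of $p_k$ sits in the level set of a family of Poisson-commuting Hamiltonians acting on an appropriate sub-orbit: the sphere is a level set $\{|z_j|^2 + \cdots + |z_{j+\ell}|^2 = C\}$ in the complex slice $\widetilde{\mathcal{A}}_{k+1}(\mathfrak{a},\mathfrak{b})$, and the restriction of $\omega_\lambda$ to this slice can be computed explicitly via the KKS form. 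Combined with the bundle projection $p_k$ and the inductive hypothesis that $\bar{S}_{k-1}(\gamma)$ is isotropic in a smaller co-adjoint orbit, isotropicity of $\bar{S}_k(\gamma)$ follows.

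The main obstacle is the isotropic assertion at boundary points: the components $\Phi_\lambda^{i,j}$ may fail to be smooth there, so the standard ``$\{\Phi_\lambda^{i,j},\Phi_\lambda^{i',j'}\}=0$ implies isotropic level set'' argument does not apply directly. The cleanest remedy is a density-plus-continuity argument—approximate $\textbf{u}$ by interior points whose fibers are Lagrangian tori and pass to the Hausdorff limit—together with the explicit fibration structure above to ensure the limit is a smooth submanifold of the correct dimension rather than a degenerate subset.
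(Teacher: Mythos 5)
Your proof of the iterated bundle structure and the dimension formula follows essentially the same route as the paper: set up the tower of spaces $\bar{S}_k(\gamma)$ by constraining the spectra of all leading principal submatrices, apply Proposition~\ref{proposition_A_k_S_k_bundle} at each stage with $\mathfrak{a} = \mathfrak{a}^{(k+1)}$, $\mathfrak{b} = \mathfrak{a}^{(k)}$, and identify $S_k(\mathfrak{a},\mathfrak{b})$ with $S_k(\gamma)$ via the dictionary between walls of $W_k(\gamma)$ and strict inequalities in the pattern, which is exactly the observation $S_k(\gamma) = S_k(\mathfrak{a}(k+1),\mathfrak{b}(k+1))$ invoked in the paper.

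One caution: the last two paragraphs of your proposal address isotropicity, which is not part of Corollary~\ref{corollary_first_part} — that assertion is stated and proved separately as Proposition~\ref{proposition_second_part} and only enters Theorem~\ref{theorem_main}. The density-plus-Hausdorff-limit argument you sketch for it is not reliable: a Hausdorff limit of Lagrangian tori need not be an isotropic submanifold (it may degenerate in dimension or acquire non-isotropic tangent directions where the system ceases to be smooth). The paper instead proves isotropicity directly via Lemma~\ref{lemma_isotropic}, which exploits $U(k)$-homogeneity of $\mathcal{A}_{k+1}(\mathfrak{a},\mathfrak{b})$ (Lemma~\ref{lemma_transitive}) to show that $\omega_{\mathfrak{a}}$ pulls back through $\rho_{k+1}$ to $\omega_{\mathfrak{b}}$ on tangent vectors to $\mathcal{A}_{k+1}(\mathfrak{a},\mathfrak{b})$, and then telescopes this identity down the tower to a point. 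For the corollary as stated, though, your argument is complete.
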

	\begin{proof}
		For each $(i,j) \in \Z^2_{\geq 1}$, we denote by $\Phi_\lambda^{i,j} \colon \mathcal{O}_\lambda \rightarrow \R$ be the component of $\Phi_\lambda$ which
		corresponds to the unit box $\square^{(i,j)}$ of $\Gamma_\lambda$.
		For each $k \in \Z_{\geq 1}$, 
		define
		\[
			a_i(k) := \Phi_\lambda^{i, k+1-i}(\textbf{\textup{u}}), \quad 1 \leq i\leq k \quad \quad \mathrm{and} \quad b_i(k) := a_i(k-1), \quad 1 \leq i\leq k-1. 
		\]
		Let $\fa{(k)} := (a_1(k), \cdots, a_k(k))$ and $\fb{(k)} := (b_1(k), \cdots, b_{k-1}(k))$.
		By applying Proposition \ref{proposition_A_k_S_k_bundle} repeatedly and following the observation that
		$S_k(\gamma) = S_k(\fa{(k+1)}, \fb{(k+1)})$, the fiber $\Phi_\lambda^{-1}(\textbf{\textup{u}})$ admits the iterated bundle structure described in ~\eqref{figure_iterated_bundle}.
		Then the dimension formula immediately follows.
	\end{proof}

\begin{equation}\label{figure_iterated_bundle}
	\xymatrix{
		\quad \overline{S_{n-1}(\gamma)} \quad \ar[r] \ar[d]^{{p_{n-1}}}& \quad \cdots \quad \ar[r] \ar[d] & \iota_{(n-1)}^* \left( \mcal{A} (\frak{a}{(n)} , \frak{b}{(n)})\right)   \ar[r] \ar[d]^{\iota^*_{(n-1)} \rho_n}  & \,  \mcal{A} (\frak{a}{(n)}, \frak{b}{(n)}) \, \ar@{^{(}->}[r]^{\,\,\,\,\,\,\,\,\, \iota_{(n)}} \ar[d]^{\rho_n} & \mcal{O}_{\frak{a}{(n)}}\\
		\quad \overline{S_{n-2}(\gamma)} \quad \ar[r] \ar[d]^{{p_{n-2}}} & \quad \cdots \quad \ar[r] \ar[d] & \,\,\, \mcal{A}(\frak{a}{(n-1)}, \frak{b}{(n-1)}) \,\,\, \ar@{^{(}->}[r]^{\,\,\,\,\,\,\,\,\,\,\,\,\,\,\,\, \iota_{(n-1)}} \ar[d]^{\rho_{n-1}}   &  \quad \mcal{O}_{\frak{a}{(n-1)}} \quad  & \\
		\quad \quad \vdots \quad \quad \ar[r] \ar[d] & \quad {\cdots} \quad   \ar@{^{(}->}[r] \ar[d] & \quad \quad \mcal{O}_{\frak{a}{(n-2)}} \quad \quad &  &\\
		\overline{S_1(\gamma)} = \mcal{A}(\frak{a}{(2)}, \frak{b}{(2)}) \ar@{^{(}->}[r]^{\quad \quad \,\,\,\,\,\,\,\,\, \iota_{(2)}} \ar[d]^{{p_1} = \rho_{2}} & \quad \cdots \quad &  & & \\
		\overline{S_0(\gamma)} = \mcal{O}_{\frak{a}{(1)}} &   &  & &}
\end{equation}
\vspace{0.2cm}

		To complete the proof of Theorem \ref{theorem_main}, it remains to verify that $\Phi^{-1}_\lambda(\textbf{\textup{u}})$ is an isotropic submanifold
		of $(\mathcal{O}_\lambda, \omega_\lambda)$ for every $\textbf{\textup{u}} \in \Delta_\lambda$.
		We first recall our notations as follows. 
		For a fixed positive integer $k>1$, let $\fa = (a_1, \cdots, a_{k+1})$ and $\fb = (b_1, \cdots, b_k)$ be sequences of real numbers satisfying~\eqref{patternaandb}
		and let $\rho_{k+1} \colon \mathcal{A}_{k+1}(\fa,\fb) \rightarrow \mathcal{O}_\fb$ be the map defined by $\rho_{k+1}(x) = x^{(k)}$.
		Then $\rho_{k+1}$ makes $\mathcal{A}_{k+1}(\fa,\fb)$ into a $\widetilde{\mathcal{A}}_{k+1}(\fa,\fb)$-bundle over $\mathcal{O}_\fb$. 
		See Proposition \ref{proposition_A_k_S_k_bundle}.
		Note that $U(k)$ acts on $\mathcal{A}_{k+1}(\fa,\fb)$ as a subgroup of $U(k+1)$ via the embedding
		\[
		\begin{array}{ccccl}
				i_k \colon& U(k) & \hookrightarrow & U(k+1) \\[0.8em]
				        &  A & \mapsto &
		\begin{pmatrix}A & 0\\[0.3em]
                  	0 & 1\\
		  \end{pmatrix}. \\
		 \end{array}
		\]
%                Then we can see that
%                \[
%                	(\rho_{k+1})_*|_{H_x} \colon H_x \rightarrow T_{\rho_{k+1}(x)} \mathcal{O}_\fb
%		\]
%		is surjective since $\rho_{k+1}$ is $U(k)$-invariant and the $U(k)$-action on $\mathcal{O}_\fb$ is transitive.
%		Let  $(i_k)_* \colon \mathfrak{u}(k) \rightarrow \mathfrak{u}(k+1)$ be the induced Lie algebra monomorphism.
%                Then $\ker (\rho_{k+1})_*|_{H_x}$ is given by
%                 \[
%                 	\begin{array}{ccl}
%	                 	\ker (\rho_{k+1})_*|_{H_x} & = & \left\{ [(i_k)_*(X), x] ~|~ X \in \mathfrak{u}(k) ,~ [X, x^{(k)}]= 0   \right\} \\
%	                 	                                                              & = & \left\{ [(i_k)_*(X), x] ~|~ X \in T_e U(k)_{x^{(k)}} \right\} \\
%                 	\end{array}
%		\]
%		where $x^{(k)}$ is the $k \times k$ leading principal submatrix of $x$ and $U(k)_{x^{(k)}}$ is the stabilizer of $x^{(k)} \in \mathcal{O}_\fb$ for the $U(k)$-action.
%
%		From now on, we assume that
%		$U(k)$ acts on ${\mathcal{A}}_{k+1}(\fa,\fb)$ via $i_k$, unless stated otherwise.
		\begin{lemma}\label{lemma_transitive}
			$U(k)$ acts transitively on $\mathcal{A}_{k+1}(\fa,\fb)$.
		\end{lemma}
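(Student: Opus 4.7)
The plan is to prove transitivity in two stages: first show that $U(k)$ acts transitively on the base $\mathcal{O}_\fb$ of the fibration $\rho_{k+1} \colon \mathcal{A}_{k+1}(\fa,\fb) \to \mathcal{O}_\fb$, and then show that the stabilizer of a chosen basepoint acts transitively on the fiber.

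First, I would check that $U(k)$ does indeed preserve $\mathcal{A}_{k+1}(\fa,\fb)$ and that $\rho_{k+1}$ is $U(k)$-equivariant: for $A \in U(k)$ and $x \in \mathcal{A}_{k+1}(\fa,\fb)$, conjugation by $i_k(A) = \mathrm{diag}(A,1)$ preserves $\mathrm{sp}(x) = \fa$ and sends $x^{(k)}$ to $A x^{(k)} A^*$, which has spectrum $\fb$. Transitivity of $U(k)$ on $\mathcal{O}_\fb$ is immediate since $\mathcal{O}_\fb$ is by definition a $U(k)$-co-adjoint orbit. Thus it suffices to show that the stabilizer $\mathrm{Stab}_{U(k)}(I_\fb) = U(m_1) \times \cdots \times U(m_s)$ (where $m_1, \ldots, m_s$ are the multiplicities of the distinct eigenvalues of $\fb$) acts transitively on the fiber $\rho_{k+1}^{-1}(I_\fb) = \widetilde{\mathcal{A}}_{k+1}(\fa,\fb)$.

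Under the identification of $\widetilde{\mathcal{A}}_{k+1}(\fa,\fb)$ with those $z = (z_1, \ldots, z_k) \in \C^k$ satisfying the constraints coming from~\eqref{equation_det}, a direct block computation shows that conjugation by $\mathrm{diag}(A_0, 1)$ with $A_0 \in \mathrm{Stab}_{U(k)}(I_\fb)$ transforms $z$ into $z A_0^*$. Since $A_0$ is block-diagonal with respect to the eigenspace decomposition of $I_\fb$, each $U(m_j)$-factor acts by a unitary transformation on the corresponding sub-block of $z$, preserving the Euclidean norm within that block.

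The core of the argument is then to match the block structure of the stabilizer with the cases in Lemmas~\ref{lemma_fiber_zero_a_b}, \ref{lemma_fiber_zero_b_a}, \ref{lemma_fiber_zero_a_a}, and \ref{lemma_fiber_sphere_b_b}. Within a maximal block of repeated $b$-values, the interlacing~\eqref{patternaandb} together with the case analysis~\eqref{eq:inequality1}--\eqref{eq:inequality2} forces exactly one of two alternatives: either all $z$-coordinates in the block vanish (cases (1), (3), (4), via Lemmas~\ref{lemma_fiber_zero_a_b}, \ref{lemma_fiber_zero_b_a}, \ref{lemma_fiber_zero_a_a}), on which the $U(m_j)$-action is trivially transitive, or the $z$-coordinates satisfy a single sphere constraint $|z_j|^2 + \cdots + |z_{j+\ell}|^2 = C_j$ (case (2), via Lemma~\ref{lemma_fiber_sphere_b_b}), on which the standard $U(m_j)$-action is transitive. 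Since transitivity holds block-by-block, the full stabilizer acts transitively on $\widetilde{\mathcal{A}}_{k+1}(\fa,\fb)$, and together with transitivity on $\mathcal{O}_\fb$ this gives the desired result. The main obstacle is the combinatorial bookkeeping to verify that every repeated-$b$ block of the stabilizer lines up with exactly one of the four cases above; this is essentially contained in the previous lemmas but requires careful reconciliation of boundary cases where the block sits at the edge of the interlacing pattern.
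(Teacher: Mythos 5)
Your proposal is correct and takes essentially the same approach as the paper. The paper likewise reduces to showing that the stabilizer $U(k)_{I_\fb}$ acts transitively on the fiber $\widetilde{\mathcal{A}}_{k+1}(\fa,\fb)$, decomposes this stabilizer as a product of unitary groups indexed by the multiplicity blocks of $\fb$, and then invokes Lemmas~\ref{lemma_fiber_zero_a_b}--\ref{lemma_fiber_sphere_b_b} to conclude that in each block the relevant sub-tuple of $z$-coordinates either vanishes identically or is confined to a sphere on which the corresponding $U(m_j)$-factor acts transitively by the standard linear action.
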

		\begin{proof}
			Since any element $x \in \mathcal{A}_{k+1}(\fa,\fb)$ is conjugate to an element of the following form
			\[
			Z_{(\fa,\fb)}(z) =   \begin{pmatrix} b_1 & & 0 & \bar{z}_1\\
                	  & \ddots & & \vdots\\
                  	0 & & b_k & \bar{z}_k\\
                  	z_1 & \hdots & z_k & z_{k+1}
		  \end{pmatrix} \in \widetilde{\mathcal{A}}_{k+1}(\fa,\fb) \subset \mathcal{A}_{k+1}(\fa,\fb)
		  \]
		with respect to the $U(k)$-action, it is enough to show that the isotropy subgroup $U(k)_{I_\fb}$ of $I_\fb$
		acts on $\widetilde{\mathcal{A}}_{k+1}(\fa,\fb)$ transitively.
		
		Assume that $\fb$ is given such that
		\[
			b_1 = \cdots = b_{i_1} > b_{i_1 + 1} = \cdots = b_{i_2} > \cdots > b_{i_{r-1} + 1} = \cdots = b_{i_r} := b_k.
		\]
		for some $r \geq 1$ provided with $i_0 = 0$ and $i_r = k$.
		Then it is not hard to show that $U(k)_{I_b} = U(k_1) \times \cdots \times U(k_r)$ where $k_j = i_j - i_{j-1}$ for $j=1,\cdots, r$.
		For each $j$, we know that each $(z_{i_{j-1}+1}, \cdots, z_{i_{j}}) \in \C^{k_{j}}$ satisfies either
		\begin{itemize}
			\item $|z_{i_j+1}|^2 + \cdots + |z_{i_{j+1}}|^2 = 0$, or
			\item $|z_{i_{j-1}+1}|^2 + \cdots + |z_{i_{j}}|^2 = C_{j}$ for some positive constant $C_{j} \in \R_{>0}$
		\end{itemize}
		by Lemma \ref{lemma_fiber_zero_a_b}, \ref{lemma_fiber_zero_b_a}, \ref{lemma_fiber_zero_a_a}, and \ref{lemma_fiber_sphere_b_b}.
		In the latter case, $U(k)_{I_b}$-action is written as
		\[
		\begin{pmatrix} g & 0\\
                	  0 & 1\\
		\end{pmatrix}
		\cdot
		\begin{pmatrix} I_b & \bar{z}^t\\
                	  z & z_{k+1}\\
		\end{pmatrix}
		\cdot
		\begin{pmatrix} g^{-1} & 0\\
                	  0 & 1\\
		\end{pmatrix}
		=
		\begin{pmatrix} I_b & g\bar{z}^t\\
                	  zg^{-1} & z_{k+1}\\
		\end{pmatrix}
		\]
		for every $g \in U(k)_{I_b}$ and $z = (z_1, \cdots, z_k)$.
		Note that every $g \in U(k)_{I_b}$ is of the form
		\[
		g = \begin{pmatrix} g_1 & 0 & 0 & \cdots & 0\\
                	  0 & g_2 & 0 & \cdots & 0\\
                	  0 & 0 & \ddots & \cdots & \vdots \\
                	  \vdots & \vdots & \vdots & \ddots & \vdots\\
                	  0 & \cdots & \cdots & \cdots & g_r
		\end{pmatrix}
		\]		
		where $g_i \in U(k_i)$ for $i=1,\cdots,r$. Thus each $g \in U(k)_{I_b}$ acts on $(z)_{j} := (z_{i_{j-1}+1}, \cdots, z_{i_{j}}) \in \C^{k_{j}}$
		by $(z)_{j} \cdot g_{j}^{-1}$ which is equivalent to the standard linear $U(k_{j})$-action on the sphere
		$S^{2k_{j} - 1} \subset \C^{k_j}$ of radius $\sqrt{C_{j}}$.
		Therefore, the action is transitive.
		\end{proof}
		
		\begin{lemma}\label{lemma_isotropic}
			For each $x \in \mathcal{A}_{k+1}(\fa,\fb)$ and any $\xi, \eta \in T_x \mathcal{A}_{k+1}(\fa,\fb)$, we have
			\[
				(\omega_\fa)_x(\xi, \eta) = (\omega_\fb)_{\rho_{k+1}(x)}((\rho_{k+1})_* \xi, (\rho_{k+1})_* \eta).
			\]
			In particular, the vertical tangent space $V_x \subset T_x \mathcal{A}_{k+1}(\fa,\fb)$ of $\rho_{k+1}$ is contained in $\ker (\omega_\fa)_x$.
		\end{lemma}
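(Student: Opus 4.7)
The plan is to leverage the transitivity of the $U(k)$-action on $\mathcal{A}_{k+1}(\fa,\fb)$ proved in Lemma \ref{lemma_transitive} and then reduce everything to a direct trace computation using the explicit formula $\widetilde{\omega}_h(X,Y) = \mathrm{tr}(ih[X,Y])$ for the KKS form. Lemma \ref{lemma_transitive} gives that for any $x \in \mathcal{A}_{k+1}(\fa,\fb)$ and any $\xi \in T_x \mathcal{A}_{k+1}(\fa,\fb)$ there is some $X \in \mathfrak{u}(k)$ with $\xi = [i_k(X), x]$, and similarly $\eta = [i_k(Y), x]$ for some $Y \in \mathfrak{u}(k)$. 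This is the essential structural input; everything else is block-matrix bookkeeping.

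First I would write $x$ in block form
\[
x = \begin{pmatrix} x^{(k)} & v \\ v^* & c \end{pmatrix}, \qquad x^{(k)} \in \mathcal{O}_\fb, \quad v \in \C^k, \quad c \in \R,
\]
and compute $[i_k(X), x]$. Since $i_k(X) = \begin{pmatrix} X & 0 \\ 0 & 0 \end{pmatrix}$, a direct computation yields
\[
[i_k(X), x] = \begin{pmatrix} [X, x^{(k)}] & Xv \\ -v^* X & 0 \end{pmatrix},
\]
whose leading $k \times k$ principal block is precisely $[X, x^{(k)}]$. Hence $(\rho_{k+1})_* \xi = [X, x^{(k)}] \in T_{x^{(k)}}\mathcal{O}_\fb$, and analogously $(\rho_{k+1})_* \eta = [Y, x^{(k)}]$.

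The key identity is the commutator relation $[i_k(X), i_k(Y)] = i_k([X,Y])$, which follows from the block structure. Consequently
\[
x \cdot [i_k(X), i_k(Y)] = \begin{pmatrix} x^{(k)} & v \\ v^* & c \end{pmatrix} \begin{pmatrix} [X,Y] & 0 \\ 0 & 0 \end{pmatrix} = \begin{pmatrix} x^{(k)}[X,Y] & 0 \\ v^*[X,Y] & 0 \end{pmatrix},
\]
whose trace equals $\mathrm{tr}(x^{(k)}[X,Y])$; the last row and column of $x$ drop out entirely. Multiplying by $i$ and applying the definitions of $\omega_\fa$ on $\mathcal{O}_\fa$ and $\omega_\fb$ on $\mathcal{O}_\fb$ gives
\[
(\omega_\fa)_x(\xi, \eta) = \mathrm{tr}\bigl(ix[i_k(X), i_k(Y)]\bigr) = \mathrm{tr}\bigl(ix^{(k)}[X,Y]\bigr) = (\omega_\fb)_{x^{(k)}}\bigl([X,x^{(k)}], [Y,x^{(k)}]\bigr),
\]
which is the desired equality. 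The second assertion is then immediate: if $\xi$ is vertical, then $(\rho_{k+1})_*\xi = 0$ and the right-hand side vanishes for every $\eta$, so $V_x \subset \ker(\omega_\fa)_x$.

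The only subtle point is the transitivity input, which is already handled by Lemma \ref{lemma_transitive}; without it one could not realize every tangent vector of $\mathcal{A}_{k+1}(\fa,\fb)$ as coming from $i_k(\mathfrak{u}(k))$, and the block-matrix cancellation producing $\mathrm{tr}(x[\,\cdot\,, \cdot\,]) = \mathrm{tr}(x^{(k)}[\,\cdot\,, \cdot\,])$ would not apply. Once that is in hand, the argument is purely computational and no further obstacle arises.
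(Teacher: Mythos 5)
Your proof is correct and follows essentially the same route as the paper's: invoke Lemma~\ref{lemma_transitive} to write every tangent vector as $[(i_k)_*(X), x]$ for some $X \in \mathfrak{u}(k)$, and then observe that the $(k+1,k+1)$-entry of $x\,[(i_k)_*(X),(i_k)_*(Y)]$ vanishes so the trace reduces to $\mathrm{tr}(ix^{(k)}[X,Y])$. Your explicit block-matrix computation of $[i_k(X),x]$ to identify $(\rho_{k+1})_*\xi = [X,x^{(k)}]$ is a welcome elaboration of the paper's one-line appeal to $U(k)$-equivariance of $\rho_{k+1}$, but the substance is identical.
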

		
		\begin{proof}
			Note that Lemma \ref{lemma_transitive} implies that any tangent vector in $T_x \mathcal{A}_{k+1}(\fa,\fb)$ can be written as $[(i_k)_*(X), x]$ for some
			$X \in \mathfrak{u}(k)$ where
			\[
				(i_k)_*(X) = 		\begin{pmatrix}X & 0\\[0.3em]
                  	            0 & 0\\
		  \end{pmatrix} \in \mathfrak{u}(k+1).
			\]
			Thus for any $\xi, \eta \in T_x \mathcal{A}_{k+1}(\fa,\fb)$, there exist $X, Y \in \mathfrak{u}(k)$ such that
			\[
				\xi = [(i_k)_*(X), x], \quad \eta = [(i_k)_*(Y), x].
			\]
			Therefore, we have
			\[
				(\omega_\fa)_x(\xi, \eta) = \mathrm{tr}(ix[(i_k)_*(X), (i_k)_*(Y)]) = \mathrm{tr}(ix^{(k)}[X,Y]) = (\omega_\fb)_{x^{(k)}} ([X, x^{(k)}] , [Y, x^{(k)}])	
			\]
			since all entries of the $k+1$-th row and column of $[(i_k)_*(X), (i_k)_*(Y)]$ are zeros so that  
			the $(k+1, k+1)$-th entry of the matrix $x[(i_k)_*(X), (i_k)_*(Y)]$ is zero. Furthermore,
			since $\rho_{k+1}$ is $U(k)$-invariant, we obtain that $[X, x^{(k)}] = (\rho_{k+1})_*(\xi)$ and
			$[Y, x^{(k)}] = (\rho_{k+1})_*(\eta)$.			
			This completes the proof.
		\end{proof}

	\begin{proposition}\label{proposition_second_part}
		For any $\textbf{\textup{u}} \in \Delta_\lambda$, the fiber $\Phi_\lambda^{-1}(\textbf{\textup{u}})$ is an isotropic submanifold of
		$(\mathcal{O}_\lambda,\omega_\lambda)$, i.e.,
		$\omega|_{\Phi_\lambda^{-1}(\textbf{\textup{u}})} = 0$.
	\end{proposition}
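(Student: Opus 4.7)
The plan is to iterate Lemma \ref{lemma_isotropic} along the tower \eqref{figure_iterated_bundle} of bundles produced in Corollary \ref{corollary_first_part}. The content of Lemma \ref{lemma_isotropic} is that each projection $\rho_{k+1} \colon \mcal{A}_{k+1}(\fa,\fb) \to \mcal{O}_\fb$ intertwines the KKS forms, in the sense that $\omega_\fa|_{\mcal{A}_{k+1}(\fa,\fb)} = \rho_{k+1}^* \omega_\fb$ on tangent vectors to $\mcal{A}_{k+1}(\fa,\fb)$. Collapsing this identity all the way down the tower will express the symplectic form on the fiber as a pullback from a single point, where it must vanish.

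More concretely, any $\textbf{\textup{u}} \in \Delta_\lambda$ lies in the relative interior of a unique face $f_\gamma$, and $\Phi_\lambda^{-1}(\textbf{\textup{u}})$ is a smooth submanifold of $\mcal{O}_\lambda$ by Corollary \ref{corollary_first_part} together with Proposition \ref{proposition_bundle}. Setting $\fa(k) := \bigl(\Phi_\lambda^{1,k}(\textbf{\textup{u}}), \ldots, \Phi_\lambda^{k,1}(\textbf{\textup{u}})\bigr)$ and $\fb(k) := \fa(k-1)$, the fiber sits inside $\mcal{A}_n(\fa(n),\fb(n)) \subset \mcal{O}_{\fa(n)} = \mcal{O}_\lambda$. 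For any $x \in \Phi_\lambda^{-1}(\textbf{\textup{u}})$ and any $\xi, \eta \in T_x \Phi_\lambda^{-1}(\textbf{\textup{u}}) \subset T_x \mcal{A}_n(\fa(n),\fb(n))$, I would apply Lemma \ref{lemma_isotropic} at the top level to obtain
\[
	(\omega_\lambda)_x(\xi, \eta) = (\omega_{\fa(n-1)})_{x^{(n-1)}}\bigl((\rho_n)_*\xi, (\rho_n)_*\eta\bigr).
\]
Because $x \in \Phi_\lambda^{-1}(\textbf{\textup{u}})$ forces each leading principal submatrix $x^{(k)}$ to have prescribed spectrum $\fa(k)$, the map $\rho_{k+1}$ sends $\Phi_\lambda^{-1}(\textbf{\textup{u}})$ into $\mcal{A}_k(\fa(k),\fb(k))$, so the pushed-forward vectors always lie in the correct tangent space for Lemma \ref{lemma_isotropic} to apply at the next stage. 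Iterating $n-1$ times yields
\[
	(\omega_\lambda)_x(\xi, \eta) = (\omega_{\fa(1)})_{x^{(1)}}\bigl((\rho_2 \circ \cdots \circ \rho_n)_*\xi, (\rho_2 \circ \cdots \circ \rho_n)_*\eta\bigr) = 0,
\]
since $\mcal{O}_{\fa(1)}$ is a single point and its KKS form is trivially zero.

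There is no substantial obstacle in this argument; the only mild subtlety is checking that at each stage the iterated pushforwards genuinely lie in $T\mcal{A}_k(\fa(k),\fb(k))$, which is immediate from the spectral condition defining $\Phi_\lambda^{-1}(\textbf{\textup{u}})$ together with the commutativity of \eqref{figure_iterated_bundle}. Combined with Corollary \ref{corollary_first_part}, this would complete the proof of Theorem \ref{theorem_main}.
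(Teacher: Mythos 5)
Your argument is exactly the paper's proof: both set $\fa(k) = (\Phi_\lambda^{i,k+1-i}(\textbf{\textup{u}}))_{1\le i\le k}$, apply Lemma~\ref{lemma_isotropic} at each stage of the tower~\eqref{figure_iterated_bundle} to push the KKS form down the bundle maps $\rho_{k+1}$, and conclude by observing that the bottom of the tower is a point. The one step you flag as a ``mild subtlety'' --- that the iterated pushforwards stay tangent to the next $\mcal{A}_k(\fa(k),\fb(k))$ --- is also noted in the paper's proof (via the observation that $p_{n-1}$ is the restriction of $\rho_n$ to $\bar{S}_{n-1}(\gamma)$), so nothing is missing.
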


	\begin{proof}
		Suppose that $\gamma$ is a face of $\Gamma_\lambda$
		such that the corresponding face $f_\gamma$ contains $\textbf{\textup{u}}$ in its interior.
		Let $x \in \Phi^{-1}_\lambda(\textbf{\textup{u}})$ and let $\xi, \eta \in T_x \Phi^{-1}_\lambda(\textbf{\textup{u}})$.
		Then Corollary \ref{corollary_first_part} says that $\Phi^{-1}_\lambda(\textbf{\textup{u}})$ is the total space of an iterated bundle
		\[
				\Phi_\lambda^{-1}(\gamma) = \bar{S_{n-1}}(\gamma) \xrightarrow {p_{n-1}} \bar{S_{n-2}}(\gamma) \rightarrow \cdots
		 		\bar{S_1}(\gamma) \xrightarrow{p_1} \bar{S_0}(\gamma) = \mathrm{pt}
		\]
		described in~\eqref{figure_iterated_bundle}.

		For each integer $k$ with $ 1 \leq k \leq n$ and $1 \leq i \leq k$,
		let us define
		\[
			a_i(k) := \Phi_\lambda^{i, k+1-i}(\textbf{\textup{u}}).
		\]
		and let $\fa(k) := (a_1(k), \cdots, a_k(k))$.
		In particular, we have $\fa(n) = \lambda = \{ \lambda_1, \cdots, \lambda_n \}$.
		Then
		Lemma \ref{lemma_isotropic} implies that
		\[
			(\omega_{\fa(n)})_x (\xi, \eta) = (\omega_{\fa(n-1)})_{\rho_n(x)}((\rho_n)_*(\xi), (\rho_n)_*(\eta)).
		\]
		Since
		$p_{n-1}$ is the restriction of $\rho_n$ to $\bar{S}_{n-1}(\gamma) \subset \mathcal{A}(\fa(n), \fa(n-1))$,
		both $(\rho_n)_*(\xi)$ and $(\rho_n)_*(\eta)$ are lying on $T_{p_{n-1}(x)} \bar{S}_{n-2}(\gamma) \subset
		T_{p_{n-1}(x)} \mathcal{A}(\fa(n-1), \fa(n-2))$.
		Applying Lemma \ref{lemma_isotropic} inductively, we obtain
		\[	\begin{array}{ccl}
			(\omega_{\fa(n)})_x (\xi, \eta) & = & (\omega_{\fa(n-1)})_{\rho_n(x)}((\rho_n)_*(\xi), (\rho_n)_*(\eta)) \\
                                             & = & (\omega_{\fa(n-2)})_{\rho_{n-1} \circ \rho_n(x)}((\rho_{n-1} \circ \rho_n)_*(\xi), (\rho_{n-1} \circ \rho_n)_*(\eta)) \\
                                             & = & \cdots \\
			                     & = & (\omega_{\fa(2)})_{\rho_2 \circ \cdots \circ \rho_n(x)}((\rho_2 \circ \cdots \circ \rho_n)_*(\xi), (\rho_2 \circ \cdots \circ \rho_n)_*(\eta))\\
			                     & = & 0
			\end{array}
		\]
		where the last equality follows from $\rho_2 \colon \mathcal{A}(\fa(2), \fa(1)) \rightarrow \{ a_1(1) = \Phi_\lambda^{1,1}(\textbf{\textup{u}}) \} = \mathrm{point}.$
	\end{proof}
	
	\begin{proof}[Proof of Theorem~\ref{theorem_main}]
		It follows from Corollary~\ref{corollary_first_part} and  Proposition~\ref{proposition_second_part}.
	\end{proof}

%------------------------------------------------------------------------
\section{Degenerations of fibers to tori}
\label{secDegenerationsOfFibersToTori}

In this section, we describe how each fiber of a GC system 
degenerates to a torus fiber of a toric moment map. 

Let $\lambda$ be given in \eqref{equation_GC-pattern} and let $f$ be a face of the GC polytope $\Delta_\lambda$ of dimension $r$. 
It is shown in Section ~\ref{secIteratedBundleStructuresOnGelfandCetlinFibers} that a fiber $\Phi_\lambda^{-1}(\textbf{\textup{u}})$ is the total space of an iterated bundle 
	\begin{equation}\label{equation_iterated_S}
		\Phi_\lambda^{-1}(\textbf{\textup{u}}) \cong \bar{S_{n-1}}(\gamma_f) \xrightarrow {p_{n-1}} \bar{S_{n-2}}(\gamma_f)
		 \rightarrow \cdots
		 \xrightarrow{p_2} \bar{S_1}(\gamma_f) \xrightarrow{p_1} \bar{S_0}(\gamma_f) := \mathrm{point}
	\end{equation}
where $\gamma_f$ is the face of  $\Gamma_\lambda$ corresponding to $f$, see Corollary \ref{corollary_first_part}.
The main theorem of this section further claims that 
every $S^1$-factor appeared in each stage of the iterated bundle in \eqref{equation_iterated_S} comes out as a trivial factor. 
Namely, letting $\bar{S_{i}}(\gamma)$ be $(S^1)^{m_i} \times Y_i$ such that $\sum_{i=1}^{n-1} m_i = r$, the fiber is written as the product
$$
\Phi_\lambda^{-1}(\textbf{\textup{u}}) = (S^1)^r \times Y(\textbf{\textup{u}})
$$
where $Y(\textbf{\textup{u}})$ is the total space of an iterated bundle whose fibers at stages are $Y_{\bullet}$'s, see Theorem~\ref{theorem_contraction}. Indeed, $Y(\textbf{\textup{u}})$ shrinks to a point through a toric degeneration, which illustrates how fibers degenerate into toric fibers. As an application, we can compute the fundamental group and the second homotopy group of 
$\Phi_\lambda^{-1}(\textbf{\textup{u}})$, which will be crucially used in \cite{CKO2} and \cite{CKO3}.

Recall that for a given K\"{a}hler manifold $(X,\omega)$, 
a flat family $\pi \colon \mcal{X} \rightarrow \C$ of algebraic varieties is called a 
{\em toric degeneration} $(X, \omega)$ if there exists an algebraic embedding $i \colon \mcal{X} \hookrightarrow \p^N \times \C$ such that 
\begin{enumerate}
	\item the diagram
		\begin{equation}\label{equation_def_toric_degeneration}
		    \xymatrix{
			      \mcal{X}  ~\ar[drr]_{\pi} \ar@{^{(}->}[rr]^{i}
			      & & \p^N \times \C \ar[d]^{q} \\ 
			      & & \C 
			    }
		\end{equation} commutes where $q \colon \p^N \times \C \rightarrow \C$ is the projection to the second factor, 
	\item $\pi^{-1}(\C^*)$ is isomorphic to $ X \times \C^*$	as a complex variety, and 
	\item For the product K\"{a}hler form $\widetilde{\omega} := \omega_{\mathrm{FS}} \oplus \omega_{\mathrm{std}}$ 
	on $\p^N \times \C$ where 
	$\omega_{\mathrm{FS}}$ is some multiple of the Fubini-Study form on $\p^N$ and $\omega_{\mathrm{std}}$ is the standard K\"{a}hler form 
	on $\C$, 
		\begin{itemize}
			\item $(X_1, \widetilde{\omega}|_{X_1})$ is symplectomorphic to  $(X, \omega)$, and 
			\item $X_0$ is a projective toric variety (in $\p^N$) and $\widetilde{\omega}|_{X_0}$ is a torus invariant K\"{a}hler form 
			denoted by $\omega_0$
		\end{itemize}
	where $X_t := \pi^{-1}(t) \cong i(\pi^{-1}(t)) \subset \p^N \times \{t \}$ is a projective variety for every $t \in \C$. 
\end{enumerate}	

Let $\mcal{X}^{\mathrm{sm}} \subset \mcal{X}$ be the smooth locus of $\mcal{X}$.  
The Hamiltonian vector field, denoted by $\widetilde{V}_\pi$, for the imaginary part $\mathrm{Im}(\pi)$ of $\pi$ is defined on $\mcal{X}^{\mathrm{sm}}$.
By the holomorphicity, $\pi$ satisfies the Cauchy-Riemann equation which induces $\nabla\mathrm{Re}(\pi) = - \widetilde{V}_\pi$ where 
$\nabla$ denotes the gradient vector field with respect to a K\"{a}hler metric associated with $\widetilde{\omega}$. 
We call
\[
	V_\pi := \widetilde{V}_\pi / ||\widetilde{V}_\pi||^2
\]
the {\em gradient-Hamiltonian vector field of $\pi$}, see Ruan \cite{Ru}. 
Then the one-parameter subgroup generated by $V_\pi$ induces a symplectomorphism 
\begin{equation}\label{equation_symplecto_open_dense}
	\phi \colon (\mcal{U}, \omega) \rightarrow (\phi(\mcal{U}), \omega_0)
\end{equation}
on an open dense subset $\mcal{U}$ of $X$ ($\cong X_1$) such that $\phi(\mcal{U}) = X_0^{\mathrm{sm}}$ and it is extended to 
a surjective continuous map 
\[
	\phi \colon X \rightarrow X_0
\]
defined on the whole $X$, see Harada-Kaveh \cite[Theorem A]{HK} for more details. 

We may also consider a toric degeneration of a K\"{a}hler manifold {\em ``equipped with a completely integrable system''} as follows.
Consider a triple $(X,\omega,\Theta)$ where $\Theta = (\Theta_\alpha)_{\alpha \in \mcal{I}}$ is a (continuous) completely integrable system on $(X, \omega)$
and $\mcal{I}$ is the index set for $\Theta$ such that $|\mcal{I}| = \dim_\C X$. 
We call $\pi \colon \mcal{X} \rightarrow \C$ a {\em toric degeneration} of $(X, \omega, \Theta)$ if $\pi$ is a toric degeneration of 
$(X,\omega)$ and $\Theta = \Phi \circ \phi$ where $\Phi \colon X_0 \rightarrow \R^{|\mcal{I}|}$ is the toric moment map on $(X_0, \omega_0)$, see \cite[Definition 1.1]{NNU1}.
\begin{equation}\label{equation_toric_degeneration_diagram_Section 7}
	\xymatrix{
		(X_1,\omega_1)\cong (X,\omega)  \ar[dr]_{\Theta} \ar[rr]^{ \phi}
                            & & (X_0, \omega_0) \subset \p^N \times \{0\}
      \ar[dl]^{\Phi} \\
   & \Delta_0 &}
\end{equation}
The Hamiltonian vector field of each component $\Phi_\alpha$ of $\Phi$ ($ = \{ \Phi_\alpha \}_{\alpha \in \mcal{I}}$) is globally defined on $X_0$, even though $X_0$ is singular, by the following reason. 
Note that $X_0 \subset \p^N \times \{0 \}$ is a projective toric variety, which is the Zariski closure of the single $(\C^*)^{|\mcal{I}|}$-orbit on $X_0$.
The $(\C^*)^{|\mcal{I}|}$-action on $X_0$ extends to the linear Hamiltonian action on $\p^N$ with respect to $\omega_{\mathrm{FS}}$. 
We denote by $(S^1)^{|\mcal{I}|}$ the maximal compact subtorus of $(\C^*)^{|\mcal{I}|}$, by $\widetilde{\Phi} = (\widetilde{\Phi}_\alpha)_{\alpha \in \mcal{I}}$
a moment map for the $(S^1)^{|\mcal{I}|}$-action on $\p^N$, and by $\xi_\alpha$ the fundamental vector field of $\widetilde{\Phi}_\alpha$ on $\p^N$ for each $\alpha \in \mcal{I}$.
Then each component $\Phi_\alpha$ coincides with the restriction of $\widetilde{\Phi}_\alpha$ to $X_0$. 
Since $X_0$ is $T^{|\mcal{I}|}$-invariant, the trajectory of the flow of $\xi_\alpha$ passing through any point of $X_0$ 
should be on $X_0$. In other words, the restriction $\xi_\alpha|_{X_0}$ should be tangent\footnote{Every toric variety is a stratified space \cite{LS}
so that each point in $X_0$ is contained in an open smooth stratum and each vector field $\xi_\alpha$ is tangent to the stratum.} 
to $X_0$, and therefore the Hamiltonian vector field of $\Phi_\alpha$ is defined on the whole $X_0$. 

Now, let $\mcal{V}_\alpha$ be the open dense subset of $X$
on which $\Theta_\alpha$ is smooth. 
Then the Hamiltonian vector field, denoted by $\zeta_\alpha$, of $\Theta_\alpha$ is defined on $\mcal{V}_\alpha$. 
For any subset $\mcal{I}' \subset \mcal{I}$, we let
\begin{equation}\label{domainofsmoothofcomponent}
	\displaystyle \mcal{V}_{\mcal{I}'} := \displaystyle \bigcap_{\alpha \in \mcal{I}'} \mcal{V}_\alpha
\end{equation}
so that the Hamiltonian vector field of $\Theta_\alpha$ is defined on $\mcal{V}_{\mcal{I}'}$ for every $\alpha \in \mcal{I}'$. 
If $\Theta_\alpha$ is a periodic Hamiltonian, i.e., if $\Theta_\alpha$ generates a circle action for every $\alpha \in \mcal{I}'$, 
then $\mcal{V}_{\mcal{I}'}$ admits the $T^{|\mcal{I}'|}$-action generated by $\{ \zeta_\alpha \}_{\alpha \in \mcal{I}'}$. 
Note that $\mcal{V}_{\mcal{I}'}$ is open dense in $X$ so that $\mcal{U} \cap \mcal{V}_{\mcal{I}'}$ 
is also open dense in $X$ where $\mcal{U}$ is in \eqref{equation_symplecto_open_dense}.

\begin{lemma}\label{lemma_equivariant}
	For any $\alpha \in \mcal{I}$ and $p \in \mcal{V}_\alpha$, we have 
	\[
		\phi(\exp(t \, \zeta_\alpha) \cdot p) =\exp(t \, \xi_\alpha) \cdot \phi(p)
	\]
	for every $t \in \R$.  
\end{lemma}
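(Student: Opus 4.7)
The plan is to first verify the intertwining relation on the open dense locus where both $\phi$ and $\Theta_\alpha$ are smooth, and then use continuity to extend it to all of $\mcal{V}_\alpha$. On $\mcal{U}\cap \mcal{V}_\alpha$ the map $\phi$ is a symplectomorphism onto an open subset of $X_0^{\mathrm{sm}}$ with $\phi^\ast \omega_0 = \omega$, and the identity $\Theta = \Phi\circ\phi$ gives $\Theta_\alpha = \Phi_\alpha \circ \phi$. Applying Hamilton's equation on both sides, I obtain $\phi_\ast \zeta_\alpha = \xi_\alpha$ at every point of $\mcal{U}\cap\mcal{V}_\alpha$, where $\xi_\alpha|_{X_0^{\mathrm{sm}}}$ is the Hamiltonian vector field of $\Phi_\alpha$. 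Consequently, whenever the trajectory $\{\exp(s\zeta_\alpha)\cdot p\}_{s\in[0,t]}$ is contained in $\mcal{U}$, the two curves $s \mapsto \phi(\exp(s\zeta_\alpha)\cdot p)$ and $s \mapsto \exp(s\xi_\alpha)\cdot \phi(p)$ satisfy the same initial value problem on $X_0^{\mathrm{sm}}$ and therefore coincide.

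To remove the caveat that the trajectory must stay in $\mcal{U}$, I will show $\mcal{U}$ is invariant under the flow of $\zeta_\alpha$. Since $\exp(t\xi_\alpha)$ is part of the compact torus action on the toric variety $X_0$, it preserves the smooth stratum $X_0^{\mathrm{sm}}$. For $p \in \mcal{U}\cap \mcal{V}_\alpha$, set $T = \sup\{\tau > 0 : \exp(s\zeta_\alpha)\cdot p \in \mcal{U}\text{ for all } 0 \leq s \leq \tau\}$. The preceding paragraph yields $\phi(\exp(s\zeta_\alpha)\cdot p) = \exp(s\xi_\alpha)\cdot \phi(p) \in X_0^{\mathrm{sm}}$ for $0 \leq s < T$. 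If $T$ were finite, continuity of $\phi$ and of the two flows would force the limit $\phi(\exp(T\zeta_\alpha)\cdot p) = \exp(T\xi_\alpha)\cdot \phi(p)$ to remain in $X_0^{\mathrm{sm}}$; because $\phi|_{\mcal{U}}$ is a diffeomorphism onto $X_0^{\mathrm{sm}}$ while the strata of $X \setminus \mcal{U}$ are collapsed by $\phi$ into the singular locus of $X_0$, this places $\exp(T\zeta_\alpha)\cdot p$ back in the open set $\mcal{U}$, contradicting maximality of $T$. Hence $T = \infty$, and the asserted identity holds for every $t \in \R$ and every $p \in \mcal{U}\cap \mcal{V}_\alpha$.

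Both sides of the identity, viewed as functions of $p \in \mcal{V}_\alpha$ with $t$ fixed, are continuous: the left-hand side because $\phi$ is continuous on $X$ and the flow of $\zeta_\alpha$ is smooth on $\mcal{V}_\alpha$, the right-hand side because $\exp(t\xi_\alpha)$ is a homeomorphism of $X_0$. Since $\mcal{U}\cap \mcal{V}_\alpha$ is open dense in $\mcal{V}_\alpha$, validity on this subset propagates to all of $\mcal{V}_\alpha$ by density. The main obstacle is the invariance step: the pointwise equality $\phi_\ast \zeta_\alpha = \xi_\alpha$ on $\mcal{U}$ integrates to an equality of flows for all time only after a bootstrap using both the smoothness-preservation of the toric action on $X_0$ and the property that $\phi$ sends no point of $\mcal{U}$ into the singular stratum of $X_0$; the remaining continuity extension is then routine.
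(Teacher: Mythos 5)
Your first paragraph reproduces the paper's key computation: combining $\phi^*\omega_0=\omega$ with $\Theta=\Phi\circ\phi$ to get $\phi_*\zeta_\alpha=\xi_\alpha$ on $\mcal{U}\cap\mcal{V}_\alpha$, and then invoking ODE uniqueness. The remainder of your argument, however, takes a genuinely different route, and it is there that a gap appears.

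The paper's proof never needs $\mcal{U}$ to be invariant under the flow of $\zeta_\alpha$. Instead it extends the \emph{vector-field} identity $\phi_*\zeta_\alpha=\xi_\alpha$ from $\mcal{U}\cap\mcal{V}_\alpha$ to all of $\mcal{V}_\alpha$ by density, pointing out explicitly that this is legitimate because $\xi_\alpha$ is defined on \emph{all} of $X_0$ (this is exactly why the paragraph preceding the lemma goes to the trouble of showing that the Hamiltonian vector field of $\Phi_\alpha$ is tangent to every stratum of the toric variety, not just the smooth one). Once the infinitesimal identity holds on $\mcal{V}_\alpha$, uniqueness of integral curves gives the flow identity directly, and the question of whether the $\zeta_\alpha$-trajectory stays inside $\mcal{U}$ simply never arises.

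Your bootstrap, by contrast, tries to establish $\mcal{U}$-invariance, and the crucial step is the claim that $\phi$ collapses $X\setminus\mcal{U}$ into the singular locus of $X_0$, i.e.\ $\phi^{-1}(X_0^{\mathrm{sm}})=\mcal{U}$. The paper only records the one-sided statement $\phi(\mcal{U})=X_0^{\mathrm{sm}}$, together with the fact that $\phi|_{\mcal{U}}$ is a symplectomorphism onto its image; neither of these, nor surjectivity and continuity of $\phi$, rules out a priori that some point outside $\mcal{U}$ is sent into $X_0^{\mathrm{sm}}$. The equality $\phi^{-1}(X_0^{\mathrm{sm}})=\mcal{U}$ may well be part of Harada--Kaveh's construction, but you assert it without citation or argument, and it is precisely the load-bearing step of your invariance proof. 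As written, the bootstrap therefore has a real hole. The cleaner path, and the one the paper takes, is to dodge the invariance question entirely: push the vector-field identity to $\mcal{V}_\alpha$ by density (using that $\xi_\alpha$ is globally defined) and conclude by ODE uniqueness; alternatively, one can establish the flow identity for short time at every point of $\mcal{V}_\alpha$ and chain, using that $\mcal{V}_\alpha$ itself (unlike $\mcal{U}$) is manifestly invariant under the Hamiltonian flow of $\Theta_\alpha$.
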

\begin{proof}
	Fix $\alpha \in \mcal{I}$.
	From the fact that 
	\begin{itemize}
		\item $\phi^*\omega_0 = \omega$ on $\mcal{U} \cap \mcal{V}_\alpha$, and 
		\item $\Theta = \Phi \circ \phi$, 
	\end{itemize}	
	it follows that
	\[
		\omega_0(\phi_*(\zeta_\alpha), \phi_*(\cdot)) = \omega(\zeta_\alpha, \cdot ) = d\Theta_\alpha(\cdot) = d\Phi_\alpha \circ d\phi(\cdot) = \omega_0(\xi_\alpha, \phi_*(\cdot))
	\]
	so that $\phi_*(\zeta_\alpha) = \xi_\alpha$ on $\mcal{U} \cap \mcal{V}_\alpha$.
	Since $\mcal{U} \cap \mcal{V}_\alpha$ is open dense in $\mcal{V}_\alpha$ and $\xi_\alpha$ is defined on whole $X_0$, the equality 
	\[
		\phi_*(\zeta_\alpha) = \xi_\alpha
	\]
	holds on $\mcal{V}_\alpha$. 
	This completes the proof by the uniqueness of a solution to a first-order differential equation.  
\end{proof}

Let $\mcal{I}' \subset \mcal{I}$ and suppose that $\Theta_\alpha$ is a periodic Hamiltonian on $\mcal{V}_{\mcal{I}'}$ for every $\alpha \in \mcal{I}'$. 
Since $\Phi_\alpha$ is also a periodic Hamiltonian on $X_0$, we deduce the following immediately from Lemma \ref{lemma_equivariant}.

\begin{corollary}\label{corollary_torus_equivariant}
	Let $\mcal{I}' \subset \mcal{I}$ such that $\{ \Theta_\alpha \}_{\alpha \in \mcal{I}'}$ are periodic Hamiltonians on $\mcal{V}_{\mcal{I}'}$. 
	Then $\phi$ is $T^{|\mcal{I}'|}$-equivariant on $\mcal{V}_{\mcal{I}'}$. 
\end{corollary}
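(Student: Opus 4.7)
The plan is to derive the corollary essentially as a direct consequence of Lemma~\ref{lemma_equivariant}, after decomposing an arbitrary torus element into a product of one-parameter subgroup exponentials and verifying that the two torus actions involved are well-defined.

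First I would set up the two commuting families of flows. Since $\Theta = (\Theta_\alpha)_{\alpha \in \mcal{I}}$ is a completely integrable system, the components satisfy $\{\Theta_\alpha, \Theta_\beta\} = 0$ on the open dense smooth locus, in particular on $\mcal{V}_{\mcal{I}'}$. Combined with the hypothesis that each $\Theta_\alpha$ with $\alpha \in \mcal{I}'$ generates a periodic Hamiltonian flow on $\mcal{V}_{\mcal{I}'}$, the flows of the vector fields $\zeta_\alpha$ pairwise commute and assemble into a $T^{|\mcal{I}'|}$-action on $\mcal{V}_{\mcal{I}'}$; note that $\mcal{V}_{\mcal{I}'}$ is preserved under each such flow because $\exp(t\zeta_\alpha)$ preserves every level set of each $\Theta_\beta$ (again by Poisson-commutativity) and hence preserves the locus of smoothness of each $\Theta_\beta$. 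On the toric side, the functions $\Phi_\alpha$ for $\alpha \in \mcal{I}'$ are components of the toric moment map on $X_0$, so their Hamiltonian vector fields $\xi_\alpha$ commute and generate a sub-torus action of the ambient toric $T^{|\mcal{I}|}$-action.

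Next, given $\mathbf{t} = (t_\alpha)_{\alpha \in \mcal{I}'} \in T^{|\mcal{I}'|}$, the action on $p \in \mcal{V}_{\mcal{I}'}$ can be written (in any fixed ordering) as
\[
  \mathbf{t}\cdot p \;=\; \prod_{\alpha \in \mcal{I}'} \exp(t_\alpha\,\zeta_\alpha)\cdot p.
\]
Applying Lemma~\ref{lemma_equivariant} one index $\alpha$ at a time, and using that the intermediate points remain in the relevant $\mcal{V}_\alpha$ by the invariance observation above, one obtains
\[
  \phi(\mathbf{t}\cdot p) \;=\; \prod_{\alpha \in \mcal{I}'} \exp(t_\alpha\,\xi_\alpha)\cdot \phi(p) \;=\; \mathbf{t}\cdot \phi(p),
\]
which is precisely the claimed $T^{|\mcal{I}'|}$-equivariance.

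There is no real obstacle: the heavy lifting was already done in Lemma~\ref{lemma_equivariant}, which uses the density argument on $\mcal{U} \cap \mcal{V}_\alpha$ together with smoothness of $\xi_\alpha$ on all of $X_0$ to promote the pushforward identity $\phi_*\zeta_\alpha = \xi_\alpha$ from the open dense subset to all of $\mcal{V}_\alpha$. The only item that must be checked with care is the invariance of $\mcal{V}_{\mcal{I}'}$ under each $\exp(t_\alpha\zeta_\alpha)$, and this follows from the commutation of the flows.
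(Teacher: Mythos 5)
Your proposal is correct and follows essentially the same route as the paper, which simply deduces the corollary ``immediately'' from Lemma~\ref{lemma_equivariant}; you have just made explicit the routine steps the paper leaves implicit (decomposing a torus element into commuting one-parameter flows and noting that $\mcal{V}_{\mcal{I}'}$ is preserved, which is already built into the hypothesis that the $\Theta_\alpha$ generate a $T^{|\mcal{I}'|}$-action there). No gap.
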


We will apply Corollary \ref{corollary_torus_equivariant} to GC systems. Recall that for any $\lambda$ given in \eqref{equation_GC-pattern}, Nishinou-Nohara-Ueda \cite{NNU1}
built a toric degeneration of the GC system $\Phi_\lambda$ on $(\mcal{O}_\lambda, \omega_\lambda)$. 
We briefly explain their construction of the toric degeneration of 
$(\mcal{O}_\lambda, \omega_\lambda, \Phi_\lambda)$ and a continuous map $\phi \colon \mcal{O}_\lambda \rightarrow X_0$ given in 
\eqref{equation_symplecto_open_dense}. (We also refer the reader to \cite{KM} or \cite{NNU1} for more details.) 

We begin with Kogan-Miller's {\em toric degeneration in stage} in \cite{KM}, an $(n-1)$-parameter family $F \colon \mcal{X} \rightarrow \C^{n-1}$
of projective varieties and it can be factored as 
\begin{equation}{\label{equation_KoM}}
	F = q \circ \iota, \quad \quad 
	\mcal{X} \stackrel{\iota}  \hookrightarrow  P \times \C^{n-1} \stackrel {q} \rightarrow \C^{n-1}, \quad 
	P = \prod_{k=1}^r \p_{n_k}
\end{equation}
where $\p_{n_k} := \p\left(\bigwedge^{n_k} \C^n \right) = \p^{{n \choose n_k}-1}$ and $\iota$ is an algebraic embedding with a K\"{a}hler form $\widetilde{\omega}$ on $P \times \C^{n-1}$ such that 
\begin{itemize}
	\item $(F^{-1}(1,\cdots,1), \widetilde{\omega}|_{F^{-1}(1,\cdots,1)}) \cong (\mcal{O}_\lambda, \omega_\lambda)$ and 
	\item $F^{-1}(0,\cdots,0)$ is isomorphic to the toric variety $X_0$ whose moment map image is $\Delta_\lambda$ 
		with the torus-invaraint K\"{a}hler form $\widetilde{\omega}|_{F^{-1}(0,\cdots,0)}$ on $X_0$. 
\end{itemize}
See \cite[Section 5 and Remark 5.2]{NNU1} for more details. Following \cite{NNU1}, we denote the coordinates of $\C^{n-1}$ by $(t_2, \cdots, t_n)$ and 
$F^{-1}(1,\cdots,1, t = t_k , 0,\cdots, 0)$ by $X_{k,t}$ for $2 \leq k \leq n$ and $t \in \C$. 
Then the set 
\[
	\{ X_{k,t} \}_{2 \leq k \leq n, t \in \C}
\] can be regarded as a family of algebraic varieties in $P$ via the embedding $\iota$ where 
$X_{n,1} \subset P$ is the image of the Pl\"{u}cker embedding of $\mcal{O}_\lambda$
and $X_{2,0} \subset P$ is the toric variety $X_0$ associated with $\Delta_\lambda$.

Let $T^{\frac{n(n-1)}{2}}$ be the compact subtorus of $(\C^*)^{\frac{n(n-1)}{2}} \subset X_0$ and we fix a decomposition 
\[
	T^{\frac{n(n-1)}{2}} \cong T^1 \times T^2 \times \cdots \times T^{n-1}.
\]
For each $k = 1,\cdots,n-1$, we denote the $i$-th coordinate of $T^k$ by $\tau_{i,j}$
\footnote{
For the consistency of~\eqref{equation_coordinate}, we use the index $(i,j)$. 
} where $i + j = k+1$. 
Each $S^1$-action on $X_0$ corresponding to $\tau_{i,j}$ can be extended to the linear Hamiltonian $S^1$-action on $P$ and we denote a corresponding moment map by 
\begin{equation}{\label{phiijaa}}
	\Phi^{i,j} : P \rightarrow \R.
\end{equation}

On the other hand, $P$ admits the Hamiltonian $U(n)$-action induced by the standard linear $U(n)$-action on $\C^n$ with the moment map $\mu^{(n)} : P \rightarrow \frak{u}(n)^*$.
We also denote by 
\[
	\mu^{(k)} \colon P \rightarrow \frak{u}(k)^* \cong \{\text{$k \times k$ Hermitian matrices} \}
\]
the moment map for each $U(k)$-action where $U(k)$ is the subgroup of $U(n)$ given by 
\[
	U(1) \subset U(2) \subset \cdots \subset U(n-1) \subset U(n), \quad U(k) := \begin{pmatrix} U(k) & 0 \\ 0 & I_{n-k} \end{pmatrix}
\]
for $k=1,\cdots,n-1$. For each pair $(i,j) \in (\Z_+)^2$ with $i + j = k + 1$, define 
\begin{equation}\label{phiijbb}
	\Phi_\lambda^{i,j} \colon P \rightarrow \R
\end{equation}
which assigns the $i$-th largest eigenvalue of $\mu^{(k)}(p)$ for every $p \in P$. 

\begin{remark}
If one follows the notations used in \cite[Section 5]{NNU1}, then we may express as
\[
	\tau_{i,j} = \tau_i^{(j)}, \quad \Phi^{i,j} = v^{(k)}_i, \quad \Phi^{i,j}_\lambda = \lambda^{(k)}_i, \quad i+j = k+1.
\]
\end{remark}

In general, a fiber of $F$ in \eqref{equation_KoM} is not invariant under neither the $U(n)$-action nor under the $T^1 \times T^2 \times \cdots \times T^{n-1}$-action on $P$,  
but $X_{k,t}$ is invariant under the $U(k-1)$ action and the $T^k \times \cdots \times T^{n-1}$ action. Moreover, $X_{k+1,0} = X_{k,1}$ admits both the $U(k)$-action 
and the $T^k \times \cdots \times T^{n-1}$ action. 

The following theorem states that the maps $\Phi^{i,j}_\lambda$'s in \eqref{phiijbb}
and $\Phi^{i,j}$'s in \eqref{phiijaa} defined on $P$ induces a completely integrable system on $X_{k,t}$ and how the GC system $\Phi_\lambda$ on $X_{n,1} \cong \mcal{O}_\lambda$
degenerates into the toric moment map $\Phi$ on $X_{2,0} = X_0$ {\em in stages}. See also Section 5 and Section 7 of \cite{NNU1}.

\begin{theorem}[Theorem 6.1 in \cite{NNU1}]\label{theorem_NNU_6_1}
For every $k \geq 2$ and $t \in \C$, the map
\[
	\Phi_{k,t} := \left.(\underbrace{\Phi^{1,1}_\lambda}_{1}, \cdots, \underbrace{\Phi^{1, k-1}_\lambda, \cdots, \Phi^{k-1,1}_\lambda}_{k-1}, 
	\underbrace{\Phi^{1, k}_{\vphantom{\lambda}}, \cdots, \Phi^{k,1}_{\vphantom{\lambda}}}_{k}, \cdots, 
	\underbrace{\Phi^{1,n-1}_{\vphantom{\lambda}}, \cdots, \Phi^{n-1,1}_{\vphantom{\lambda}}}_{n-1})\right|_{X_{k,t}} 
	: X_{k,t} \rightarrow \R^{\dim \Delta_\lambda}
\]
is a completely integrable system on $X_{k,t}$ in the sense of Definition \ref{definition_CIS_continuous}. Moreover, 
we have $\Phi_\lambda^{i, k - i} = \Phi_{\vphantom{\lambda}}^{i, k - i}$
on $X_{k, 0} = X_{k-1, 1}$ for every $i=1,\cdots, k-1$.
\end{theorem}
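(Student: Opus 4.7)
The plan is to exploit the two commuting Hamiltonian group actions present on the fiber $X_{k,t}$ by construction of the Kogan--Miller family~\eqref{equation_KoM}: the residual $U(k-1)$-action that survives stages $1$ through $k-1$ of the degeneration, and the torus $T^k \times T^{k+1} \times \cdots \times T^{n-1}$-action that has already been activated at stages $k$ and beyond. These actions commute inside $U(n) \times T^{|\mcal{I}|}$ since the higher torus factors sit in blocks complementary to the $U(k-1)$-block. First I would identify the components of $\Phi_{k,t}$ with moment-map data coming from these two actions: the GC-block $(\Phi_\lambda^{i,j})_{i+j \leq k}$ consists of the eigenvalue functions of the nested principal submatrices of $\mu^{(k-1)}|_{X_{k,t}}$, while the toric block $(\Phi^{i,j})_{i+j \geq k+1}$ consists of the moment coordinates of the torus action.

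Then I would verify the three conditions of Definition~\ref{definition_CIS_continuous} block-by-block and across blocks. Within the GC-block, Poisson commutativity follows from the Guillemin--Sternberg Thimm-type argument that underlies the original GC integrability sketched in Section~\ref{ssecCompletelyIntegrableSystemOnMathcalOLambda}, now applied to the $U(k-1)$-moment map $\mu^{(k-1)}|_{X_{k,t}}$. Within the toric block, Poisson commutativity is automatic since the components are coordinates of a torus moment map. Cross-commutativity is clean: each toric component is $U(k-1)$-invariant (being a moment map for the commuting torus action), while each GC-block component factors through $\mu^{(k-1)}$, so its Hamiltonian flow preserves $U(k-1)$-invariant functions. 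Smoothness and linear independence on an open dense subset follow from Proposition~\ref{proposition_GS_smooth} for the GC block and from toric-variety regularity for the toric block; independence across the two blocks holds because they vary along complementary directions, transverse to and along $U(k-1)$-orbits respectively, and a dimension count matches $\dim \Delta_\lambda$.

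The identification $\Phi_\lambda^{i,k-i} = \Phi^{i,k-i}$ on the interface $X_{k,0} = X_{k-1,1}$ is the most delicate step and I expect it to be the main obstacle. The two functions are, a priori, restrictions to the same subvariety of two different globally defined functions on $P$: the eigenvalue function of $\mu^{(k-1)}$ on one hand, and a coordinate of the $T^{k-1}$-moment map on the other. To prove their equality, I would run the single-parameter sub-degeneration in the $(k-1)$-th coordinate of~\eqref{equation_KoM} and invoke Lemma~\ref{lemma_equivariant} to match the Hamiltonian circle actions of the two functions: on $X_{k-1,1}$ the eigenvalue $\Phi_\lambda^{i,k-i}$ generates a periodic Hamiltonian flow on its smooth locus by the remark following Proposition~\ref{proposition_GS_smooth}, and this flow must coincide with the circle action generated by $\Phi^{i,k-i}$ when transported by the gradient-Hamiltonian flow to $X_{k,0}$. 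Equality of periodic Hamiltonians with the same flow, together with matching values pinned at a common base point, would give the desired identity on the smooth locus, and continuity then extends it to all of $X_{k,0}$. The main technical subtlety is the behavior along the walls where eigenvalues of $\mu^{(k-1)}$ collide and along the singular locus of $X_{k,0}$; I would handle these via density of the smooth locus, following the same strategy that gives continuity of the GC system to begin with.
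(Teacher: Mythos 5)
The statement you are proving is not proven in this paper: it is cited wholesale as Theorem~6.1 of Nishinou--Nohara--Ueda \cite{NNU1} (whose proof proceeds by an explicit Pl\"{u}cker-coordinate analysis of the Kogan--Miller degeneration), so there is no internal proof to compare against. Evaluated on its own terms, your sketch for the integrable-system claim is reasonable: the block structure into a $U(k-1)$-eigenvalue block and a $T^k \times \cdots \times T^{n-1}$-moment block, Thimm-type commutativity within the first block, tautological commutativity within the second, and cross-commutativity via the commuting actions, together with Proposition~\ref{proposition_GS_smooth} for generic smoothness and a dimension count, is essentially the right skeleton. This is the part of your write-up that is least worrisome.

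The ``Moreover'' identity $\Phi_\lambda^{i,k-i}=\Phi^{i,k-i}$ on $X_{k,0}=X_{k-1,1}$ is where your argument has a genuine gap, and it is a circularity gap rather than a technical one. You propose to invoke Lemma~\ref{lemma_equivariant} and the gradient-Hamiltonian transport to identify the two periodic circle actions. But Lemma~\ref{lemma_equivariant} is stated and proven \emph{under the hypothesis} $\Theta = \Phi \circ \phi$, and in the present context that hypothesis is precisely the compatibility encoded by Corollary~\ref{corollary_NNU_deform}, which in turn is downstream of the identity you are trying to establish. Theorem~\ref{theorem_NNU_6_1} and Corollary~\ref{corollary_NNU_deform} together are what make the Gelfand--Cetlin system a toric degeneration of a triple $(X,\omega,\Theta)$ in the sense the paper uses; you cannot enter that framework to prove one of its entrance conditions. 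Even setting the circularity aside, ``same Hamiltonian flow plus pinned value at a base point'' is a fragile route here, because both candidate Hamiltonians are only continuous (and only generate circle flows on open dense subsets of the singular variety $X_{k,0}$), so matching flows generically leaves open an additive constant on each connected component of the complement and requires a nontrivial density-plus-continuity patching argument. The way NNU actually close this step is computational, not dynamical: they work in the explicit Pl\"{u}cker embedding $P=\prod_k\mathbb{P}_{n_k}$ and verify directly that, after the $k$-th stage of the degeneration, the $i$-th largest eigenvalue of $\mu^{(k-1)}$ restricted to $X_{k,0}$ is a toric moment coordinate in the $T^{k-1}$-block. A correct proof along your lines would need to replace the appeal to Lemma~\ref{lemma_equivariant} with an independent identification of the two functions on $X_{k,0}$, for instance by that explicit coordinate computation.
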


Note that $\Phi_{k,t}$'s are related to one another via the {\em gradient-Hamiltonian flows} introduced by Wei-Dong Ruan \cite{Ru}. 
For each $m=1,\cdots, n-1$, let $F_m$ be the $m$-th component of $F$ in \eqref{equation_KoM} and 
let $\widetilde{V}_m$ be the Hamiltonian vector field of the imaginary part of $F_m$
on the smooth locus $\mcal{X}^{\mathrm{sm}}$ of $\mcal{X}$. Then the gradient-Hamiltonian vector field is defined by 
\[
	V_m := \widetilde{V}_m / ||\widetilde{V}_m||^2. 
\]
The flow of $V_m$, which we denote by $\phi_{m,t}$ where $t$ is a time parameter, 
preserves the fiberwise symplectic form and so $\phi_{m,t}$ induces a symplectomorphism  
on an open subset of each fiber on which $\phi_{m,t}$ is smooth. 
As a corollary, we have the following.
\begin{corollary}[Corollary 7.3 in \cite{NNU1}]\label{corollary_NNU_deform}
	 The gradient-Hamiltonian vector field $V_k$ gives a deformation of $X_{k,t}$ 
	 preserving the structure of completely integrable systems.
 	 In particular, we have the following commuting diagram for every $t \geq 0$ : 
	  \begin{equation}\label{equation_preserve_integrable_system}
		    \xymatrix{
			      X_{k,1}  \ar[dr]_{\Phi_{k,1}} \ar[rr]^{\phi_{k,1-t}}
			      & & X_{k,t} \ar[dl]^{\Phi_{k,t}} \\ 
			      & \Delta_{\lambda} &
			    }
	  \end{equation}
\end{corollary}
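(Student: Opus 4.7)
The plan is to reduce the commutativity of the diagram to the claim that every component function of $\Phi_{k,t}$ is preserved by the gradient-Hamiltonian flow of $F_k$. Since $V_k = \widetilde{V}_k/\|\widetilde{V}_k\|^2$ is a positive rescaling of the Hamiltonian vector field $\widetilde{V}_k$ of $\operatorname{Im}(F_k)$ on the smooth locus $\mcal{X}^{\mathrm{sm}}$, invariance along $V_k$ is equivalent to $\{\operatorname{Im}(F_k), f\} = 0$ for each component $f$. Standard properties of the gradient-Hamiltonian flow of a holomorphic map show that $\phi_{k,s}$ preserves the other components of $F$ and carries $F_k^{-1}(1)$ to $F_k^{-1}(1-s)$; in particular, $\phi_{k,1-t}\colon X_{k,1}\to X_{k,t}$ is a symplectomorphism wherever smooth.

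Next I would split into the two types of components appearing in $\Phi_{k,t}$ in Theorem~\ref{theorem_NNU_6_1}. For the toric-type components $\Phi^{i,j}$ with $i+j \geq k+1$, these are restrictions of moment maps for the subtori $T^{i+j-1}$ of the ambient torus $T^1\times\cdots\times T^{n-1}$ acting linearly on $P$. By the construction of the Kogan-Miller family in~\eqref{equation_KoM}, the total space $\mcal{X}\subset P\times\C^{n-1}$ is invariant under $T^k\times\cdots\times T^{n-1}$ and this subtorus acts trivially on the parameter $t_k$. Hence its action commutes with $F_k$, which immediately yields $\{\operatorname{Im}(F_k),\Phi^{i,j}\}=0$ for $i+j-1\geq k$.

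For the Gelfand-Cetlin-type components $\Phi^{i,j}_\lambda$ with $i+j\leq k$, these depend only on the leading $(i+j-1)\times(i+j-1)$ principal submatrix of the $U(n)$-moment map $\mu^{(n)}$, hence only on the $U(k-1)$-moment map $\mu^{(k-1)}$ since $i+j-1\leq k-1$. The essential structural feature of the Kogan-Miller degeneration, as recalled in the paragraph preceding Theorem~\ref{theorem_NNU_6_1}, is that the sub-family $F^{-1}(t_2,\ldots,t_k,0,\ldots,0)$ is $U(k-1)$-invariant for every $t_k$, so $F_k$ is $U(k-1)$-invariant and $\widetilde{V}_k$ is tangent to $U(k-1)$-orbits. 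Consequently $\widetilde{V}_k$ preserves $\mu^{(k-1)}$, and therefore preserves the eigenvalues of every leading principal submatrix of size at most $k-1$; that is, $V_k(\Phi^{i,j}_\lambda)=0$ for all such $(i,j)$.

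Combining the two cases gives $\Phi_{k,t}\circ\phi_{k,1-t}=\Phi_{k,1}$ on the open dense locus where the flow and all components are smooth, and the identity extends by continuity to all of $X_{k,1}$. The main technical obstacle I anticipate is justifying that $\phi_{k,1-t}$ extends continuously across the singular strata of $\mcal{X}$ and across the closed subset where the eigenvalues $\Phi^{i,j}_\lambda$ collide; for this I would invoke the general framework of Ruan~\cite{Ru} and Harada-Kaveh~\cite{HK} for gradient-Hamiltonian flows of toric degenerations, which provides a continuous surjection $\phi_{k,1-t}\colon X_{k,1}\to X_{k,t}$ restricting to the required symplectomorphism on an open dense subset, enough to upgrade the previous pointwise identity to the commutativity of the whole diagram.
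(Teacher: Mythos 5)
First, note that the paper does not prove this statement: it is quoted verbatim as Corollary~7.3 of \cite{NNU1}, and no argument is given in the present text. So there is no ``paper's own proof'' to compare against, and your proposal must be judged on its own.

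Your overall strategy---splitting the components of $\Phi_{k,t}$ into toric-type and Gelfand--Cetlin-type, and using the $(T^k\times\cdots\times T^{n-1})$- and $U(k-1)$-invariance of the subfamily $\{X_{k,t}\}_t$ recorded before Theorem~\ref{theorem_NNU_6_1}---is the right one. However, the step for the Gelfand--Cetlin components contains a genuine error. You assert that because $F_k$ is $U(k-1)$-invariant, ``$\widetilde{V}_k$ is tangent to $U(k-1)$-orbits,'' and then deduce that $\mu^{(k-1)}$ is preserved. Both the intermediate claim and the inference drawn from it are false. Invariance of $\mathrm{Im}(F_k)$ under $U(k-1)$ makes $\widetilde{V}_k$ \emph{$U(k-1)$-equivariant}, not tangent to orbits (those are entirely different conditions, and in general $\widetilde{V}_k$ is transverse to orbits). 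Moreover, even if $\widetilde{V}_k$ were tangent to orbits, $\mu^{(k-1)}$ would \emph{not} be preserved: the moment map satisfies $\mu^{(k-1)}(g\cdot x)=\mathrm{Ad}_g^*\,\mu^{(k-1)}(x)$, so only the spectrum of the full $(k-1)\times(k-1)$ matrix is constant along orbits, whereas the eigenvalues of its smaller leading principal submatrices---which are exactly the components $\Phi_\lambda^{i,j}$ with $i+j\le k-1$ that you need---are not orbit invariants.

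The correct route to your conclusion is the Noether-type Poisson-bracket argument: since $\mathrm{Im}(F_k)$ is $U(k-1)$-invariant on the slice, for every $X\in\mathfrak u(k-1)$ with moment component $\mu^X=\langle\mu^{(k-1)},X\rangle$ one has $\{\mathrm{Im}(F_k),\mu^X\}=-\xi_X\bigl(\mathrm{Im}(F_k)\bigr)=0$, where $\xi_X$ is the fundamental vector field. Hence $\mu^{(k-1)}$ is preserved by the flow \emph{as a matrix-valued map}, not merely up to conjugation, and therefore every leading principal submatrix of every size $\le k-1$, together with its eigenvalues, is preserved. With this replacement the GC-component case goes through, and the rest of your proposal---the toric-component case and the continuity extension via \cite{Ru,HK}---is sound, though you should be careful to state that the invariance hypotheses are only asserted on the subfamily $F^{-1}(1,\dots,1,t_k,0,\dots,0)$ and that this is compatible with the flow because the remaining $F_m$'s are conserved.
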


By Corollary \ref{corollary_NNU_deform}, we obtain a continuous map 
\begin{equation}\label{phicontimapx}
	\phi \colon \mcal{O}_\lambda = X_{n,1} \rightarrow X_0 = X_{2,0}
\end{equation}
where $\phi = \phi_{2,1} \circ \cdots \circ \phi_{n,1}$ and it satisfies $\Phi \circ \phi = \Phi_\lambda$. 
Note that 
\[
	\phi \colon \Phi_\lambda^{-1}(\mathring{\Delta}_\lambda) \rightarrow \Phi^{-1}(\mathring{\Delta}_\lambda)
\]
is a symplectomorphism. 
\vspace{0.1cm}

Now, we describe how each GC fiber degenerates into a torus along the toric degeneration described above. 
To do this, first we need to determine which component $\Phi_\lambda^{i,j}$ is smooth on the fiber $\Phi_\lambda^{-1}(\textbf{u})$ for each $\textbf{u} \in \Delta_\lambda$. 

Let $\gamma$ be an $r$-dimensional face of $\Gamma_\lambda$ and let $f_\gamma$ be the corresponding face of $\Delta_\lambda$. 
Let 
	\begin{equation}\label{equation_index_cycle}
		\mcal{I}_{\mathcal{C}(\gamma)} := \{ (i,j) ~|~ (i,j) = v_\sigma \quad ~\text{for some minimal cycle}~\sigma~\text{of}~\gamma \}
	\end{equation}
so that $|\mcal{I}_{\mathcal{C}(\gamma)}| = r$. (See Figure \ref{figure_M_1_block_simple_closed_region}.)

\begin{lemma}\label{lemma_smooth_component_free_torus_action}
	Each $\Phi_\lambda^{i,j}$ is smooth on $\Phi_\lambda^{-1}(\mathbf{u})$ for every $(i,j) \in \mcal{I}_{\mathcal{C}(\gamma)}$.
	In particular, $\Phi_\lambda^{i,j}$ is smooth on $\Phi_\lambda^{-1}(\mathring{f}_\gamma)$. 
	Furthermore, $\{ \Phi_\lambda^{i,j} \}_{(i,j) \in \mcal{I}_{\mathcal{C}(\gamma)}}$ generates a fiberwise $T^r$-action on 
	$\Phi_\lambda^{-1}(\textbf{\textup{u}})$ for each $\textbf{\textup{u}} \in \mathring{f}_\gamma$.
\end{lemma}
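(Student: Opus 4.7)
The plan is to reduce the lemma to two ingredients: the smoothness criterion of Guillemin--Sternberg (Proposition~\ref{proposition_GS_smooth}) and the dictionary recalled after Theorem~\ref{theorem_equiv_CG_LD} that translates the combinatorics of $\gamma$ into the defining equalities of the face $f_\gamma = \Psi(\gamma)$. The key combinatorial input is that every $(i,j)\in\mcal{I}_{\mathcal{C}(\gamma)}$ arises as $v_\sigma$ for a minimal cycle $\sigma$ of $\gamma$; by the proof of Proposition~\ref{proposition_M_1_block_simple_closed_region}, the unit box $\square^{(i,j)}$ is enclosed by $\sigma$, so in particular the top edge from $(i-1,j)$ to $(i,j)$ and the right edge from $(i,j-1)$ to $(i,j)$ both belong to $\gamma$.

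Next, I would apply the description of $\Psi$ recalled in the paragraph after Theorem~\ref{theorem_equiv_CG_LD}: the hyperplane $\{u_{i,j} = u_{i,j+1}\}$ cuts out a wall of $f_\gamma$ only if no positive path in $\gamma$ separates $\square^{(i,j)}$ from $\square^{(i,j+1)}$, and analogously for $\{u_{i,j} = u_{i+1,j}\}$. Since in both cases the separating edges lie in $\gamma$, neither equality is imposed on $f_\gamma$; combining with the GC pattern~\eqref{rem:diagram-pattern}, every $\textbf{\textup{u}} \in \mathring{f}_\gamma$ satisfies the strict inequalities
\[
	u_{i+1,j} < u_{i,j} < u_{i,j+1}.
\]
Evaluating at any $z\in\Phi_\lambda^{-1}(\textbf{\textup{u}})$, where $\Phi_\lambda^{a,b}(z) = u_{a,b}$, Proposition~\ref{proposition_GS_smooth} yields the smoothness of $\Phi_\lambda^{i,j}$ on an open neighborhood of $z$, so $\Phi_\lambda^{-1}(\mathring{f}_\gamma) \subset \mcal{V}_{i,j}$ for every $(i,j) \in \mcal{I}_{\mathcal{C}(\gamma)}$, establishing the first two assertions.

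For the final assertion, I would invoke the remark immediately following Proposition~\ref{proposition_GS_smooth}: on $\mcal{V}_{i,j}$ the Hamiltonian flow of $\Phi_\lambda^{i,j}$ is periodic of integer period, hence generates a Hamiltonian $S^1$-action there. Since $\Phi_\lambda$ is a completely integrable system, the Poisson brackets $\{\Phi_\lambda^{i,j},\Phi_\lambda^{i',j'}\}$ vanish on the open dense smooth locus and, by continuity, on $\mcal{V}_{\mcal{I}_{\mathcal{C}(\gamma)}}$; the $r$ commuting circle actions therefore assemble into a Hamiltonian $T^r$-action on $\mcal{V}_{\mcal{I}_{\mathcal{C}(\gamma)}}$. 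Because every $\Phi_\lambda^{a,b}$ is a first integral for each of the $r$ generating Hamiltonians, this action preserves every level set of $\Phi_\lambda$, giving the desired fiberwise $T^r$-action on $\Phi_\lambda^{-1}(\textbf{\textup{u}})$. The only genuine subtlety is the combinatorial step in the first paragraph; once the strict inequalities at $(i,j)$ are secured, the smoothness and the torus action follow immediately from the authors' previously recorded results.
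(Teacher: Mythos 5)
Your argument is correct and follows essentially the same route as the paper's own proof: the strict inequalities at $(i,j)=v_\sigma$ feed into Proposition~\ref{proposition_GS_smooth}, the periodicity remark after that proposition gives the circle actions, and Poisson-commutativity assembles them into a fiber-preserving $T^r$-action. You merely spell out the combinatorial step (that the two edges of $\gamma$ terminating at $v_\sigma$ prevent the equalities $u_{i,j}=u_{i,j+1}$ and $u_{i,j}=u_{i+1,j}$ from being imposed on $f_\gamma$) which the paper leaves implicit.
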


\begin{proof}
	The smoothness of each $\Phi_\lambda^{i,j}$ on $\Phi_\lambda^{-1}(\mathring{f}_\gamma)$ follows 
	from the condition $(i,j) = v_\sigma$ and Proposition \ref{proposition_GS_smooth}.
	Also, we have seen in Section \ref{ssecSmoothnessOfPhiLambda} that each $\Phi_\lambda^{i,j}$ 
	generates a circle action on an open dense subset of $\mathcal{O}_\lambda$ on which $\Phi_\lambda^{i,j}$  is smooth.
	Since all components of $\Phi_\lambda$ Poisson-commute with each other, it finishes the proof.
\end{proof}

We recall the following well-known fact on toric varieties.
Let $\Delta \subset \R^N \cong \frak{t}^*$ be a convex polytope of dimension $N$ and let $X_\Delta$ be the corresponding projective toric variety
where $\frak{t}$ is the Lie algebra of the maximal compact torus $T^N$ in $(\C^*)^N \subset X_0$ and $\frak{t}^*$ is the dual of $\frak{t}$. 
Let $f$ be an $r$-dimensional face of $\Delta$ and suppose that $F_1, \cdots, F_m$ are facets of 
$\Delta$ such that $f = \cap_{i=1}^m F_i$. Also let $v_i \in \frak{t}$ be the inward primitive integral normal vector of $F_i$ for $i=1,\cdots,m$

\begin{lemma}[Exercise 12.4.7.(d) in \cite{CLS}]\label{lemma_freely}
	Let $\xi_1, \cdots, \xi_r$ be primitive integral vectors in $\frak{t}$ which generates an $r$-dimensional subtorus $T^r$ of $T^N$. 
	Then the $T^r$ acts on $\Phi^{-1}(\mathring{f})$ freely if
	$\Z^N \cong \langle v_1, \cdots, v_m, \xi_1, \cdots, \xi_r \rangle$.	
\end{lemma}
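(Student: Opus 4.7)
The plan is to interpret the stated lattice hypothesis as forcing a direct product decomposition $T^N \cong T^r \times T_f$, where $T_f$ denotes the isotropy subgroup in $T^N$ of any point of $\Phi^{-1}(\mathring{f})$, from which $T^r \cap T_f = \{e\}$ follows at once. Set $L = \Z^N$, $V = L \otimes_\Z \R$, and let $L_1 := \langle \xi_1, \ldots, \xi_r\rangle_\Z$, $L_2 := \langle v_1, \ldots, v_m\rangle_\Z$, with $V_i := L_i \otimes_\Z \R$ for $i=1,2$. I will first recall from Cox--Little--Schenck \cite[\S12.2]{CLS} the identification of $T_f$ as the closed subgroup of $T^N = V/L$ whose Lie algebra is $V_2 = \mathrm{span}_\R(v_1,\dots,v_m)$. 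The freeness of the $T^r$-action on $\Phi^{-1}(\mathring{f})$ is then equivalent to $T^r \cap T_f = \{e\}$, so only the latter needs to be established.

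Next I will perform a dimension count over $\R$. Because $f = F_1 \cap \cdots \cap F_m$ has dimension $r$, its defining inward primitive normals span the $(N-r)$-dimensional subspace $V_2$. Since $\xi_1,\dots,\xi_r$ generate an $r$-dimensional subtorus they are $\R$-linearly independent, so $\dim_\R V_1 = r$. The hypothesis $L_1 + L_2 = L$ yields $V_1 + V_2 = V$, and combining with $\dim V_1 + \dim V_2 = r + (N-r) = N$ forces $V = V_1 \oplus V_2$.

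The final step is to upgrade this to a $\Z$-lattice decomposition. Since $L_1 \cap L_2 \subset V_1 \cap V_2 = 0$, combining with $L_1 + L_2 = L$ gives $L = L_1 \oplus L_2$. In particular both $L_1 = V_1 \cap L$ and $L_2 = V_2 \cap L$ are saturated in $L$, so the natural homomorphism $V_1/L_1 \times V_2/L_2 \to V/L$ is an isomorphism of compact tori, identifying $T^r$ and $T_f$ as complementary direct factors of $T^N$. Consequently $T^r \cap T_f = \{e\}$, as desired.

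The only step requiring external input is the identification of the isotropy subtorus $T_f$ of a point in $\Phi^{-1}(\mathring{f})$; the remaining arguments are elementary linear algebra over $\Z$ once the dimension counts line up. For smooth toric varieties the identification of $T_f$ is classical, but the central fiber $X_0$ of our toric degeneration is singular in general, so this is the step where I would appeal carefully to the reference rather than reprove it. Once $T_f$ is correctly identified as the closed subgroup with Lie algebra $V_2$, the argument above produces the direct sum decomposition of $T^N$ and hence the claim.
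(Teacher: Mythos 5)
The paper does not prove this lemma at all: it is cited directly from Cox--Little--Schenck (Exercise 12.4.7(d)), so there is no in-paper argument to compare against. Your proof is a correct and essentially standard verification of the cited exercise.

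A few small remarks. First, the reduction to showing $T^r \cap T_f = \{e\}$ is correct precisely because the $T^N$-stabilizer of any point of $\Phi^{-1}(\mathring f)$ is the subtorus $T_f$ whose Lie algebra is $V_2 = \mathrm{span}_\R(v_1,\dots,v_m)$; this holds for normal (possibly singular) projective toric varieties, which is exactly what is needed since the central fibre $X_0$ of the degeneration is singular. You are right to flag this as the single place requiring external input. Second, the justification you give for $\dim_\R V_2 = N-r$ is slightly off as written: the fact that $f = F_1 \cap \cdots \cap F_m$ has dimension $r$ does not by itself guarantee the normals span a full $(N-r)$-dimensional space (for instance, two opposite triangular facets of a square pyramid already intersect in the apex while their normals span only a plane). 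What one always has is $\dim_\R V_2 \le N-r$, since all the $v_i$ annihilate the tangent directions of $f$; combined with $\dim_\R V_1 \le r$ and the hypothesis $V_1 + V_2 = V$, one forces $\dim V_1 = r$, $\dim V_2 = N-r$, and $V_1 \cap V_2 = 0$ simultaneously. (Alternatively, if $F_1,\dots,F_m$ are understood to be \emph{all} facets containing $f$, which is how the lemma is applied in Proposition~\ref{proposition_toric_degenerate_equivariant_map}, then the normal-cone argument does give $\dim V_2 = N-r$ directly.) Third, the passage from $L = L_1 \oplus L_2$ to the direct product of tori is correct, and it is worth noting explicitly that this decomposition is also what shows $L_1 = V_1 \cap L$, i.e., that the compact subtorus $T^r$ generated by $\xi_1,\dots,\xi_r$ really is $V_1/L_1$ inside $V/L$ rather than a finite quotient of it; your last paragraph does record this saturation statement, so the logic is sound. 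With the dimension count phrased as above, the argument is complete.
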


We then obtain the following.

\begin{proposition}\label{proposition_toric_degenerate_equivariant_map}
	Let $\textbf{\textup{u}} \in \mathring{f}_\gamma$ and consider the $T^r$-action on $\Phi_\lambda^{-1}(\textbf{\textup{u}})$ generated by 
	$\{ \Phi_\lambda^{i,j} \}_{(i,j) \in \mcal{I}_{\mathcal{C}(\gamma)}}$.
	Then the $T^r$-action on $\Phi_\lambda^{-1}(\textbf{\textup{u}})$ is free. Furthermore, $\Phi_\lambda^{-1}(\textbf{\textup{u}})$ becomes a trivial principal 
	$T^r$-bundle over 
	$\Phi_\lambda^{-1}(\textbf{\textup{u}}) / T^r$, that is,  
	\[
		\Phi_\lambda^{-1}(\textbf{\textup{u}}) \cong T^r \times \Phi_\lambda^{-1}(\textbf{\textup{u}}) / T^r.
	\]
\end{proposition}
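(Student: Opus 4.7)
The plan is to use the toric degeneration diagram~\eqref{equation_toric_degeneration_diagram_Section 7} to transfer the question to the toric fiber $\Phi^{-1}(\textbf{\textup{u}})$. By Lemma~\ref{lemma_smooth_component_free_torus_action}, each $\Phi_\lambda^{i,j}$ with $(i,j) \in \mcal{I}_{\mcal{C}(\gamma)}$ is smooth on the whole of $\Phi_\lambda^{-1}(\textbf{\textup{u}})$ and these Hamiltonians pairwise Poisson-commute, so they generate a smooth $T^r$-action. Corollary~\ref{corollary_torus_equivariant} then shows that the contraction map $\phi$ in~\eqref{phicontimapx} is $T^r$-equivariant on $\Phi_\lambda^{-1}(\textbf{\textup{u}})$, where on the toric side $T^r$ acts as the subtorus of the full compact torus $T^d$ on $X_0$ (with $d = \dim \Delta_\lambda$) generated by the weight vectors $\{e_{i,j}\}_{(i,j) \in \mcal{I}_{\mcal{C}(\gamma)}}$.

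The heart of the proof is the verification of freeness on the toric fiber $\Phi^{-1}(\textbf{\textup{u}})$. Since $\textbf{\textup{u}}$ is in the relative interior of the $r$-dimensional face $f_\gamma$, the toric fiber is a single $T^d$-orbit of real dimension $r$, and by Lemma~\ref{lemma_freely} it suffices to show that the vectors $\{e_{i,j}\}_{(i,j) \in \mcal{I}_{\mcal{C}(\gamma)}}$, together with the primitive inward normals to the facets of $\Delta_\lambda$ containing $f_\gamma$, span $\Z^d$. Each such facet corresponds to an adjacency of lattice points $(i,j),(i',j')$ in $\Gamma_\lambda$ that is \emph{not} separated by an edge of $\gamma$, and its primitive normal is the difference $\pm(e_{i,j} - e_{i',j'})$. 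These relations partition the interior unit boxes of $\Gamma_\lambda$ into connected components; components meeting the outer boundary are pinned to the eigenvalues $\lambda_\bullet$ and contribute no additional freedom, while the remaining bounded components are in bijection with the minimal cycles of $\gamma$ through the correspondence of Proposition~\ref{proposition_M_1_block_simple_closed_region} (cf.\ Figure~\ref{figure_M_1_block_simple_closed_region}), with the top-right vertex $v_\sigma$ of each cycle $\sigma$ singling out a representative box inside its component. The facet normals together with the $r$ chosen basis vectors therefore exhaust the standard $\Z$-basis of $\Z^d$, verifying the lattice hypothesis of Lemma~\ref{lemma_freely}.

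Freeness on the Gelfand--Cetlin fiber then follows by equivariance: if $t \in T^r$ fixes $p \in \Phi_\lambda^{-1}(\textbf{\textup{u}})$, then $t$ fixes $\phi(p) \in \Phi^{-1}(\textbf{\textup{u}})$, forcing $t = 1$. For triviality, pick any $q_0 \in \Phi^{-1}(\textbf{\textup{u}}) \cong T^r$ and set $F_0 := \phi^{-1}(q_0) \cap \Phi_\lambda^{-1}(\textbf{\textup{u}})$. Because $T^r$ acts transitively on $\Phi^{-1}(\textbf{\textup{u}})$ and $\phi$ is equivariant, every $T^r$-orbit in $\Phi_\lambda^{-1}(\textbf{\textup{u}})$ meets $F_0$ in exactly one point, so the translation map
\[
\psi \colon T^r \times F_0 \longrightarrow \Phi_\lambda^{-1}(\textbf{\textup{u}}), \qquad (t, x) \longmapsto t \cdot x,
\]
is a $T^r$-equivariant diffeomorphism identifying $F_0$ with $\Phi_\lambda^{-1}(\textbf{\textup{u}}) / T^r$. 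The main obstacle in this program is the combinatorial lattice-spanning step, where one has to match the edges of $\gamma$, the facet normals they induce, the boundary values of $\Gamma_\lambda$, and the top-right vertices of minimal cycles carefully enough to conclude the hypothesis of Lemma~\ref{lemma_freely}; everything else---equivariance transfer and the slice-style trivialization---is a routine consequence of that combinatorial input.
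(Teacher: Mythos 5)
Your proposal is correct and follows essentially the same route as the paper's proof: freeness is verified on the toric fiber by checking the lattice-spanning hypothesis of Lemma~\ref{lemma_freely} using the facet normals $\pm(\xi_{i,j}-\xi_{i',j'})$ together with the vectors indexed by $\mcal{I}_{\mcal{C}(\gamma)}$, transferred to the Gelfand--Cetlin fiber via the $T^r$-equivariance of $\phi$ from Corollary~\ref{corollary_torus_equivariant}, and triviality is deduced from the fact that the base $\Phi^{-1}(\textbf{\textup{u}})$ is a single free $T^r$-orbit. Your explicit slice $F_0$ is just an unwinding of the paper's ``pull-back of the trivial bundle over a point'' argument, and your connected-component bookkeeping fleshes out the lattice computation the paper dispatches with ``it is not hard to see'' and Figure~\ref{figure_final_proof}.
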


\begin{proof}
	We first show that the $T^r$-action is free on $\Phi^{-1}(\textbf{\textup{u}})$. 
	For each $(i,j) \in \mcal{I}_\lambda$\footnote{See \eqref{collectionofindicesset} for the definition of 
	$\mcal{I}_\lambda$.}, we denote by 
	\[
		\xi_{i,j} := \left( u_{k,l} \right) \in \R^{\frac{n(n-1)}{2}}, \quad \begin{cases} \text{$u_{k,l} = 1$ if $(k,l) = (i,j)$}\\
		\text{$u_{k,l} = 0$ otherwise}.
		\end{cases}
	\]
	Note that an inward primitive integral normal vector $v_F$ for any facet $F$ of $\Delta_\lambda$ is either
	\[
		v_{vert}^{i,j} := \xi_{i,j+1} - \xi_{i,j}, \quad \text{or} \quad v_{hor}^{i,j} := -\xi_{i+1,j} + \xi_{i,j}
	\]
	where $(i,j) \in \mcal{I}_\lambda$. In particular, if $F$ contains $f_\gamma$, then $v_F$ is either
	\[
		\begin{cases}
			v_{vert}^{i, j}, \quad \text{$\square^{i,j}$ and $\square^{i,j+1}$ are in the same simple region of $\gamma$, or} \\
			v_{hor}^{i, j}, \quad \text{$\square^{i,j}$ and $\square^{i+1,j}$ are in the same simple region of $\gamma$}
		\end{cases}
	\] for some $(i,j) \in \mcal{I}_\lambda$. Then, it is not hard to see that 
	\[
		\{ v_F \}_{f_\gamma \subset F} \cup \{\xi_{i,j} \}_{(i,j) \in \mcal{I}_{\mcal{C}(\gamma)}} 
	\]
	generates the full lattice $\Z^N$ where $N = \dim \Delta_\lambda$. (See Figure \ref{figure_final_proof} for example.)
	Therefore the $T^r$-action generated by $\{\Phi^{i,j} \}_{(i,j) \in \mcal{I}_{\mcal{C}(\gamma)}}$ on $\Phi^{-1}(\mathring{f}_\gamma)$ is free by Lemma \ref{lemma_freely}, and hence 
	the $T^r$-action on each fiber $\Phi^{-1}(\textbf{\textup{u}})$ is free for every $\textbf{\textup{u}} \in \mathring{f}_\gamma$.
	Since 
	\[
		\phi_{\textbf{\textup{u}}} := \phi|_{\Phi_\lambda^{-1}({\textbf{\textup{u}}})} \colon \Phi_\lambda^{-1}(\textbf{\textup{u}}) 
		\to \Phi_{\vphantom{\lambda}}^{-1}(\textbf{\textup{u}}) \cong T^r. 
	\]
	is $T^r$-equivariant by Corollary \ref{corollary_torus_equivariant}, we see that the $T^r$-action on $\Phi_\lambda^{-1}(\textbf{\textup{u}})$ is also free. 
	
	The freeness of the $T^r$-action on $\Phi_\lambda^{-1}({\textbf{\textup{u}}})$ implies that the map
	$\phi_{\textbf{\textup{u}}} := \phi|_{\Phi_\lambda^{-1}({\textbf{\textup{u}}})}$ becomes a principal bundle map such that 
	\[
	\xymatrix{
		\Phi_\lambda^{-1}({\textbf{\textup{u}}}) \ar[d]_{/T^r} \ar[r]^{\phi_{\textbf{\textup{u}}}} & \Phi^{-1}({\textbf{\textup{u}}}) \cong T^r \ar[d]^{/ T^r}\\
		\Phi_\lambda^{-1}({\textbf{\textup{u}}}) / T^r \ar[r] & \mathrm{point} 
		}
	\] commutes. In particular, $\Phi_\lambda^{-1}({\textbf{\textup{u}}})$ is a pull-back bundle of the trivial $T^r$-bundle over a point so that $\Phi_\lambda^{-1}({\textbf{\textup{u}}})$ 
	is a trivial $T^r$-bundle as desired.
\end{proof}

\begin{figure}[h]
	\scalebox{1}{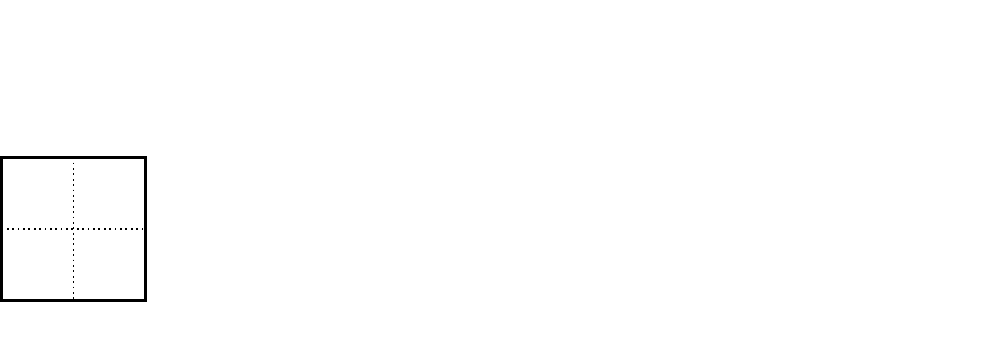}
	\caption{\label{figure_final_proof} Proof of Proposition \ref{proposition_toric_degenerate_equivariant_map} : $\lambda = (3,2,1,0)$ - case} 
\end{figure}

To sum up, we can describe how each fiber of a GC system deforms into a torus fiber of a moment map of $X_0$ via a toric degeneration as follows. 

\begin{theorem}\label{theorem_contraction}
	Let $\gamma$ be a face of the ladder diagram $\Gamma_\lambda$ of dimension $r$ and let $f_\gamma$ be the corresponding face of the Gelfand-Cetlin polytope $\Delta_\lambda$.
	For each point ${\textbf{\textup{u}}}$ in the relative interior $\mathring{f}_\gamma$, all $S^1$-factors that appeared in each stage of the iterated bundle structure $\bar{S_{\bullet}}(\gamma)$ 
	of $\Phi_\lambda^{-1}({\textbf{\textup{u}}})$ given in Theorem \ref{theorem_main} are factored out. That is, 
	\[
		\Phi_\lambda^{-1}({\textbf{\textup{u}}}) \cong T^r \times \bar{S_{\bullet}}(\gamma)'
	\] where 
	$\bar{S_{\bullet}}(\gamma)'$ is the total space of the iterated bundle which can be obtained by the construction of $\bar{S_{\bullet}}(\gamma)$
	ignoring all $S^1$-factors appeared in each stage. Furthermore, the continuous map $\phi$ in~\eqref{phicontimapx} on each fiber $\Phi_\lambda^{-1}({\textbf{\textup{u}}})$
	is the projection map 
	\[ 
		\Phi_\lambda^{-1}({\textbf{\textup{u}}}) \cong T^r \times \bar{S_{\bullet}}(\gamma)' \stackrel{\phi} \longrightarrow T^r \cong \Phi^{-1}({\textbf{\textup{u}}}). 
	\]
\end{theorem}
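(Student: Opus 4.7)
The plan is to deduce Theorem~\ref{theorem_contraction} from the free $T^r$-action established in Proposition~\ref{proposition_toric_degenerate_equivariant_map} together with a careful identification of the orbit space $\Phi_\lambda^{-1}(\textbf{\textup{u}})/T^r$ with the iterated bundle obtained by deleting every $S^1$-factor from $\bar{S}_\bullet(\gamma)$. Proposition~\ref{proposition_toric_degenerate_equivariant_map} already gives a global splitting $\Phi_\lambda^{-1}(\textbf{\textup{u}}) \cong T^r \times \Phi_\lambda^{-1}(\textbf{\textup{u}})/T^r$, where the $T^r$-action is generated by the Hamiltonians $\{\Phi_\lambda^{i,j}\}_{(i,j)\in \mcal{I}_{\mcal{C}(\gamma)}}$ indexed by the $r$ minimal cycles of $\gamma$. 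By Corollary~\ref{corollary_torus_equivariant} the degeneration map $\phi$ is $T^r$-equivariant, so it descends to a map of quotients; since the image $\Phi^{-1}(\textbf{\textup{u}})$ is a single $T^r$-orbit, the descended map is forced to be constant and $\phi$ is the projection onto the $T^r$-factor of the splitting. So the content of the theorem reduces to identifying $\Phi_\lambda^{-1}(\textbf{\textup{u}})/T^r$ with $\bar{S}_\bullet(\gamma)'$.

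To carry out the identification, I would exploit the bijection established in the proof of Proposition~\ref{proposition_M_1_block_simple_closed_region}: the $r$ minimal cycles of $\gamma$ are in one-to-one correspondence with the $M_1$-blocks (containing a bottom vertex) appearing inside the various $W_k(\gamma)$. Under the dictionary of Proposition~\ref{proposition_A_k_S_k_bundle}, each such $M_1$-block contributes exactly one $S^1$-factor to $S_k(\gamma)$, and this $S^1$ is the fiber $\{|z_j|^2 = C_j\}$ in the model $\widetilde{\mcal{A}}_{k+1}(\mathfrak{a}(k+1),\mathfrak{a}(k))$ of Lemma~\ref{lemma_fiber_sphere_b_b}. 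The key step is to show that the circle generated by $\Phi_\lambda^{v_\sigma}$ acts by the standard rotation on exactly this $\C$-coordinate and trivially on every other spherical factor appearing at any stage. This follows by combining Lemma~\ref{lemma_equivariant} (so that the action is pushed forward to the toric rotation on $X_0$) with the explicit form of the $U(k)$-action on $\widetilde{\mcal{A}}_{k+1}$ described in the proof of Lemma~\ref{lemma_transitive}: the eigenvalue $\Phi_\lambda^{v_\sigma}$ depends only on the Hermitian $(k+1)\times(k+1)$-block where the $M_1$-cell lives, and its Hamiltonian flow is the diagonal $U(1)$ on the single coordinate $z_j$ responsible for that block.

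Once this stagewise compatibility is in place, an induction on $k$ shows that the iterated bundle $\bar{S}_\bullet(\gamma)$ of Corollary~\ref{corollary_first_part} is $T^r$-equivariant when $T^r$ is decomposed as the product of sub-circles indexed by $(i,j)\in\mcal{I}_{\mcal{C}(\gamma)}$ with $i+j = k+1$ acting at stage $k$. Because the action at each stage is free (by the freeness statement of Proposition~\ref{proposition_toric_degenerate_equivariant_map}) and is the Hopf action on the distinguished $S^1$-factors only, the quotient map $p_k: \bar{S}_k(\gamma) \to \bar{S}_{k-1}(\gamma)$ descends to an $S_k(\gamma)'$-bundle $\bar{S}_k(\gamma)' \to \bar{S}_{k-1}(\gamma)'$, where $S_k(\gamma)'$ is obtained from $S_k(\gamma)$ by deleting its $S^1$-factors. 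Combining the splitting $\Phi_\lambda^{-1}(\textbf{\textup{u}})\cong T^r\times \Phi_\lambda^{-1}(\textbf{\textup{u}})/T^r$ with the inductive identification $\Phi_\lambda^{-1}(\textbf{\textup{u}})/T^r \cong \bar{S}_\bullet(\gamma)'$ yields the desired diffeomorphism and the description of $\phi$.

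The main obstacle will be the stagewise compatibility in the second paragraph: showing that among the many Hamiltonians $\Phi_\lambda^{i,j}$ contributing to the $T^r$-action, the one attached to a minimal cycle $\sigma$ rotates precisely one spherical factor at one specific stage and is trivial on all others. This requires a careful local model computation near the relevant $M_1$-block, translating the min-max description of $\Phi_\lambda^{i,j}$ into the $(z_1,\dots,z_k)$-coordinates of $\widetilde{\mcal{A}}_{k+1}(\mathfrak{a}(k+1),\mathfrak{a}(k))$; once that is done, the remainder of the argument is a formal assembly of the bundle structure and the splitting.
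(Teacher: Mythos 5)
Your plan is structurally correct and parallels the paper's argument closely: both proofs rest on Proposition~\ref{proposition_toric_degenerate_equivariant_map} for the free $T^r$-action and the trivial splitting $\Phi_\lambda^{-1}(\textbf{\textup{u}}) \cong T^r \times \Phi_\lambda^{-1}(\textbf{\textup{u}})/T^r$, then identify the quotient with $\bar{S_\bullet}(\gamma)'$ by showing the torus action respects the iterated bundle, and finally read off $\phi$ as the projection from the $T^r$-equivariance of Corollary~\ref{corollary_torus_equivariant}. The place where you diverge is the verification that the circle generated by $\Phi_\lambda^{v_\sigma}$ (for $v_\sigma$ with $i+j=k+1$) acts vertically with respect to $p_k$. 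You propose a local model computation in the $\widetilde{\mathcal{A}}_{k+1}$-coordinates, showing the flow is the standard rotation on the single coordinate $z_i$ attached to the $M_1$-block. This is true, but the paper reaches verticality by a shorter route: $\Phi_\frak{a}^{i,k+1-i}$ is the pullback under $p_k = \rho_{k+1}$ of the function $\Phi_\frak{b}^{i,k+1-i}$ on $\mathcal{O}_\frak{b}$, which is \emph{constant} (it assigns the fixed eigenvalue $b_i$); combined with Lemma~\ref{lemma_isotropic} (which gives $\rho_{k+1}^*\omega_\frak{b} = \omega_\frak{a}|_{\mathcal{A}(\frak{a},\frak{b})}$), one gets $(p_k)_*\xi_{\hat f} = \xi_f = 0$ immediately, so the Hamiltonian vector field is vertical with no coordinate bookkeeping. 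One small caution about your hint: Lemma~\ref{lemma_transitive} describes the \emph{conjugation} $U(k)$-action transitively permuting the fiber $\widetilde{\mathcal{A}}_{k+1}$, which is not the same thing as the Hamiltonian circle action of $\Phi_\lambda^{i,j}$; they coincide on the distinguished diagonal circle only after one identifies the periodic Hamiltonian flow of the $i$-th eigenvalue with rotation of the $i$-th eigenspace, which is exactly the nontrivial content you would still need to supply. The paper's pullback-of-constants argument sidesteps this identification and is the more economical route to the same conclusion.
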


\begin{proof} 
	Consider the iterated bundle structure of $\Phi_\lambda^{-1}({\textbf{\textup{u}}})$ given in Theorem \ref{theorem_main} : 
	\begin{equation}\label{equation_iterated_bundle}
		\begin{array}{ccccccccc}
		\Phi_\lambda^{-1}({\textbf{\textup{u}}}) \cong \bar{S_{n-1}}(\gamma) & \xrightarrow {p_{n-1}} & \bar{S_{n-2}}(\gamma) &
		 \rightarrow & \cdots & 
		 \xrightarrow{p_2} & \bar{S_1}(\gamma) & \xrightarrow{p_1} & \bar{S_0}(\gamma) := \mathrm{point} \\
		 \hookuparrow & & \hookuparrow & & \hookuparrow & & \hookuparrow & &  \\
		 S_{n-1}(\gamma) & & S_{n-2}(\gamma)  & & \cdots & & S_1(\gamma)  \\
		 \end{array}
	\end{equation}	
	where $S_k(\gamma)$ is the fiber of $p_k \colon \overline{S_k}(\gamma) \rightarrow \overline{S_{k-1}}(\gamma)$ at the $k$-th stage
	defined in \eqref{equation_block_sphere}. 
	Each $S_k(\gamma)$ can be factorized into $S_k(\gamma) = (S^1)^{r_k} \times Y_k$ where $r_k$ is the number of $M_1$ blocks in $W_k(\gamma)$ containing a bottom vertex
	of $W_k$ and $Y_k$ is either a point or a product of odd-dimensional spheres 
	without any $S^1$-factors. (See the proof of Proposition \ref{proposition_bundle}.)
	Then we claim that 
	\begin{enumerate}
		\item there is a one-to-one correspondence between the $S^1$-factors that appeared in each stage and the elements in $\mcal{I}_{\mcal{C}(\gamma)}$, 
		\item $(S^1)^{r_k}$ acts on $\overline{S_k}(\gamma)$ fiberwise with respect to $p_k \colon \overline{S_k}(\gamma) \rightarrow \overline{S_{k-1}}(\gamma)$, and 
		\item the $(S^1)^{r_k}$-action on $\Phi_\lambda^{-1}({\textbf{\textup{u}}}) \cong \bar{S_{n-1}}(\gamma)$ generated by $\{ \Phi_\lambda^{i,j} \}_{(i,j) \in \mcal{I}_{\mcal{C}(\gamma)}, i+j-1=k}$
		is an extension of the $(S^1)^{r_k}$-action on $\overline{S_k}(\gamma)$ given in (2). 
	\end{enumerate}
	The first statement (1) is straightforward since each $(i,j) \in \mcal{I}_{\mcal{C}(\gamma)}$ corresponds to an $M_1$-block in 
	$W_{i+j-1}(\gamma)$ containing a bottom vertex of $W_{i+j-1}$
	so that each $(i,j) \in \mcal{I}_{\mcal{C}(\gamma)}$ assigns an $S^1$-factor in $S_{i+j-1}(\gamma)$. See Section \ref{ssecWShapedBlocks}. \
	The third statement (3) is also clear since each $\Phi_\lambda^{i,j}$ with $i+j = k+1$ descends to a function $\Phi_{\lambda_k}^{i,j}$ on 
	$\bar{S_{k-1}}(\gamma) \subset \mcal{H}_k$ where 
	\[
		\lambda_k = (\Phi_\lambda^{1,k}({\textbf{\textup{u}}}), \cdots, \Phi_\lambda^{k,1}({\textbf{\textup{u}}})). 
	\]

	For the second statement (2), fix $k \geq 1$ and consider the $k$-th stage
	\begin{equation}\label{bundleatstagek}
		\begin{array}{ccc}
			S_k(\gamma) = (S^1)^{r_k} \times Y_k & \hookrightarrow & \overline{S_k}(\gamma) \subset (\mathcal{O}_{\frak{a}}, \omega_\frak{a})\\
			                                     &    &  \downarrow p_k\\
			                                      &    &  \overline{S_{k-1}}(\gamma) \subset (\mcal{O}_{\frak{b}}, \omega_\frak{b}) \\
		\end{array}
	\end{equation}
	of $\overline{S_\bullet}(\gamma)$. 
	As we have seen in the diagram~\eqref{figure_iterated_bundle},  $\overline{S_k}(\gamma)$ is a subset of $\mcal{A}(\frak{a}, \frak{b})$ where $\frak{a} = (a_1, \cdots, a_{k+1})$ and $\frak{b} = (b_1, \cdots, b_k)$ with 
	\begin{itemize}
		\item $a_i = \Phi_\lambda^{i, k+2-i}({\textbf{\textup{u}}})$ for $1 \leq i \leq k+1$ and 
		\item $b_j = \Phi_\lambda^{j, k+1-j}({\textbf{\textup{u}}})$ for $1 \leq j \leq k$. 
	\end{itemize}
	Note that $\Phi_{\frak{a}}^{i,k+1-i}$ is smooth and hence generates a circle action
	on $\mcal{O}_{\frak{a}}$ whenever $a_i < b_i < a_{i+1}$, see Section \ref{ssecSmoothnessOfPhiLambda}.
	There are exactly $r_k$ such $b_i$'s and we only need to prove that the torus action generated by those $\Phi_{\frak{a}}^{i,k+1-i}$'s (with $a_i < b_i < a_{i+1}$)
	is fiberwise with respect to the projection $p_k$.
	
	 For any smooth function $f$ on $\mcal{O}_\frak{b}$ and 
	$\hat{f} = f \circ p_k$ on $\mcal{O}_\frak{a}$, we denote by $\xi_f$ and $\xi_{\hat{f}}$ the Hamiltonian vector fields for $f$ and $\hat{f}$, respectively. 
	Then it satisfies $(p_k)_* \xi_{\hat{f}} = \xi_f$, i.e., $dp_k ( \xi_{\hat f})(x) = \xi_f(p_k(x))$ for every $x \in \mcal{O}_{\frak{b}}$ since
	\begin{equation}\label{equation_pull_back}
		\begin{array}{ccl}
			\omega_\frak{b}((p_k)_* \xi_{\hat{f}}, (p_k)_* (\cdot)) & = & (p_k)^*\omega_\frak{b}(\xi_{\hat{f}}, \cdot) \\
			                                                                         & = & \omega_\frak{a}(\xi_{\hat{f}}, \cdot) \\
			                                                                         & = & d\hat{f}(\cdot) = df((p_k)_* (\cdot)) \\
			                                                                         & = & \omega_\frak{b}(\xi_f, (p_k)_* (\cdot))
		\end{array}
	\end{equation}
	where the second equality comes from Lemma \ref{lemma_isotropic}. 
	
	We apply this to $\Phi_{\frak{a}}^{i,k+1-i} = \Phi_{\frak{b}}^{i,k+1-i} \circ p_k$. 
	Since $\Phi_{\frak{b}}^{i,k+1-i}$ is a constant function on $\mcal{O}_{\frak{b}}$, we see that the Hamiltonian vector field generated by 
	$\Phi_{\frak{a}}^{i,k+1-i}$ is tangent to the vertical direction of $p_k$ in \eqref{bundleatstagek}.
	
	Once (1), (2), and (3) are satisfied, its iterated bundle in \eqref{equation_iterated_bundle} descends to 
	\begin{equation}\label{equation_iterated_bundle_quotient}
		\begin{array}{ccccccccc}
		\Phi_\lambda^{-1}(\textbf{\textup{u}}) / (S^1)^r \cong \bar{S_{n-1}}(\gamma)' & \xrightarrow {p_{n-1}'} & \bar{S_{n-2}}(\gamma)' &
		 \rightarrow & \cdots & 
		 \xrightarrow{p_2'} & \bar{S_1}(\gamma)' & \xrightarrow{p_1'} & \bar{S_0}(\gamma)' := \mathrm{point} \\
		 \hookuparrow & & \hookuparrow & & \hookuparrow & & \hookuparrow & &  \\
		 Y_{n-1} & & Y_{n-2}  & & \cdots & & Y_1  \\
		 \end{array}
	\end{equation}	
	where $\overline{S_k}(\gamma)' = \overline{S_k}(\gamma) / (S^1)^{r_k + \cdots + r_1}$ and the $(S^1)^{r_k + \cdots + r_1}$-action is generated by 
	$\{ \Phi_\lambda^{i,j} \}_{(i,j) \in \mcal{I}_{\mcal{C}(\gamma)}, i+j \leq k+1}$. Since $\Phi_\lambda^{-1}(\textbf{\textup{u}}) \cong T^r \times Y(\textbf{\textup{u}})$ for some $Y(\textbf{\textup{u}})$ by Proposition \ref{proposition_toric_degenerate_equivariant_map},
	we have $Y(\textbf{\textup{u}}) \cong \Phi_\lambda^{-1}(\textbf{\textup{u}}) / T^r \cong \overline{S_\bullet}(\gamma)'$, which completes the proof.
\end{proof}

As an application of Theorem~\ref{theorem_contraction}, one can provide a more explicit description of GC fibers. As mentioned in Remark~\ref{remark_su(3)}, an iterated bundle in Theorem~\ref{theorem_main} is \emph{not} trivial in general. Theorem~\ref{theorem_contraction} guarantees that, in some case, the iterated bundle can be characterized explicitly. 

\begin{example}
In the following examples, the topology of each GC fibers are computed explicitly. 
\begin{enumerate}
\item Let $\mcal{O}_\lambda \simeq \mcal{F}(6)$ be the co-adjoint orbit associated with $\lambda = (5,3,1,-1,-3,-5)$. 
Consider the face $\gamma_1$ defined in Figure~\ref{Figure_diffeotype}. Then, one can see that there are seven $S^1$-factors, which coincides with the number of 
minimal cycles in $\gamma_1$, so that we have 
\[
	\Phi^{-1}_\lambda(\textbf{\textup{u}}) \simeq (S^1)^7 \times Y({\bf u})
\]
where $Y({\bf u}) \cong SU(3)$ by Remark~\ref{remark_su(3)}. 
\item Let $\lambda = (3,3,3, -3,-3,-3)$. 
Then, the co-adjoint orbit $\mcal{O}_\lambda$ is $\mathrm{Gr}(3,6)$. 
Consider the face $\gamma_2$ defined in Figure~\ref{Figure_diffeotype}. 
Similarly, one can easily check that the fiber over a point in the relative interior of $\gamma_2$ has three $S^1$ factors and we get 
\[
	\Phi^{-1}_\lambda(\textbf{\textup{u}}) \simeq (S^1)^3 \times Y({\bf u})
\]
where $Y(\textbf{\textup{u}})$ is an $S^3$-bundle over $S^3$, which should be homeomorphic to $S^3 \times S^3$, (see Steenrod \cite{St}).
\end{enumerate}
\end{example}

\begin{figure}[ht]
	\scalebox{1}{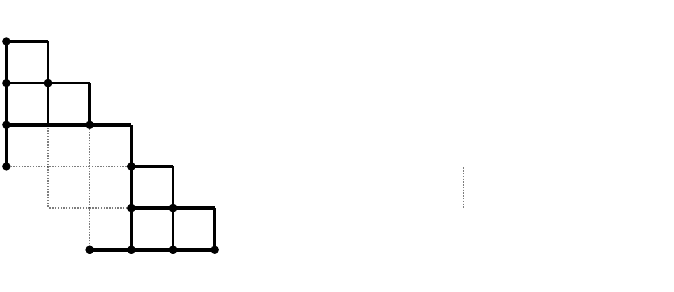}
	\caption{\label{Figure_diffeotype} Faces $\gamma_1$ and $\gamma_2$ of $\mcal{F}(6)$ and $\mathrm{Gr}(3,6)$}	
\end{figure}

Another application of Theorem \ref{theorem_contraction} is to compute the first and the second homotopy groups of each $\Phi_\lambda^{-1}(\textbf{\textup{u}})$ as follows.
Let $\textbf{\textup{u}} \in \Delta_\lambda$ and let $f$ be the face of $\Delta_\lambda$ containing $\textbf{\textup{u}}$ in its relative interior. 
Also, let $\gamma$ be the face of $\Gamma_\lambda$ corresponding to $f$.
For each $k=1, \cdots, n-1$, the fibration~\eqref{bundleatstagek} 
induces the long exact sequence of homotopy groups given by
\begin{equation}\label{equation_homotopy_long_exact_sequence}
	\begin{array}{ccccccccccc}
		\cdots  &
		\rightarrow & \pi_2(S_k(\gamma)) & \rightarrow & \pi_2(\bar{S_k}(\gamma))&
		\rightarrow &  \pi_2(\bar{S_{k-1}}(\gamma))& & & & \\
		& \rightarrow & \pi_1(S_k(\gamma)) & \rightarrow & \pi_1(\bar{S_k}(\gamma)) & \rightarrow &
		\pi_1(\bar{S_{k-1}}(\gamma)) &
		\rightarrow & \pi_0(S_k(\gamma)) & \rightarrow & \cdots
	\end{array}
\end{equation}

\begin{proposition}\label{proposition_pi_2_zero_pi_1_k}
	Let $\textbf{\textup{u}} \in \Delta_\lambda$. Then the followings hold.
	\begin{itemize}
		\item $\pi_2(\Phi_\lambda^{-1}(\textbf{\textup{u}})) = 0$.
		\item If $\textbf{\textup{u}}$ is a point in the relative interior of an $r$-dimensional face of $\Delta_\lambda$, then
		\[
			\pi_1(\Phi_\lambda^{-1}(\textbf{\textup{u}})) \cong \Z^r.
		\]
	\end{itemize}
\end{proposition}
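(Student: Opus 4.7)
The plan is to exploit the iterated bundle structures established in Theorem~\ref{theorem_main} and Theorem~\ref{theorem_contraction} together with the long exact sequence of homotopy groups displayed in~\eqref{equation_homotopy_long_exact_sequence}. The basic input is that each fiber $S_k(\gamma)$ at the $k$-th stage is either a point or a product of odd-dimensional spheres $S^{2\ell-1}$ with $\ell \geq 1$. Since $\pi_2(S^m) = 0$ for every $m \geq 1$ (both $S^1$ and spheres of dimension $\geq 3$ have vanishing $\pi_2$), we always have $\pi_2(S_k(\gamma)) = 0$.

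To show $\pi_2(\Phi_\lambda^{-1}(\textbf{\textup{u}})) = 0$, I would argue by induction on $k$ that $\pi_2(\overline{S_k}(\gamma)) = 0$. The base case $k=0$ is trivial since $\overline{S_0}(\gamma)$ is a point. For the inductive step, the relevant segment of~\eqref{equation_homotopy_long_exact_sequence} reads
\[
\pi_2(S_k(\gamma)) \longrightarrow \pi_2(\overline{S_k}(\gamma)) \longrightarrow \pi_2(\overline{S_{k-1}}(\gamma)).
\]
The left group vanishes by the observation above and the right one vanishes by the inductive hypothesis, so the middle group vanishes. Taking $k = n-1$ gives $\pi_2(\Phi_\lambda^{-1}(\textbf{\textup{u}})) = 0$.

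For the computation of $\pi_1$, the idea is to split off the circle factors first. Applying Theorem~\ref{theorem_contraction}, for $\textbf{\textup{u}}$ in the relative interior of an $r$-dimensional face $f_\gamma$ we obtain
\[
\Phi_\lambda^{-1}(\textbf{\textup{u}}) \cong T^r \times \overline{S_\bullet}(\gamma)',
\]
where $\overline{S_\bullet}(\gamma)'$ is an iterated bundle whose fiber $Y_k$ at each stage is either a point or a product of odd-dimensional spheres \emph{of dimension} $\geq 3$, with no $S^1$ factors. Such spheres are simply connected, so $\pi_1(Y_k) = 0$ and $\pi_0(Y_k) = 0$ for every $k$. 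Then an induction on $k$ using the relevant segment
\[
\pi_1(Y_k) \longrightarrow \pi_1(\overline{S_k}(\gamma)') \longrightarrow \pi_1(\overline{S_{k-1}}(\gamma)') \longrightarrow \pi_0(Y_k)
\]
of the fibration long exact sequence yields $\pi_1(\overline{S_\bullet}(\gamma)') = 0$. Combining this with $\pi_1(T^r) = \Z^r$ through the K\"unneth-type product formula for $\pi_1$ of a product space gives
\[
\pi_1(\Phi_\lambda^{-1}(\textbf{\textup{u}})) \cong \pi_1(T^r) \oplus \pi_1(\overline{S_\bullet}(\gamma)') \cong \Z^r,
\]
with $r = \dim f_\gamma$ matching the number of $S^1$-factors by Proposition~\ref{proposition_M_1_block_simple_closed_region}.

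No step here is technically delicate: the only nontrivial ingredients are Theorem~\ref{theorem_main}, Theorem~\ref{theorem_contraction}, and the elementary facts about homotopy groups of odd spheres. If anything, the mild subtlety is making sure that after factoring out the $S^1$'s via Theorem~\ref{theorem_contraction}, the remaining iterated bundle really has only spheres of dimension $\geq 3$ in its fibers; but this is built into the statement of Theorem~\ref{theorem_contraction}, since every $S^1$-factor at every stage is removed simultaneously.
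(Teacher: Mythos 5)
Your proof takes essentially the same approach as the paper: an induction on $k$ through the long exact sequence of the iterated fibration to kill $\pi_2$, and Theorem~\ref{theorem_contraction} to split off $T^r$ and reduce the $\pi_1$ computation to the simply connected complement $\overline{S_\bullet}(\gamma)'$. In fact your version is slightly more careful, since you only invoke $\pi_2(S_k(\gamma))=0$ (which always holds) rather than $\pi_1(S_k(\gamma))=0$ (which may fail when $S^1$-factors are present); the paper's phrasing of an isomorphism $\pi_2(\overline{S_k}(\gamma))\cong\pi_2(\overline{S_{k-1}}(\gamma))$ is a minor overstatement, though the vanishing conclusion is unaffected.
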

\begin{proof}
	Since $S_k(\gamma)$ in \eqref{bundleatstagek} is a point or a product space of odd dimensional spheres,
	we have $\pi_1(S_k(\gamma)) = \pi_2(S_k(\gamma)) = 0$, and therefore 
	$\pi_2(\overline{S_k}(\gamma)) \cong \pi_2(\overline{S_{k-1}}(\gamma))$ for every $k=1,\cdots, n-1$. 
	Moreover, since $\pi_2(\bar{S_1}(\gamma)) = \pi_2(S_1(\gamma)) = 0$, we get $\pi_2(\bar{S_k}(\gamma)) = 0$ for every $k$ by the induction on $k$. 
	The second statement is deduced from Theorem \ref{theorem_contraction}, since $\bar{S_{\bullet}}(\gamma)'$ is simply connected. 
\end{proof}

\begin{corollary}\label{corollary_Lagrangian_tori}
	For a point $\textbf{\textup{u}} \in \Delta_\lambda$, the fiber $\Phi_\lambda^{-1}(\textbf{\textup{u}})$ is a Lagrangian torus if and only if $\textbf{\textup{u}}$ is an interior point of
	$\Delta_\lambda$
\end{corollary}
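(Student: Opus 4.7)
The plan is to prove the two directions separately, using Theorems~\ref{theorem_main} and~\ref{theorem_contraction} for the ``if'' direction and Proposition~\ref{proposition_pi_2_zero_pi_1_k} (together with the dimension count from Theorem~\ref{theorem_main}) for the ``only if'' direction.

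For the ``if'' direction, suppose $\textbf{\textup{u}} \in \mathring{\Delta}_\lambda$. Then the unique face containing $\textbf{\textup{u}}$ in its relative interior is $\Delta_\lambda$ itself, corresponding to the face $\gamma = \Gamma_\lambda$, whose dimension is $r = \dim \Delta_\lambda = \frac{1}{2}\dim_{\R}\mcal{O}_\lambda$. Since every edge of $\Gamma_\lambda$ is a wall in $W_k(\Gamma_\lambda)$, each simple closed region in $W_k(\Gamma_\lambda)$ is a single unit box $\square^{(a,b)}$; the ones containing a bottom vertex (i.e.\ those with $a+b = k+1$) are exactly $M_1$-blocks, and there are $k$ of them. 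Consequently $S_k(\Gamma_\lambda) \cong (S^1)^k$, so $\sum_k \dim S_k(\Gamma_\lambda) = \frac{n(n-1)}{2} = r$. By Theorem~\ref{theorem_contraction}, $\Phi_\lambda^{-1}(\textbf{\textup{u}}) \cong T^r \times Y(\textbf{\textup{u}})$ where $Y(\textbf{\textup{u}})$ has no $S^1$-factors, forcing $Y(\textbf{\textup{u}}) = \mathrm{point}$. Hence $\Phi_\lambda^{-1}(\textbf{\textup{u}}) \cong T^r$, and by Proposition~\ref{proposition_second_part} it is isotropic of the half-dimension, i.e.\ Lagrangian.

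For the ``only if'' direction, the strategy is to use the fundamental-group obstruction. Suppose $\Phi_\lambda^{-1}(\textbf{\textup{u}})$ is a Lagrangian torus $T^m$; then necessarily $m = \frac{1}{2} \dim_{\R} \mcal{O}_\lambda = \dim \Delta_\lambda$. Let $f$ be the face of $\Delta_\lambda$ whose relative interior contains $\textbf{\textup{u}}$, and set $r = \dim f$. By Proposition~\ref{proposition_pi_2_zero_pi_1_k}, $\pi_1(\Phi_\lambda^{-1}(\textbf{\textup{u}})) \cong \Z^r$. Comparing with $\pi_1(T^m) \cong \Z^m$ yields $r = m = \dim \Delta_\lambda$, which forces $f = \Delta_\lambda$ and thus $\textbf{\textup{u}} \in \mathring{\Delta}_\lambda$.

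I expect no serious obstacle here: both directions reduce to plugging in the already-established structural results. The only mildly delicate point is the bookkeeping verifying $\sum_{k=1}^{n-1} \dim S_k(\Gamma_\lambda) = \dim \Delta_\lambda$ for the full ladder, which is immediate from counting $M_1$-blocks at the bottom vertices of each $W_k$. The rest is a direct application of Theorem~\ref{theorem_contraction} and the homotopy-group computation in Proposition~\ref{proposition_pi_2_zero_pi_1_k}.
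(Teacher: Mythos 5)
Your overall strategy coincides with the paper's: the ``only if'' direction via Proposition~\ref{proposition_pi_2_zero_pi_1_k} is exactly the paper's argument, and the ``if'' direction via Theorem~\ref{theorem_contraction} is also the paper's route. The problem is the combinatorial bookkeeping you use to force $Y(\textbf{\textup{u}}) = \mathrm{point}$. The claims that every simple closed region of $W_k(\Gamma_\lambda)$ is a single unit box, that the regions containing bottom vertices are exactly the $k$ boxes with $a+b=k+1$, and hence that $\sum_k \dim S_k(\Gamma_\lambda) = \frac{n(n-1)}{2} = r$, are true only for the full flag manifold. For a general partial flag manifold the block $W_k$ contains boxes lying \emph{outside} the ladder diagram $\Gamma_\lambda$; these are not separated by walls, so they coalesce into regions larger than a unit box (typically not $M$-shaped), and moreover $r = \dim\Delta_\lambda = |\mcal{I}_\lambda| < \frac{n(n-1)}{2}$. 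Concretely, for $\mathrm{Gr}(2,4)$ one has $r=4$ while your count gives $6$: in $W_3(\Gamma_\lambda)$ the boxes $\square^{(1,3)}$ and $\square^{(3,1)}$ sit inside three-box regions that contribute nothing to $S_3(\Gamma_\lambda)$, and only $\square^{(2,2)}$ yields an $S^1$.

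The repair is short and uses results already in the paper. By Proposition~\ref{proposition_M_1_block_simple_closed_region} the total number of $S^1$-factors appearing among the $S_k(\Gamma_\lambda)$ equals $\dim\Gamma_\lambda = \dim\Delta_\lambda = r$, and by Theorem~\ref{theorem_main} (or Proposition~\ref{proposition_second_part}) the fiber is isotropic, hence of dimension at most $r$. Since Theorem~\ref{theorem_contraction} splits off a $T^r$-factor, the complementary factor $Y(\textbf{\textup{u}})$ is $0$-dimensional and connected, hence a point, and $\Phi_\lambda^{-1}(\textbf{\textup{u}}) \cong T^r$ is a Lagrangian torus. With this substitution both directions are correct and match the paper's proof; your ``only if'' direction needs no change.
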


\begin{proof}
	The ``if'' statement follows immediately from Theorem \ref{theorem_contraction}, and the ``only if'' part follows from Proposition \ref{proposition_pi_2_zero_pi_1_k}.
\end{proof}

%------------------------------------------------------------------------
\bibliographystyle{amsalpha}
\bibliography{geometry}

\end{document}